\documentclass{amsart}

\usepackage{amsmath}
\usepackage{amssymb}
\usepackage{graphicx}
\usepackage{amsmath,amscd}
\usepackage{tikz}
\usepackage{tikz-cd}
\usetikzlibrary{positioning,arrows}
\usepackage[utf8]{inputenc} 
\usepackage[T1]{fontenc}
\usepackage[small,nohug,heads=vee]{diagrams}
\usepackage{enumitem}
\usepackage{fancyhdr}
\usepackage{hyperref}

\numberwithin{equation}{section}
\theoremstyle{plain}

\newtheorem{lem}[equation]{Lemma}

\newtheorem{thm}[equation]{Theorem}
\newtheorem{cor}[equation]{Corollary}
\newtheorem{que}[equation]{Question}
\newtheorem{ob}[equation]{Observation}

\theoremstyle{definition}
\newtheorem{definition}[equation]{Definition}
\newtheorem{remark}[equation]{Remark}
\newtheorem{example}[equation]{Example}

\newcommand*{\inc}{\ensuremath{\mathcal{I}}}
\newcommand{\out}{\operatorname{Out}}
\newcommand{\aut}{\operatorname{Aut}}
\newcommand{\qi}{\operatorname{QI}}
\newcommand{\isom}{\operatorname{Isom}}
\newcommand{\id}{\operatorname{id}}

\begin{document}

\title[QUASI-ISOMETRIC CLASSIFICATION OF RIGHT-ANGLED ARTIN GROUPS I]{QUASI-ISOMETRIC CLASSIFICATION OF RIGHT-ANGLED ARTIN GROUPS I: THE FINITE OUT CASE}
\author{JINGYIN HUANG}
\address{Courant Institute of Mathematical Science\\
New York University, 251 Mercer Street\\
New York, NY, USA.}

\email{jingyin@cims.nyu.edu}
\begin{abstract}
Let $G$ and $G'$ be two right-angled Artin groups. We show they are quasi-isometric if and only if they are isomorphic, under the assumption that the outer automorphism groups $\out(G)$ and $\out(G')$ are finite. If we only assume $\out(G)$ is finite, then $G'$ is quasi-isometric $G$ if and only if $G'$ is isomorphic to a subgroup of finite index in $G$. In this case, we give an algorithm to determine whether $G$ and $G'$ are quasi-isometric by looking at their defining graphs.
\end{abstract}

\maketitle
\tableofcontents
\setcounter{tocdepth}{2}

\section{Introduction}
\subsection{Backgrounds and Summary of Results}
Given a finite simplicial graph $\Gamma$ with vertex set $\{v_{i}\}_{i\in I}$, the right-angled Artin group (RAAG) with defining graph $\Gamma$, denoted by $G(\Gamma)$, is given by the following presentation:
\begin{center}
\{$v_i$, for $i\in{I}\ |\ [v_i,v_j]=1$ if $v_{i}$ and $v_{j}$ are joined by an edge\}
\end{center}
$\{v_{i}\}_{i\in I}$ is called a \textit{standard generating set} for $G(\Gamma)$ (cf. Section \ref{subsec_raag}).

The class of RAAG's enjoys a balance between simplicity and complexity. On one hand, RAAG's have many nice geometric, combinatorial and group theoretic properties (see \cite{charney2007introduction} for a summary); on the other hand, this class inherits the full complexity of the collection of finite simplicial graphs, and even a single RAAG could have very complicated subgroups (see, for example \cite{bestvina1997morse}). 

In recent years, RAAG's have become important models to understand other unknown groups, either by (virtually) embedding the unknown groups into some RAAG's (such a program is outlined in \cite[Section~6]{wise2009research}, see also the references over there), or by finding embedded copies of RAAG's in the unknown groups (\cite{clay2010geometry,koberda2012right,taylor2013right,kim2013anti,baik2014right}). 

In this paper, we study the asymptotic geometry of RAAG's and classify a particular class of RAAG's by their quasi-isometric types. Previously, the quasi-isometric classification of RAAG's has been done for the following two classes. 
\begin{enumerate}
\item \emph{Tree groups} by Behrstock and Neumann. It is shown in \cite{behrstock2008quasi} that for any two trees $\Gamma_{1}$ and $\Gamma_{2}$ with diameter $\ge 3$, $G(\Gamma_{1})$ and $G(\Gamma_{2})$ are quasi-isometric. Higher dimensional analogs of tree groups are studied in \cite{MR2727658}.
\item \emph{Atomic groups} by Bestvina, Kleiner and Sageev. A RAAG is atomic if its defining graph $\Gamma$ is connected and does not contain valence one vertices, cycles of length $<5$ and separating closed stars. It is shown in \cite{MR2421136} that two atomic RAAG's are quasi-isometric if and only if they are isomorphic.
\end{enumerate}
Note that atomic groups are much more \textquotedblleft rigid\textquotedblright\ than tree groups. We define the \textit{dimension} of $G(\Gamma)$ to be the maximal $n$ such that $G(\Gamma)$ contains a $\Bbb Z^{n}$ subgroup, and it coincides with the cohomological dimension of $G(\Gamma)$. All atomic groups are 2-dimensional, hence it is natural to ask what are higher dimensional RAAG's which satisfy similar rigidity properties as atomic RAAG's. This is the starting point of the current paper.

Since we are looking for RAAG's which are rigid, those ones with small quasi-isometry groups would be reasonable candidates. However, even in the atomic case, the quasi-isometry group $\qi(G(\Gamma))$ is huge (see the discussion of quasi-isometry flexibility in \cite[Section 11]{MR2421136}). Then we turn to the outer automorphism group $\out(G(\Gamma))$ for guidance. 

Now we ask whether those RAAG's with \textquotedblleft small\textquotedblright\ outer automorphism groups are also geometrically rigid in an appropriate sense. Actually, \textquotedblleft small\textquotedblright\ outer automorphism groups and (quasi-isometric or commensurability) rigidity results come together in several other cases, for example, higher rank lattices (\cite{mostow1973strong,kleiner1997rigidity,eskin1997quasi,eskin1998quasi}), mapping class groups (\cite{hamenstaedt2005geometry,MR2928983}), $\out(F_{n})$ (\cite{farb2007commensurations}) etc. Our first result is about the quasi-isometric classification for RAAG's with finite outer automorphism group.

\begin{thm}
\label{1.1}
Pick $G(\Gamma_{1})$ and $G(\Gamma_{2})$ such that $\out(G(\Gamma_{i}))$ is finite for $i=1,2$. Then they are quasi-isomeric if and only if they are isomorphic.
\end{thm}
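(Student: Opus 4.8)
The forward direction is immediate, since isomorphic groups are quasi-isometric, so I focus on the converse. Because finitely generated RAAG's are determined up to isomorphism by their defining graphs (Droms), it suffices to manufacture a graph isomorphism $\Gamma_{1}\cong\Gamma_{2}$ out of a quasi-isometry $\phi\colon G(\Gamma_{1})\to G(\Gamma_{2})$. I would pass to the geometric model: let $X_{i}$ be the universal cover of the Salvetti complex of $G(\Gamma_{i})$, a CAT(0) cube complex on which $G(\Gamma_{i})$ acts geometrically, so that $\phi$ upgrades to a quasi-isometry $X_{1}\to X_{2}$. Inside each $X_{i}$ sit the \emph{standard flats}: the convex subcomplexes stabilized by conjugates of the special subgroups $\langle K\rangle\cong\mathbb{Z}^{|K|}$ attached to cliques $K$ of $\Gamma_{i}$. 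A maximal standard flat has dimension equal to the size of a maximal clique, and the top dimension $n$ (the maximal rank of an abelian subgroup, equivalently the dimension in the sense defined above) is a quasi-isometry invariant, hence preserved by $\phi$.

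The central step is a \emph{quasiflat rigidity} statement: every top-dimensional quasiflat in $X_{i}$ lies within bounded Hausdorff distance of a finite union of standard flats. Applying this to the images under $\phi$ of the standard flats of $X_{1}$, and exploiting maximality, one should extract that $\phi$ induces a dimension-preserving correspondence between (parallel classes of) maximal standard flats of $X_{1}$ and those of $X_{2}$. This is where the hypothesis that $\out(G(\Gamma_{i}))$ is finite becomes essential: finiteness of $\out$ is equivalent to the absence of vertex domination (no transvections) and of separating vertex-stars (no partial conjugations), and it is exactly these features that let flats be sheared or slid so as to wreck rigidity. For instance, every tree RAAG has a dominated leaf, which is precisely why all tree groups are mutually quasi-isometric. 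I would use finiteness of $\out$ to show that distinct parallel classes of standard flats remain coarsely distinguishable and that the coarse intersection pattern of standard flats is respected by $\phi$.

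Granting this, I would descend to a purely combinatorial incidence structure. The collection of standard flats, partially ordered by coarse inclusion and recording their pairwise coarse intersections, recovers the poset of cliques of $\Gamma_{i}$ together with their intersection relations; equivalently it recovers the flag complex spanned by $\Gamma_{i}$, and hence $\Gamma_{i}$ as its $1$-skeleton. Since $\phi$ preserves standard flats, their dimensions, their inclusions and their intersections up to bounded error, it induces an isomorphism of these incidence structures. This yields $\Gamma_{1}\cong\Gamma_{2}$, and therefore $G(\Gamma_{1})\cong G(\Gamma_{2})$, as desired.

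The main obstacle is the quasiflat rigidity asserted in the second paragraph: controlling the coarse geometry of an \emph{arbitrary} top-dimensional quasiflat and pinning it to a union of standard flats. Unlike the symmetric-space or higher-rank-lattice setting, there is no ambient smooth structure to exploit, so I expect the argument to proceed combinatorially through hyperplanes and their separation properties, showing that a quasiflat must track the hyperplane combinatorics forced by a maximal clique, and invoking the finite-$\out$ hypothesis at exactly the point where one must exclude degenerate, non-standard limiting directions that would otherwise let the flat pattern be rearranged.
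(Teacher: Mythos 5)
Your overall scheme (top-dimensional flats $\to$ standard flats $\to$ combinatorial incidence data $\to$ graph isomorphism) is the right skeleton, and it is essentially the one the paper follows, but two of your steps have genuine gaps. First, the quasiflat statement you call central is not quite what is available and not where the real difficulty lies. The quasiflat theorem (Theorem \ref{2.10}) sends a top-dimensional flat to a single flat up to bounded Hausdorff distance, with no hypothesis on $\out$; but top-dimensional flats are generally \emph{not} standard flats (e.g.\ in a product of trees most $2$-flats are non-standard), and the collection of all top-dimensional flats is too large to read off the clique structure of $\Gamma$. The hard step is passing from top-dimensional flats to \emph{standard} flats of all dimensions: a lower-dimensional standard flat need not be a coarse intersection of top-dimensional objects in any way visible from $\Gamma$. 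This is the content of Section \ref{sec_stable subgraph} (the ``stable subgraph'' machinery culminating in Theorem \ref{3.28}), and it is exactly here that the transvection-free part of the finite-$\out$ hypothesis is consumed. Your proposal acknowledges that finiteness of $\out$ must enter but does not supply, or even locate, this argument.

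Second, and more seriously, your final step is false as stated: the coarse incidence structure of (parallel classes of) standard flats is the extension complex $\mathcal{P}(\Gamma)$, not the flag complex $F(\Gamma)$, and $\mathcal{P}(\Gamma_1)\cong\mathcal{P}(\Gamma_2)$ does \emph{not} imply $\Gamma_1\cong\Gamma_2$ in general (Example \ref{3.29} exhibits commensurable RAAG's with non-isomorphic defining graphs, and non-quasi-isometric RAAG's can have isomorphic extension graphs). So ``recovering the poset of cliques'' from the flat-incidence data is precisely the missing idea, not a routine descent. The paper bridges this gap with Lemma \ref{4.10}: given a vertex $p$ and the maximal standard flats $F_1,\dots,F_k$ through it, one must show the corresponding flats $F_1',\dots,F_k'$ in $X(\Gamma_2)$ intersect in exactly one point, which yields a genuine map on vertex sets and hence (via Corollary \ref{4.13}) mutual graph embeddings $\Gamma_1\hookrightarrow\Gamma_2\hookrightarrow\Gamma_1$. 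That step uses the \emph{other} half of the finite-$\out$ hypothesis --- no separating closed stars --- through Lemma \ref{4.8} and a Mayer--Vietoris connectivity argument; without it the intersection $\cap_i F_i'$ can be empty and no vertex map exists. Your proposal never addresses how to get from ``flats correspond'' to ``points correspond,'' which is the crux of the converse direction.
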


This theorem is proved in Section \ref{sec_qi implies iso}. See Theorem \ref{4.15} for a more detailed version of Theorem \ref{1.1}.

The collection of RAAG's with finite outer automorphism group is a reasonably large class. Recall that there is a 1-1 correspondence between finite simplicial graphs and RAAG's (\cite{droms1987isomorphisms}), thus it makes sense to talk about a random RAAG by considering the Erdős–Rényi model for random graphs. If the parameters of the model are in the right range, then almost all RAAG's have finite outer automorphism group (\cite{charney2012random,MR2966695}).

The class of 2-dimensional RAAG's with finite outer automorphism group is strictly larger than the class of atomic RAAG's, moreover, there are plenty of higher dimensional RAAG's with finite outer automorphism group. 

Whether $\out(G(\Gamma))$ is finite or not can be easily read from $\Gamma$. We defined the \emph{closed star} of a vertex $v$ in $\Gamma$, denoted by $St(v)$, to be the full subgraph (see Section \ref{subsec_notation}) spanned by $v$ and vertices adjacent to $v$. Similarly, $lk(v)$ is defined to be the full subgraph spanned by vertices adjacent to $v$. Note that this definition is slightly different from the usual one.

By results in \cite{servatius1989automorphisms,laurence1995generating}, $\out(G(\Gamma))$ is generated by the following four types of elements (we identify the vertex set of $\Gamma$ with a standard generating set of $G(\Gamma)$): 
\begin{enumerate}
\item Given vertex $v\in\Gamma$, sending $v\to v^{-1}$ and fixing all other generators.
\item Graph automorphisms of $\Gamma$.
\item If $lk(w)\subset St(v)$ for vertices $w,v\in\Gamma$, sending $w\to wv$ and fixing all other generators induces a group automorphism. It is called a \textit{transvection}. When $d(v,w)=1$, it is an \textit{adjacent transvection}, otherwise it is a \textit{non-adjacent transvection}.
\item Suppose $\Gamma\setminus St(v)$ is disconnected. Then one obtains a group automorphism by picking a connected component $C$ and sending $w\to vwv^{-1}$ for each vertex $w\in C$ (all other generators are fixed). It is called a \textit{partial conjugation}.
\end{enumerate}

Elements of type (3) or (4) have infinite order in $\out(G(\Gamma))$ while elements of type (1) or (2) are of finite order. $\out(G(\Gamma))$ is finite if and only if $\Gamma$ does not contain any separating closed star, and there do not exist distinct vertices $v,w\in\Gamma$ such that $lk(w)\subset St(v)$.

\begin{thm}
\label{1.3}
Suppose $\out(G(\Gamma_{1}))$ is finite. Then the following are equivalent:
\begin{enumerate}
\item $G(\Gamma_{2})$ is quasi-isometric to $G(\Gamma_{1})$.
\item $G(\Gamma_{2})$ is isomorphic to a subgroup of finite index in $G(\Gamma_{1})$.
\item $\Gamma_{2}^{e}$ is isomorphic to $\Gamma_{1}^{e}$.
\end{enumerate}
\end{thm}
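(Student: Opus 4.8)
The plan is to prove the cycle of implications $(2)\Rightarrow(1)\Rightarrow(3)\Rightarrow(2)$, concentrating the quasi-isometric rigidity into the single step $(1)\Rightarrow(3)$. A useful preliminary remark: when $\out(G(\Gamma_{2}))$ also happens to be finite, everything follows immediately from Theorem \ref{1.1}, since then (1) already yields an isomorphism $G(\Gamma_{2})\cong G(\Gamma_{1})$, hence $\Gamma_{2}\cong\Gamma_{1}$ and a fortiori $\Gamma_{2}^{e}\cong\Gamma_{1}^{e}$. So the genuinely new content is the case where $G(\Gamma_{2})$ carries infinitely many outer automorphisms, and the role of passing to $\Gamma^{e}$ is precisely to divide out the flexibility responsible for this.

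The implication $(2)\Rightarrow(1)$ is the soft direction: a finite-index subgroup equipped with the restricted word metric is quasi-isometric to the ambient group, and quasi-isometry type is a commensurability invariant, so no structure theory of RAAG's is needed. For $(3)\Rightarrow(2)$ I would argue constructively. The expectation is that $\Gamma^{e}$ is a canonical representative of the commensurability class of $\Gamma$, recording the ``parallel-vertex'' equivalence classes (distinct vertices sharing the same closed star, which are exactly the simplest source of infinite-order transvections forbidden under the finite-$\out$ hypothesis) together with their multiplicities. Granting that $G(\Gamma)$ is commensurable to $G(\Gamma^{e})$ via an explicit blow-up/finite-cover construction on the associated Salvetti complexes, an isomorphism $\Gamma_{1}^{e}\cong\Gamma_{2}^{e}$ then matches the two combinatorial data sets and produces the required finite-index embedding.

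The hard step is $(1)\Rightarrow(3)$, and this is where the rigidity lives. Given a quasi-isometry $G(\Gamma_{1})\to G(\Gamma_{2})$, I would work in the CAT(0) cube complexes universally covering the two Salvetti complexes and exploit the fact that the collection of top-dimensional standard flats, together with their coarse intersection combinatorics, is preserved up to bounded error. The finiteness of $\out(G(\Gamma_{1}))$ supplies the rigidity that promotes this coarse correspondence to an honest combinatorial statement about the defining data. The main obstacle is the asymmetry of the hypothesis: I may not assume $G(\Gamma_{2})$ is rigid, so I must show that every infinite-order outer automorphism of $G(\Gamma_{2})$ is of the benign parallel-vertex type already collapsed in $\Gamma_{2}^{e}$. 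In particular I must rule out that any partial conjugation survives, equivalently that $\Gamma_{2}$ has no separating closed star, and that no transvection other than those between equivalent vertices occurs. A partial conjugation encodes a genuine product/splitting pattern in the large-scale geometry that a quasi-isometry to a finite-$\out$ RAAG cannot manufacture; excluding this geometrically is the crux of the argument.

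Once $\Gamma_{2}$ is shown to be rigid after normalization, the finish is clean: pass to the finite-$\out$ RAAG's $G(\Gamma_{1}^{e})$ and $G(\Gamma_{2}^{e})$ commensurable to $G(\Gamma_{1})$ and $G(\Gamma_{2})$ respectively, observe that they are quasi-isometric (transitivity through the original quasi-isometry and the commensurabilities of the previous paragraph), apply Theorem \ref{1.1} to obtain an isomorphism $G(\Gamma_{1}^{e})\cong G(\Gamma_{2}^{e})$, and read off $\Gamma_{1}^{e}\cong\Gamma_{2}^{e}$ from the fact that isomorphic RAAG's have isomorphic defining graphs. The bulk of the technical work, and the place where I expect the real difficulty to concentrate, is the geometric exclusion of separating-star and non-parallel transvection flexibility in $\Gamma_{2}$, i.e.\ transferring the rigidity of $G(\Gamma_{1})$ across the quasi-isometry to constrain the a priori unrigid group $G(\Gamma_{2})$.
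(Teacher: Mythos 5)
Your cycle $(2)\Rightarrow(1)\Rightarrow(3)\Rightarrow(2)$ matches the structure of the paper's proof (Theorem \ref{5.15}, read together with Theorem \ref{5.3} and Lemma \ref{5.9}), and $(2)\Rightarrow(1)$ is indeed trivial. But the proposal rests on a misreading of what $\Gamma^{e}$ is, and this breaks both of the substantive implications. By Definition \ref{2.11} the vertex set of $\Gamma^{e}$ is the set of \emph{all conjugates} of standard generators, so $\Gamma^{e}$ is an infinite graph whenever $\Gamma$ is not a clique; it is not a finite ``reduced'' graph obtained by collapsing parallel vertices, and ``$G(\Gamma^{e})$'' is not a finitely generated RAAG, let alone one commensurable to $G(\Gamma)$. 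Consequently your route for $(3)\Rightarrow(2)$ (a blow-up/finite-cover identification of $G(\Gamma_{i})$ with $G(\Gamma_{i}^{e})$) and your endgame for $(1)\Rightarrow(3)$ (apply Theorem \ref{1.1} to $G(\Gamma_{1}^{e})$ and $G(\Gamma_{2}^{e})$) have no starting point. The premise that $\Gamma^{e}$ is ``a canonical representative of the commensurability class'' is also false in general: the paper exhibits commensurable RAAG's with non-isomorphic extension graphs (Example \ref{3.29}) and cites non-quasi-isometric RAAG's with isomorphic extension graphs, so $(3)\Rightarrow(2)$ genuinely needs the finite-$\out$ hypothesis on $\Gamma_{1}$. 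The paper's actual argument (Lemma \ref{5.9}) takes the simplicial isomorphism $\mathcal{P}(\Gamma_{1})\cong\mathcal{P}(\Gamma_{2})$, turns it into an action of $G(\Gamma_{2})$ on $(G(\Gamma_{1}),d_{r})$ by syllable-metric isometries, and then constructs an invariant coherent ordering and coherent labeling (Lemma \ref{5.7}) to conjugate that action into the left-translation action, realizing $G(\Gamma_{2})$ as a finite-index subgroup; nothing in your sketch supplies a substitute for this straightening step.

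There is a second, independent gap in your $(1)\Rightarrow(3)$: you propose to show that $\Gamma_{2}$ has no separating closed star and that the only surviving flexibility is ``parallel-vertex'' transvections. Neither is what happens. The paper proves (Theorem \ref{5.3}) that $\out(G(\Gamma_{2}))$ is transvection free --- so \emph{no} transvections survive, benign or otherwise --- but partial conjugations in $G(\Gamma_{2})$ cannot be excluded and typically do occur: special subgroups of a finite-$\out$ RAAG produced by generalized star extensions generally have separating closed stars. The real difficulty in $(1)\Rightarrow(3)$ is not excluding partial conjugations but proving that the induced embedding $q_{\ast}:\mathcal{P}(\Gamma_{1})\to\mathcal{P}(\Gamma_{2})$ is \emph{surjective}, i.e.\ that every parallel class of standard geodesics in $X(\Gamma_{2})$ is hit; this is where the stable-subgraph machinery of Section \ref{sec_stable subgraph} and Lemma \ref{5.1} are used, and your sketch does not address it.
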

Here $\Gamma^{e}$ denotes the \textit{extension graph} introduced by Kim and Koberda in \cite{kim2013embedability} (see Definition 2.11). Extension graphs can be viewed as \textquotedblleft curve graphs\textquotedblright\ for RAAG's (\cite{kim2014geometry}). This analog carries on to the aspect of quasi-isometric rigidity. Namely, if $G$ is a mapping class group and $q:G'\to G$ is a quasi-isometry, then it is shown in \cite{MR2928983} that $G'$ naturally acts on the curve graph associated with $G$. This is still true if $G$ is a RAAG with some restriction on its outer automorphism group, for example, $\out(G)$ is finite.

However, in general, there exists a pair of commensurable RAAG's with different extension graphs, see Example \ref{3.29}. There also exists a pair of non-quasi-isometric RAAG's with isomorphic extension graphs, see \cite[Section 5.3]{huang2016quasi}.
 
Motivated by Theorem \ref{1.3} (2), we now look at finite index \textit{RAAG subgroups} (i.e. subgroups which are also RAAG's) of $G(\Gamma_{1})$.

Given a RAAG $G(\Gamma)$ (not necessarily having a finite outer automorphism group) and pick a standard generating set $S$ for $G(\Gamma)$. Let $d_{S}$ be the word metric on $G(\Gamma)$ with respect to $S$. A subset $K\subset G(\Gamma)$ is \textit{$S$-convex} if for any three points $x,y\in K$ and $z\in G(\Gamma)$ such that $d_{S}(x,y)=d_{S}(x,z)+d_{S}(z,y)$, we must have $z\in K$. Every finite $S$-convex subset $K$ naturally gives rise to a finite index RAAG subgroup $G\le G(\Gamma)$ such that $K$ is the fundamental domain of the left action $G\curvearrowright G(\Gamma)$. For example, if $G(\Gamma)=\Bbb Z\oplus\Bbb Z$ and pick $K$ to be a rectangle of size $n$ by $m$, then the corresponding subgroup is of form $n\Bbb Z\oplus m\Bbb Z$. The detailed construction in the more general case is given in Section \ref{subsec_construct finite index}. $G$ is called an \textit{$S$-special} subgroup of $G(\Gamma)$. A subgroup of $G(\Gamma)$ is \textit{special} if it is $S$-special for some standard generating set $S$. A similar construction in the case of right-angled Coxeter groups is in \cite{haglund2008finite}.

Here is an alternating description in terms of the canonical completion introduced by \cite{MR2377497}. Let $S(\Gamma)$ be the Salvetti complex of $G(\Gamma)$ (see Section \ref{subsec_raag}) and let $X(\Gamma)$ be the universal cover. We pick an identification between $G(\Gamma)$ and the 0-skeleton of $X(\Gamma)$. The above subset $K$ gives rise to a convex subcomplex $\bar{K}\subset X(\Gamma)$. Then the corresponding special subgroup is the fundamental group of the canonical completion with respect the local isometry $\bar{K}\to S(\Gamma)$.

Our next result says if $\out(G(\Gamma))$ is finite, then this is the only way to obtain finite index RAAG subgroups of $G(\Gamma)$.

\begin{thm}
\label{1.4}
Suppose $\out(G(\Gamma))$ is finite and let $S$ be a standard generating set for $G(\Gamma)$. Then all finite index RAAG subgroups are $S$-special. Moreover, there is a 1-1 correspondence between non-negative finite $S$-convex subsets of $G(\Gamma)$ based at the identity and finite index RAAG subgroups of $G(\Gamma)$. 
\end{thm}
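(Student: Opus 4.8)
The plan is to reduce the statement to the geometric rigidity of the standard flat structure of $X(\Gamma)$ that already powers Theorems \ref{1.1} and \ref{1.3}. The essential assertion is the first one, that a finite index RAAG subgroup $H\le G(\Gamma)$ is $S$-special; once that is in hand, the \textquotedblleft moreover\textquotedblright\ part is bookkeeping: the construction $K\mapsto G$ of Section \ref{subsec_construct finite index} and an inverse assignment $H\mapsto K$ (read off the fundamental domain of $H$ based at the identity) will be seen to be mutually inverse on non-negative finite $S$-convex sets. So fix $H\le G(\Gamma)$ of finite index with $H\cong G(\Lambda)$ for some graph $\Lambda$, choose a standard generating set $\{u_{1},\dots,u_{k}\}$ of $H$, and let $X=X(\Gamma)$ carry its $CAT(0)$ cube complex structure, identified with $G(\Gamma)$ on the $0$-skeleton.

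The key claim is: after replacing the $u_{j}$ by suitable conjugates (and inverses), each $u_{j}$ is a positive power $v_{i(j)}^{\,n_{j}}$ of a standard generator of $G(\Gamma)$, and commuting pairs $u_{j},u_{\ell}$ use adjacent vertices $v_{i(j)},v_{i(\ell)}$ of $\Gamma$. To prove it I would use that, since $[G(\Gamma):H]<\infty$, the group $H\cong G(\Lambda)$ acts geometrically on $X$, so the orbit map is an $H$-equivariant quasi-isometry $X(\Lambda)\to X$. I then invoke the quasi-isometric invariance of the pattern of standard flats developed earlier; this is precisely the point at which finiteness of $\out(G(\Gamma))$ is indispensable, since in its absence transvections would let standard geodesics be \textquotedblleft tilted\textquotedblright\ and the pattern would only be coarsely preserved. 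Under this invariance a standard geodesic of $X(\Lambda)$ — in particular the axis of a standard generator $u_{j}$ — is carried to within finite Hausdorff distance of a standard geodesic $L_{j}$ of $X$. In a $CAT(0)$ space two geodesics at finite Hausdorff distance are parallel, so the axis of $u_{j}$ in $X$ must be parallel to $L_{j}$; pushing $u_{j}$ onto $L_{j}$ by a conjugation identifies it with a power of the standard generator labelling $L_{j}$, and after possibly inverting we may take the power positive. The statement about commuting pairs follows the same way applied to the $2$-dimensional standard flat of $X(\Lambda)$ spanned by $u_{j}$ and $u_{\ell}$.

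Granting the claim, I would assemble the convex domain. The common basepoint (the identity) together with the positive powers $v_{i(j)}^{\,n_{j}}$ and the realized adjacency relation determines a product-of-intervals region, and I take $K\subset G(\Gamma)$ to be the set of $0$-cells in the fundamental domain of $H\curvearrowright X$ based at the identity. One checks that $K$ is finite, contains the identity, is non-negative because each generator is a positive power along a standard direction, and is $S$-convex because the tiling of $X$ by $H$-translates is locally a tiling of a product of commuting standard directions by subintervals, so any $\ell^{1}$-geodesic between two vertices of one tile stays in that tile. Feeding this $K$ into the canonical completion of $\bar{K}\to S(\Gamma)$ returns $H$, exhibiting $H$ as $S$-special; and since $H\mapsto K$ undoes $K\mapsto G$, this gives the asserted bijection.

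The main obstacle is the rigidity step of the second paragraph, and within it two delicate points. First, one must rule out that a primitive standard generator $u_{j}$ translates diagonally inside a higher-dimensional standard flat (as $v_{1}v_{2}$ does in a $\Bbb Z^{2}$): this is exactly what the parallelism-to-a-standard-geodesic conclusion forbids, but it must be extracted from the invariance of $1$-dimensional standard flats, which is finer and more fragile than the generally available invariance of top-dimensional quasiflats, and which genuinely needs the finite-$\out$ hypothesis. Second, the conjugations straightening the individual $u_{j}$ must be chosen coherently, so that commuting generators come to share a single standard flat through a common vertex; only then is the fundamental domain an honest convex box rather than a union of misaligned cells. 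Securing this simultaneous normalization, rather than the one-generator-at-a-time statements, is where the real work of the proof lies.
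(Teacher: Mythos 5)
Your architecture --- straighten each standard generator of $H$ into a conjugate of a positive power of an element of $S$ via flat rigidity, then read off a convex fundamental domain --- is the same in spirit as the paper's, but the proposal stops exactly where the proof has to start. After the one-generator-at-a-time step you only know $u_{j}=\alpha_{j}v_{i(j)}^{n_{j}}\alpha_{j}^{-1}$ for some unrelated conjugators $\alpha_{j}\in G(\Gamma)$. Nothing so far forces the standard geodesics $\alpha_{j}\,l_{i(j)}$ to meet a common convex subcomplex, to have consistent translation lengths across a parallel class, or to point in the positive direction from a common basepoint; and ``the fundamental domain of $H\curvearrowright X$ based at the identity'' is not a well-defined object --- a finite-index subgroup has many vertex fundamental domains, and producing a canonical one that is finite, $S$-convex, non-negative and contains $\id$ is precisely the content of the theorem, not an input to it. You name this simultaneous normalization as ``where the real work of the proof lies'' and then do not supply it. The paper's proof spends essentially all of Section \ref{sec_geometry of f.i. raag subgroups}, Steps 1--2, on exactly this: starting from the map $\phi:G(\Gamma)\to G(\Gamma')$ of Lemma \ref{4.10}, it modifies the standard generating set of $G(\Gamma')$ by an explicit finite sequence of conjugations and partial conjugations (Lemmas \ref{change base point}, \ref{order-preserving}, \ref{independent}, then Steps 2.1--2.3) so that $\phi$ becomes order-preserving of the form (\ref{5.13}) on \emph{every} standard geodesic simultaneously and $\phi^{-1}(\id)$ becomes non-negative, convex and based at $\id$; only then is $K=\phi^{-1}(\id)$ defined and shown to be a strict fundamental domain (Lemma \ref{6.13}). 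The key technical point your sketch has no substitute for is Lemma \ref{independent}: straightening the generators attached to one orbit of standard geodesics must not un-straighten those already treated, and this is what makes the finitely-many-steps induction close up.

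A secondary caution: to carry the axis of $u_{j}$ (a standard geodesic of $X(\Lambda)$) to within finite Hausdorff distance of a standard geodesic of $X(\Gamma)$, Theorem \ref{3.28} is applied with \emph{domain} $X(\Lambda)$, so one needs $\out(G(\Lambda))$ to be transvection free --- which is not a hypothesis and must be extracted from the finiteness of $\out(G(\Gamma))$ via the surjectivity statement of Theorem \ref{5.3}. That result is available to you at this point in the paper, but it is not the ``invariance developed earlier'' in the naive one-directional sense, and Example \ref{3.29} shows the statement genuinely fails if $\out(G(\Gamma))$ is merely transvection free rather than finite. As written, the proposal records the two hard points accurately but proves neither.
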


See Theorem \ref{6.15} for a slight reformulation of Theorem \ref{1.4}.

We need to explain two terms: non-negative and based at the identity. For example, take $G=n\Bbb Z\oplus m\Bbb Z$ inside $\Bbb Z\oplus\Bbb Z$, then any $n$ by $m$ rectangle could be the fundamental domain for the action of $G$. We naturally require the rectangle to be in the first quadrant and contain the identity, which would give us a unique choice. Similar things can be done in all RAAG's and this two terms will be defined precisely in Section \ref{sec_geometry of f.i. raag subgroups}.

The most simple example is when $G(\Gamma)=\Bbb Z$, we have a 1-1 correspondence between finite index subgroups of form $n\Bbb Z$ and the intervals of form $[0,n-1]$.

\begin{cor}
\label{1.5}
If $\out(G(\Gamma_{1}))$ is finite, then $G(\Gamma_{2})$ is quasi-isometric to $G(\Gamma_{1})$ if and only if $G(\Gamma_{2})$ is isomorphic to a special subgroup of $G(\Gamma_{1})$.
\end{cor}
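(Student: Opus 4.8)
The plan is to deduce this corollary by combining Theorem~\ref{1.3} with Theorem~\ref{1.4}; the substantive work has already been carried out in those two results, so the task here is to assemble them correctly while keeping careful track of the definitions of \emph{special subgroup} and \emph{RAAG subgroup}.

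For the backward implication, I would suppose $G(\Gamma_2)$ is isomorphic to a special subgroup $H$ of $G(\Gamma_1)$. By definition $H$ is $S$-special for some standard generating set $S$, and by the construction recalled just before Theorem~\ref{1.4} every $S$-special subgroup arises from a finite $S$-convex set serving as a fundamental domain, hence has finite index in $G(\Gamma_1)$. Since any finite index subgroup is quasi-isometric to the ambient group via the inclusion, $G(\Gamma_2)\cong H$ is quasi-isometric to $G(\Gamma_1)$. I note that this direction does not even require finiteness of $\out(G(\Gamma_1))$.

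For the forward implication, I would suppose $G(\Gamma_2)$ is quasi-isometric to $G(\Gamma_1)$. Applying the implication (1)$\Rightarrow$(2) of Theorem~\ref{1.3}, which is where the hypothesis that $\out(G(\Gamma_1))$ is finite enters, produces an embedding of $G(\Gamma_2)$ as a finite index subgroup $H\le G(\Gamma_1)$. Since $H\cong G(\Gamma_2)$ is itself a RAAG, it is a finite index RAAG subgroup in exactly the sense of Theorem~\ref{1.4}. Fixing any standard generating set $S$ for $G(\Gamma_1)$, that theorem shows $H$ is $S$-special, hence special; therefore $G(\Gamma_2)$ is isomorphic to a special subgroup of $G(\Gamma_1)$.

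The only points requiring attention, rather than a genuine obstacle, are bookkeeping ones: that \textquotedblleft special\textquotedblright\ entails finite index (so that the backward direction actually yields a quasi-isometry), and that the finite index subgroup produced by Theorem~\ref{1.3} qualifies as a RAAG subgroup in the precise sense needed to invoke Theorem~\ref{1.4}. Both are immediate from the definitions, so I expect no new argument beyond the two cited theorems to be necessary.
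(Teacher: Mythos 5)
Your proof is correct and matches the paper's (implicit) argument: Corollary \ref{1.5} is obtained exactly by combining Theorem \ref{1.3} (in its detailed form, Theorem \ref{5.15}) with Theorem \ref{1.4} (Theorem \ref{6.15}), plus the observation that special subgroups have finite index by construction. No further comment is needed.
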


It turns out that there is an algorithm to enumerate the defining graphs of all special subgroups of a RAAG, so
\begin{thm}
\label{1.6}
If $\out(G(\Gamma))$ is finite, then $G(\Gamma')$ is quasi-isometric to $G(\Gamma)$ if and only if $\Gamma'$ can be obtained from $\Gamma$ by finitely many GSE's. In particular, there is an algorithm to determine whether $G(\Gamma')$ and $G(\Gamma)$ are quasi-isometric by looking at the graphs $\Gamma$ and $\Gamma'$.
\end{thm}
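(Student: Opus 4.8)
The plan is to deduce the theorem from Corollary \ref{1.5} by translating the group-theoretic characterization of quasi-isometry into a purely combinatorial statement about defining graphs. By Corollary \ref{1.5}, under the finite-$\out$ hypothesis $G(\Gamma')$ is quasi-isometric to $G(\Gamma)$ exactly when $G(\Gamma')$ is isomorphic to a special subgroup of $G(\Gamma)$. Thus the entire equivalence reduces to the following assertion, which I would isolate as the main lemma: the defining graphs of the special subgroups of $G(\Gamma)$, taken up to isomorphism, are precisely the graphs obtainable from $\Gamma$ by finitely many GSE's. Granting this, the stated ``if and only if'' is immediate, and the algorithmic conclusion follows because a single GSE is a finitely describable local move on a finite graph.

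To prove the main lemma I would first make the bijection of Theorem \ref{1.4} explicit: every special subgroup corresponds to a non-negative finite $S$-convex subset $K$ based at the identity, and I would compute its defining graph directly from the canonical-completion picture. Realizing $K$ as a convex subcomplex $\bar K\subset X(\Gamma)$ and forming the cover determined by the local isometry $\bar K\to S(\Gamma)$, the standard generators of the resulting RAAG are the parallelism classes of edges in this cover, and two generators commute exactly when the corresponding classes span a square. This yields an explicit recipe $K\mapsto\Gamma_K$ for the defining graph of the associated special subgroup, and the task becomes to match this recipe with the GSE operation.

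The heart of the argument is then a pair of inclusions. For one direction I would show that each GSE applied to $\Gamma$ outputs the graph $\Gamma_K$ of some special subgroup, by constructing the convex set $K$ that the GSE encodes (an elementary ``thickening'' in one vertex direction) and verifying that $\Gamma_K$ agrees with the GSE output. For the converse I would induct on the size $|K|$ (equivalently on the number of edges of $\bar K$ in each direction), peeling off one elementary thickening, checking that it corresponds to a single GSE, and that the reduced convex set again yields a special subgroup whose defining graph is reached by one fewer GSE. The main obstacle is exactly this bookkeeping: tracking how the commutation pattern among the subgroup's generators changes under an elementary enlargement of $K$ and confirming that every such change is realized by a GSE and conversely. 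Here the finite-$\out$ assumption is essential, since it rules out transvections and separating closed stars and thereby rigidly pins down the parallelism classes of edges in $\bar K$.

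Finally, for the algorithm it suffices to make the search over convex sets effective and finite. If $G(\Gamma')$ is isomorphic to a special subgroup of index $m$ in $G(\Gamma)$, then the fundamental domain $K$ has exactly $m$ elements, so a bound on $m$ bounds $|K|$ and hence the search space. Such a bound comes from a commensurability invariant: an Euler-characteristic computation determines $m$ whenever $\chi(G(\Gamma))\neq 0$, and in the remaining cases a growth- or dimension-type quasi-isometry invariant (together with the extension-graph constraint of Theorem \ref{1.3}) plays the same role. The procedure then enumerates all GSE-sequences within this bound, equivalently all convex sets $K$ with $|K|$ below it, computes each $\Gamma_K$, and tests it for isomorphism with $\Gamma'$, returning \emph{yes} precisely when a match occurs.
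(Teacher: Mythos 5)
Your reduction of the equivalence to Corollary \ref{1.5} and your main lemma (defining graphs of special subgroups $=$ graphs reachable by GSE's) are exactly the paper's route: the paper proves the forward inclusion by observing that the GSE process builds, step by step, a pair $(\Gamma_i,K_i)$ with $K_i$ a compact convex subcomplex of $X(\Gamma)$ and $\Gamma_i$ the defining graph of $\Theta_S(K_i)$, and proves the converse by your ``peeling'' induction, realizing $K$ as $K_1\subset K_2\subset\cdots\subset K_s=K$ where $K_{i+1}$ is the convex hull of $K_i$ and one adjacent edge, and checking each such enlargement is a single GSE. So for the ``if and only if'' part your proposal is essentially the paper's proof.

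The genuine gap is in your termination argument for the algorithm. You bound the search by bounding the index $m=|K|$, using $\chi(G(\Gamma'))/\chi(G(\Gamma))$ when $\chi(G(\Gamma))\neq 0$ and, in the remaining cases, an unspecified ``growth- or dimension-type quasi-isometry invariant.'' That fallback is not an argument: when $\chi(G(\Gamma))=0$ nothing you cite determines or bounds the index of a candidate embedding (and a priori a group could embed as special subgroups of several different indices), so the search space is not shown to be finite. The paper avoids this entirely with a combinatorial bound: a GSE is nontrivial exactly when $St(\pi(g(v)))\subsetneq F(\Gamma)$, which always holds when $\out(G(\Gamma))$ is finite and $G(\Gamma)\ncong\Bbb Z$ (a vertex whose closed star is all of $\Gamma$ would give an adjacent transvection); hence every GSE strictly increases the number of vertices of $\Gamma_i$, so $\Gamma'$ with $n$ vertices is reached, if at all, by at most $n$ GSE's, and one simply enumerates all GSE-sequences of length $\le n$. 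This bound is available inside your own framework (it also yields $|K|\le 2^{n}$, since each elementary thickening at most doubles $|K_i|$), and substituting it for the Euler-characteristic step repairs the proof.
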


GSE is a generalized version of star extension in \cite[Example~1.4]{MR2421136}, see also \cite[lemma~50]{kim2013embedability}. It will be defined in Section \ref{sec_geometry of f.i. raag subgroups}.

A question motivated by Theorem \ref{1.3} is the following:

\begin{que}
\label{question}
Let $G(\Gamma)$ be a RAAG such that $\out(G(\Gamma))$ is finite. And let $H$ be a finite generated group quasi-isometric to $G(\Gamma)$. What can we say about $H$?
\end{que}

As a partial answer to this question, we prove the following result in \cite{huang2016groups}.

\begin{thm}
$($\cite{huang2016groups}$)$ Let $G(\Gamma)$ and $H$ be as in Question \ref{question}. Then the induced quasi-action $H\curvearrowright X(\Gamma)$ is quasi-isometrically conjugate to a geometric action $H\curvearrowright X'$. Here $X'$ is a $CAT(0)$ cube complex which is closely related to $X(\Gamma)$.
\end{thm}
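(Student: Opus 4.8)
The plan is to start from a quasi-isometry $q\colon H\to G(\Gamma)$ and transport the left translation action of $H$ on itself through $q$ to obtain the induced quasi-action $H\curvearrowright X(\Gamma)$ (recall $X(\Gamma)$ is a $CAT(0)$ cube complex carrying a geometric action of $G(\Gamma)$, so $H$ and $X(\Gamma)$ are quasi-isometric). The first step is to extract from $X(\Gamma)$ a canonical quasi-isometry-invariant combinatorial skeleton. The natural candidate is the collection of maximal standard flats together with the family of standard hyperplanes and their coarse intersection pattern. The quasi-isometric rigidity established earlier in this paper --- the mechanism underlying Theorems \ref{1.1} and \ref{1.3} --- shows that any self-quasi-isometry of $X(\Gamma)$ coarsely permutes the maximal standard flats and preserves the nesting and transversality relations among the standard hyperplanes, with constants depending only on the quasi-isometry constants. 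The finiteness of $\out(G(\Gamma))$ is exactly what forbids shearing one flat into another (no transvections or partial conjugations survive), so this pattern is genuinely rigid. Applying this to each group element, I would conclude that $H$ coarsely permutes this structure with uniform constants.

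The second step is to upgrade this coarse permutation to an honest combinatorial action. Since parallelism classes of standard hyperplanes, and the relations ``nested'' and ``transverse'' between them, are coarse invariants that are insensitive to bounded perturbation, the coarse permutation of the first step descends to a genuine permutation of the set of these classes that preserves the induced pocset (halfspace) structure. Equivalently, by the identification of $\Gamma^{e}$ as the invariant in Theorem \ref{1.3}, one obtains a genuine action of $H$ on the extension graph $\Gamma^{e}$ by graph automorphisms, together with the more refined combinatorial data recording the product directions of the flats.

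The third step is to build $X'$ and check that the action is geometric. I would assemble a space with walls from the genuine pocset of hyperplane classes and pass to its dual $CAT(0)$ cube complex $X'$ in the sense of Sageev (via the Chatterji--Niblo / Nica formalism); the honest pocset automorphisms then give an honest cubical action $H\curvearrowright X'$. One verifies that $X'$ is finite dimensional (bounded by $\dim X(\Gamma)$) and locally finite, that the action is cocompact (finitely many $H$-orbits of cubes, inherited from cocompactness of $G(\Gamma)\curvearrowright X(\Gamma)$ via the bounded geometry of the quasi-action), and that it is proper (walls coarsely separate points of $X(\Gamma)$ and cube stabilizers are finite). This upgrading of coarse wall data to a genuine, locally finite, finite-dimensional pocset admitting a proper cocompact action is the step I expect to be the main obstacle: one must show the coarse intersection relations are mutually consistent enough to define a genuine pocset, and that no infinite families of mutually transverse walls or infinitely refining chains appear. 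Once more, finiteness of $\out(G(\Gamma))$ is the crucial input that rigidifies the flat pattern and bounds its complexity.

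Finally, I would establish the quasi-isometric conjugacy. Both $X(\Gamma)$ and $X'$ are cube complexes dual to the same wall data up to bounded error, so the canonical ``dual to the walls'' comparison map $X(\Gamma)\to X'$ is a quasi-isometry; by construction it intertwines the quasi-action $H\curvearrowright X(\Gamma)$ with the honest action $H\curvearrowright X'$ up to uniformly bounded error. This exhibits the quasi-action as quasi-isometrically conjugate to the geometric action $H\curvearrowright X'$, with $X'$ built directly from the hyperplanes of $X(\Gamma)$ and hence closely related to it.
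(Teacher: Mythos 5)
This theorem is not proved in the present paper at all: it is stated here as a quotation from \cite{huang2016groups} (``we prove the following result in \cite{huang2016groups}''), so there is no proof in this paper to compare against; one can only compare your outline with what this paper's results actually supply and with the strategy of that reference. The part of your outline that does match the available input is the beginning: by Theorem \ref{3.28} and Theorem \ref{5.3}, a cobounded quasi-action of $H$ on $X(\Gamma)$ coarsely permutes standard flats and standard geodesics, and this induces a \emph{genuine} action of $H$ on the extension complex $\mathcal{P}(\Gamma)$, equivalently, by Corollary \ref{4.16}, an action by isometries of $(G(\Gamma),d_{r})$.

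The genuine gap is your third step, the assertion that the coarse permutation ``descends to a genuine permutation of the set of these classes that preserves the induced pocset (halfspace) structure.'' Two things go wrong, and they are exactly the difficulties this paper keeps pointing at. First, individual hyperplanes of $X(\Gamma)$ are not coarse objects: all hyperplanes dual to a fixed standard geodesic $l$ are parallel copies of $l^{\perp}$ inside $P_{l}\cong l\times X(lk(v))$ (Lemma \ref{3.4}) and hence lie pairwise at finite Hausdorff distance, so a quasi-isometry can only remember their parallelism class, i.e.\ the vertex $\Delta(l)\in\mathcal{P}(\Gamma)$; what is quasi-isometry invariant is $\mathcal{P}(\Gamma)$, not the pocset of walls of $X(\Gamma)$, and a parallelism class of walls does not even determine a partition of $X(\Gamma)$, so ``cubulating the classes'' gives nothing proper. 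Second, the genuine action one does obtain lives in $\aut(\mathcal{P}(\Gamma))\cong\isom(G(\Gamma),d_{r})$, and $d_{r}$-isometries need not send halfspaces to halfspaces: the flip of Example \ref{flip order} maps the vertex set of the halfspace $\pi_{l}^{-1}((-\infty,0])$ to $\pi_{l}^{-1}((-\infty,-1])\cup\pi_{l}^{-1}(1)$, which is not a halfspace. Note that finiteness of $\out(G(\Gamma))$ does \emph{not} exclude these flips --- Corollary \ref{4.16} says precisely that the full automorphism group of $\mathcal{P}(\Gamma)$ is $\isom(G(\Gamma),d_{r})$, which contains all of them --- so the hope expressed in your proposal that finite $\out$ rigidifies the wall pattern is misplaced. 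When $H$ is itself a RAAG this paper removes the flips by the coherent ordering/labelling argument (Lemma \ref{5.9}), but that argument pulls back orderings from the cubical structure of $H$ acting on itself and has no analogue for a general finitely generated $H$; moreover, if your construction worked as stated it would make $H$ act geometrically on a complex dual to the walls of $X(\Gamma)$, essentially $X(\Gamma)$ itself, a much stronger conclusion than the theorem claims (the statement says $X'$ is only ``closely related to'' $X(\Gamma)$). The proof in \cite{huang2016groups} confronts this head-on by changing the model space instead of straightening the action: for each vertex of $\mathcal{P}(\Gamma)$ the corresponding quasi-action on the line $l\cong\mathbb R$ need not be quasi-conjugate to an isometric action on $\mathbb R$ (the obstruction is a quasimorphism that is not a homomorphism), but it can be realized as an isometric action on a quasi-line, and $X'$ is assembled by replacing each standard line of $X(\Gamma)$ by such a quasi-line before checking properness and cocompactness. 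Without this replacement, or some substitute for it, your third and fourth steps never get started.
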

\subsection{Comments on the Proof}
\subsubsection{Proof of Theorem \ref{1.1}}
We start with several notations. The Salvetti complex of $G(\Gamma)$ is denoted by $S(\Gamma)$, the universal cover of $S(\Gamma)$ is denoted by $X(\Gamma)$, and flats in $X(\Gamma)$ that cover standard tori in $S(\Gamma)$ are called \textit{standard flats}. See Section \ref{subsec_raag} for precise definition of these terms.
 
Let $q:X(\Gamma)\to X(\Gamma')$ be a quasi-isometry. The proof of Theorem \ref{1.1} follows the scheme of the proof of the main theorem in \cite{MR2421136}. Similar schemes can also be found in \cite{kleiner1997rigidity,MR2928983}. There are three steps in \cite{MR2421136}. First they show $q$ maps top dimensional flats to top dimensional flats up to finite Hausdorff distance. However, the collection of all top dimensional flats is too large to be linked directly to the combinatorics of RAAG's, so the second step is to show quasi-isometries preserve standard flats up to finite Hausdorff distance. The third step is to straighten the quasi-isometry such that it actually maps standard flats to standard flats exactly, not just up to finite Hausdorff distance, and the conclusion follows automatically. 

In our cases, the first step has been done in \cite{quasiflat}, where we show $q$ still preserves top dimensional flats up to finite Hausdorff distance in higher dimensional case. No assumption on the outer automorphism group is needed for this step.

The second step consists of two parts. First we show $q$ preserves certain top dimensional maximal products up to finite Hausdorff distance. Then one wish to pass to standard flats by intersecting these top dimensional objects. However, in the higher dimensional case, a lower dimensional standard flat may not be the intersection of top dimensional objects, and even in the case it is an intersection, one may not be able to read this information directly from the defining graph $\Gamma$. This is quite different from the 2-dimensional situation in \cite{MR2421136} and relies on several new ingredients. 

A necessary condition for $q$ to preserve the standard flats is that every elements in $\out(G(\Gamma))$ should do so, which implies there could not be any transvections in $\out(G(\Gamma))$. This condition is also sufficient.
\begin{thm}
\label{1.7}
Suppose $\out(G(\Gamma))$ is transvection free. Then there exists a positive constant $D=D(L,A,\Gamma)$ such that for any standard flat $F\subset X(\Gamma)$, there exists a standard flat $F'\subset X(\Gamma')$ such that $d_{H}(q(F),F')<D$.
\end{thm}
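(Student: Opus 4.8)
The plan is to bootstrap down from the top-dimensional case to all standard flats, using transvection-freeness to supply the rigidity that is automatic in dimension two. I would first reduce to a per-clique statement. Since $G(\Gamma)$ acts cocompactly on $X(\Gamma)$, there are only finitely many orbits of standard flats, one for each clique $\sigma\subseteq\Gamma$; and a quasi-inverse of the $(L,A)$-quasi-isometry $q$ is again a quasi-isometry with controlled constants. It therefore suffices to show that, for each clique $\sigma$, the image under $q$ of any standard flat of type $\sigma$ lies within some Hausdorff distance $D_{\sigma}=D_{\sigma}(L,A,\Gamma)$ of a standard flat of $X(\Gamma')$; the constant $D$ is then the maximum of the finitely many $D_{\sigma}$. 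This reduction both isolates one flat at a time and, via equivariance, lets me recover uniform constants by a compactness argument at the end.

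The first substantive step is to upgrade the known fact that $q$ preserves top-dimensional flats up to bounded Hausdorff distance (Step 1, \cite{quasiflat}) to the statement that $q$ preserves top-dimensional maximal product regions. For a clique $\sigma$, the parallel set of a standard flat $F$ of type $\sigma$ is coarsely the standard product region $G(\sigma * lk(\sigma))=G(\sigma)\times G(lk(\sigma))$, and it is exactly the coarse union of the top-dimensional flats that coarsely contain $F$. I would characterize such a product region purely coarsely, as a maximal subset swept out by a coherent family of top-dimensional flats whose pairwise coarse intersections all contain a common sub-flat. Since $q$ sends top-dimensional flats to top-dimensional flats within bounded Hausdorff distance, and since quasi-isometries preserve coarse intersections up to controlled error, $q$ sends each such maximal product region to within bounded Hausdorff distance of a maximal product region of $X(\Gamma')$. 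This is the geometric content of the first half of the second step.

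The main obstacle is descending from product regions to the standard flats inside them. As the introduction notes, a standard flat of type $\sigma$ need not be a coarse intersection of top-dimensional flats, and even when it is, this is not visible directly in $\Gamma$; so I cannot simply intersect. Instead I would isolate $F$ inside its parallel set. Writing the parallel set as a product $F\times C$, with $C$ coarsely equal to $X(lk(\sigma))$, the factor $F$ becomes coarsely canonical precisely when no direction of $C$ can be confused with a direction of $F$. This is where transvection-freeness enters, and where the hypothesis is sharp: a transvection $w\to wv$ (that is, $lk(w)\subseteq St(v)$) is exactly the combinatorial shadow of a parallel set in which a standard geodesic can be slid into a transverse direction, and the paper's own remark shows that some such condition is necessary for $q$ to preserve standard flats at all. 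I would formalize the converse as a rigidity lemma: under transvection-freeness, $F$ is the unique coarsely-defined factor of maximal flat-rank in its parallel set, hence is preserved by every quasi-isometry of that parallel set and is therefore transported by $q$ to a bounded neighborhood of a single standard flat $F'$. Matching dimensions and comparing supports then pins down $F'$ uniquely up to bounded Hausdorff distance.

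I expect this last rigidity lemma to be the crux of the argument: converting the purely combinatorial transvection-free condition into the coarse statement that each parallel set has a canonical flat factor, while keeping every constant uniform, is the delicate point and the genuine difference from the two-dimensional atomic situation of \cite{MR2421136}. Once it is in place, uniformity of $D$ is routine, since parallel sets, product regions, supports and coarse intersections are all equivariant for the cocompact actions and involve only finitely many clique-types, so a standard compactness (or pigeonhole) argument upgrades the per-flat bounds to a single $D=D(L,A,\Gamma)$.
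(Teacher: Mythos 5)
Your setup is sound as far as it goes: the reduction to finitely many clique-types with uniform constants, and the promotion of top-dimensional flats to top-dimensional maximal product regions, correspond to what the paper actually does (Lemma \ref{3.6}, Lemma \ref{3.11}, Corollary \ref{3.12}). You have also correctly located the hard point. But the step you call the crux --- the ``rigidity lemma'' asserting that under transvection-freeness $F$ is the unique coarsely-defined flat factor of its parallel set and is therefore ``preserved by every quasi-isometry of that parallel set'' --- is not proved, and in the form you state it, it is false. The parallel set of a standard flat $F$ of type $\sigma$ is the standard subcomplex $X(St(\sigma))=X(\sigma\circ lk(\sigma))$, and viewed as a RAAG in its own right this always admits transvections onto every vertex $v\in\sigma$ (inside $St(\sigma)$ one has $lk(w)\subset St(v)$ for every $w$, since $v$ is adjacent to everything). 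Consequently the parallel set has self-quasi-isometries that shear $F$ into the other Euclidean directions of its De~Rham factor and do not preserve $F$ up to finite Hausdorff distance; no intrinsic coarse characterization of $F$ inside $P_F$ can exist. (For the same reason, your description of $P_F$ as the coarse union of top-dimensional flats coarsely containing $F$ fails when $St(\sigma)$ contains no clique of size $\dim X(\Gamma)$.) Transvection-freeness of $G(\Gamma)$ is a \emph{global} hypothesis, and the rigidity you need can only come from how $P_F$ interacts with the rest of $X(\Gamma)$ --- a mechanism your proposal does not supply.

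This is exactly where the paper's argument diverges from yours. Instead of isolating $F$ inside its own parallel set, the paper develops a calculus of \emph{stable subgraphs} (Definition \ref{3.13}): starting from the stable top-dimensional tree products, it cuts them down using operations that all reach \emph{outside} the subcomplex in question --- intersecting two stable subgraphs (Lemma \ref{3.14}), passing to $v^{\perp}\cap\Gamma_1$ for a vertex $v\notin\Gamma_1$ (Lemma \ref{3.17}), and passing to orthogonal complements (Lemma \ref{3.16}). The transvection-free condition $v^\perp\nsubseteq St(w)$ is used precisely to guarantee that a minimal stable subgraph can always be cut further unless it is a single vertex (Lemmas \ref{3.19}, \ref{3.22}, \ref{3.23}), and cliques are then reassembled from stable vertices via Lemma \ref{3.25}. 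To repair your proof you would need to replace the intrinsic rigidity claim with an argument of this kind --- e.g.\ exhibiting, for each flat direction of $lk(\sigma)$ that could be confused with a direction of $\sigma$, a vertex outside $St(\sigma)$ whose link separates them --- and that is essentially the content of the paper's Section \ref{subsec_stability of std flats}, not a routine verification.
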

Here $d_{H}$ denotes the Hausdorff distance.

In step 3, we introduce an auxiliary simplicial complex $\mathcal{P}(\Gamma)$, which serves as a link between the asymptotic geometry of $X(\Gamma)$ and the combinatorial structure of $X(\Gamma)$. More precisely, on one hand, $\mathcal{P}(\Gamma)$ can be viewed as a simplified Tits boundary for $X(\Gamma)$, on the other hand, one can read certain information about the wall space structure of $X(\Gamma)$ from $\mathcal{P}(\Gamma)$. This complex turns out to coincide with the extension graph introduced in \cite{kim2013embedability}, where it was motivated from the viewpoint of mapping class group.

Denote the Tits boundary of $X(\Gamma)$ by $\partial_{T}(X(\Gamma))$, and let $T(\Gamma)\subset\partial_{T}(X(\Gamma))$ be the union of Tits boundaries of standard flats in $X(\Gamma)$. Then $T(\Gamma)$ has a natural simplicial structure. However, $T(\Gamma)$ contains redundant information, this can be seen in the similar situation where the link of the base point of $S(\Gamma)$ looks more complicated than $\Gamma$, but they essentially contain the same information. 

This redundancy can be resolved by replacing the spheres in $T(X)$ that arise from standard flats by simplexes of the same dimension. This gives rise to a well defined simplicial complex $\mathcal{P}(\Gamma)$, since for any standard flats $F_{1}$ and $F_{2}$ with $\partial_{T}F_{1}\cap\partial_{T}F_{2}\neq\emptyset$, there exists a standard flat $F$ such that $\partial_{T}F=\partial_{T}F_{1}\cap\partial_{T}F_{2}$. See Section \ref{subsec_a boundary map} for more properties of $\mathcal{P}(\Gamma)$.

By Theorem \ref{1.7}, if both $\out(G(\Gamma))$ and $\out(G(\Gamma'))$ are transvection free, then $q$ induces a boundary map $\partial q:\mathcal{P}(\Gamma)\to\mathcal{P}(\Gamma')$, which is a simplicial isomorphism. Next we want to consider the converse and reconstruct a map $X(\Gamma)\to X(\Gamma')$ from the boundary map $\partial q$ in the following sense. Pick vertex $p\in X(\Gamma)$, let $\{F_{i}\}_{i=1}^{n}$ be the collection of maximal standard flats containing $p$. By Theorem \ref{1.7}, for each $i$, there exists a unique maximal standard flat $F'_{i}\subset X(\Gamma')$ such that $d_{H}(q(F_{i}),F'_{i})<\infty$. One may wish to map $p$, which turns out to be the intersection of $F_i$'s, to the intersection of all $F'_i$'s. However, in general $\cap_{i=1}^{n}F'_{i}$ may be empty, or contain more than one point, hence our map may not be well-defined.

It turns out that if we also rule out partial conjugations in $\out(G(\Gamma))$, then $\cap_{i=1}^{n}F'_{i}$ is exactly a point. This give rises to a well-defined map $\bar{q}:X(\Gamma)^{(0)}\to X(\Gamma')^{(0)}$ which maps vertices in a standard flat to vertices in standard flat. If $\out(G(\Gamma'))$ is also finite, then we can define an inverse map of $\bar{q}$ and this is enough to deduce Theorem \ref{1.1}.

\subsubsection{Proof of Theorem \ref{1.3}}
If only $\out(G(\Gamma))$ is assumed to be finite, we can still recover the fact that $\partial q$ is a simplicial isomorphism (this is non-trivial, since Theorem \ref{1.7} does not say for any standard flat $F'\subset X(\Gamma')$, we can find a standard flat $F\subset X(\Gamma)$ such that $d_{H}(q(F),F')<\infty$). Hence we can define $\bar{q}$ as before. However the inverse of $\bar{q}$ does not exist in general.

The next step is to trying to extend $\bar{q}$ to a cubical map (Definition \ref{cubical}) from $X(\Gamma)$ to $X(\Gamma')$. There are obvious obstructions: though $\bar{q}$ maps vertices in a standard geodesic to vertices in a standard geodesic, $\bar{q}$ may not preserve the order of these vertices. A typical example is given in the following picture, where one can permute the green level and the red level in a tree, then the order of vertices in the black line is not preserved.
\begin{center}
\includegraphics[scale=0.5]{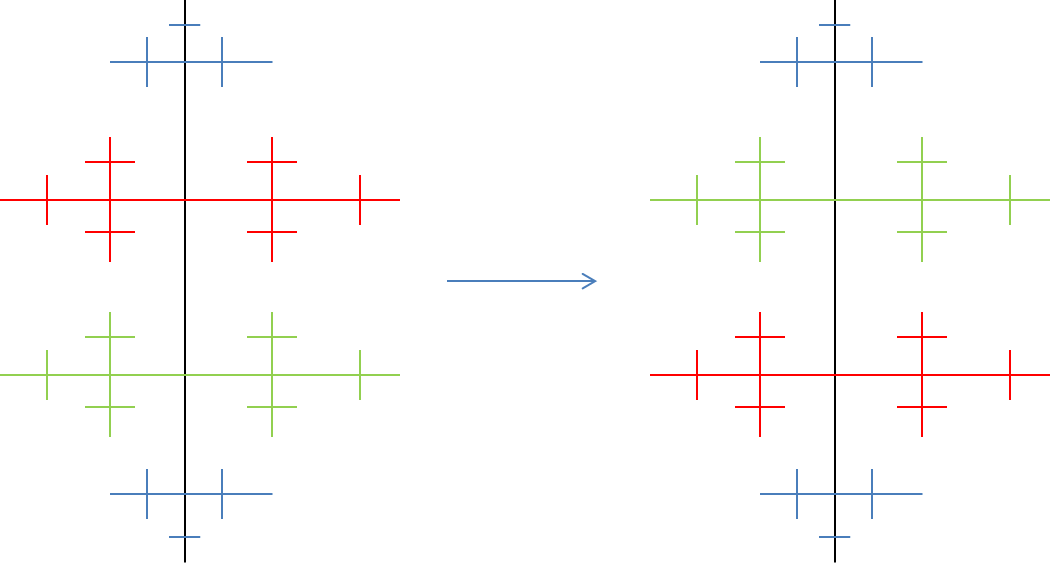}
\end{center}
A remedy is to \textquotedblleft flip backwards\textquotedblright. Namely we will pre-compose $\bar{q}$ with a sequence of permutations of \textquotedblleft levels\textquotedblright\ such that the resulting map restricted to each standard geodesic respects the order. Then we can extend $\bar{q}$ to a cubical map. This argument relies on the understanding of quasi-isometric flexibility, namely how much room we have to perform these flips. One formulation of this aspect is the following.
\begin{thm}
\label{1.8}
If $\out(G(\Gamma))$ is finite, then $\aut(\mathcal{P}(\Gamma))\cong \isom(G(\Gamma),d_{r})$.
\end{thm}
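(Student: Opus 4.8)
The plan is to produce mutually inverse homomorphisms $\Phi\colon\isom(G(\Gamma),d_{r})\to\aut(\mathcal{P}(\Gamma))$ and $\Psi\colon\aut(\mathcal{P}(\Gamma))\to\isom(G(\Gamma),d_{r})$, using the boundary map and its reconstruction discussed in the Comments above. The forward map extracts the combinatorial action of an isometry on the simplified Tits boundary; the backward map realizes a purely combinatorial automorphism as an honest isometry, and this second map is where the real work lies.

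For $\Phi$, I would start with a $d_{r}$-isometry $f$ of $G(\Gamma)$. Since $d_{r}$ is comparable to the word metric, $f$ is in particular a self-quasi-isometry of $X(\Gamma)$. As $\out(G(\Gamma))$ is finite, $G(\Gamma)$ is transvection free, so Theorem \ref{1.7} applies and $f$ carries each standard flat to within bounded Hausdorff distance of a unique standard flat. Passing to Tits boundaries and invoking the construction of $\mathcal{P}(\Gamma)$, in which the boundary sphere of a standard flat is replaced by a simplex of the same dimension, $f$ induces a simplicial automorphism $\partial f$ of $\mathcal{P}(\Gamma)$. One checks that $f\mapsto\partial f$ is a homomorphism.

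For the inverse $\Psi$, given $\sigma\in\aut(\mathcal{P}(\Gamma))$, I would realize $\sigma$ as a vertex map of $X(\Gamma)^{(0)}=G(\Gamma)$. The vertices of $\mathcal{P}(\Gamma)$ correspond to parallelism classes of standard geodesics (equivalently, to classes of standard walls), and the simplicial structure records which classes span a common standard flat; thus $\sigma$ prescribes a permutation of these classes compatible with the intersection pattern of maximal standard flats. To promote this to a point map I would use the reconstruction argument of the previous subsection together with the ``flip'' flexibility: choosing, within each parallelism class, a compatible reordering of the parallel copies, one sends the intersection point of a family $\{F_{i}\}$ of maximal standard flats through a vertex $p$ to the intersection point of the image family $\{F'_{i}\}$. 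Finiteness of $\out(G(\Gamma))$, i.e.\ the absence of partial conjugations, is precisely what guarantees that each $\bigcap_{i}F'_{i}$ is a single vertex, so that $\bar{\sigma}$ is well defined; and the definition of $d_{r}$, which records exactly the separation data by parallelism classes of walls, is what makes $\bar{\sigma}$ a $d_{r}$-isometry. That $\Phi$ and $\Psi$ are mutually inverse should then follow because an element on either side is completely determined by its induced action on the parallelism classes of standard geodesics, i.e.\ on the vertices of $\mathcal{P}(\Gamma)$, so comparing these actions yields $\Psi\circ\Phi=\id$ and $\Phi\circ\Psi=\id$.

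I expect the principal obstacle to be the construction of $\Psi$, in two interlocking parts. First, one must show that the flip data can be chosen compatibly across all parallelism classes simultaneously, so that $\bar{\sigma}$ is well defined on every vertex at once rather than merely locally; this is the quantitative content of the quasi-isometric flexibility alluded to before the statement. Second, proving that $\bigcap_{i}F'_{i}$ is nonempty and reduces to a single point is the decisive use of finiteness of $\out(G(\Gamma))$, and matching the reconstructed map with $d_{r}$ exactly, rather than only coarsely, is the delicate point that upgrades a combinatorial bijection to a genuine isometry.
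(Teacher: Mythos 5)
Your map $\Psi\colon\aut(\mathcal{P}(\Gamma))\to\isom(G(\Gamma),d_{r})$ is essentially the paper's construction (Lemma \ref{4.10} applied to $\sigma$ and $\sigma^{-1}$, sending $p=\bigcap_{i}F_{i}$ to $\bigcap_{i}F'_{i}$, with the $d_{r}$-isometry property coming from the fact that every standard geodesic is an intersection of maximal standard flats). The ``flip data'' you propose to choose is not actually needed for this direction: the point map is canonically determined by $\sigma$, and the flips only enter later, in the splitting arguments of Section \ref{subsec_coherent ordering and labelling}. The genuine gap is in your map $\Phi$. You assert that $d_{r}$ is comparable to the word metric, so that a $d_{r}$-isometry is a self-quasi-isometry of $X(\Gamma)$ to which Theorem \ref{1.7} applies. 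This is false: $d_{r}(\id,a^{n})=1$ for every generator $a$ and every $n$, so $(G(\Gamma),d_{r})$ is not quasi-isometric to $(G(\Gamma),d_{w})$, and $\isom(G(\Gamma),d_{r})$ genuinely contains maps that are not word-metric quasi-isometries. Indeed, by Lemma \ref{5.7} one may take a coherent ordering whose bijection $V_{\lambda}\to\Bbb Z$ on a single parallelism class is an arbitrary, unboundedly distorting permutation of $\Bbb Z$ (applied uniformly across the class via the product structure of the parallel set); the resulting $d_{r}$-isometry distorts $d_{w}$ arbitrarily badly, so Theorem \ref{1.7} cannot be invoked and your construction of $\Phi$ does not get off the ground.

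The paper's proof of this direction (Corollary \ref{4.16}) avoids quasi-isometric rigidity entirely and argues synthetically: a $d_{r}$-isometry $\varphi$ preserves the right-angle relation (\ref{4.17}) among triples at $d_{r}$-distance $1$ (via a flat-rectangle argument in the $CAT(0)$ metric), and any three points at pairwise $d_{r}$-distance $1$ must lie on a common standard geodesic; hence $\varphi$ carries vertex sets of standard geodesics to vertex sets of standard geodesics, which directly induces a simplicial automorphism of $\mathcal{P}(\Gamma)$. You need an elementary argument of this kind in place of the appeal to Theorem \ref{1.7}. Once that is repaired, your closing claim that the two maps are mutually inverse because each element is determined by its action on parallelism classes is acceptable (injectivity on the isometry side is Remark \ref{4.18}), but note that it is a statement requiring proof, not a tautology: a priori two different bijections of $G(\Gamma)$ could induce the same permutation of parallelism classes, and it is only after knowing that both sides send standard geodesics to standard geodesics that the comparison can be made.
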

Here $d_{r}$ denotes the syllable metric, see \cite[Section 5.2]{kim2014geometry}.
 
Theorem \ref{1.3} - Theorem \ref{1.6} will rely on the cubical map $\bar{q}$. In particular, $\bar{q}^{-1}(x)$ ($x\in X(\Gamma')$ is a vertex) is a compact convex subcomplex and this is how we obtain the $S$-convex subset in Theorem \ref{1.4}. 

\subsection{Organization of the Paper}
Section \ref{sec_prelim} contains basic notations used in this paper and some background material about $CAT(0)$ cube complexes and RAAG's. In particular, Section \ref{subsec_coarse intersection} collects several technical lemmas about $CAT(0)$ cube complex. One can skip Section \ref{subsec_coarse intersection} on first reading and come back when needed.

In Section \ref{sec_stable subgraph} we prove Theorem \ref{1.7}. Section \ref{subsec_standard subcomplexes and tree products} is about the stability of top dimensional maximal product subcomplexes under quasi-isometries and Section \ref{subsec_stability of std flats} deals with lower dimensional standard flats. In Section \ref{sec_qi implies iso} we prove Theorem \ref{1.1}. We will construct the extension complex from our viewpoint in Section \ref{subsec_a boundary map} and explain how is this object related to Tits boundary, flat space and contact graph. In Section \ref{subsec_reconstruct}, we describe how to reconstruct the quasi-isometry.

Section \ref{subsec_preservation of extesion complex} and Section \ref{subsec_coherent ordering and labelling} are devoted to proving Theorem \ref{1.3}. We prove Theorem \ref{1.4}, Corollary \ref{1.5} and Theorem \ref{1.6} in Section \ref{sec_geometry of f.i. raag subgroups}.

\subsection{Acknowledgement}
I would like to thank Bruce Kleiner and Robert Young, for reading parts of the paper and give valuable comments. I would thank Jason Behrstock, Ruth Charney, and Walter Neumann for suggestion interesting questions, which leads to Section \ref{subsec_GSE} of this paper. I would thank Pallavi Dani, Mark Hagen, Thomas Koberda, Do-gyeom Kim and Sang-hyun Kim for related discussions. I also thank the referee for carefully reading the paper and providing helpful comments.

\section{Preliminaries}
\label{sec_prelim}
\subsection{Notation and Conventions}
\label{subsec_notation}
All graphs in this paper are simple.

The flag complex of a graph $\Gamma$ is denoted by $F(\Gamma)$, i.e. $F(\Gamma)$ is a flag complex such that its 1-skeleton is $\Gamma$.

A subcomplex $K'$ in a combinatorial polyhedral complex $K$ is \textit{full} if $K'$ contains all the subcomplexes of $K$ which have the same vertex set as $K'$. If $K$ is 1-dimensional, then we also call $K'$ a \textit{full subgraph}.

We use \textquotedblleft$\ast$\textquotedblright\ to denote the join of two simplicial complexes and \textquotedblleft$\circ$\textquotedblright\ to denote the join of two graphs. Let $K$ be a simplicial complex or a graph. By viewing the 1-skeleton as a metric graph with edge length $=1$, we obtain a metric defined on the 0-skeleton, which we denote by $d$. Let $N\subset K$ be a subcomplex. We define the \textit{orthogonal complement} of $N$, denoted by $N^{\perp}$, to be the set $\{w\in K^{(0)}\mid d(w,v)=1$ for any vertex $v\in N\}$; define the \textit{link} of $N$, denoted by $lk(N)$, to be the full subcomplex spanned by $N^{\perp}$; and define the \textit{closed star} of $N$, denoted by $St(N)$, to be the full subcomplex spanned by $N\cup lk(N)$. Suppose $L$ is a subcomplex such that $N\subset L\subset K$. We denote the closed star of $N$ in $L$ by $St(v,L)$. If $L$ is a full subcomplex, then $St(N,L)=St(N)\cap L$. We can define $lk(N,L)$ in a similar way. Let $M\subset K$ be an arbitrary subset. We denote the collection of vertices inside $M$ by $v(M)$.

We use $\id$ to denote the identity element of a group, and use $\textmd{Id}$ to denote the identity map from a space to itself.

Let $(X,d)$ be a metric space. The open ball of radius $r$ centred at $p$ in $X$ will be denoted by $B(p,r)$. Given subsets $A,B\subset X$, the open $r$-neighborhood of a subset $A$ is denoted by $N_{r}(A)$. The diameter of $A$ is denoted by $diam(A)$. The Hausdorff distance between $A$ and $B$ is denoted by $d_{H}(A,B)$. We will also use the following adapted notation of coarse set theory introduced in \cite{mosher2004quasi}.
\begin{table}[ht] 
\centering 
\begin{tabular}{c c} 
\hline
Symbol & Meaning\\ 
\hline 
$A\subset_{r}B$ & $A\subset N_{r}(B)$\\ 
$A\subset_{\infty}B$  & $\exists r>0$ such that $A\subset N_{r}(B)$\\ 
$A\overset{r}{=}B$ & $d_{H}(A,B)\le r$ \\ 
$A\overset{\infty}{=}B$ & $d_{H}(A,B)<\infty$ \\
$A\cap_{r} B$ & $N_{r}(A)\cap N_{r}(B)$ \\
\hline 
\end{tabular} 
\end{table}

\subsection{$CAT(0)$ space and $CAT(0)$ cube complex}
The standard reference for $CAT(0)$ spaces is \cite{MR1744486}. 

Let $(X,d)$ be a $CAT(0)$ space. Pick $x,y\in X$, we denote the unique geodesic segment joining $x$ and $y$ by $\overline{xy}$. For $y,z\in X\setminus \{x\}$, denote the comparison angle between $\overline{xy}$ and $\overline{xz}$ at $x$ by $\overline{\angle}_{x}(y,z)$ and the Alexandrov angle by $\angle_{x}(y,z)$. 

The boundary of $X$, denoted by $\partial X$, is the collection of asymptotic classes of geodesic rays. $\partial X$ has an angular metric, which is defined by 
\begin{equation*}
\angle(\xi_{1},\xi_{2})=\lim_{t,t'\to\infty}\overline{\angle}_{p}(l_{1}(t),l_{2}(t'))
\end{equation*}
where $l_{1}$ and $l_{2}$ are unit speed geodesic rays emanating from a base point $p$ such that $l_{i}(\infty)=\xi_{i}$ for $i=1,2$. This metric does not depend on the choice of $p$, and the length metric associated to the angular metric, denoted by $d_{T}$, is called the Tits metric. The Tits boundary, denoted by $\partial_{T}X$, is the $CAT(1)$ space $(\partial X,d_{T})$ (see Chapter II.8 and II.9 of \cite{MR1744486}).

Given two metric spaces $(X_{1},d_{1})$ and $(X_{2},d_{2})$, denote the \textit{Cartesian product} of $X_{1}$ and $X_{2}$ by $X_{1}\times X_{2}$, i.e. $d=\sqrt{d^{2}_{1}+d^{2}_{2}}$ on $X_{1}\times X_{2}$. If $X_{1}$ and $X_{2}$ are $CAT(0)$, then so is $X_{1}\times X_{2}$.

An \textit{$n$-flat} in a $CAT(0)$ space $X$ is the image of an isometric embedding $\Bbb E^{n}\to X$. Note that any flat is convex in $X$.

Pick a convex subset $C\subset X$, then $C$ is also $CAT(0)$. We use $\pi_{C}$ to denote the nearest point projection from $X$ to $C$. $\pi_{C}$ is well-defined and is 1-Lipschitz. Moreover, pick $x\in X\setminus C$, then $\angle_{\pi_{C}(x)}(x,y)\ge\pi/2$ for any $y\in C$ such that $y\neq \pi_{C}(x)$ (see Proposition II.2.4 of \cite{MR1744486}).

If $C'\subset X$ is another convex set, then $C'$ is \textit{parallel} to $C$ if $d(\cdot,C)|_{C'}$ and $d(\cdot,C')|_{C}$ are constant functions. In this case, there is a natural isomorphism between $C\times[0,d(C,C')]$ and the convex hull of $C$ and $C'$. We define the \textit{parallel set} of $C$, denoted by $P_{C}$, to be the union of all convex subsets of $X$ parallel to $C$. If $C$ has geodesic extension property, or more generally, $C$ is boundary-minimal (Section 3.C of \cite{caprace2009isometry}), then $P_{C}$ is a convex subset in $X$. Moreover, $P_{C}$ admits a canonical splitting $P_{C}=C\times C^{\perp}$, where $C^{\perp}$ is also a $CAT(0)$ space.

Now we turn to $CAT(0)$ cube complexes. All cube complexes in this paper are assumed to be finite dimensional.

A cube complex $X$ is obtained by gluing a collection of unit Euclidean cubes isometrically along their faces, see II.7.32 of \cite{MR1744486} for a precise definition. Then the cube complex has a natural piecewise Euclidean metric. This metric is complete and geodesic since $X$ is finite dimensional (I.7.19 of \cite{MR1744486}) and is non-positively curved if the link of each vertex is a flag complex (\cite{MR919829}). If in addition $X$ is simply connected, then this metric is $CAT(0)$ and $X$ is said to be a \textit{$CAT(0)$ cube complex}. We can put a different metric on the 1-skeleton $X^{(1)}$ by considering it as a metric graph with all edge lengths 1. This is called the \textit{$\ell^{1}$ metric}. We use $d$ for the $CAT(0)$ metric on $X$ and $d_{\ell^1}$ for the $\ell^1$ metric on $X^{(1)}$. The natural injection $(X^{(1)},d_{\ell^1})\hookrightarrow (X,d)$ is a quasi-isometry (I.7.31 of \cite{MR1744486} or Lemma 2.2 of \cite{caprace2011rank}). In this paper, we will mainly use the $CAT(0)$ metric unless otherwise specified. Also any notions which depend on the metric, like geodesic, convex subset, convex hull etc, will be understood automatically with respect to the $CAT(0)$ metric unless otherwise specified.

\begin{definition}
	\label{cubical}
	(cf. \cite[Section 2.1]{caprace2011rank}) A cellular map between $CAT(0)$ cube complexes is \textit{cubical} if its restriction $\sigma\to\tau$ between cubes factors as $\sigma\to\eta\to\tau$, where the first map $\sigma\to\eta$ is a natural projection onto a face of $\sigma$ and the second map $\eta\to\tau$ is an isometry. 
\end{definition}

A \textit{geodesic segment, geodesic ray} or \textit{geodesic} in $X$ is an isometric embedding of $[a,b]$, $[0,\infty)$ or $\Bbb R$ into $X$ with respect to the $CAT(0)$ metric. A \textit{combinatorial geodesic segment, combinatorial geodesic ray} or \textit{combinatorial geodesic} is a $\ell^{1}$-isometric embedding of $[a,b]$, $[0,\infty)$ or $\Bbb R$ into $X^{(1)}$ such that its image is a subcomplex.

Let $X$ be a $CAT(0)$ cube complex and let $Y\subset X$ be a subcomplex. Then the following are equivalent (see \cite{haglund2008finite}):
\begin{enumerate}
\item $Y$ is convex with respect to the $CAT(0)$ metric.
\item $Y$ is a full subcomplex and $Y^{(1)}\subset X^{(1)}$ is convex with respect to the $\ell^{1}$ metric.
\item $Lk(p,Y)$ (the link of $p$ in $Y$) is a full subcomplex of $Lk(P,X)$ for every vertex $p\in Y$.
\end{enumerate}

The collection of convex subcomplexes in a $CAT(0)$ cube complex enjoys the following version of Helly's property (\cite{gerasimov1998fixed}):

\begin{lem}
\label{2.1}
Let $X$ be as above and $\{C_{i}\}_{i=1}^{k}$ be a collection of convex subcomplexes. If $C_{i}\cap C_{j}\neq\emptyset$ for any $1\le i\neq j\le k$, then $\cap_{i=1}^{k}C_{i}\neq\emptyset$.
\end{lem}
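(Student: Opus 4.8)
The plan is to reduce everything to the three-subcomplex case and then induct, using the median structure of $X$. Recall from the equivalence quoted just above (condition (2)) that a subcomplex $C$ is convex in the $CAT(0)$ metric precisely when $C^{(1)}$ is convex in the $\ell^1$ metric; equivalently, whenever two vertices lie in $C$, every vertex on an $\ell^1$-geodesic between them also lies in $C$. I will work throughout in the $0$-skeleton, regarded as a median graph with the $\ell^1$ metric: for any three vertices $x,y,z$ there is a unique median vertex $m(x,y,z)$, characterized by the property that it lies simultaneously on an $\ell^1$-geodesic between each of the three pairs. Two elementary facts will be used freely: the intersection of convex subcomplexes is again a convex subcomplex, and a nonempty intersection of subcomplexes contains a vertex.

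First I would treat $k=3$. Pick vertices $p_{12}\in C_{1}\cap C_{2}$, $p_{13}\in C_{1}\cap C_{3}$, and $p_{23}\in C_{2}\cap C_{3}$, which exist by hypothesis, and let $m=m(p_{12},p_{13},p_{23})$ be their median. Since $m$ lies on an $\ell^1$-geodesic between $p_{12}$ and $p_{13}$, and both of these endpoints lie in the convex subcomplex $C_{1}$, convexity forces $m\in C_{1}$. The identical argument applied to the pair $(p_{12},p_{23})$ inside $C_{2}$ and to the pair $(p_{13},p_{23})$ inside $C_{3}$ yields $m\in C_{2}$ and $m\in C_{3}$. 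Hence $m\in C_{1}\cap C_{2}\cap C_{3}$, establishing the case $k=3$.

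For general $k$ I would induct on $k$, the cases $k\le 2$ being trivial. Assuming the statement for $k-1$, set $D_{i}=C_{i}\cap C_{k}$ for $1\le i\le k-1$; each $D_{i}$ is a convex subcomplex. For $i\neq j$ the three subcomplexes $C_{i},C_{j},C_{k}$ pairwise intersect, so the $k=3$ case gives $C_{i}\cap C_{j}\cap C_{k}\neq\emptyset$, i.e. $D_{i}\cap D_{j}\neq\emptyset$. Thus $\{D_{i}\}_{i=1}^{k-1}$ is a pairwise-intersecting family of convex subcomplexes, and the induction hypothesis yields $\cap_{i=1}^{k-1}D_{i}\neq\emptyset$. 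Since $\cap_{i=1}^{k-1}D_{i}=\cap_{i=1}^{k}C_{i}$, this completes the proof.

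The only genuinely substantive step is the case $k=3$, and there the point to pin down carefully is the interval characterization of the median together with the fact that $\ell^1$-convexity of $C_{i}^{(1)}$ lets me transport membership along these intervals; everything after that is a formal induction. If one prefers not to invoke the median structure as a black box, the same three-subcomplex statement can instead be extracted from gate (nearest-point) projections onto convex subcomplexes by tracking which hyperplanes separate the chosen points, but the median argument above is the most economical route.
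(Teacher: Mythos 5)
Your proof is correct. The paper does not actually prove Lemma \ref{2.1}; it only cites \cite{gerasimov1998fixed}, so there is no in-text argument to compare against. Your route — the median $m(p_{12},p_{13},p_{23})$ of three pairwise intersection points lies in each $C_i$ by $\ell^1$-convexity of the vertex sets, followed by the formal induction via $D_i=C_i\cap C_k$ — is the standard proof of the Helly property for convex subcomplexes of a $CAT(0)$ cube complex (equivalently, for convex subsets of a median graph), and all the supporting facts you invoke (the equivalence of $CAT(0)$-convexity with $\ell^1$-convexity of the $1$-skeleton, closure of convexity under intersection, and the fact that a nonempty intersection of subcomplexes contains a vertex) are either quoted in the paper or elementary. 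No gaps.
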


\begin{lem}
\label{2.2}
Let $X_{1}$ and $X_{2}$ be two $CAT(0)$ cube complexes and let $K\subset X_{1}\times X_{2}$ be a convex subcomplex. Then $K$ admits a splitting $K=K_{1}\times K_{2}$ where $K_{i}$ is a convex subcomplex of $X_{i}$ for $i=1,2$.
\end{lem}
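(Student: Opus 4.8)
The plan is to produce the splitting explicitly from the coordinate projections and then verify it at three levels: vertex sets, convexity of the factors, and finally equality of subcomplexes. Let $p_{i}:X_{1}\times X_{2}\to X_{i}$ be the two coordinate projections (these are cubical maps), and define $K_{i}$ to be the full subcomplex of $X_{i}$ spanned by the vertex set $p_{i}(v(K))$. The whole argument will run at the level of the $\ell^{1}$ metric, exploiting that on a product one has $d_{\ell^{1}}\big((a_{1},a_{2}),(b_{1},b_{2})\big)=d_{\ell^{1}}(a_{1},b_{1})+d_{\ell^{1}}(a_{2},b_{2})$, together with the characterization that a subcomplex is $CAT(0)$-convex if and only if it is full and $\ell^{1}$-convex in the $1$-skeleton (the equivalence of conditions (1) and (2) recalled above). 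One inclusion is free, namely $v(K)\subset v(K_{1})\times v(K_{2})$, so the content is to show $v(K_{1})\times v(K_{2})\subset v(K)$, that each $K_{i}$ is convex, and that these vertex-level statements upgrade to an identity of subcomplexes.

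The heart of the proof is a \emph{corner argument}. Given two vertices $a=(a_{1},a_{2})$ and $b=(b_{1},b_{2})$ in $v(K)$, I would pick a combinatorial geodesic $\alpha$ from $a_{2}$ to $b_{2}$ in $X_{2}^{(1)}$ and a combinatorial geodesic $\beta$ from $a_{1}$ to $b_{1}$ in $X_{1}^{(1)}$. The concatenation of $\{a_{1}\}\times\alpha$ followed by $\beta\times\{b_{2}\}$ is an edge path in $(X_{1}\times X_{2})^{(1)}$ from $a$ to $b$ whose $\ell^{1}$-length equals $d_{\ell^{1}}(a_{2},b_{2})+d_{\ell^{1}}(a_{1},b_{1})=d_{\ell^{1}}(a,b)$; hence it is a combinatorial geodesic. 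Since $K^{(1)}$ is $\ell^{1}$-convex, this geodesic lies in $K$, and in particular its break vertex $(a_{1},b_{2})$ lies in $v(K)$. Applying this to witnesses $(a_{1},a_{2}),(b_{1},b_{2})\in v(K)$ of arbitrary $a_{1}\in v(K_{1})$ and $b_{2}\in v(K_{2})$ gives $v(K_{1})\times v(K_{2})\subset v(K)$, so $v(K)=v(K_{1})\times v(K_{2})$.

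To see that $K_{1}$ is convex (and symmetrically $K_{2}$), fix any $c_{2}\in v(K_{2})$; by the vertex identity just proved, $(x_{1},c_{2}),(y_{1},c_{2})\in v(K)$ for all $x_{1},y_{1}\in v(K_{1})$. Any combinatorial geodesic $\gamma_{1}$ from $x_{1}$ to $y_{1}$ lifts to $\gamma_{1}\times\{c_{2}\}$, a combinatorial geodesic of the same length in the product, which therefore lies in $K$ by $\ell^{1}$-convexity; projecting by $p_{1}$ shows $\gamma_{1}\subset K_{1}$. Thus $K_{1}^{(1)}$ is $\ell^{1}$-convex, and since $K_{1}$ is full by construction, it is convex. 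Finally I would upgrade to subcomplexes: $K_{1}\times K_{2}$ is a full subcomplex of $X_{1}\times X_{2}$ (a cube $\sigma_{1}\times\sigma_{2}$ lies in it precisely when all its vertices lie in $v(K_{1})\times v(K_{2})$), its vertex set equals $v(K)$, and $K$ is itself full because it is convex; two full subcomplexes with the same vertex set coincide, giving $K=K_{1}\times K_{2}$. The step I expect to require the most care is the corner argument, specifically checking the additivity of $\ell^{1}$-length that makes the concatenated path a genuine geodesic, and then the passage from the vertex-set equality to the subcomplex equality via fullness; everything else is bookkeeping with the stated equivalence between $CAT(0)$-convexity and $\ell^{1}$-convexity.
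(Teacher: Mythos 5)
Your proof is correct and complete. The paper itself disposes of this lemma in one sentence, by reducing to the special case $X_{1}\cong[0,1]$ (where a convex subcomplex of $[0,1]\times X_{2}$ is visibly a face of $[0,1]$ times a convex subcomplex of $X_{2}$) and asserting that the general case follows, essentially by treating the hyperplane directions of $X_{1}$ one at a time. Your route is different in flavor: you work globally, defining $K_{i}$ as the full subcomplex on $p_{i}(v(K))$ and proving the product identity directly via the corner argument, using additivity of the $\ell^{1}$ metric on products (equivalently, that hyperplanes of $X_{1}\times X_{2}$ are of the form $h_{1}\times X_{2}$ or $X_{1}\times h_{2}$, so separation counts add). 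All the delicate points check out: the concatenated path $\{a_{1}\}\times\alpha$ followed by $\beta\times\{b_{2}\}$ has length $d_{\ell^{1}}(a,b)$ and hence is a geodesic forced into $K$ by $\ell^{1}$-convexity; the lifted geodesic $\gamma_{1}\times\{c_{2}\}$ shows $K_{1}^{(1)}$ contains \emph{every} geodesic between its vertices (not just one), which is what $\ell^{1}$-convexity requires; and the final upgrade from vertex sets to subcomplexes is legitimate because $K$, $K_{1}$, $K_{2}$, and $K_{1}\times K_{2}$ are all full, and full subcomplexes are determined by their vertex sets. What your approach buys is a self-contained argument that avoids the (unstated) induction over hyperplanes implicit in the paper's reduction; what the paper's reduction buys is brevity and the reusable observation that convex subcomplexes interact with each product edge direction independently. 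Either is acceptable; yours is the more careful write-up.
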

The lemma is clear when $X_{1}\cong[0,1]$, and the general case follows from this special case. 

Now we come to the notion of hyperplane, which is the cubical analog of \textquotedblleft track\textquotedblright\  introduced in \cite{dunwoody1985accessibility}. A \textit{hyperplane} $h$ in a cube complex $X$ is a subset such that 
\begin{enumerate}
\item $h$ is connected.
\item For each cube $C\subset X$, $h\cap C$ is either empty or a union of mid-cubes of $C$. 
\item $h$ is minimal, i.e. if there exists $h'\subset h$ satisfying (1) and (2), then $h=h'$.
\end{enumerate}
Recall that a \textit{mid-cube} of $C=[0,1]^{n}$ is a subset of form $f_{i}^{-1}(1/2)$, where $f_{i}$ is one of the coordinate functions.

If $X$ is a $CAT(0)$ cube complex, then the following are true (see \cite{MR1347406}):
\begin{enumerate}
\item Each hyperplane is embedded, i.e. $h\cap C$ is either empty or a mid-cube of $C$ (in more general cube complexes, it is possible that $h\cap C$ contains two or more mid-cubes of $C$);
\item $h$ is a convex subset in $X$ and $h$ with the induced cell structure from $X$ is also a $CAT(0)$ cube complex;
\item $X\setminus h$ has exactly two connected components, they are called \textit{halfspaces}. The closure of a halfspace is called \textit{closed halfspace}, which is also convex in $X$ with respect to the $CAT(0)$ metric.
\item Let $N_{h}$ be the smallest subcomplex of $X$ that contains $h$. Then $N_{h}$ is a convex subcomplex of $X$ and there is a natural isometry $i:N_{h}\to h\times [0,1]$ such that $i(h)=h\times\{1/2\}$. $N_{h}$ is called the \textit{carrier} of $h$.
\item For every edge $e\subset X$, there exists a unique hyperplane $h_{e}$ which intersects $e$ in its midpoint. In this case, we say $h_{e}$ is the hyperplane dual to $e$ and $e$ is an edge dual to the hyperplane $h_{e}$.
\item Lemma \ref{2.1} is also true for a collection of hyperplanes.
\end{enumerate}

Now it is easy to see an edge path $\omega\subset X$ is a combinatorial geodesic segment if and only if there do not exist two different edges of $\omega$ such that they are dual to the same hyperplane. Moreover, for two vertices $v,w\in X$, their $\ell^{1}$ distance is exactly the number of hyperplanes that separate $v$ from $w$.

Pick an edge $e\subset X$ and let $\pi_{e}:X\to e\cong [0,1]$ be the $CAT(0)$ projection. Then
\begin{enumerate}
\item The hyperplane dual to $e$ is exactly $\pi^{-1}_{e}(1/2)$.
\item $\pi_{e}^{-1}(t)$ is convex in $X$ for any $0\le t\le 1$, moreover, if $0<t<t'<1$, then $\pi_{e}^{-1}(t)$ and $\pi_{e}^{-1}(t')$ are parallel.
\item Let $N_{h_e}$ be the carrier of the hyperplane dual to $e$. Then $N_{h_e}$ is the closure of $\pi_{e}^{-1}(0,1)$. Alternatively, we can describe $N_{h_e}$ as the parallel set of $e$.
\end{enumerate}

\subsection{Coarse intersections of convex subcomplexes} 
\label{subsec_coarse intersection}
\begin{lem}[Lemma 2.10 of \cite{quasiflat}]
\label{2.3}
Let $X$ be a $CAT(0)$ cube complex of dimension $n$ and let $C_{1}$, $C_{2}$ be convex subcomplexes. Put $\Delta=d(C_{1},C_{2})$. Let $Y_{1}=\{y\in C_{1}\mid d(y,C_{2})=\Delta\}$ and $Y_{2}=\{y\in C_{2}\mid d(y,C_{1})=\Delta\}$. Then:
\begin{enumerate}
\item $Y_{1}$ and $Y_{2}$ are not empty.
\item $Y_{1}$ and $Y_{2}$ are convex; $\pi_{C_{1}}$ maps $Y_{2}$ isometrically onto $Y_{1}$ and $\pi_{C_{2}}$ maps $Y_{1}$ isometrically onto $Y_{2}$; the $CAT(0)$ convex hull of $Y_{1}\cup Y_{2}$ is isometric to $Y_{1}\times [0,\Delta]$ (since we are taking the $CAT(0)$ convex hull, it does not has to be a subcomplex).
\item $Y_{1}$ and $Y_{2}$ are subcomplexes, and $\pi_{C_{2}}|_{Y_{1}}$ is a cubical isomorphism from $Y_1$ to $Y_2$ with its inverse given by $\pi_{C_{1}}|_{Y_{2}}$.
\item For any $\epsilon>0$, there exists $A=A(\Delta,n,\epsilon)$ such that if $p_{1}\in C_{1}$, $p_{2}\in C_{2}$ and $d(p_{1},Y_{1})\geq \epsilon>0$, $d(p_{2},Y_{2})\geq \epsilon>0$, then 
\begin{equation}
\label{2.4}
d(p_{1}, C_{2})\geq \Delta + Ad(p_{1},Y_{1});\ \ \  d(p_{2}, C_{1})\geq \Delta + Ad(p_{2},Y_{2})
\end{equation}
\end{enumerate}
\end{lem}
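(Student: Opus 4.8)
The plan is to establish the four assertions more or less in the order given, deriving the metric statements (1)--(2) from convexity of the distance function together with the parallel-set splitting recalled above, then identifying the sets $Y_{1},Y_{2}$ combinatorially to get (3), and finally isolating the quantitative divergence estimate (4) as the genuine difficulty.

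For (1) and (2), I would begin with the convex function $f:=d(\cdot,C_{2})|_{C_{1}}$ on the $CAT(0)$ space $C_{1}$, which is bounded below by $\Delta$, so that $Y_{1}=f^{-1}(\Delta)$ is precisely its minimum set and is therefore convex; the same applies to $Y_{2}$. Nonemptiness I would get from the combinatorial side: the set of hyperplanes separating $C_{1}$ from $C_{2}$ is finite (each is dual to an edge of any combinatorial geodesic between the two subcomplexes, and such geodesics have finite length), so the $\ell^{1}$ nearest-point (gate) map is defined and its image realizes the minimal distance, producing points of $Y_{1}$ and $Y_{2}$. For the isometry in (2), fix $y\in Y_{1}$ and set $y'=\pi_{C_{2}}(y)$; since $d(y',C_{1})\le d(y',y)=\Delta\le d(C_{1},C_{2})$, uniqueness of the nearest point gives $\pi_{C_{1}}(y')=y$ and $y'\in Y_{2}$, and the projection angle inequality forces $\overline{yy'}$ to meet both $C_{1}$ and $C_{2}$ orthogonally. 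Hence $\pi_{C_{2}}(Y_{1})\subset Y_{2}$ and symmetrically, and for $y\in Y_{1}$ one has $d(y,Y_{2})=\Delta$ (and conversely), so $Y_{1}$ and $Y_{2}$ are parallel convex subsets. The parallel-set splitting recalled in the preliminaries then identifies their convex hull with $Y_{1}\times[0,\Delta]$ and exhibits $\pi_{C_{2}}|_{Y_{1}}$ as the product isometry onto $Y_{2}$, which is (2).

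To obtain (3) I would pass to hyperplanes. Let $\mathcal{W}$ be the finite collection of hyperplanes separating $C_{1}$ from $C_{2}$. The claim is that $Y_{1}$ coincides with the $\ell^{1}$-closest-point set of $C_{2}$ in $C_{1}$, and that this set is a convex subcomplex, realizable as the intersection of $C_{1}$ with the carriers $N_{h}$, $h\in\mathcal{W}$ (an intersection of convex subcomplexes, hence convex by the characterization of convexity in cube complexes). The combinatorial projection restricts on $Y_{1}$ to a bijection onto $Y_{2}$ that sends cubes to cubes, i.e.\ a cubical isomorphism, with inverse the projection in the other direction. The one point needing care here is that the $CAT(0)$ minimum set and the $\ell^{1}$ minimum set agree; this follows because the bridge between $C_{1}$ and $C_{2}$ is a \emph{straight} product subcomplex $Y_{1}\times[0,\Delta]$ (so that the $[0,\Delta]$ factor is simultaneously a combinatorial and a $CAT(0)$ geodesic), whence both metrics are minimized on exactly the same subset $Y_{1}$, and the product isometry of (2) is the same map as the combinatorial gate.

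The crux is (4), and I expect it to be the \textbf{main obstacle}. Put $s=d(p_{1},Y_{1})\ge\epsilon$ and $y=\pi_{Y_{1}}(p_{1})$. The target I would aim for is the sharp quadratic bound $d(p_{1},C_{2})^{2}\ge s^{2}+\Delta^{2}$, which is attained in the Euclidean model where $C_{1}$ is the $x$-axis and $C_{2}$ a half-line at height $\Delta$ over $\{x\le 0\}$. Granting it, the elementary inequality $\sqrt{s^{2}+\Delta^{2}}\ge \Delta+\tfrac{\sqrt{\epsilon^{2}+\Delta^{2}}-\Delta}{\epsilon}\,s$ for $s\ge\epsilon$ (convexity of $t\mapsto\sqrt{t^{2}+\Delta^{2}}$, whose slope through the origin is increasing) delivers (4) with the explicit constant $A=(\sqrt{\epsilon^{2}+\Delta^{2}}-\Delta)/\epsilon$. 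The difficulty is precisely the lower bound $d(p_{1},C_{2})^{2}\ge s^{2}+\Delta^{2}$: a naive comparison argument only upper-bounds $d(p_{1},C_{2})$ by $d(p_{1},y')=\sqrt{s^{2}+\Delta^{2}}$, since a priori $C_{2}$ could bend back toward $p_{1}$. I would rule this out by projecting the entire configuration onto the bridge (equivalently the parallel set), using orthogonality at $y$ from (2) and the fact that $C_{2}$ lies in a single closed halfspace of each $h\in\mathcal{W}$, so that any geodesic from $p_{1}$ to $C_{2}$ must first travel the orthogonal gap $\Delta$ and separately absorb the tangential displacement $s$. Finite-dimensionality is what lets one reduce the estimate to comparison geometry inside a flat of dimension at most $n$, which is presumably why the constant is allowed to depend on $n$ as well as on $\Delta$ and $\epsilon$.
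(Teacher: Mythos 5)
First, a caveat: the paper does not prove this statement at all --- it is imported verbatim as Lemma 2.10 of \cite{quasiflat} --- so there is no in-paper argument to compare yours against; I can only assess your proposal on its own terms. Your treatment of (1) and (2) is essentially the standard one and is fine. In (3) there are two incorrect assertions: $Y_{1}$ is \emph{not} the intersection of $C_{1}$ with the carriers of the separating hyperplanes (take $C_{1}=\mathbb{R}\times\{0\}$ and $C_{2}=(-\infty,0]\times\{\Delta\}$ in $\mathbb{R}^{2}$: there is one separating hyperplane, its carrier contains all of $C_{1}$, yet $Y_{1}=(-\infty,0]\times\{0\}$); the correct combinatorial description is the gate, i.e.\ the set of points not separated from $C_{2}$ by any hyperplane crossing $C_{1}$, which is what Lemma \ref{2.6} encodes. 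Also, your claim that the bridge is a ``straight product subcomplex'' whose interval factor is simultaneously a combinatorial and a $CAT(0)$ geodesic contradicts the parenthetical in the statement itself (e.g.\ $C_{1}=\{(0,0)\}$, $C_{2}=\{(1,1)\}$ in $\mathbb{R}^{2}$, where $\Delta=\sqrt{2}$ and the bridge is a diagonal segment), so your justification that the $CAT(0)$- and $\ell^{1}$-minimizing sets coincide does not stand as written, even though the conclusion is true.

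The genuine gap is in (4). Everything there rests on the inequality $d(p_{1},C_{2})^{2}\ge d(p_{1},Y_{1})^{2}+\Delta^{2}$, and you do not prove it: the sentence ``any geodesic from $p_{1}$ to $C_{2}$ must first travel the orthogonal gap $\Delta$ and separately absorb the tangential displacement $s$'' is a restatement of the desired conclusion, not an argument. The inputs you actually invoke --- orthogonality of the bridge at $y$, the $1$-Lipschitz projection onto the bridge, and $C_{2}$ lying in one halfspace of each separating hyperplane --- only yield $d(p_{1},C_{2})\ge\Delta$. That they cannot yield more by themselves is shown by two disjoint geodesics in $\mathbb{H}^{2}$ with a short common perpendicular of length $\Delta$: all the same orthogonality relations hold, yet $d(p,\gamma_{2})=\operatorname{arcsinh}(\cosh(s)\sinh\Delta)\approx s+\log\sinh\Delta<\sqrt{s^{2}+\Delta^{2}}$ for $\sinh\Delta<1$ and $s$ large (indeed $d(p,\gamma_{2})<d(p,Y_{1})$ there). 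So the cube structure must enter in an essential, quantitative way --- e.g.\ via the fact that every hyperplane separating $p_{1}$ from $Y_{1}$ inside $C_{1}$ also separates $p_{1}$ from $C_{2}$ and is disjoint from the hyperplanes separating $C_{1}$ from $C_{2}$ (a consequence of Lemma \ref{2.6}), combined with the convexity of $t\mapsto d(\gamma(t),C_{2})$ along $[\pi_{Y_{1}}(p_{1}),p_{1}]$, which reduces (4) to producing a definite gap $d(x,C_{2})\ge\Delta+c(\Delta,n,\epsilon)$ at distance exactly $\epsilon$ from $Y_{1}$; that gap is the real content and is still missing from your write-up. The fact that your claimed constant $A=(\sqrt{\epsilon^{2}+\Delta^{2}}-\Delta)/\epsilon$ is independent of $n$, while the statement explicitly allows $A$ to depend on $n$, should itself have been a warning sign that the sharp Pythagorean bound is not the route the argument can be expected to take.
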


\begin{remark}
\label{2.5}
Equation (\ref{2.4}) implies for any $r>0$, $(C_{1}\cap_{r}C_{2})\subset_{r'}Y_{i}$ ($i=1,2$), where $r'=\min(1,(2r-\Delta)/A)+r$ and $A=A(\Delta,n,1)$. Moreover, $\partial_{T}C_{1}\cap\partial_{T}C_{2}=\partial_{T}Y_{1}=\partial_{T}Y_{2}$.
\end{remark}

The remark implies $Y_{1}\overset{\infty}{=}Y_{2}\overset{\infty}{=}(C_{1}\cap_{r}C_{2})$ for $r$ large enough. We use $\inc(C_{1},C_{2})=(Y_{1},Y_{2})$ to describe this situation, where $\mathcal{I}$ stands for the word \textquotedblleft intersect\textquotedblright. The next lemma gives a combinatorial description of $Y_{1}$ and $Y_{2}$.

\begin{lem}
\label{2.6}
Let $X$, $C_{1}$, $C_{2}$, $Y_{1}$ and $Y_{2}$ be as above. Pick an edge $e$ in $Y_{1}$ $($or $Y_{2})$ and let $h$ be the hyperplane dual to $e$. Then $h\cap C_{i}\neq\emptyset$ for $i=1,2$. Conversely, if a hyperplane $h'$ satisfies $h'\cap C_{i}\neq\emptyset$ for $i=1,2$, then $h'$ is the dual hyperplane of some edge $e'$ in $Y_{1}$ $($or $Y_{2})$. Moreover, $\inc(h'\cap C_{1},h'\cap C_{2})=(h'\cap Y_{1},h'\cap Y_{2})$.
\end{lem}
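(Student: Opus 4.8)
The plan is to reduce everything to the combinatorics of separating hyperplanes, using Lemma \ref{2.3} to control $Y_1$ and $Y_2$. The first thing I would record is a combinatorial description of $Y_1$: for a vertex $v\in C_1$, let $W(v)$ be the set of hyperplanes separating $v$ from $C_2$, and let $\mathcal{S}$ be the set of hyperplanes separating $C_1$ from $C_2$ (crossing neither). One always has $\mathcal{S}\subseteq W(v)$, and $W(v)\setminus\mathcal{S}$ consists of hyperplanes crossing $C_1$ but missing $C_2$. Using the product splitting of the convex hull of $Y_1\cup Y_2$ from Lemma \ref{2.3}(2), I would check that $v\in Y_1$ exactly when $W(v)=\mathcal{S}$; equivalently, no hyperplane crossing $C_1$ separates a point of $Y_1$ from $C_2$. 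I would also use the standard facts that the hyperplanes separating a vertex $v$ from its projection $\pi_{C_2}(v)$ are precisely those in $W(v)$, and that $\pi_{C_2}$ is side-preserving for any hyperplane that crosses $C_2$.

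For the forward direction, let $e\subset Y_1$ be an edge with endpoints $p,q$ and dual hyperplane $h$. Since $e\subset C_1$, we get $h\cap C_1\neq\emptyset$ immediately. By Lemma \ref{2.3}(3), $e'=\pi_{C_2}(e)$ is an edge of $Y_2$ with endpoints $p'=\pi_{C_2}(p)$, $q'=\pi_{C_2}(q)$. Because $h$ crosses $C_1$, the characterization above gives $h\notin W(p)$ and $h\notin W(q)$, so $h$ separates neither $p$ nor $q$ from $C_2$; in particular $h$ does not separate $p$ from $p'$, nor $q$ from $q'$. Thus $p,p'$ lie on one side of $h$ and $q,q'$ on the other, and since $p,q$ are on opposite sides, so are $p',q'$. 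Hence $h$ separates the adjacent pair $p',q'$, i.e.\ $h$ is dual to $e'\subset C_2$, giving $h\cap C_2\neq\emptyset$.

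For the converse together with the ``Moreover'' statement, set $B_i=h'\cap C_i$; these are nonempty convex subcomplexes of the $CAT(0)$ cube complex $h'$. Applying Lemma \ref{2.3} to the pair $(B_1,B_2)$ inside $h'$ produces nonempty nearest-point sets $Z_1\subset B_1$, $Z_2\subset B_2$. The key claim is that $d(B_1,B_2)=\Delta$ and $Z_i=h'\cap Y_i$; granting this, $h'\cap Y_1=Z_1\neq\emptyset$ shows $h'$ is dual to an edge of $Y_1$, and the equality $\inc(B_1,B_2)=(h'\cap Y_1,h'\cap Y_2)$ is exactly the last assertion. To prove the claim I would take a vertex $x\in Z_1$, i.e.\ the midpoint of an edge $e_x=ab\subset C_1$ dual to $h'$, and show $e_x\subset Y_1$. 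Since $a,b$ differ only across $h'$ and $h'$ (crossing $C_2$) lies in neither $W(a)$ nor $W(b)$, one gets $W(a)=W(b)$, so $a,b$ are equidistant from $C_2$; it then remains to rule out any hyperplane $k\in W(a)\setminus\mathcal{S}$, i.e.\ any $k$ crossing $C_1$ and separating $a,b$ from $C_2$.

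The hard part will be this last exclusion, which is where minimality of $x$ in $Z_1$ must be used. Such a $k$ would cross $h'$ (it separates the midpoint $x$, lying on $a$'s side, from $B_2\subseteq C_2$ on the far side), so $\bar{k}=k\cap h'$ is a hyperplane of $h'$ separating $x$ from $B_2$; crossing it ought to move $x$ strictly closer to $B_2$ while staying inside $B_1=h'\cap C_1$, contradicting $x\in Z_1$. Making this rigorous requires producing a square in $C_1$ dual to both $h'$ and $k$ at $x$ --- equivalently, showing $k\cap h'\cap C_1\neq\emptyset$ near $e_x$ --- so that crossing $\bar{k}$ genuinely stays in $B_1$. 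I expect this crossing-square step, together with verifying the combinatorial description of $Y_1$ from Lemma \ref{2.3}, to be the main technical obstacle; everything else is bookkeeping with separating hyperplanes and Helly's property (Lemma \ref{2.1}).
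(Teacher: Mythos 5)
Your proposal is correct in outline, but it follows a genuinely different route from the paper, which proves the whole lemma in a few lines of CAT(0) geometry: the paper applies Lemma \ref{2.3} to the pair $(h'\cap C_1,h'\cap C_2)$ inside $h'$ to get $(Y_1',Y_2')$, then uses the carrier splitting $N_{h'}\cong h'\times[0,1]$ and the identity $C_i\cap\bigl(h'\times(\tfrac12-\epsilon,\tfrac12+\epsilon)\bigr)=(h'\cap C_i)\times(\tfrac12-\epsilon,\tfrac12+\epsilon)$ to show that for $x\in Y_1'$ one has $\angle_{x'}(x,y)\ge\pi/2$ for all $y\in C_2$, so $\pi_{h'\cap C_2}$ agrees with $\pi_{C_2}$ there; this identifies the projections and yields $Y_i'=Y_i\cap h'$ at once, with the forward implication read off from Lemma \ref{2.3}(3). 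Your combinatorial route works, but be aware of where its real cost sits: the characterization $Y_1=\{v:W(v)=\mathcal{S}\}$ is exactly the statement that the CAT(0) closest-point set coincides with the combinatorial gate, and it does not fall out of Lemma \ref{2.3}(2) alone --- you need Lemma \ref{2.7} (every $k\in\mathcal{S}$ slices $Y_1\times[0,\Delta]$) plus a gating/reflection argument to exclude hyperplanes crossing $C_1$ or $C_2$ from the separating set; most of the work you defer lives there, not in the step you flag as the obstacle. Indeed the ``crossing square at $x$'' is unnecessary: Helly's property (Lemma \ref{2.1}, which as the paper notes also applies to hyperplanes) applied to $\{k,h',C_1\}$, whose pairwise intersections you have already shown to be nonempty, gives $k\cap h'\cap C_1\neq\emptyset$ somewhere, so $\bar k$ meets $B_1$; combined with your own characterization applied to $(B_1,B_2)$ inside $h'$ (a hyperplane of $h'$ separating a point of $Z_1$ from $B_2$ must miss $B_1$), this already contradicts $x\in Z_1$ with no local square needed. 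Finally, you only argue $Z_1\subseteq h'\cap Y_1$; for the ``Moreover'' you also need the reverse inclusion, which does follow, but from your forward direction together with $d(h'\cap C_1,h'\cap C_2)\ge\Delta$ and the product structure of the convex hull of $Y_1\cup Y_2$ --- this should be said explicitly.
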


\begin{proof}
The first part of the lemma follows from Lemma \ref{2.3}. Let $\inc(h'\cap C_{1},h'\cap C_{2})=(Y'_{1},Y'_{2})$. Pick $x\in Y'_{1}$ and let $x'=\pi_{h'\cap C_{2}}(x)\in Y'_{2}$. Then $\pi_{h'\cap C_{1}}(x')=x$. Let $N_{h'}=h'\times[0,1]$ be the carrier of $h'$. Then $(h'\cap C_{i})\times(\frac{1}{2}-\epsilon,\frac{1}{2}+\epsilon)=C_{i}\cap(h'\times(\frac{1}{2}-\epsilon,\frac{1}{2}+\epsilon))$ for $i=1,2$ and $\epsilon<\frac{1}{2}$. Thus for any $y\in C_{2}$, $\angle_{x'}(x,y)\ge\pi/2$, which implies $x'=\pi_{C_{2}}(x)$. Similarly, $x=\pi_{C_{1}}(x')=\pi_{C_{1}}\circ\pi_{C_{2}}(x)$, hence $x\in Y_{1}$ and $Y'_{1}\subset Y_{1}$. By the same argument, $Y'_{2}\subset Y_{2}$, thus $Y'_{i}=Y_{i}\cap h'$ for $i=1,2$ and the lemma follows.
\end{proof}

Lemma \ref{2.3}, Remark \ref{2.5} and Lemma \ref{2.6} can also be applied to $CAT(0)$ rectangle complexes of finite type, whose cells are of form $\Pi_{i=1}^{n}[0,a_{i}]$. \textquotedblleft Finite type\textquotedblright\ means there are only finitely many isometry types of rectangle cells in the rectangle complex.

\begin{lem}
\label{2.7}
Let $X,C_{1},C_{2},Y_{1}$ and $Y_{2}$ be as above. If $h$ is a hyperplane separating $C_{1}$ from $C_{2}$, then there exists a convex set $Y\subset h$ such that $Y$ is parallel to $Y_{1}$ (or $Y_{2}$).
\end{lem}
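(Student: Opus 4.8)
The guiding picture is that $h$ should cut the ``bridge'' between $C_1$ and $C_2$ in a horizontal slice isomorphic to $Y_1$. The plan is to make this precise by reducing the statement to a single distance–constancy fact and then feeding the pair $(Y_1,h)$ back into Lemma \ref{2.3}. First I would set up the hyperplane bookkeeping. Let $\mathcal{W}$ be the set of hyperplanes separating $C_1$ from $C_2$, so $h\in\mathcal{W}$. For any vertex $y\in Y_1$ the point $\pi_{C_2}(y)\in Y_2$ is its gate in $C_2$ and realizes $d(y,C_2)=\Delta$, so the hyperplanes separating $y$ from $\pi_{C_2}(y)$ are exactly those in $\mathcal{W}$; in particular $h$ separates $y$ from $\pi_{C_2}(y)$ for \emph{every} $y\in Y_1$.

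The key step is to show that \emph{every hyperplane $k$ crossing $Y_1$ also crosses $h$.} I would argue this by a four–point (ladder) argument that does not use the convex hull directly. Let $f=[y,y']$ be an edge of $Y_1$ dual to $k$. Since $k$ meets $Y_1\subset C_1$, it meets $C_1$, hence $k\notin\mathcal{W}$; therefore $k$ separates neither $y$ from $\pi_{C_2}(y)$ nor $y'$ from $\pi_{C_2}(y')$, which places $y,\pi_{C_2}(y)$ on one side of $k$ and $y',\pi_{C_2}(y')$ on the other. Meanwhile $h\in\mathcal{W}$ places $y,y'$ on the $C_1$–side and $\pi_{C_2}(y),\pi_{C_2}(y')$ on the $C_2$–side of $h$. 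Thus the four points $y,y',\pi_{C_2}(y),\pi_{C_2}(y')$ occupy all four $k$--$h$ quadrants, so $k$ and $h$ cross. This is exactly the assertion that, in the product $Y_1\times[0,\Delta]$ furnished by Lemma \ref{2.3}(2), the ``horizontal'' hyperplanes (those crossing $Y_1$) are transverse to the ``vertical'' hyperplane $h$.

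From this transversality I would deduce that $d(\cdot,h)$ is constant on $Y_1$. Work with the combinatorial distance to the carrier $N_h$, which counts the hyperplanes separating a given vertex from $N_h$; such a hyperplane must be disjoint from $N_h$, hence cannot cross $h$. For adjacent vertices $y,y'\in Y_1$ the unique distinguishing hyperplane is the dual $k$ of their joining edge, and $k$ crosses $h$ by the previous step, so $k$ does not contribute to either count. Hence $y$ and $y'$ have equal combinatorial distance to $N_h$, and connectivity of $Y_1$ makes this distance constant; the same then holds for the $CAT(0)$ distance $d(\cdot,h)$. Granting constancy, I apply parts (1)--(2) of Lemma \ref{2.3} — which use only $CAT(0)$ convexity, and $h$ is $CAT(0)$–convex — to the pair $(Y_1,h)$: constancy says that the subset of $Y_1$ nearest to $h$ is all of $Y_1$, so its projection $Y:=\pi_h(Y_1)\subset h$ together with $Y_1$ has convex hull isometric to $Y_1\times[0,d(Y_1,h)]$, i.e. $Y\subset h$ is parallel to $Y_1$. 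Since $Y_1$ is parallel to $Y_2$ by Lemma \ref{2.3}(2), this also yields the ``or $Y_2$'' clause.

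The main obstacle is precisely this transversality/constancy step. The convex hull $Y_1\times[0,\Delta]$ supplies the correct intuition — horizontal versus vertical hyperplanes meeting $h$ in a single slice — but it is only a $CAT(0)$–convex set and need not be a subcomplex, so one cannot literally invoke the product classification of hyperplanes inside it. I therefore expect to run the bookkeeping combinatorially, through the ladder argument above, recording only which hyperplanes cross $Y_1$, $Y_2$ and $h$ rather than slicing the hull. One minor technical point to handle carefully is that the combinatorial distance count is most naturally phrased for the carrier $N_h\cong h\times[0,1]$ rather than for $h$ itself; this is harmless, since the splitting of the carrier identifies parallel copies of $h$ and preserves parallelism.
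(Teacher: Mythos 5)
Your argument is correct in substance but takes a genuinely different route from the paper's. The paper works directly inside the $CAT(0)$ convex hull $M=Y_1\times[0,\Delta]$ of $Y_1\cup Y_2$: for a point $x$ in an edge $e\subset Y_1$, the vertical geodesic $\{x\}\times[0,\Delta]$ meets $h$ in a single point $\{x\}\times\{t\}$, and the horizontal slice $e\times\{t\}$, being a parallel translate of $e$ sitting inside a cube, must either lie in $h$ or be parallel to an edge dual to $h$; the latter would force $h$ to be dual to $e$ and hence to meet $Y_1$, which is impossible. So $M\cap h=Y_1\times\{t\}$ and the lemma follows in a few lines, without ever leaving the hull even though it is not a subcomplex. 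You replace this with hyperplane bookkeeping: your four-point quadrant argument showing that every hyperplane $k$ dual to an edge of $Y_1$ crosses $h$ is correct (the inputs you need --- that $k$ meets both $C_1$ and $C_2$, and that $f$ and $\pi_{C_2}(f)$ are dual to the same hyperplane --- are exactly Lemma \ref{2.6} and Lemma \ref{2.3}(3)), and it isolates the combinatorial content more explicitly than the paper does, at the cost of a longer path back to the metric conclusion.

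That path is the one place you must tighten. From ``$y$ and $y'$ have equal combinatorial distance to $N_h$'' you cannot conclude ``the same then holds for the $CAT(0)$ distance'': two vertices can be separated from a convex subcomplex by the same \emph{number} of hyperplanes yet have different $CAT(0)$ distances to it (compare $(2,0)$ and $(1,1)$ relative to the origin in $\mathbb{R}^2$). What your ladder argument actually establishes is stronger --- the \emph{set} of hyperplanes separating a vertex of $Y_1$ from $N_h$ is the same for all vertices of $Y_1$ --- and that is what you should exploit. The cleanest way to close: apply Lemma \ref{2.3} and Lemma \ref{2.6} to the pair $(Y_1,N_h)$ to get $(\tilde Y_1,\tilde Y_2)=\inc(Y_1,N_h)$; by Lemma \ref{2.6} and your transversality step, every hyperplane dual to an edge of $Y_1$ meets $N_h$ and is therefore dual to an edge of $\tilde Y_1$, while a proper convex subcomplex of the connected complex $Y_1$ always misses some hyperplane crossing $Y_1$; hence $\tilde Y_1=Y_1$, Lemma \ref{2.3}(2) gives the parallelism, and the splitting $N_h\cong h\times[0,1]$ transports the parallel copy into $h$ itself, as you note at the end.
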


\begin{proof}
Let $\Delta=d(C_{1},C_{2})$ and let $M=Y_{1}\times [0,\Delta]$ be the convex hull of $Y_{1}$ and $Y_{2}$. We want to prove $M\cap h=Y_{1}\times \{t\}\subset Y_{1}\times [0,\Delta]$ for some $t\in [0,\Delta]$. It suffices to show for any edge $e\subset Y_{1}$, $(e\times [0,\Delta])\cap h=e\times \{t\}$ for some $t$. 

Pick point $x\in e$ and let $\{x\}\times\{t\}$ be the point in $M=Y_{1}\times [0,\Delta]$. Since $e\times\{t\}$ is parallel to $e$, $e\times\{t\}$ sits inside a cube and $e\times\{t\}$ is parallel to an edge of this cube. Thus either $e\times\{t\}\subset h$ or $e\times\{t\}$ is parallel to some edge dual to $h$, but the second case implies that $h$ is dual to $e$ and $h\cap Y_{1}\neq\emptyset$, which is impossible, so $e\times \{t\}\subset(e\times [0,\Delta])\cap h$. Now we are done since $(\{x\}\times [0,\Delta])\cap h$ is exactly one point for each $x\in e$. 
\end{proof}

\subsection{Right-angled Artin groups} 
\label{subsec_raag}
Pick a finite simplicial graph $\Gamma$. Let $G(\Gamma)$ be a RAAG. A generating set $S\subset G(\Gamma)$ is called a \emph{standard generating set} if all relators in the associated group presentation are commutators. Each standard generating set $S$ determines a graph $\Gamma_S$ whose vertices are elements in $S$ and two vertices are adjacent if the corresponding group elements commute. It follows from \cite{droms1987isomorphisms} that the isomorphism type of $\Gamma_S$ does not depend on the choice of the standard generating set $S$, in particular, $\Gamma_S$ is isomorphic to $\Gamma$.

Let $S$ be a standard generating set for $G(\Gamma)$. We label the vertices of $\Gamma$ by elements in $S$. $G(\Gamma)$ has a nice Eilenberg-MacLane space $S(\Gamma)$, called the Salvetti complex (see \cite{MR1368655,charney2007introduction}). $S(\Gamma)$ is a non-positively curved cube complex. The 2-skeleton of $S(\Gamma)$ is the usual presentation complex of $G(\Gamma)$. If the presentation complex contains a copy of 2-skeleton of a 3-torus, then we attach a 3-cell to obtain a 3-torus. We can build $S(\Gamma)$ inductively in this manner, and this process will stop after finitely many steps. The closure of each $k$-cell in $S(\Gamma)$ is a $k$-torus. Torus of this kind is called a \textit{standard torus}. There is a 1-1 correspondence between the $k$-cells (or standard tori of dimension $k$) in $S(\Gamma)$ and $k$-cliques (complete subgraph of $k$ vertices) in $\Gamma$, thus $\dim(S(\Gamma))=dim(F(\Gamma))+1$. We define the \textit{dimension} of $G(\Gamma)$ to be the dimension of $S(\Gamma)$.

Denote the universal cover of $S(\Gamma)$ by $X(\Gamma)$, which is a $CAT(0)$ cube complex. Our previous labeling of vertices of $\Gamma$ induces a labeling of the standard circles of $S(\Gamma)$, which lifts to a labeling of edges of $X(\Gamma)$. We choose an orientation for each standard circle of $S(\Gamma)$ and this would give us a directed labeling of the edges in $X(\Gamma)$. If we pick a base point $v\in X(\Gamma)$ ($v$ is a vertex), then there is a 1-1 correspondence between words in $G(\Gamma)$ and edge paths in $X(\Gamma)$ which start at $v$. 

Each full subgraph $\Gamma'\subset\Gamma$ gives rise to a subgroup $G(\Gamma')\hookrightarrow G(\Gamma)$. Subgroup of this kind is called a \textit{$S$-standard subgroup} and a left coset of an $S$-standard subgroup is called a \textit{$S$-standard coset} (we will omit $S$ when the generating set is clear). There is also an embedding $S(\Gamma')\hookrightarrow S(\Gamma)$ which is locally isometric. Let $p:X(\Gamma)\to S(\Gamma)$ be the covering map. Then each connect component of $p^{-1}(S(\Gamma'))$ is a convex subcomplex isometric to $X(\Gamma')$. We will call these components \textit{standard subcomplexes with defining graph $\Gamma'$}. A \textit{standard k-flat} is a standard complex which covers a standard $k$-torus in $S(\Gamma)$. When $k=1$, we also call it a \textit{standard geodesic}.

We pick an identification of the Cayley graph of $G(\Gamma)$ with the 1-skeleton of $X(\Gamma)$, hence $G(\Gamma)$ is identified with the vertices of $X(\Gamma)$. Let $v\in X(\Gamma)$ be the base vertex which corresponds to the identity in the Cayley graph of $G(\Gamma)$. Then for any $h\in G(\Gamma)$, the convex hull of $\{hgv\}_{g\in G(\Gamma')}$ is a standard subcomplex associated with $\Gamma'$. Thus there is a 1-1 correspondence between standard subcomplexes with defining graph $\Gamma'$ in $X(\Gamma)$ and left cosets of $G(\Gamma')$ in $G(\Gamma)$. 

Note that for every edge $e\in X(\Gamma)$, there is a vertex in $\Gamma$ which shares the same label as $e$, and we denote this vertex by $V_{e}$. If $K\subset X(\Gamma)$ is a subcomplex, we define $V_{K}$ to be $\{V_{e}\mid e$ is an edge in $K\}$ and $\Gamma_{K}$ to be the full subgraph spanned by $V_{K}$. $\Gamma_K$ is called the \textit{support} of $K$. In particular, if $K$ is a standard subcomplex, then the \textit{defining graph} of $K$ is $\Gamma_{K}$. 

Every finite simplicial graph $\Gamma$ admits a canonical join decomposition $\Gamma=\Gamma_{1}\circ\Gamma_{2}\circ\cdots\circ\Gamma_{k}$, where $\Gamma_{1}$ is the maximal clique join factor and for $2\le i\le k$, $\Gamma_{i}$ does not allow any non-trivial join decomposition and is not a point. $\Gamma$ is \textit{irreducible} if this join decomposition is trivial. This decomposition induces a product decomposition $X(\Gamma)=\Bbb E^{n}\times\Pi_{i=2}^{k}X(\Gamma_{i})$, which is called the \textit{De Rahm decomposition} of $X(\Gamma)$. This is consistent with the canonical product decomposition of $CAT(0)$ cube complex discussed in Section 2.5 of \cite{caprace2011rank}. 
 
We turn to the asymptotic geometry of RAAG's. A right-angled Artin group $G(\Gamma)$ is one-ended if and only if $\Gamma$ is connected. Moreover, the $n$-connectivity at infinity of $G(\Gamma)$ can be read off from $\Gamma$, see \cite{brady2001connectivity}. In order to classify all RAAG's up to quasi-isometry, it suffices to consider those one-ended RAAG's. This follows from the main results in \cite{papasoglu2002quasi}. Moreover, Lemma 3.2 of \cite{papasoglu2002quasi} implies the following:

\begin{lem}
\label{2.8}
If $q:X(\Gamma)\to X(\Gamma')$ is an $(L,A)$-quasi-isometry, then there exists $D=D(L,A)>0$ such that for any connected component $\Gamma_{1}\subset\Gamma$ such that $\Gamma_{1}$ is not a point and any standard subcomplex $K_{1}\subset X(\Gamma)$ with defining graph $\Gamma_{1}$, there is a unique connected component $\Gamma'_{1}\subset\Gamma'$ and a unique standard subcomplex $K'_{1}\subset X(\Gamma')$ with defining graph $\Gamma'_{1}$ such that $d_{H}(q(K_{1}),K'_{1})<D$.
\end{lem}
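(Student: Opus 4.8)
The plan is to realize $X(\Gamma)$ as a tree of spaces adapted to the free-product (Grushko) decomposition of $G(\Gamma)$ and then apply the coarse preservation of vertex spaces furnished by Lemma 3.2 of \cite{papasoglu2002quasi}. First I would write $\Gamma$ as the disjoint union of its connected components, separating those that are single points from the rest, say $\Gamma = \Gamma_1 \sqcup \cdots \sqcup \Gamma_m \sqcup \{p_1,\dots,p_\ell\}$ with each $\Gamma_i$ connected and not a point. Reading off the presentation, this yields a free-product decomposition $G(\Gamma) = G(\Gamma_1) * \cdots * G(\Gamma_m) * F_\ell$, where each $G(\Gamma_i)$ is one-ended (a RAAG is one-ended exactly when its defining graph is connected with at least two vertices) and $F_\ell$ is free on the isolated generators. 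Under the identification of $G(\Gamma)$ with $X(\Gamma)^{(0)}$, the standard subcomplexes with connected non-point defining graph $\Gamma_i$ are exactly the left cosets $g\,G(\Gamma_i)$, by the $1$--$1$ correspondence recorded earlier; these are precisely the one-ended factor cosets, and they are the non-degenerate vertex spaces in the tree-of-spaces structure underlying the Bass--Serre tree of this splitting.

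With this dictionary in place, I would invoke Lemma 3.2 of \cite{papasoglu2002quasi}, which asserts that a quasi-isometry between two such groups coarsely respects the structure: the image under $q$ of each one-ended factor coset lies within bounded Hausdorff distance of a one-ended factor coset of the target, with a bound depending only on the quasi-isometry constants $(L,A)$ and not on the individual coset or on $\Gamma$. Applied to $K_1 = g\,G(\Gamma_1)$, this produces a standard subcomplex $K_1' \subset X(\Gamma')$ whose defining graph $\Gamma_1'$ is a connected non-point component of $\Gamma'$, together with a constant $D = D(L,A)$ satisfying $d_H(q(K_1), K_1') < D$. The component $\Gamma_1'$ is read off directly as the defining graph of $K_1'$.

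It remains to establish uniqueness of both $K_1'$ and $\Gamma_1'$. The key point is that two distinct one-ended factor cosets in $X(\Gamma')$ are at infinite Hausdorff distance from one another: each is unbounded (being one-ended), while distinct cosets of free factors in a free product have bounded coarse intersection, so neither can lie in a bounded neighborhood of the other. Hence if $q(K_1)$ were within $D$ of two distinct factor cosets $K_1'$ and $K_1''$, the triangle inequality $d_H(K_1', K_1'') \le d_H(K_1', q(K_1)) + d_H(q(K_1), K_1'') < 2D$ would contradict this divergence; thus $K_1'$, and therefore its defining graph $\Gamma_1'$, is unique.

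The main obstacle is the translation step itself: Lemma 3.2 of \cite{papasoglu2002quasi} is phrased in the abstract language of ends and factor sets of a group with infinitely many ends, so the substance of the argument lies in matching its hypotheses and conclusion to the concrete geometry here — identifying the one-ended factor cosets of the Grushko decomposition with exactly the standard subcomplexes of connected non-point type, verifying that these factors are genuinely one-ended and pairwise coarsely disjoint, and confirming that the resulting bound depends on $(L,A)$ alone. Once this is set up, the degenerate case in which $G(\Gamma)$ is already one-ended — where $\Gamma$ is connected and non-point, so $K_1 = X(\Gamma)$, and coarse surjectivity of $q$ together with the quasi-isometry invariance of one-endedness force $\Gamma'$ connected non-point with $K_1' = X(\Gamma')$ — is routine.
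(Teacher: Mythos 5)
Your proposal is correct and follows essentially the same route as the paper, which derives this lemma directly from Lemma 3.2 of Papasoglu--Whyte applied to the Grushko decomposition $G(\Gamma) = G(\Gamma_1)*\cdots*G(\Gamma_m)*F_\ell$ coming from the connected components of $\Gamma$; the paper simply cites that lemma without spelling out the dictionary between one-ended factor cosets and standard subcomplexes, which you supply. Your uniqueness argument via the infinite Hausdorff distance between distinct factor cosets is the standard (and correct) way to finish.
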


It is shown in \cite{behrstock2012divergence} and \cite{abrams2013pushing} that $G(\Gamma)$ has linear divergence if and only if $\Gamma$ is a join or $\Gamma$ is one point, which implies $\Gamma$ being a join is a quasi-isometric invariant. Moreover, their results together with Theorem B of \cite{kapovich1998quasi} implies that the De Rahm decomposition is stable under quasi-isometry:

\begin{thm}
\label{2.9}
Given $X=X(\Gamma)$ and $X'=X(\Gamma')$, let $X=\Bbb R^{n}\times\prod_{i=1}^{k}X(\Gamma_{i})$ and let $X'=\Bbb R^{n'}\times\prod_{j=1}^{k'}X(\Gamma'_{j})$ be the corresponding De Rahm decomposition. If $\phi:X\to X'$ is an $(L,A)$ quasi-isometry, then $n=n'$, $k=k'$ and there exist constants $L'=L'(L,A)$, $A'=A'(L,A)$, $D=D(L,A)$ such that after re-indexing the factors in $X'$, we have $(L',A')$ quasi-isometry $\phi_{i}: X(\Gamma_{i})\to X(\Gamma'_{i})$ so that $d(p'\circ\phi, \prod_{i=1}^{k}\phi_{i}\circ p)<D$, where $p: X\to \prod_{i=1}^{k}X(\Gamma_{i})$ and $p':X'\to \prod_{i=1}^{k}X(\Gamma'_{i})$ are the projections.
\end{thm}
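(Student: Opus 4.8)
The plan is to reduce to the product-rigidity theorem of Kapovich and Leeb (Theorem B of \cite{kapovich1998quasi}), using the divergence dichotomy to verify its hypotheses and to isolate the Euclidean factor. Recall that in the De Rham decomposition $X=\Bbb R^{n}\times\prod_{i=1}^{k}X(\Gamma_{i})$, the factor $\Bbb R^{n}$ comes from the maximal clique join factor, while each $\Gamma_{i}$ ($1\le i\le k$) is join-irreducible and is not a single point; moreover $G(\Gamma_{i})$ acts geometrically on the Hadamard space $X(\Gamma_{i})$, so every factor has a cocompact isometry group. The first step is to translate join-irreducibility into a coarse statement. By the divergence results of \cite{behrstock2012divergence} and \cite{abrams2013pushing}, since $\Gamma_{i}$ is neither a point nor a join, $G(\Gamma_{i})$ has superlinear divergence for each $1\le i\le k$, whereas $\Bbb R^{n}$ has linear divergence. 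Since a quasi-isometry to a product of two unbounded spaces would force linear divergence, superlinear divergence shows each $X(\Gamma_{i})$ is \emph{coarsely irreducible} (not quasi-isometric to a nontrivial product) and non-flat, which is exactly the input needed below, and divergence cleanly separates the $\Bbb R^{n}$ factor from the rest.

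With these hypotheses in hand I would apply Theorem B of \cite{kapovich1998quasi}: a quasi-isometry between finite products of Hadamard spaces with cocompact isometry groups whose non-Euclidean factors are coarsely irreducible must preserve the product structure. Concretely, it matches the Euclidean factors, forcing $n=n'$; it induces a bijection between the collections $\{X(\Gamma_{i})\}$ and $\{X(\Gamma_{j}')\}$, forcing $k=k'$ and supplying the re-indexing; and it lies within bounded distance of a product of quasi-isometries between the matched irreducible factors. Applying this to $\phi$ produces quasi-isometries $\phi_{i}:X(\Gamma_{i})\to X(\Gamma_{i}')$ with constants $L'(L,A),A'(L,A)$ and the estimate $d(p'\circ\phi,\prod_{i=1}^{k}\phi_{i}\circ p)<D(L,A)$, which is the assertion of the theorem. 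Note that no refined statement is made about the Euclidean part beyond $n=n'$: the splitting of $\Bbb R^{n}$ into lines is not canonical, so the Euclidean directions are correctly handled as a single lump rather than as $n$ irreducible line factors.

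The main obstacle is not the bookkeeping but checking that the hypotheses of the product-rigidity theorem really hold, and this is exactly where the divergence dichotomy is indispensable. One must rule out that some $X(\Gamma_{i})$ secretly splits off a line or a product factor which could then be confused with or absorbed into the $\Bbb R^{n}$ factor; the implication \textquotedblleft superlinear divergence $\Rightarrow$ coarsely irreducible and non-flat\textquotedblright\ is what closes this gap. Once each non-Euclidean factor is certified coarsely irreducible and the Euclidean factor is certified by linear divergence, the matching of factors, the equalities $n=n'$ and $k=k'$, and the uniform bounded-distance estimate are delivered directly by \cite{kapovich1998quasi}.
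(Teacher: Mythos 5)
Your proposal is correct and is essentially the paper's own argument: the paper derives Theorem \ref{2.9} precisely by combining the divergence dichotomy of \cite{behrstock2012divergence,abrams2013pushing} (to certify that each join-irreducible factor is coarsely irreducible and to isolate the Euclidean factor) with Theorem B of \cite{kapovich1998quasi}. Your write-up just makes explicit the verification of hypotheses that the paper leaves implicit.
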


Thus in order to study the quasi-isometric classification of RAAG's, it suffices to study those RAAG's which are one-ended and irreducible, but this will rely on finer quasi-isometric invariant of RAAG's. 

Recall that in the case of Gromov hyperbolic spaces, quasi-isometries map geodesics to geodesics up to finite Hausdorff distance, hence induces a well-defined boundary map. The analog of this fact for 2-dimensional RAAG's has been established in \cite{bestvina2008quasiflats}, i.e. quasi-isometries map 2-flats to 2-flats up to finite Hausdorff distance. The following is a higher dimensional generalization of Theorem 3.10 of \cite{bestvina2008quasiflats}.

\begin{thm}[Theorem 5.20 of \cite{quasiflat}]
\label{2.10}
If $\phi: X(\Gamma_{1})\to X(\Gamma_{2})$ is an $(L, A)$-quasi-isometry, then $dim(X(\Gamma_{1}))=dim(X(\Gamma_{2}))$. And there is a constant $D=D(L, A)$ such that for any top-dimensional flat $F_{1}\subset X(\Gamma_{1})$, there is a unique flat $F_{2}\subset X(\Gamma_{2})$ such that $d_{H}(\phi(F_{1}),F_{2})< D.$
\end{thm}

For each right-angled Artin group $G(\Gamma)$, there is a simplicial graph $\Gamma^{e}$, called the \textit{extension graph}, which is introduced in \cite{kim2013embedability}. Extension graphs can be viewed as \textquotedblleft curve graphs\textquotedblright\ for RAAG's (\cite{kim2014geometry}).

\begin{definition}[Definition 1 of \cite{kim2013embedability}]
\label{2.11}
The vertex set of $\Gamma^{e}$ consists of words in $G(\Gamma)$ that are conjugate to elements in $S$ (recall that $S$ is a standard generating set for $G(\Gamma)$), and two vertices are adjacent in $\Gamma^{e}$ if and only if the corresponding words commute in $G(\Gamma)$.
\end{definition}

The flag complex of the extension graph is called the \textit{extension complex}.

Since the curve graph captures the combinatorial pattern of how Dehn twist flats intersect in a mapping class group, it plays an important role in the quasi-isometric rigidity of mapping class group (\cite{hamenstaedt2005geometry,MR2928983}). Similarly, we will see in Section \ref{sec_qi implies iso} that the extension graph captures the combinatorial pattern of the coarse intersection of certain collection of flats in a RAAG, and it is a quasi-isometric invariant for certain classes of RAAG's.

\section{Stable subgraph}
\label{sec_stable subgraph}
We study the behavior of certain standard subcomplexes under quasi-isometries in this section.

\subsection{Coarse intersection of standard subcomplexes and flats}
\label{subsec_standard subcomplexes and tree products}
\begin{lem}
\label{3.1}
Let $\Gamma$ be a finite simplicial graph and let $K_{1}$, $K_{2}$ be two standard subcomplexes of $X(\Gamma)$. If $(Y_{1},Y_{2})=\inc (K_{1},K_{2})$, then $Y_{1}$ and $Y_{2}$ are also standard subcomplexes.
\end{lem}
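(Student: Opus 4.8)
The plan is to pin down $Y_1$ explicitly as a standard coset, using the hyperplane description in Lemma \ref{2.6}. After translating by a group element (an isometry of $X(\Gamma)$ preserving the cubical and labelling structure) I may assume $K_1=G(\Gamma_1)$ and $K_2=gG(\Gamma_2)$, where $\Gamma_i=\Gamma_{K_i}$. By Lemma \ref{2.3}, $Y_1$ is a convex subcomplex, and by Lemma \ref{2.6} an edge $e\subset K_1$ lies in $Y_1$ precisely when its dual hyperplane $h_e$ crosses both $K_1$ and $K_2$. Let $\Lambda$ be the full subgraph of $\Gamma_1$ spanned by the labels of the hyperplanes crossing both $K_1$ and $K_2$; the target is to show that $Y_1$ is the standard subcomplex on $\Lambda$ through any of its vertices. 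When $\Delta:=d(K_1,K_2)=0$ this is the classical fact $G(\Gamma_1)\cap G(\Gamma_2)=G(\Gamma_1\cap\Gamma_2)$, so $K_1\cap K_2=Y_1=Y_2$ is the standard subcomplex on $\Gamma_1\cap\Gamma_2$; the substance is the case $\Delta>0$.

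The engine is a local criterion for standardness: a nonempty convex subcomplex $Y\subset X(\Gamma)$ is standard if and only if for every vertex $p\in Y$ and every label $s\in V_Y$, both edges at $p$ carrying the label $s$ lie in $Y$. (Convexity makes $Y$ a full subcomplex; the condition lets one move from a fixed base vertex $p_0$ in every $V_Y$-direction without leaving $Y$, so $Y\supset p_0G(\Gamma_Y)$, and the reverse inclusion holds because every edge of $Y$ is labelled by $V_Y$.) I would apply this to $Y=Y_1$. Fix a vertex $p\in Y_1$ and a label $s\in V_{Y_1}$. Since $Y_1\subset K_1$ one has $s\in V_{K_1}$, so, $K_1$ being standard, both $s$-edges at $p$ already lie in $K_1$; by Lemma \ref{2.6} it then remains only to prove that the hyperplane $H$ dual to such an $s$-edge crosses $K_2$.

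This last point is the crux, and its clean half is a monotonicity statement. Let $A=p\langle s\rangle\subset K_1$ be the standard $s$-axis through $p$. Applying Lemma \ref{2.3} and Lemma \ref{2.6} to the pair $(A,K_2)$, the convexity of $x\mapsto d(x,K_2)$ along the geodesic $A$ shows that $A\cap Y_1$ is exactly the sub-interval of $A$ on which this distance attains its minimum $\Delta$, and that the $s$-hyperplanes met by $A$ which cross $K_2$ are precisely those dual to the edges of this interval. Hence $H$ crosses $K_2$ if and only if $p$ lies in the \emph{interior} of $A\cap Y_1$ in the $s$-direction. The remaining task is therefore to rule out that $p$ sits at an endpoint of $A\cap Y_1$ while $s$ still occurs as a label in $Y_1$. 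This cannot follow from convexity of $Y_1$ alone: a convex subcomplex that ``turns a corner'' (e.g.\ a ray bending in a tree) violates the local criterion, so the essential input must be that $Y_1$ is a coarse intersection of two cosets, not merely a convex set.

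To supply that input I would induct on $\dim X(\Gamma)$, transporting the $s$-edge guaranteed by $s\in V_{Y_1}$ to the vertex $p$. Since $s\in V_{Y_1}$ there is an $s$-hyperplane $h_0$ crossing both $K_1$ and $K_2$; it is itself a copy of $X(lk(s))$, of strictly smaller dimension, and $h_0\cap K_1$, $h_0\cap K_2$ are standard subcomplexes of it. The ``moreover'' clause of Lemma \ref{2.6} identifies $h_0\cap Y_1$ as the first coordinate of $\inc(h_0\cap K_1,h_0\cap K_2)$, which by the inductive hypothesis is a standard subcomplex of $h_0$; feeding this back---together with the convexity of $Y_1$ and the fact that two hyperplanes of the same label never cross, so that the $s$-hyperplanes crossing $K_1$ are ordered consistently across parallel axes---one propagates the crossing condition from $h_0$ to the axis $A$ through $p$, forcing $H$ to cross $K_2$. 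I expect this transport step, reconciling the per-axis interval structure with the global coset structure of $K_1$ and $K_2$, to be the main obstacle. The symmetric assertion for $Y_2$ then follows either by the same argument or from the label-preserving cubical isomorphism $\pi_{K_2}|_{Y_1}\colon Y_1\to Y_2$ of Lemma \ref{2.3}(3).
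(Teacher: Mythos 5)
Your reduction to the local criterion for standardness is sound, and you have correctly located the crux: for a vertex $p\in Y_1$ and a label $s\in V_{Y_1}$, one must show that the hyperplane $H$ dual to the $s$-edge of $K_1$ at $p$ actually meets $K_2$. But that step is not proved. You set up an induction on dimension, invoke an $s$-hyperplane $h_0$ witnessing $s\in V_{Y_1}$, and then assert that ``one propagates the crossing condition from $h_0$ to the axis $A$ through $p$'' --- while acknowledging yourself that this transport is the main obstacle. The difficulty is real: $h_0$ is dual only to the $s$-edges in its own parallel class, and nothing forces $h_0$ to meet the parallel set of $A$. Two $s$-axes in $K_1$ need not be parallel (already in a tree, two distinct $s$-lines diverge), so the ``consistent ordering across parallel axes'' you appeal to does not connect $h_0$ to $A$; the family of same-label hyperplanes is pairwise non-crossing but tree-like, not linearly ordered. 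Knowing by induction that $h_0\cap Y_1$ is standard inside $h_0\cong X(lk(s))$ therefore gives no information about the edges at $p$ when $p$ lies outside the carrier of $h_0$. Since the assertion ``both $s$-edges at $p$ lie in $Y_1$ whenever $s\in V_{Y_1}$'' is essentially equivalent to the lemma, the proposal assumes what it must prove at exactly this point.

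The mechanism the paper uses to close this gap is worth absorbing. Fix $v_1\in Y_1$, let $l$ be the geodesic from $v_1$ to $\pi_{K_2}(v_1)$, and let $V_l$ be the set of labels of the cubes that $l$ passes through. One shows (see Corollary \ref{3.2}) that $s\in V_{Y_1}$ precisely when $s\in V_{K_1}\cap V_{K_2}$ and $s$ commutes with every element of $V_l$; for such $s$, the $s$-edge $e$ at $v_1$ and a matching $s$-edge of $K_2$ cobound a flat rectangle having $l$ as one side, which places $e$ in $Y_1$. The transport is then achieved by the observation that the opposite side $l'$ of this rectangle is a geodesic from the other endpoint of $e$ to $K_2$ with $V_{l'}=V_l$; this invariance lets one move the basepoint along any edge path without changing the commutation condition, and induction on the combinatorial length of the path finishes both inclusions. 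Some such control on how the realizing geodesic (and its label set) changes as the basepoint moves is exactly what your ``propagation'' step is missing.
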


\begin{proof}
The lemma is clear if $K_{1}\cap K_{2}\neq\emptyset$. Now we assume $d(K_{1},K_{2})=c>0$. Pick a vertex $v_{1}\in K_{1}$, by Lemma \ref{2.3}, there exists vertex $v_{2}\in K_{2}$ such that $d(v_{1},v_{2})=c$. Let $l:[0,c]\to X(\Gamma)$ be the unit speed geodesic from $v_{1}$ to $v_{2}$. We can find sequence of cubes $\{B_{i}\}_{i=1}^{N}$ and $0=t_{0}<t_{1}<...<t_{N-1}<t_{N}=c$ such that each $B_{i}$ contains $\{l(t)\mid t_{i-1}<t<t_{i}\}$ as interior points.

Let $V_{l}=\cup_{i=1}^{N}V_{B_{i}}$ (recall that $V_{B_i}$ is the collection of the labels of edges in $B_i$, cf. Section \ref{subsec_raag}) and let $V_{i}=V_{K_{i}}$ for $i=1,2$. Put $V'=V_{1}\cap V_{2}\cap V_{l}^{\perp}$ ($V_{l}^{\perp}$ denotes the orthogonal complement of $V_1$, see Section \ref{subsec_notation}) and let $\Gamma'$ be the full subgraph spanned by $V'$. Let $Y'_{1}$ be the standard subcomplex that has defining graph $\Gamma'$ and contains $v_1$ (if $V'$ is empty, then $Y'_{1}=v_{1}$). We claim $Y_{1}=Y'_{1}$.

Pick an edge $e\subset K_{1}$ such that $v_{1}\in e$ and $V_{e}\in V'$. Let $h$ be the hyperplane dual to $e$ and $N_{h}\cong h\times[0,1]$ be the carrier of $h$. Since $d(V_{e},w)=1$ for any $w\in V_{l}$, we can assume $l\subset h\times\{1\}\subset N_{h}$. By our definition of $V'$, there is an edge $e'\in K_{2}$ with $v_{2}\in e'$ and $h$ dual to $e'$, thus $e$ and $e'$ cobound an isometrically embedded flat rectangle (one side of the rectangle is $l$), which implies $e\subset Y_{1}$. Let $l'$ be the side of the rectangle opposite to $l$. We can define $V_{l'}$ similarly as we define $V_{l}$, then $V_{l'}=V_{l}$. Now let $\omega$ be any edge path starting at $v_{1}$ such that $V_{e'}\in V'$ for any edge $e'\subset \omega$. Then it follows from the above argument and induction on the combinatorial length of $\omega$ that $\omega\subset Y_{1}$, thus $Y'_{1}\subset Y_{1}$. 

For the other direction, since $Y_{1}$ is a convex subcomplex by Lemma \ref{2.3}, it suffices to prove every vertex of $Y_{1}$ belongs to $Y'_{1}$. By the induction argument as above, we only need to show for edge $e_{1}$ with $v_{1}\in e_{1}$, if $e_{1}\subset Y_{1}$, then $e_{1}\subset Y'_{1}$. Lemma \ref{2.3} implies that there exists edge $e_{2}\subset Y_{2}$ such that $e_{1}$ and $e_{2}$ cobound an isometrically embedded flat rectangle (one side of the rectangle is $l$). So $l$ is in the carrier of the hyperplane dual to $e_{1}$. It follows that $V_{e_{1}}\in V'$ and $e_{1}\subset Y'_{1}$.
\end{proof}

\begin{cor}
\label{3.2}
Let $K_{1},K_{2},Y_{1}$ and $Y_{2}$ be as above. Let $h$ be a hyperplane separating $K_{1}$ and $K_{2}$ and let $e$ be an edge dual to $h$. Then $V_{e}\in V^{\perp}_{Y_{1}}=V^{\perp}_{Y_{2}}$. In particular, pick vertex $v\in\Gamma$, then $v\in V_{Y_{1}}$ if and only if
\begin{enumerate}
\item $v\in V_{K_{1}}\cap V_{K_{2}}$.
\item For any hyperplane $h'$ separating $K_{1}$ from $K_{2}$ and any edge $e'$ dual to $h'$, $d(v,V_{e'})=1$.
\end{enumerate}
\end{cor}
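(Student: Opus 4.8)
The plan is to deduce everything from the explicit description of the support of $Y_1$ obtained in Lemma \ref{3.1}, namely $V_{Y_1}=V_{K_1}\cap V_{K_2}\cap V_l^{\perp}$, together with the gate structure of the pair $(Y_1,Y_2)=\inc(K_1,K_2)$ coming from Lemma \ref{2.3}. Throughout I fix a vertex $p\in Y_1$, set $q=\pi_{K_2}(p)$, so that $q\in Y_2$ and, by Lemma \ref{2.3}(3), $p=\pi_{K_1}(q)$; write $\Delta=d(K_1,K_2)=d(p,q)$. I will also use two standard facts about $X(\Gamma)$: all edges dual to a fixed hyperplane carry the same label, and two hyperplanes cross only if their labels are adjacent (hence distinct) in $\Gamma$. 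Finally, since $Y_1$ and $Y_2$ are parallel standard subcomplexes (Lemma \ref{2.3}(2)--(3) and Lemma \ref{3.1}), corresponding edges are parallel and so carry equal labels; thus $V_{Y_1}=V_{Y_2}$, which already gives $V_{Y_1}^{\perp}=V_{Y_2}^{\perp}$.

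For the first assertion, let $h$ separate $K_1$ from $K_2$ and let $e$ be dual to $h$, so $V_e$ is the common label $V_h$. By Lemma \ref{2.7} there is a convex set $Y\subset h$ parallel to $Y_1$; parallelism gives $V_Y=V_{Y_1}$. Every edge $f$ of $Y$ lies in $h$, hence is dual to a hyperplane crossing $h$, so $V_f$ is adjacent to and distinct from $V_h=V_e$. As $f$ ranges over the edges of $Y$ this says precisely that $V_e$ is adjacent to every vertex of $V_Y=V_{Y_1}$ and lies in none of them, i.e. $V_e\in V_{Y_1}^{\perp}=V_{Y_2}^{\perp}$.

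For the ``in particular'' statement, the forward implication is immediate: if $v\in V_{Y_1}$ then $v\in V_{K_1}\cap V_{K_2}$ because $V_{Y_1}=V_{Y_2}$ and $Y_i\subset K_i$, and condition (2) is exactly the first assertion read off at $v$. The substance is the converse. Assume $v\in V_{K_1}\cap V_{K_2}$ satisfies (2). Since $v\in V_{K_1}$ there is a $v$-labelled edge $e_v$ of $K_1$ at $p$ with second endpoint $p'=pv\in K_1$; and since $v\in V_{K_2}$ there is a $v$-labelled edge of $K_2$ at $q$ with second endpoint $q'=qv\in K_2$. Writing $q=pg$ with $g$ the word read along a combinatorial geodesic from $p$ to $q$, every letter of $g$ is the label of a hyperplane separating $p$ from $q$, and I claim each such hyperplane separates $K_1$ from $K_2$. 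Granting this, condition (2) forces $v$ to commute with every letter of $g$, so $v^{-1}gv=g$ and, using that $G(\Gamma)$ acts by isometries, $d(p',q')=d(pv,pgv)=d(\id,v^{-1}gv)=d(\id,g)=\Delta$. Hence $d(p',K_2)\le\Delta$, and as $p'\in K_1$ this is an equality, so $p'\in Y_1$. Since $Y_1$ is a convex (hence full) subcomplex containing both endpoints $p,p'$ of $e_v$, it contains $e_v$, giving $v=V_{e_v}\in V_{Y_1}$.

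The one point needing care --- and the step I expect to be the main obstacle --- is the claim that the hyperplanes separating $p$ from $q=\pi_{K_2}(p)$ are exactly the hyperplanes separating $K_1$ from $K_2$; this is what lets me convert the local commutation hypothesis (2) into control over the whole word $g$. I would prove it from the gate property: a hyperplane separates $p$ from its projection $\pi_{K_2}(p)$ if and only if it separates $p$ from all of $K_2$, with $K_2$ lying entirely in the far halfspace; applying this together with $p=\pi_{K_1}(q)$ from Lemma \ref{2.3}(3) shows that each such hyperplane has $K_1$ on $p$'s side and $K_2$ on $q$'s side, i.e. it separates $K_1$ from $K_2$. Equivalently, one may identify these hyperplanes with the factors crossing the $[0,\Delta]$-direction of the bridge $Y_1\times[0,\Delta]$ of Lemma \ref{2.3}(2) and Remark \ref{2.5}, which is also how one sees that the set $V_l$ of Lemma \ref{3.1} consists precisely of the separating labels. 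Once this identification is in hand, the corollary is a direct translation of Lemma \ref{3.1}.
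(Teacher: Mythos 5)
Your proposal is correct in outline, and the step you flag as the main obstacle is exactly where the paper's proof spends essentially all of its effort. The paper argues entirely through the formula $V_{Y_1}=V_{K_1}\cap V_{K_2}\cap V_l^{\perp}$ of Lemma \ref{3.1}: it lets $V'_l$ be the set of labels of edges dual to hyperplanes separating $K_1$ from $K_2$ and proves $V_l=V'_l$, after which the corollary is a restatement of that formula. The inclusion $V'_l\subset V_l$ is immediate, and $V_l\subset V'_l$ --- equivalently your claim that every hyperplane $h$ separating $p$ from $q=\pi_{K_2}(p)$ misses both $K_i$ and hence separates them --- is proved there by a comparison-angle argument: if $h\cap K_1\neq\emptyset$, the geodesic triangle on $p$, $x=\overline{pq}\cap h$ and $x'=\pi_{h\cap K_1}(x)$ has two angles $\ge\pi/2$ and a third strictly positive one, forcing an angle sum exceeding $\pi$. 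Your appeal to the \textquotedblleft gate property\textquotedblright\ of $\pi_{K_2}$ is, in its nontrivial direction, precisely this assertion; to close it you should either run the angle argument above or first record (and justify) that on vertices the $CAT(0)$ projection to a convex subcomplex agrees with the combinatorial gate. Everything else you do is sound and genuinely different from the paper: deducing the first assertion from the parallel copy $Y\subset h$ supplied by Lemma \ref{2.7} is a clean, self-contained alternative to reading $V'_l\subset V_l\subset V_{Y_1}^{\perp}$ off Lemma \ref{3.1}, and your converse --- producing the $v$-labelled edge at $p\in Y_1$, using condition (2) to commute $v$ past the word $g=p^{-1}q$, and concluding $pv\in Y_1$ from $d(pv,qv)=\Delta$ together with convexity (hence fullness) of $Y_1$ --- replaces the paper's bookkeeping with the auxiliary set $V_l$ by a direct computation with the group action. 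The paper's route gets both directions of the \textquotedblleft in particular\textquotedblright\ statement at once from the single identity $V_l=V'_l$; yours avoids $V_l$ altogether and makes it transparent why $pv$ lands in $Y_1$.
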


\begin{proof}
Let $l$ and $V_l$ be the same as the proof of Lemma \ref{3.1}. Let $V'_{l}$ be a collection of vertices of $\Gamma$ such that $v\in V'_{l}$ if and only if $v=V_{e'}$ for some edge $e'\subset X(\Gamma)$ satisfying (2). It suffices to prove $V'_{l}=V_{l}$.

$V'_{l}\subset V_{l}$ is clear since if a hyperplanes $h$ separates $K_{1}$ from $K_{2}$, then $l$ intersects $h$ transversally at one point. To see $V_{l}\subset V'_{l}$, it suffices to show $h\cap K_{i}=\emptyset$ for $i=1,2$, where $h$ is a hyperplane that intersects $l$ transversally. Let $x=l\cap h$. Suppose $h\cap K_{1}\neq\emptyset$ and let $x'=\pi_{h\cap K_{1}}(x)$. Now consider the triangle $\Delta(v_{1},x,x')$ (recall that $v_{1}=l(0)$), we have $\angle_{v_{1}}(x,x')\ge\pi/2$ (since $\pi_{K_{1}}(x)=v_{1}$), $\angle_{x'}(v_{1},x)\ge\pi/2$ (see the proof of Lemma \ref{2.6}) and $\angle_{x}(v_{1},x')>0$, which is a contradiction, so $h\cap K_{1}=\emptyset$, similarly $h\cap K_{2}=\emptyset$.
\end{proof}

\begin{remark}
\label{3.3}
Recall that a standard coset of $G(\Gamma)$ is a left coset of a standard subgroup of $G(\Gamma)$. Lemma \ref{3.1} implies that for each pair of standard cosets of $G(\Gamma)$, we can associated another standard coset, which captures the coarse intersection of the pair. Moreover, we can also define a notion of distance between two standard cosets, which takes value on $G(\Gamma)$.
\end{remark}

\begin{lem}
\label{3.4}
Let $K\subset X(\Gamma)$ be a convex subcomplex and let $\Gamma'=lk(\Gamma_{K})$ ($\Gamma_{K}$ is the support of $K$, see Section \ref{subsec_raag}; $lk(\Gamma_{K})$ is the full subgraph spanned by $V_{K}^{\perp}$, see Section \ref{subsec_notation}). Then the parallel set $P_{K}$ of $K$ is a convex subcomplex and canonically splits as $K\times X(\Gamma')$.
\end{lem}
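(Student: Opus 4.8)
The plan is to identify the parallel set $P_K$ explicitly with a product subcomplex and thereby deduce its convexity and the splitting simultaneously. First I would fix a base vertex $p\in K$ and record that, since every vertex of $\Gamma'=lk(\Gamma_K)$ is adjacent in $\Gamma$ to every vertex of $\Gamma_K$ and $V_K\cap V_K^{\perp}=\emptyset$, the full subgraph of $\Gamma$ spanned by $V_K\cup V_K^{\perp}$ is exactly the join $\Gamma_K\circ\Gamma'$. Hence the standard subcomplex $N$ with defining graph $\Gamma_K\circ\Gamma'$ containing $p$ splits canonically as $N=N_K\times F'$, where $N_K$ is the $\Gamma_K$-standard subcomplex through $p$ and $F'=X(\Gamma')$ is the $\Gamma'$-standard subcomplex through $p$. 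Because $K$ is connected, contains $p$, and all of its edges carry labels in $V_K$, every vertex of $K$ is joined to $p$ by an edge path labeled in $V_K$, so $K\subseteq N_K$ is a convex subcomplex. I would then set $M=K\times F'\subseteq N\subseteq X(\Gamma)$; being a product of convex subcomplexes inside the product $N$, $M$ is convex (the converse direction of Lemma \ref{2.2}), and by construction $M\cong K\times X(\Gamma')$. The goal is to prove $P_K=M$.

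The inclusion $M\subseteq P_K$ is the easy half: in the metric product $M=K\times F'$ the slices $K\times\{f\}$ are pairwise parallel and $K\times\{p\}=K$, so every point of $M$ lies on a convex subcomplex parallel to $K$, whence $M\subseteq P_K$.

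The substantive direction is $P_K\subseteq M$, and this is where I expect the main difficulty. I would take a convex subcomplex $K'$ parallel to $K$ and aim to show $K'\subseteq M$. Using the identification of the convex hull $\operatorname{conv}(K\cup K')$ with $K\times[0,\Delta]$ recorded in the preliminaries, the hyperplanes crossing $K'$ coincide with those crossing $K$, while any hyperplane $h$ separating $K$ from $K'$ restricts to a ``vertical'' hyperplane $K\times\{t\}$ of this product and therefore crosses every hyperplane crossing $K$. Since two crossing hyperplanes in $X(\Gamma)$ have distinct adjacent labels and the labels of hyperplanes crossing $K$ are exactly $V_K$, the label of every such $h$ must lie in $V_K^{\perp}=V(\Gamma')$. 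The hard part is precisely this label bookkeeping: one must ensure that \emph{every} hyperplane separating a parallel copy from $K$ carries a label orthogonal to all of $V_K$, which is what confines the extra directions to $X(lk(\Gamma_K))$ and prevents $P_K$ from spilling outside $M$. Granting it, I would fix a vertex $x\in K'$, put $k=\pi_K(x)\in K$, and observe that the hyperplanes separating $k$ from $x$ are exactly those separating $x$ from $K$, each of which separates $K$ from $K'$ and hence is labeled in $V(\Gamma')$. A combinatorial geodesic from $k$ to $x$ then uses only edges labeled in $V(\Gamma')$, so $x$ lies in the $\Gamma'$-standard subcomplex through $k$, which inside $N$ is $\{k\}\times F'\subseteq K\times F'=M$. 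This yields $K'\subseteq M$ and so $P_K\subseteq M$.

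Combining the two inclusions gives $P_K=M=K\times F'$, a convex subcomplex canonically isometric to $K\times X(\Gamma')$, as desired. The remaining ingredients — convexity of the product via Lemma \ref{2.2}, the slice argument, and the correspondence between label-restricted edge paths and standard subcomplexes from Section \ref{subsec_raag} — are routine, so the essential content is the hyperplane-label analysis in the third paragraph.
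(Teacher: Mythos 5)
Your overall strategy --- identifying $P_K$ with the copy $M=K\times X(\Gamma')$ sitting inside the standard subcomplex for the join $\Gamma_K\circ lk(\Gamma_K)$, and proving the two inclusions --- is the same as the paper's, and your hyperplane-crossing argument for the key label claim (a hyperplane separating $K$ from a parallel copy contains a parallel copy of $K$ by Lemma \ref{2.7}, hence crosses every hyperplane dual to an edge of $K$, hence is labelled in $V_K^{\perp}$) is a clean and correct substitute for the paper's flat-rectangle computation. The easy inclusion $M\subseteq P_K$ and the reduction of a vertex $x$ of a parallel convex \emph{subcomplex} to its gate $k=\pi_K(x)$ are also fine.

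The gap is in the scope of the inclusion $P_K\subseteq M$. The parallel set is defined in Section \ref{sec_prelim} as the union of all convex \emph{subsets} parallel to $K$, not just subcomplexes, and non-subcomplex parallel copies genuinely occur: already inside $M=K\times X(\Gamma')$ the slices $K\times\{f\}$ for non-vertex points $f\in X(\Gamma')$ are parallel to $K$ and are contained in no parallel subcomplex. Your argument only shows that every parallel convex \emph{subcomplex} lies in $M$; it does not rule out a parallel convex subset sitting at a non-integral ``height'' escaping $M$, and your final step --- a combinatorial geodesic from $k$ to a vertex $x\in K'$ --- has no analogue when the parallelism image $\phi(v)$ of a vertex $v\in K$ is not a vertex. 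The paper closes exactly this case: for a vertex $v\in K$ and an arbitrary parallel copy $K'$, it takes the CAT(0) geodesic $l$ from $v$ to $\phi(v)$ and controls the labels of \emph{all} cubes that $l$ passes through (not only the hyperplanes it crosses transversally), by exhibiting the flat rectangle spanned by $l$ and each edge of $K$ at $v$ and propagating the conclusion along edge paths in $K$; this forces $l$, and hence $\phi(v)$, into the $\Gamma'$-standard subcomplex through $v$, after which convexity of $M$ absorbs all of $K'$. You need to add such an argument (or otherwise reduce arbitrary parallel copies to the subcomplex case) before concluding $P_K=M$.
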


Note that we do not require $K$ to satisfy geodesic extension property.
\begin{proof}
Pick a vertex $v\in K$. Let $\Gamma''=\Gamma_{K}$ and let $P_{1}$ be the unique standard subcomplex that passes through $v$ and has defining graph $\Gamma'\circ\Gamma''$ (recall that $\circ$ denotes the graph join). Then $K\subset P_{1}$. Let $P'$ be the natural copy of $K\times X(\Gamma')$ inside $P_{1}$. It is clear that  $P'\subset P_{K}$.

Let $K'$ be a convex subset parallel to $K$ and let $\phi:K\to K'$ be the isometry induced by $CAT(0)$ projection onto $K'$. Pick vertex $v\in K$ and let $l$ be the geodesic segment connecting $v$ and $\phi(v)$. We define $V_{l}$ as in the proof of Lemma \ref{3.1} (note that $\phi(v)$ is not necessarily a vertex). Let $e$ be any edge such that $v\in e\subset K$. Then there is a flat rectangle with $e$, $\phi(e)$ and $l$ as its three sides. Thus $l$ is contained in the carrier of the hyperplane dual to $e$ and $V_{l}\subset V_{e}^{\perp}$. Note that if $l'$ is the side opposite to $l$, then $V_{l'}=V_{l}$. For any given edge $e'\subset K$, we can find an edge path $\omega\subset K$ such that $e$ is the first and $e'$ is the last edge in $\omega$. By induction on the combinatorial length of $w$ and the same argument as above, we can show $V_{l}\subset V_{e'}^{\perp}$, thus $V_{l}\subset V_{K}^{\perp}$ and $K'\subset P'$. It follows that $P_{K}\subset P'$, so $P_{K}=P'$.
\end{proof}

\begin{remark}
\label{3.5}
The following is a generalization of the above lemma for general $CAT(0)$ cube complexes. Let $X$ be a $CAT(0)$ cube complex. A convex set $K\subset X$ is \textit{regular} if for any $x\in K$, $\Sigma_{x}K$ (the space of direction of $x$ in $K$, see Chapter II.3 of \cite{MR1744486}) satisfies:
\begin{enumerate}
\item $\Sigma_{x}K$ is a subcomplex of $\Sigma_{x}X$ with respect to the canonical all-right spherical complex structure on $\Sigma_{x}X$.
\item There exists $r>0$ such that $B(x,r)\cap K$ is isometric to the $r$-ball centred at the cone point in the Euclidean cone over $\Sigma_{x}K$.
\end{enumerate}
If $K\subset X$ is a regular convex subset, the $P_{K}$ is convex and admits a splitting $P_{K}\cong K\times N$ where $N$ has an induced cubical structure from $X$ ($N$ is $CAT(0)$).
\end{remark}

\begin{lem}
\label{3.6}
Let $q:X(\Gamma_{1})\to X(\Gamma_{2})$ be an $(L,A)$-quasi-isometry and let $F\subset X(\Gamma_{1})$ be a subcomplex isometric to $\Bbb E^{k}$. Suppose $n=dim(X(\Gamma_{1}))=dim(X(\Gamma_{2}))$. If there exist $R_{1}>0$, $R_{2}>0$ and top dimensional flats $F_{1}$, $F_{2}$ such that $F\overset{R_{2}}{=}F_{1}\cap_{R_{1}}F_{2}$ and $F\overset{\infty}{=}F_{1}\cap_{R}F_{2}$ for any $R\ge R_{1}$, then 
\begin{enumerate}
\item There exist a constant $D=D(L,A,R_{1},R_{2},n)$ and a subcomplex $F'\subset X(\Gamma_{2})$ isometric to $\Bbb E^{k}$ such that $q(F)\overset{D}{=}F'$.
\item There exists a constant $D'=D'(L,A)$ such that $q(P_{F})\overset{D'}{=}P_{F'}$.
\end{enumerate}
\end{lem}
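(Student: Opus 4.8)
The plan is to deduce (1) from Theorem \ref{2.10} together with the coarse-intersection machinery of Section \ref{subsec_coarse intersection}, and then to bootstrap (2) from (1). First I would apply Theorem \ref{2.10} to the two top-dimensional flats $F_{1},F_{2}$, obtaining top-dimensional flats $F'_{1},F'_{2}\subset X(\Gamma_{2})$ and a constant $D_{0}=D_{0}(L,A)$ with $q(F_{i})\overset{D_{0}}{=}F'_{i}$. Next I would form the exact coarse intersections $\inc(F_{1},F_{2})=(Y_{1},Y_{2})$ in $X(\Gamma_{1})$ and $\inc(F'_{1},F'_{2})=(Y'_{1},Y'_{2})$ in $X(\Gamma_{2})$ via Lemma \ref{2.3}. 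By Remark \ref{2.5}, $Y_{i}$ and $Y'_{i}$ capture the corresponding coarse intersections of neighborhoods. Combining this with the hypothesis $F\overset{R_{2}}{=}F_{1}\cap_{R_{1}}F_{2}$ and its stability $F\overset{\infty}{=}F_{1}\cap_{R}F_{2}$ for all $R\ge R_{1}$ gives a constant $D_{1}=D_{1}(R_{1},R_{2},n)$ with $F\overset{D_{1}}{=}Y_{1}$; here I use that $d(F_{1},F_{2})$ is controlled by $R_{1}$ (the coarse intersection is nonempty), so that $Y_{1}\subset F_{1}\cap_{R_{1}}F_{2}$ on one side, and the quantitative estimate of Lemma \ref{2.3}(4) on the other.

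The crux is to transport the coarse intersection through $q$, that is, to show $q(Y_{1})\overset{D_{2}}{=}Y'_{1}$. For the forward inclusion, $Y_{1}\subset F_{1}\cap_{R_{1}}F_{2}$ forces $q(Y_{1})\subset F'_{1}\cap_{R_{3}}F'_{2}$ with $R_{3}=LR_{1}+A+D_{0}$, and Remark \ref{2.5} then gives $q(Y_{1})\subset_{D_{2}}Y'_{1}$. For the reverse inclusion I would fix a quasi-inverse $\bar{q}$ of $q$: any $y\in Y'_{1}\subset F'_{1}\cap F'_{2}$ has $\bar{q}(y)$ lying in $F_{1}\cap_{R_{4}}F_{2}$ for a fixed $R_{4}=R_{4}(L,A,D_{0})$, so by the \emph{stability} hypothesis and Remark \ref{2.5} we get $\bar{q}(y)\in N_{c}(Y_{1})$ for a constant $c$, with $Y_{1}\overset{\infty}{=}F$; pushing back by $q$ and using $q\bar{q}\overset{\infty}{=}\mathrm{Id}$ yields $Y'_{1}\subset_{D_{2}}q(Y_{1})$. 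It is essential that $R_{4}$ depends only on $(L,A,D_{0})$ and not on the ambient neighborhood size, which is exactly what the stability assumption and the inequality (\ref{2.4}) buy us. Chaining $q(F)\overset{LD_{1}+A}{=}q(Y_{1})\overset{D_{2}}{=}Y'_{1}$ gives $q(F)\overset{D}{=}Y'_{1}$. Finally I would produce the flat $F'$: the convex subcomplex $Y'_{1}$ is within finite Hausdorff distance of $q(F)$, which is an $(L,A)$-quasiflat modelled on $\Bbb E^{k}$, so $Y'_{1}$ is quasi-isometric to $\Bbb E^{k}$; analyzing its De Rham decomposition (each irreducible factor must be a quasi-line, otherwise $Y'_{1}$ would have super-linear divergence and could not be quasi-isometric to Euclidean space) shows $Y'_{1}$ splits coarsely as a product of $k$ quasi-lines and hence lies within finite Hausdorff distance of a genuine $k$-flat $F'\subset Y'_{1}$. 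This proves (1).

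For (2) I would use the description $P_{F}=\bigcup\{F''\mid F''\cong\Bbb E^{k},\ d_{H}(F'',F)<\infty\}$, i.e. $P_{F}$ is the union of all $k$-flats parallel to $F$ (Lemma \ref{3.4} identifies $P_{F}=F\times X(\Gamma')$, and each slice $F\times\{y\}$ is such a parallel flat). Since $F_{1},F_{2}\subset P_{F}$, every slice $F\times\{y\}$ is again a stable coarse intersection of two top-dimensional flats obtained by translating $F_{1},F_{2}$ within the product $P_{F}=F\times X(\Gamma')$, and translation-homogeneity means these coarse intersections satisfy the hypotheses of (1) with the \emph{same} constants $R_{1},R_{2}$. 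Applying (1) slicewise shows that $q$ sends each parallel $k$-flat to a $k$-flat at finite Hausdorff distance from $F'$, hence into $P_{F'}$, giving $q(P_{F})\subset_{D'}P_{F'}$; the reverse inclusion follows symmetrically using $\bar{q}$. The point making $D'$ depend only on $(L,A)$ is that closeness to $P_{F'}$ is detected by the images $F'_{1},F'_{2}$ of the fixed top-dimensional flats, whose alignment is governed solely by the constant $D_{0}=D_{0}(L,A)$ of Theorem \ref{2.10}.

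I expect the main obstacle to be the transport step $q(Y_{1})\overset{D_{2}}{=}Y'_{1}$, and specifically its reverse inclusion: without the stability hypothesis one could only say $\bar{q}(y)$ lies in some coarse intersection $F_{1}\cap_{R}F_{2}$, and the resulting Hausdorff bound to $F$ would blow up with $R$. It is precisely the combination of the stability of the coarse intersection with the estimate (\ref{2.4}) of Lemma \ref{2.3} that forces $R$ to be a fixed function of $(L,A,D_{0})$ and keeps all constants uniform.
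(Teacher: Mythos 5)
Your part (1) follows the paper's route almost exactly: apply Theorem \ref{2.10} to $F_1,F_2$, transport the coarse intersection through $q$ using Lemma \ref{2.3} and Remark \ref{2.5}, and identify $q(F)$ with $\inc(F_1',F_2')$ up to uniform Hausdorff distance. The one place you diverge is the last step, where you extract the flat $F'$ from $Y_1'$ by a De Rham/divergence argument. That detour is both unnecessary and, as literally stated, leans on tools the paper only establishes for complexes of the form $X(\Gamma)$, not for arbitrary convex subcomplexes. The paper's observation is simpler and closes this instantly: $Y_1'$ is by construction a \emph{convex subcomplex of the top-dimensional flat} $F_1'\cong\Bbb E^{n}$, hence splits as $F'\times\prod_{i}I_{i}$ with $F'\cong\Bbb E^{k}$ and the $I_{i}$ finite intervals whose diameters are bounded because $Y_1'$ is Hausdorff-close to the quasi-flat $q(F)$. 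You should replace the divergence argument by this remark.

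For part (2) you take a genuinely different decomposition of $P_F$: you slice $P_F=F\times X(\Gamma')$ into parallel $k$-flats and apply part (1) to each slice, invoking homogeneity to keep the constants uniform; the paper instead covers $P_F$ (up to distance $1$, by Lemma \ref{3.4}) by the \emph{top-dimensional} flats it contains and applies Theorem \ref{2.10} to each of those directly, concluding $F_\lambda'\subset P_{F'}$ from the exact containment of a flat parallel to $F'$ inside $F_\lambda'$. Your route can be made to work, but it has two soft spots. First, it silently uses $F_1,F_2\subset P_F$ (needed so that translates of $F_1,F_2$ in the $X(\Gamma')$ direction realize each slice as a coarse intersection with the same $R_1,R_2$); this is true — $\inc(F_1,F_2)$ is a convex subcomplex of $F_1$ Hausdorff-close to $F$, hence contains a $k$-flat parallel to $F$, hence $F_1\subset P_F$ — but it requires an argument, and the "translations" must be implemented by actual deck transformations stabilizing the standard subcomplex containing $P_F$. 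Second, applying (1) slicewise yields $D'=D'(L,A,R_1,R_2,n)$, not the $D'(L,A)$ asserted in the statement; your closing justification for the sharper constant is really describing the paper's top-dimensional-flat argument rather than your own. For the applications in Section \ref{sec_stable subgraph} the weaker dependence suffices, but the paper's covering by top-dimensional flats is what delivers the constant as stated.
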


\begin{proof}
By Theorem \ref{2.10}, there exist top dimensional flats $F'_{1}\subset X(\Gamma_{2})$ and $F'_{2}\subset X(\Gamma_{2})$ such that $q(F_{i})\overset{D_{1}}{=}F'_{1}$ for $D_{1}=D_{1}(L,A)$ and $i=1,2$. Thus there exists $R'=R'(L,A,R_{1},R_{2})$ and $R_{3}=R_{3}(L,A,R_{1},R_{2})>R_{1}$ such that $q(F_{1}\cap_{R_{1}}F_{2})\subset F'_{1}\cap_{R'}F'_{2}\subset q(F_{1}\cap_{R_{3}}F_{2})$, this and Remark \ref{2.5} imply
\begin{equation}
\label{3.7}
q(F_{1}\cap_{R_{1}}F_{2})\overset{D_{2}}{=}F'_{1}\cap_{R'}F'_{2}
\end{equation}
for $D_{2}=D_{2}(n,d(F_{1},F_{2}))=D_{2}(L,A,R_{1},R_{2},n)$.

Let $(Y_{1},Y_{2})=\inc (F'_{1},F'_{2})$. Then there exists $D_{3}=D_{3}(L,A,R_{1},R_{2},n)$ such that
\begin{equation}
\label{3.8}
Y_{1}\overset{D_{3}}{=}F'_{1}\cap_{R'}F'_{2}.
\end{equation}
From (\ref{3.7}) and (\ref{3.8}), we have
\begin{equation}
\label{3.9}
q(F)\overset{D_{4}}{=}Y_{1}
\end{equation}
for $D_{4}=D_{4}(L,A,R_{1},R_{2},n)$. By Lemma \ref{2.3}, $Y_{1}$ is a convex subcomplex of $F'_{1}$. This together with (\ref{3.9}) imply $Y_1=F'\times\prod_{i=1}^{k'}I_{i}$ where $F'$ is isometric to $\Bbb E^{k}$ and $\{I_{i}\}_{i=1}^{k'}$ are finite intervals. Moreover, by (\ref{3.9}), $diam(\prod_{i=1}^{k'}I_{i})$ must be bounded in terms of $D_{4},L$ and $A$, thus (1) follows.

Let $\{F_{\lambda}\}_{\lambda\in\Lambda}$ be the collection of top dimensional flats in $X(\Gamma_{1})$ which are contained in the parallel set $P_{F}$ of $F$. Lemma \ref{3.4} implies 
\begin{equation}
\label{3.10}
d_{H}(\cup_{\lambda\in\Lambda}F_{\lambda},P_{F})\leq 1.
\end{equation} 
For $\lambda\in\Lambda$, there exists $R_{\lambda}>0$ such that $F\subset_{R_{\lambda}} F_{\lambda}$. Let $F'_{\lambda}$ be the top dimensional flat in $X(\Gamma_{2})$ such that $q(F_{\lambda})\overset{D_{1}}{=}F'_{\lambda}$. Then by (1), there exists $R'_{\lambda}>0$ such that $F'\subset_{R'_{\lambda}}(F'_{\lambda})$. This and Lemma \ref{2.3} imply $F'_{\lambda}\subset P_{F'}$ for any $\lambda\in\Lambda$. Thus by (\ref{3.10}), there exists $D'=D'(L,A)$ such that $q(P_{F})\subset_{D'} P_{F'}$. And (2) follows by running the same argument for the quasi-isometry inverse of $q$.
\end{proof}

A \textit{tree product} is a convex subcomplex $K\subset X(\Gamma)$ such that $K$ splits as a product of trees, i.e. there exists a cubical isomorphism $K\cong\Pi_{i=1}^{n}T_{i}$ where $T_{i}$'s are trees. A \textit{standard tree product} is a tree product which is also a standard subcomplex. 

One can check that $K$ is a standard tree product if and only if the defining graph $\Gamma_{K}$ of $K$ has a join decomposition $\Gamma_{K}=\Gamma_{1}\circ\Gamma_{2}\circ\cdots\circ\Gamma_{n}$ where each $\Gamma_{i}$ is discrete. Thus one can choose the above $T_{i}$'s to be standard subcomplexes of $K$. Note that every standard flat is a standard tree product, and every subcomplex isometric to $\Bbb E^{k}$ is a tree product. 

\begin{lem}
\label{3.11}
Suppose $\dim(X(\Gamma))=n$. Let $q:X(\Gamma)\to X(\Gamma')$ be a quasi-isometry. Let $K=\prod_{i=1}^{n}T_{i}$ be a top dimensional tree product with its tree factors. Then there exists a standard tree product $K'$ in $X(\Gamma')$ such that $q(K)\subset_{\infty} K'$.
\end{lem}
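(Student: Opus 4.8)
The plan is to reconstruct $K'$ by pushing the family of top-dimensional flats contained in $K$ through $q$ and then re-assembling the product structure on the image side. First, I would make a harmless reduction: enlarging $K$ to any larger tree product $\hat{K}\supseteq K$ only enlarges the eventual target (if $q(\hat{K})\subset_{\infty}K'$ then certainly $q(K)\subset_{\infty}K'$), so I may replace each factor $T_{i}$ by a maximal tree inside its supporting standard subcomplex. After this, every unbounded factor is either a bi-infinite geodesic (a rigid, two-ended direction) or a tree with at least four ends, while bounded factors contribute only uniformly bounded error; thus I may assume each $T_{i}$ contains bi-infinite geodesics and, in the non-rigid case, two of them diverging at both ends. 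Each choice $\ell_{i}\subset T_{i}$ of a line produces a top-dimensional flat $F=\prod_{i}\ell_{i}\subset K$, and by Theorem \ref{2.10} there is a unique top-dimensional flat $F'\subset X(\Gamma')$ with $q(F)\overset{D_{1}}{=}F'$, $D_{1}=D_{1}(L,A)$. These flats coarsely fill $K$, so it suffices to trap all the $F'$ in a single standard tree product.

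To detect the product structure on the image side I would use Lemma \ref{3.6}. Fix a base flat $F_{0}=\prod_{i}\ell_{i}$. For a non-rigid index $i$, replacing $\ell_{i}$ by a line $\ell_{i}'$ diverging from it at both ends produces a flat $\tilde{F}_{i}$ with $\inc(F_{0},\tilde{F}_{i})\overset{\infty}{=}F_{\hat{i}}:=\prod_{j\neq i}\ell_{j}$, and this realizes $F_{\hat{i}}$ as a coarse intersection of two top-dimensional flats in the sense required by Lemma \ref{3.6}. That lemma then yields a flat $F_{\hat{i}}'$ with $q(F_{\hat{i}})\overset{D}{=}F_{\hat{i}}'$ and, decisively, $q(P_{F_{\hat{i}}})\overset{D'}{=}P_{F_{\hat{i}}'}$. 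Since the slab $S_{i}=F_{\hat{i}}\times T_{i}$ is contained in $P_{F_{\hat{i}}}$ (the $T_{i}$-direction being orthogonal to $F_{\hat{i}}$ inside $K$, so that the labels $V_{T_{i}}$ lie in $lk(\Gamma_{F_{\hat{i}}})$), and since Lemma \ref{3.4} splits $P_{F_{\hat{i}}'}=F_{\hat{i}}'\times X(\Lambda_{i})$ with $\Lambda_{i}=lk(\Gamma_{F_{\hat{i}}'})$, the map $q$ carries the whole $T_{i}$-direction of the slab into the tree-like factor $X(\Lambda_{i})$. Sliding the remaining coordinates $\ell_{j}$ $(j\neq i)$ sweeps these slabs across all of $K$.

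The main obstacle is precisely that, a priori, the slabs above land in different parallel sets $P_{F_{\hat{i}}'}$ as the base flat varies, so the argument must show that these image directions are globally consistent and pairwise commuting, producing a single standard tree product rather than a union. I would establish this by comparing coarse intersections: for a fixed factor $i$, the image of the $T_{i}$-direction coming from two different base flats must agree up to finite Hausdorff distance, since both are $q$ applied to the intrinsic $i$-th factor of $K$ and coarse intersections are $q$-invariant by Lemma \ref{3.6} and Remark \ref{2.5}; and for $i\neq j$ the orthogonality of $T_{i}$ and $T_{j}$ forces their image supports $\Gamma_{i}'$ and $\Gamma_{j}'$ to centralize one another, i.e. to span a join in $\Gamma'$. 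Finally, each $\Gamma_{i}'$ must be discrete: if some $X(\Gamma_{i}')$ were at least two-dimensional, then combining a $2$-flat in it with $(n-1)$ lines in the orthogonal factors would produce a flat of dimension $n+1$ in $X(\Gamma')$, contradicting $\dim X(\Gamma')=n$ from Theorem \ref{2.10}. Granting these three points, $K'=\prod_{i}X(\Gamma_{i}')$ is a standard tree product through a basepoint, every $F'$ lies in a uniform neighborhood of $K'$, and since $K'$ is convex, Helly's property (Lemma \ref{2.1}) upgrades the slab-wise containments to $q(K)\subset_{\infty}K'$, with the bounded and rigid directions absorbed into the constants.
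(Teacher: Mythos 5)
Your overall strategy---push top-dimensional flats through $q$ via Theorem \ref{2.10}, apply Lemma \ref{3.6} to coarse intersections to isolate sub-directions, and use the dimension bound to force discreteness---is the same as the paper's, though you apply Lemma \ref{3.6} to the codimension-one flats $F_{\hat{i}}$ and their parallel sets, whereas the paper applies it to individual geodesics $l\subset T_{i}$ (each such $l$ is itself a coarse intersection of two top-dimensional flats of $K$, obtained by varying the \emph{other} factors). The paper then defines, once and for all, $V'_{i}=\bigcup_{l\subset T_{i}}V_{q(l)}$ over all geodesics in $T_i$, and proves $V'_{i}\subset (V'_{j})^{\perp}$ by placing any two geodesics from distinct factors inside a common top-dimensional flat $F$ and observing that their image lines are convex subcomplexes of $F'\cong\Bbb E^{n}$ at infinite Hausdorff distance from one another, hence orthogonal.

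The genuine gap is the step you yourself flag as ``the main obstacle''. Your resolution---that the images of the $T_{i}$-direction from two base flats ``must agree up to finite Hausdorff distance, since both are $q$ applied to the intrinsic $i$-th factor''---is an assertion, not an argument: two parallel copies of $T_{i}$ in $K$ based at far-apart points have images that are far apart, and nothing you have said pins their supports, or the ambient parallel sets $P_{F'_{\hat{i}}}$, to a single standard subcomplex of $X(\Gamma')$. The paper supplies two concrete mechanisms that are missing from your sketch: (a) the label sets $V'_{i}$ are defined globally as unions over \emph{all} geodesics in $T_{i}$, so the candidate defining graph $\Gamma'_{1}=V'_{1}\circ\cdots\circ V'_{n}$ does not depend on a base flat; and (b) any two top-dimensional flats of $K$ are joined by a finite chain in which consecutive flats share a top-dimensional orthant, a property inherited by their images, and this is what forces the entire family of image flats into one standard subcomplex with defining graph $\Gamma'_{1}$. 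Without (b) your construction produces one tree product per base flat with no argument that they coincide. Two smaller points: Helly's property (Lemma \ref{2.1}) is not the right tool for the last step---once every image flat lies in a uniform neighborhood of a single $K'$, the containment $q(K)\subset_{\infty}K'$ is immediate because the flats cover $K$; the entire difficulty is producing that single $K'$. And the rigid (line) directions cannot be ``absorbed into the constants'': the paper treats the mixed case by applying Lemma \ref{3.6} to the product of the line factors and inducting on dimension.
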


The proof essentially follows \cite[Theorem 4.2]{MR2421136}.

\begin{proof}
For $1\le i\le n$, let $V_{i}=V_{T_{i}}\in\Gamma$ be the collection of labels of edges in $T_{i}$. The case where all $V_{i}$'s are consist of one point follows from Theorem \ref{2.10}. If each $V_{i}$ contains at least two points, then by Lemma \ref{3.6}, for any geodesic $l\subset T_{i}$, there exists a subcomplex $l'\subset X(\Gamma')$ isometric to $\Bbb R$ such that $q(l)\overset{\infty}{=}l'$. Since $l'$ is unique up to parallelism, the collection of labels of edges in $l'$ does not depend on the choice of $l'$ and will be denoted by $V_{q(l)}$. For $1\le i\le n$, define $V'_{i}=\cup_{l\subset T_{i}}V_{q(l)}$ where $l$ varies among all geodesics in $T_{i}$. 

We claim $V'_{i}\subset (V'_{j})^{\perp}$ for $i\neq j$. To see this, pick geodesic $l_{i}\in T_{i}$ and let $F=\prod_{i=1}^{n}l_{i}$. Then there exist top dimensional flat $F'$ and geodesic lines $\{l'_{i}\}_{i=1}^{n}$ ($l'_{i}$ is a subcomplex) in $X(\Gamma')$ such that $q(F)\overset{\infty}{=}F'$ and $q(l_{i})\overset{\infty}{=}l'_{i}$. Since $l'_{i}\subset_{\infty}F'$, by Lemma \ref{2.3}, we can assume $l'_{i}$ is a subcomplex of $F'$. Pick $i\neq j$. Since $l_i$ and $l_j$ are orthogonal, they have infinite Hausdorff distance. Thus $l'_i$ and $l'_j$ have infinite Hausdorff distance. By our assumption, $l'_i$ and $l'_j$ are isometric to $\mathbb E^1$, and they are convex subcomplexes of $F'\cong \mathbb E^n$. Thus either $l'_i$ and $l'_j$ are parallel, or they are orthogonal. The former is impossible since $l'_i$ and $l'_j$ have infinite Hausdorff distance. Thus $\{l'_{i}\}_{i=1}^{n}$ is a mutually orthogonal collection. 

Let $\Gamma'_{1}=V'_{1}\circ V'_{2}\circ\cdots\circ V'_{n}\subset\Gamma'$. Then each $V'_{i}$ has to be a discrete full subgraph by our dimension assumption. Let $\{F_{\lambda}\}_{\lambda\in\Lambda}$ be the collection of top dimensional flats in $K$ and let $F'_{\lambda}$ be the unique flat such that $q(F_{\lambda})\overset{\infty}{=}F'_{\lambda}$. Note that for arbitrary $F_{\lambda_{1}}$ and $F_{\lambda_{2}}$, there exists a finite chain in $\{F_{\lambda}\}_{\lambda\in\Lambda}$ which starts with $F_{\lambda_{1}}$ and ends with $F_{\lambda_{2}}$ such that the intersection of adjacent elements in the chain contains a top dimensional orthant. Thus the collection $\{F'_{\lambda}\}_{\lambda\in\Lambda}$ also have this property. Then $\cup_{\lambda\in\Lambda}F'_{\lambda}$ is contained in a standard subcomplex of $X(\Gamma')$ with defining graph $\Gamma'_{1}$.

It remains to deal with the case where there exist $i\neq j$ such that $|V_{i}|=1$ and $|V_{j}|\ge 2$. We suppose $|V_i|=1$ for $1\le i\le m$ and $|V_{i}|\ge 2$ for $i>m$. By applying Lemma \ref{3.6} with $F=\prod_{i=1}^{m}T_{i}$, we can reduce to lower dimensional case and the lemma follows by induction on dimension.
\end{proof}

\begin{cor}
\label{3.12}
Let $q:X(\Gamma)\to X(\Gamma')$ be a quasi-isometry and let $K$ be a top dimensional maximal standard tree product, i.e. $K$ is not properly contained in another tree product. Then there exists a standard tree product $K'\subset X(\Gamma')$ such that $q(K)\overset{\infty}{=}K'$.
\end{cor}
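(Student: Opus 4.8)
The plan is to apply Lemma \ref{3.11} twice—once to $q$ and once to a quasi-inverse $\bar q$ of $q$—and then to upgrade the two resulting coarse containments into a coarse equality using the maximality of $K$. First I would apply Lemma \ref{3.11} to $q$ and $K=\prod_{i=1}^{n}T_{i}$ to get a standard tree product $K'\subset X(\Gamma')$ with $q(K)\subset_{\infty}K'$. Inspecting the construction in the proof of Lemma \ref{3.11}, the defining graph of $K'$ is a join $V'_{1}\circ\cdots\circ V'_{n}$ of $n$ nonempty discrete subgraphs, so its clique number—and hence $\dim K'$—equals $n=\dim X(\Gamma')$ (Theorem \ref{2.10}); thus $K'$ is itself a top dimensional tree product, say $K'=\prod_{i=1}^{n}T'_{i}$. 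Applying Lemma \ref{3.11} to $\bar q$ and $K'$ then yields a standard tree product $K''\subset X(\Gamma)$ with $\bar q(K')\subset_{\infty}K''$. Since $\bar q\circ q$ lies within bounded distance of the identity, chaining the containments gives $K\overset{\infty}{=}\bar q(q(K))\subset_{\infty}\bar q(K')\subset_{\infty}K''$, hence $K\subset_{\infty}K''$.

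The crux, and the step I expect to be the main obstacle, is to show that $K\subset_{\infty}K''$ together with maximality of $K$ forces $K\overset{\infty}{=}K''$. Here I would run the coarse-intersection machinery: let $\inc(K,K'')=(Y_{1},Y_{2})$, so that $Y_{1}\subset K$ and $Y_{2}\subset K''$ are standard subcomplexes by Lemma \ref{3.1}. Coarse containment means $K\subset N_{R}(K'')$ for some $R$, and Lemma \ref{2.3}(4) then bounds $d(\cdot,Y_{1})$ on all of $K$ by $(R-\Delta)/A$ with $\Delta=d(K,K'')$; hence $K\overset{\infty}{=}Y_{1}$. As $Y_{1}\subset K$ is a standard subcomplex at finite Hausdorff distance from $K$, its defining graph cannot be a proper full subgraph of $\Gamma_{K}$ (a strictly smaller standard coset is at infinite Hausdorff distance), so $\Gamma_{Y_{1}}=\Gamma_{K}$ and therefore $Y_{1}=K$. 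Thus $K=Y_{1}$ is parallel to $Y_{2}\subset K''$, and since $\pi_{K''}$ restricts to a label-preserving cubical isomorphism $Y_{1}\to Y_{2}$ (Lemma \ref{2.3}(3)), we get $\Gamma_{Y_{2}}=\Gamma_{K}$.

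Now comes the key trick. Writing $K=gG(\Gamma_{K})$ as a standard coset, consider $\hat K=gG(\Gamma_{K''})$: since $\Gamma_{K''}$ is a join of discrete graphs, $\hat K$ is again a standard tree product, and it contains $K$ because $G(\Gamma_{K})\le G(\Gamma_{K''})$. If $\Gamma_{K}\subsetneq\Gamma_{K''}$, this containment $K\subsetneq\hat K$ is proper, contradicting the maximality of $K$; hence $\Gamma_{K}=\Gamma_{K''}$, which forces $Y_{2}=K''$ (a coset of $G(\Gamma_{K''})$ inside another coset of the same group) and therefore $K\overset{\infty}{=}Y_{2}=K''$. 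Finally, I would close the loop: $\bar q(K')\subset_{\infty}K''\overset{\infty}{=}K$, so applying $q$ gives $K'\subset_{\infty}q(K)$, and combining with $q(K)\subset_{\infty}K'$ yields $q(K)\overset{\infty}{=}K'$, as desired. The only genuinely delicate point is the maximality upgrade in this last paragraph; the rest is bookkeeping with the constants supplied by Lemmas \ref{2.3}, \ref{3.1} and \ref{3.11}.
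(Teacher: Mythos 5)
Your proof is correct and follows exactly the route the paper intends: the corollary is stated without proof as a direct consequence of Lemma \ref{3.11}, and the intended argument is precisely your two-sided application of that lemma (to $q$ and to a quasi-inverse) followed by the upgrade from coarse containment to coarse equality via the coarse-intersection machinery of Lemmas \ref{2.3} and \ref{3.1} and the maximality of $K$. The details you supply --- in particular the observation that $\Gamma_K$ must be a full subgraph of $\Gamma_{K''}$ so that the coset $gG(\Gamma_{K''})$ is a tree product properly containing $K$ unless $\Gamma_K=\Gamma_{K''}$ --- are a sound filling-in of what the paper leaves implicit.
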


\subsection{Standard flats in transvection free RAAG's}
\label{subsec_stability of std flats}
Up to now, we have only dealt with top dimensional standard subcomplexes. The next goal is to study those standard subcomplexes which are not necessarily top dimensional. In particular, we are interested in whether quasi-isometries will preserve standard flats up to finite Hausdorff distance. The answer turns out to be related to the outer automorphism group of $G(\Gamma)$.

One direction is obvious, namely, if for any quasi-isometry $q:X(\Gamma)\to X(\Gamma')$, $q$ maps any standard flat in $X(\Gamma)$ to a standard flat in $X(\Gamma')$ up to finite Hausdorff distance, then $\out(G(\Gamma))$ must be transvection free (i.e. $\out(G(\Gamma))$ does not contain any transvections). The converse is also true. Now we set up several necessary tools to prove the converse.

In this section, $\Gamma$ will always be a finite simplicial graph.

\begin{definition}
\label{3.13}
A subgraph $\Gamma_{1}\subset\Gamma$ is \textit{stable} in $\Gamma$ if 
\begin{enumerate}
	\item $\Gamma_{1}$ is a full subgraph.
	\item Let $K\subset X(\Gamma)$ be a standard subcomplex such that $\Gamma_K=\Gamma_1$ and let $\Gamma'$ be a finite simplicial graph so that, for some $L,A$, there is an $(L,A)$-quasi-isometry $q:X(\Gamma)\to X(\Gamma')$. Then there exists $D=D(L,A,\Gamma_1,\Gamma)>0$ and a standard subcomplex $K'\subset X(\Gamma')$ such that $d_{H}(q(K),K')<D$.
\end{enumerate}
For simplicity, we will also say the pair $(\Gamma_{1},\Gamma)$ is \textit{stable} in this case. A standard subcomplex $K\subset X(\Gamma)$ is \textit{stable} if it arises from a stable subgraph of $\Gamma$.
\end{definition}

We claim the defining graph $\Gamma_{K'}$ of $K'$ is stable in $\Gamma'$. To see this, pick any graph $\Gamma''$ so that there is an $(L,A)$-quasi-isometry $q':X(\Gamma')\to X(\Gamma'')$, and pick any standard subcomplex $K'_1\subset X(\Gamma')$ with defining graph $\Gamma_{K'}$. Note that there is an isometry $i:X(\Gamma')\to X(\Gamma')$ such that $i(K')=K'_1$. Since the map $q'\circ i\circ q$ is a quasi-isometry from $X(\Gamma)$ to $X(\Gamma'')$, then by the stability of $\Gamma_1$, $q'\circ i\circ q(K)$ is Hausdorff close to a standard subcomplex in $X(\Gamma'')$, hence the same is true for $q'(K'_1)$. It follows from this claim that one can obtain quasi-isometric invariants by identifying certain classes of stable subgraphs. 

It is immediate from the definition that for finite simplicial graphs $\Gamma_{1}\subset\Gamma_{2}\subset\Gamma_{3}$, if $(\Gamma_{1},\Gamma_{2})$ is stable and $(\Gamma_{2},\Gamma_{3})$ is stable, then $(\Gamma_{1},\Gamma_{3})$ is stable. However, it is not necessarily true that if $(\Gamma_{1},\Gamma_{3})$ and $(\Gamma_{2},\Gamma_{3})$ are stable, then $(\Gamma_{1},\Gamma_{2})$ is stable. In the sequel, we will investigate several other properties of stable subgraph. The following lemma is an easy consequence of Lemma \ref{3.1} and Remark \ref{2.5}:

\begin{lem}
\label{3.14}
Suppose $\Gamma_{1}$ and $\Gamma_{2}$ are stable in $\Gamma$. Then $\Gamma_{1}\cap\Gamma_{2}$ is also stable in $\Gamma$.
\end{lem}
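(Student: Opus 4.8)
The plan is to show that $\Gamma_1 \cap \Gamma_2$ is a full subgraph of $\Gamma$ whose associated standard subcomplexes are carried to standard subcomplexes up to bounded Hausdorff distance. Fullness is immediate: the intersection of two full subgraphs is full. The key idea is to realize $\Gamma_1 \cap \Gamma_2$ as the coarse intersection of standard subcomplexes associated to $\Gamma_1$ and $\Gamma_2$, and then transport this coarse intersection across the quasi-isometry using Lemma \ref{3.1} and Remark \ref{2.5}.

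More precisely, fix a quasi-isometry $q:X(\Gamma)\to X(\Gamma')$ and a standard subcomplex $K_0$ with $\Gamma_{K_0} = \Gamma_1 \cap \Gamma_2$. First I would choose standard subcomplexes $K_1$ and $K_2$ in $X(\Gamma)$, with defining graphs $\Gamma_1$ and $\Gamma_2$ respectively, arranged so that $K_0$ realizes their coarse intersection: by Lemma \ref{3.1}, if $(Y_1, Y_2) = \inc(K_1,K_2)$ then $Y_1$ and $Y_2$ are standard subcomplexes, and one checks via Corollary \ref{3.2} that the defining graph of $Y_1$ is exactly $\Gamma_1 \cap \Gamma_2$ when $K_1$ and $K_2$ are chosen through a common base vertex (so that they actually intersect, making $V_{Y_1} = V_{K_1}\cap V_{K_2}$ with no orthogonality obstruction). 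Thus we may take $K_0 = Y_1$. Now apply stability: since $\Gamma_1$ and $\Gamma_2$ are stable in $\Gamma$, there exist standard subcomplexes $K'_1, K'_2 \subset X(\Gamma')$ and a constant $D_1 = D_1(L,A,\Gamma_1,\Gamma_2,\Gamma)$ with $q(K_i) \overset{D_1}{=} K'_i$ for $i=1,2$.

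The remaining step is to commute $q$ past the coarse-intersection operation. Because $q$ is an $(L,A)$-quasi-isometry and $K_0 \overset{\infty}{=} (K_1 \cap_r K_2)$ for $r$ large (Remark \ref{2.5}), there are constants $R', R_3$ depending only on $L,A$ and the relevant distances such that $q(K_1 \cap_{r} K_2) \subset K'_1 \cap_{R'} K'_2 \subset q(K_1 \cap_{R_3} K_2)$, exactly as in the estimate yielding (\ref{3.7}) in the proof of Lemma \ref{3.6}. Applying Remark \ref{2.5} once more to the pair $K'_1, K'_2$ in $X(\Gamma')$ gives $(Y'_1, Y'_2) = \inc(K'_1, K'_2)$ with $Y'_1 \overset{D_2}{=} (K'_1 \cap_{R'} K'_2)$, and by Lemma \ref{3.1} $Y'_1$ is itself a standard subcomplex of $X(\Gamma')$. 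Chaining these bounded-Hausdorff-distance estimates produces a constant $D = D(L,A,\Gamma_1,\Gamma_2,\Gamma)$ with $q(K_0) \overset{D}{=} Y'_1$, which verifies the stability condition for $\Gamma_1 \cap \Gamma_2$, with $Y'_1$ as the target standard subcomplex.

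The main obstacle I anticipate is the bookkeeping needed to ensure that the coarse intersection $\inc(K_1,K_2)$ genuinely has defining graph $\Gamma_1 \cap \Gamma_2$ rather than something smaller. The subtlety is that Lemma \ref{3.1} computes $V_{Y_1} = V_{K_1} \cap V_{K_2} \cap V_l^{\perp}$, where $V_l$ records the labels crossed by a geodesic between $K_1$ and $K_2$; the extra orthogonality factor $V_l^{\perp}$ can shrink the support. This is precisely why I would insist that $K_1$ and $K_2$ pass through a common vertex so that $d(K_1,K_2)=0$, $V_l$ is empty, and the intersection support is genuinely $V_{K_1}\cap V_{K_2} = v(\Gamma_1)\cap v(\Gamma_2)$. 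Verifying that such a choice is always available, and that the constant $D$ depends only on the allowed data, is where the care lies; the transport of coarse intersections across $q$ is then routine given Lemma \ref{3.6}'s template.
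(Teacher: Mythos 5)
Your proof is correct and follows exactly the route the paper intends: the paper gives no written proof but notes the lemma is an easy consequence of Lemma \ref{3.1} and Remark \ref{2.5}, which is precisely your argument of realizing $K_0$ as the (coarse) intersection of standard subcomplexes $K_1$, $K_2$ through a common vertex of $K_0$ and transporting the coarse intersection across $q$ as in the proof of Lemma \ref{3.6}. Your attention to the orthogonality factor $V_l^{\perp}$ (eliminated by taking $d(K_1,K_2)=0$) and to the uniformity of the constants is exactly the right bookkeeping.
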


The following result follows from Lemma \ref{2.8}.

\begin{lem}
\label{3.15}
If $\Gamma_{1}$ is stable in $\Gamma$, then every connected component of $\Gamma_{1}$ that contains more than one point is also stable in $\Gamma$.
\end{lem}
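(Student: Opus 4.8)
The plan is to reduce the statement to Lemma \ref{2.8} by restricting the quasi-isometry to a standard subcomplex carrying all of $\Gamma_{1}$. Write $\Gamma_{1}^{0}$ for a connected component of $\Gamma_{1}$ with more than one vertex. First I would record that $\Gamma_{1}^{0}$ is a full subgraph of $\Gamma$: a connected component of any graph is a full subgraph of that graph, and a full subgraph of a full subgraph is full, so $\Gamma_{1}^{0}$ is full in $\Gamma_{1}$, which is full in $\Gamma$ by the stability of $\Gamma_{1}$. This settles condition (1) of Definition \ref{3.13}.

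For condition (2), fix an $(L,A)$-quasi-isometry $q:X(\Gamma)\to X(\Gamma')$ and a standard subcomplex $K^{0}\subset X(\Gamma)$ with $\Gamma_{K^{0}}=\Gamma_{1}^{0}$. Since $\Gamma_{1}^{0}\subset\Gamma_{1}$, the standard subcomplex $K$ with $\Gamma_{K}=\Gamma_{1}$ passing through the base vertex of $K^{0}$ contains $K^{0}$, and $K\cong X(\Gamma_{1})$. By the stability of $\Gamma_{1}$ there exist $D_{1}=D_{1}(L,A,\Gamma_{1},\Gamma)$ and a standard subcomplex $K'\subset X(\Gamma')$ with $d_{H}(q(K),K')<D_{1}$; write $\Gamma_{1}'=\Gamma_{K'}$, so that $K'\cong X(\Gamma_{1}')$.

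Next I would promote $q|_{K}$ to a quasi-isometry between these two standard subcomplexes. Set $\tilde q=\pi_{K'}\circ q|_{K}:K\to K'$. Since $\pi_{K'}$ is $1$-Lipschitz and $d(q(x),K')\le D_{1}$ for all $x\in K$, the map $\tilde q$ is $D_{1}$-close to $q|_{K}$; together with $K'\subset N_{D_{1}}(q(K))$, one checks that $\tilde q$ is an $(L',A')$-quasi-isometry with $L'=L'(L,A)$ and $A'=A'(L,A,D_{1})$. Under the identifications $K\cong X(\Gamma_{1})$ and $K'\cong X(\Gamma_{1}')$, the graph $\Gamma_{1}^{0}$ is a connected component of the defining graph of the source that is not a point, and $K^{0}$ is a standard subcomplex of $X(\Gamma_{1})$ with that defining graph. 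Applying Lemma \ref{2.8} to $\tilde q$ yields a constant $D_{2}=D_{2}(L',A')$ and a standard subcomplex $K'^{0}\subset K'\cong X(\Gamma_{1}')$ with $d_{H}(\tilde q(K^{0}),K'^{0})<D_{2}$.

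Finally, a standard subcomplex of $K'$ whose defining graph is a full subgraph of $\Gamma_{1}'$ is again a standard subcomplex of $X(\Gamma')$, so $K'^{0}$ is a standard subcomplex of $X(\Gamma')$. Using $d(q(x),\tilde q(x))\le D_{1}$ on $K^{0}$ gives $d_{H}(q(K^{0}),\tilde q(K^{0}))\le D_{1}$, whence $d_{H}(q(K^{0}),K'^{0})\le D_{1}+D_{2}=:D$, a constant of the required form (since $\Gamma_{1}$ is fixed within the ambient $\Gamma$). The one place where the hypothesis is genuinely used, and the step I expect to be the only real subtlety, is the invocation of Lemma \ref{2.8}: it requires the component $\Gamma_{1}^{0}$ to consist of more than one vertex (a single-vertex component gives a standard geodesic, which a quasi-isometry need not map close to a standard geodesic), and it requires recognizing that the coarse image $K'$ is an honest standard subcomplex, so that Lemma \ref{2.8} may be applied to the restricted quasi-isometry inside it.
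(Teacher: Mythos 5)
Your proof is correct and follows exactly the route the paper intends: the paper's proof of this lemma is simply the remark that it ``follows from Lemma \ref{2.8}'', and the way it follows is precisely your argument---use stability of $\Gamma_{1}$ to obtain $K'$, promote $q|_{K}$ to a quasi-isometry $K\to K'$ between copies of $X(\Gamma_{1})$ and $X(\Gamma_{K'})$, and apply Lemma \ref{2.8} to the restricted map. Your filled-in details (fullness of the component, the projection trick to land in $K'$, and the observation that a standard subcomplex of a standard subcomplex is standard in $X(\Gamma')$) are all sound.
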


\begin{lem}
\label{3.16}
Suppose $\Gamma_{1}$ is stable in $\Gamma$. Let $V$ be the vertex set of $\Gamma_1$ and let $\Gamma_{2}$ be the full subgraph spanned by $V$ and the orthogonal complement $V^{\perp}$. Then $\Gamma_{2}$ is also stable in $\Gamma$.
\end{lem}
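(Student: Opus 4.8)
The plan is to reduce the stability of $\Gamma_2$ to a statement about parallel sets, and then to deduce that statement from the stability of $\Gamma_1$ together with the fact that quasi-isometries coarsely preserve parallelism of standard subcomplexes. Observe first that $\Gamma_2$ is exactly the closed star $St(\Gamma_1)$, and that it admits the join decomposition $\Gamma_2 = \Gamma_1 \circ lk(\Gamma_1)$, since every vertex of $V^{\perp}$ is by definition adjacent to every vertex of $V$. Consequently, if $K_2$ is a standard subcomplex with $\Gamma_{K_2} = \Gamma_2$ and $K \subset K_2$ is the standard subcomplex with $\Gamma_K = \Gamma_1$ through a chosen vertex, then Lemma \ref{3.4} identifies $K_2$ with the parallel set $P_K = K \times X(lk(\Gamma_1))$. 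Thus, given an $(L,A)$-quasi-isometry $q: X(\Gamma) \to X(\Gamma')$, it suffices to produce a standard subcomplex of $X(\Gamma')$ uniformly Hausdorff close to $q(K_2) = q(P_K)$. By stability of $\Gamma_1$ there is a standard subcomplex $K' \subset X(\Gamma')$ with $q(K) \overset{D_0}{=} K'$, where $D_0 = D_0(L,A,\Gamma_1,\Gamma)$; by Lemma \ref{3.4} the parallel set $P_{K'}$ is again a standard subcomplex (with defining graph $\Gamma_{K'} \circ lk(\Gamma_{K'})$), so I aim to show $q(P_K) \overset{\infty}{=} P_{K'}$ with a bound depending only on $L, A, \Gamma_1, \Gamma$.

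For the inclusion $q(P_K) \subset_{\infty} P_{K'}$, cover $P_K$ by the parallel copies of $K$: for each vertex $n$ of $N := X(lk(\Gamma_1))$ the slice $\tilde K_n = K \times \{n\}$ is itself a standard subcomplex with defining graph $\Gamma_1$, and $\bigcup_n \tilde K_n$ is $\delta$-dense in $P_K$ for $\delta$ depending only on $\dim X(\Gamma)$. Each slice is parallel to $K$, hence $d_H(\tilde K_n, K) = d_N(n_0, n) < \infty$; applying $q$ and stability of $\Gamma_1$ to $\tilde K_n$ yields a standard subcomplex $\tilde K'_n$ with $q(\tilde K_n) \overset{D_0}{=} \tilde K'_n$ and therefore $d_H(\tilde K'_n, K') \le L\, d_N(n_0,n) + A + 2D_0 < \infty$. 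The key point is then that two standard subcomplexes at finite Hausdorff distance are parallel, so $\tilde K'_n \subset P_{K'}$. Since every point of $P_K$ lies within $\delta$ of some $\tilde K_n$, we obtain $q(P_K) \subset_{\infty} P_{K'}$ with the uniform bound $L\delta + A + D_0$.

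For the reverse inclusion I run the same argument for a quasi-inverse $\bar q: X(\Gamma') \to X(\Gamma)$. Here I use the claim following Definition \ref{3.13}: $\Gamma_{K'}$ is stable in $\Gamma'$, and---crucially---its stability constant factors through the stability of $\Gamma_1$, hence depends only on $(L,A,\Gamma_1,\Gamma)$ and not on the a priori uncontrolled graph $\Gamma'$. Applying the inclusion just proved to $\bar q$ and the standard subcomplex $K'$ gives $\bar q(P_{K'}) \subset_{\infty} P_{K''}$, where $K''$ is the standard subcomplex close to $\bar q(K')$; since $\bar q(K') \overset{\infty}{=} K$ and standard subcomplexes at finite Hausdorff distance share the same parallel set, $P_{K''} = P_K$. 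Composing with $q$ and using $q \bar q \overset{\infty}{=} \mathrm{Id}$ yields $P_{K'} \subset_{\infty} q(P_K)$ with a bound depending only on $L, A, \Gamma_1, \Gamma$. Combining the two inclusions gives $q(P_K) \overset{D}{=} P_{K'}$ with $D = D(L,A,\Gamma_1,\Gamma)$, which is a function of $L,A,\Gamma_2,\Gamma$, establishing the stability of $\Gamma_2$.

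The main obstacle I anticipate is the parallelism-preservation fact together with the bookkeeping of constants. I must know that, among standard subcomplexes, being at finite Hausdorff distance is equivalent to being parallel (equivalently, to lying in a common parallel set), and I must verify that the stability constant for $\Gamma_{K'}$ in $\Gamma'$ genuinely routes through the stability of $\Gamma_1$, so that all estimates remain uniform in $L, A, \Gamma_1, \Gamma$. The first is a structural fact about standard subcomplexes that should follow from the product splitting of parallel sets in Lemma \ref{3.4} together with the convexity criteria for subcomplexes; the second is exactly the content of the claim recorded after Definition \ref{3.13}, which I will invoke with the composed quasi-isometry constants.
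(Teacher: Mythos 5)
Your proposal is correct and follows essentially the same route as the paper: decompose $K_2=P_{K_1}=K_1\times K_1^{\perp}$ via Lemma \ref{3.4}, apply stability of $\Gamma_1$ to the parallel slices, observe that the resulting standard subcomplexes are pairwise at finite Hausdorff distance and hence parallel, so they all lie in $P_{K'}$ (again standard by Lemma \ref{3.4}), and then run the quasi-inverse for the reverse containment. The one fact you flag as needing verification --- that standard subcomplexes at finite Hausdorff distance are parallel --- is exactly what the paper extracts from Lemma \ref{3.1} together with Lemma \ref{2.3}, so your argument closes up as written.
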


\begin{proof}
Let $K_{2}\subset X(\Gamma)$ be a standard subcomplex with its defining graph $\Gamma_{K_{2}}=\Gamma_{2}$ and let $K_{1}\subset K_{2}$ be any standard subcomplex satisfying $\Gamma_{K_{1}}=\Gamma_{1}$. Lemma \ref{3.4} implies $K_{2}=P_{K_{1}}=K_{1}\times K_{1}^{\perp}$. For vertex $x\in K_{1}^{\perp}$, denote $K_{x}=K_{1}\times\{x\}$. Let $q: X(\Gamma)\to X(\Gamma')$ be an $(L,A)$-quasi-isometry. Then there exists standard subcomplex $K'_{x}$ such that $d_{H}(q(K_{x}),K'_{x})<D=D(L,A,\Gamma_{1},\Gamma)$. Thus $K'_{x}\overset{\infty}{=}K'_{y}$ for vertices $x,y\in K_{1}^{\perp}$. It follows from Lemma \ref{3.1} that $K'_{x}$ and $K'_{y}$ are parallel. Thus $q(P_{K_{1}})\subset_{R} P_{K'_{x}}$ for $R=D+L+A$. Moreover, $P_{K'_{x}}$ is also a standard subcomplex by Lemma \ref{3.4}. By considering the quasi-isometry inverse and repeating the previous argument, we know $q(P_{K_{1}})\overset{\infty}{=}P_{K'_{x}}$, thus $\Gamma_{2}$ is also stable in $\Gamma$.
\end{proof}

\begin{lem}
\label{3.17}
Suppose $\Gamma_{1}$ is stable in $\Gamma$. Pick vertex $v\notin\Gamma_{1}$, then the full subgraph spanned by $v^{\perp}\cap\Gamma_{1}$ is stable in $\Gamma$.
\end{lem}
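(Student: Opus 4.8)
The plan is to realize the full subgraph $\Gamma_{3}$ spanned by $v^{\perp}\cap\Gamma_{1}$ as the coarse intersection of two standard subcomplexes that \emph{both} carry the stable defining graph $\Gamma_{1}$, so that the stability of $\Gamma_{1}$, and nothing about the direction $v$, does all the work. First I would set up the geometry: fix the standard subcomplex $K_{1}\subset X(\Gamma)$ through the base vertex $p$ with $\Gamma_{K_{1}}=\Gamma_{1}$, so its vertex set is $G(\Gamma_{1})p$. Since $v\notin\Gamma_{1}$ we have $v\notin G(\Gamma_{1})$, hence the left translate $vK_{1}$ is a \emph{distinct} standard subcomplex, again with defining graph $\Gamma_{1}$, and a short word-length computation gives $d(K_{1},vK_{1})=1$ (the geodesic $p\to vp$ is a single $v$-edge, and no $v$-hyperplane meets $K_{1}$ since $v\notin\Gamma_{1}$).

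The heart of the argument is the computation of $\inc(K_{1},vK_{1})=(Y_{1},Y_{2})$. Because $d(K_{1},vK_{1})=1$ and $p,vp$ lie in the two complexes, the unique hyperplane separating $K_{1}$ from $vK_{1}$ is the $v$-hyperplane dual to the edge $\overline{p\,vp}$; by Lemma \ref{2.3} the $CAT(0)$ convex hull of $Y_{1}\cup Y_{2}$ is $Y_{1}\times[0,1]$ with the $[0,1]$-factor a $v$-edge. Feeding ``every separating edge is labelled $v$'' into Corollary \ref{3.2} yields
\[
V_{Y_{1}}=V_{K_{1}}\cap V_{vK_{1}}\cap v^{\perp}=\Gamma_{1}\cap v^{\perp},
\]
so $Y_{1}$ is precisely the standard subcomplex $K_{3}$ with defining graph $\Gamma_{3}$. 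I would also verify $Y_{1}=K_{3}$ on the nose rather than merely up to labels, by checking directly that $wp\in Y_{1}$ for every $w\in G(\Gamma_{3})$: such $w$ commutes with $v$, so $w^{-1}vw=v$ and $d(wp,vK_{1})\le|w^{-1}vw|=1$.

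Finally I would transport this across a quasi-isometry. Given an $(L,A)$-quasi-isometry $q:X(\Gamma)\to X(\Gamma')$, the stability of $\Gamma_{1}$ applies to \emph{both} $K_{1}$ and $vK_{1}$ (they share the stable defining graph $\Gamma_{1}$), producing standard subcomplexes $K_{1}',K_{1}''\subset X(\Gamma')$ with $q(K_{1})\overset{D}{=}K_{1}'$ and $q(vK_{1})\overset{D}{=}K_{1}''$ for $D=D(L,A,\Gamma_{1},\Gamma)$. Coarse intersection is preserved by quasi-isometries up to a controlled error --- this is exactly the estimate $(\ref{3.7})$ combined with Remark \ref{2.5}, as in the proof of Lemma \ref{3.6} --- so $q(K_{3})=q(Y_{1})\overset{\infty}{=}Y_{1}'$, where $(Y_{1}',Y_{2}')=\inc(K_{1}',K_{1}'')$ and $Y_{1}'$ is a standard subcomplex by Lemma \ref{3.1}. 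Since $d(K_{1},vK_{1})=1$ is an absolute constant, the resulting Hausdorff bound depends only on $L,A$, the dimension, and the graphs, giving the uniform constant demanded by Definition \ref{3.13}. As every standard subcomplex with defining graph $\Gamma_{3}$ sits inside a standard subcomplex with defining graph $\Gamma_{1}$ and can be placed in the role of $K_{3}$, this establishes that $\Gamma_{3}$ is stable.

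The step I expect to be the genuine obstacle is resisting the ``obvious'' route: since $\Gamma_{3}=\Gamma_{1}\cap lk(v)$ and $\Gamma_{1}$ is stable, one is tempted to intersect with the stable subgraph $lk(v)$ and invoke Lemma \ref{3.14}. But $lk(v)=v^{\perp}$ is in general \emph{not} stable --- its failure of stability is precisely the presence of transvections --- so this is unavailable, and it is exactly why the hypothesis must be on $\Gamma_{1}$ rather than on $v$. The real content is therefore the realization above, which manufactures $\Gamma_{3}$ out of two copies of the stable graph $\Gamma_{1}$ and never asks the $v$-direction to be well behaved on its own; the only delicate points are pinning down that the separating hyperplane is genuinely a single $v$-hyperplane and that the constant stays uniform, both of which are handled by the $d=1$ normalization.
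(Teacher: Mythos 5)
Your proposal is correct and follows essentially the same route as the paper: realize the full subgraph on $v^{\perp}\cap\Gamma_{1}$ as the coarse intersection $\inc(K_{1},\bar K_{1})$ of two adjacent standard subcomplexes with defining graph $\Gamma_{1}$ separated by a single $v$-hyperplane, identify its defining graph via Corollary \ref{3.2}, and then transport across the quasi-isometry using the stability of $\Gamma_{1}$ applied to both copies. The paper's own proof is exactly this, phrased with $\bar K_{1}$ the translate through the far endpoint of a $v$-edge rather than the left translate $vK_{1}$.
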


\begin{proof}
We use $\Gamma_{2}$ to denote the full subgraph spanned by $v^{\perp}\cap\Gamma_{1}$. Let $K_{2}\subset X(\Gamma)$ be a standard subcomplex such that $\Gamma_{K_{2}}=\Gamma_{2}$ and let $K_{1}\subset X(\Gamma)$ be the unique standard subcomplex such that $\Gamma_{K_{1}}=\Gamma_{1}$ and $K_{2}\subset K_{1}$. Pick vertex $x\in K_{2}$ and let $e\subset X(\Gamma)$ to be the edge such that $V_{e}=v$ and $x\in e$. Suppose $\bar{x}$ is the other end point of $e$. Let $\bar{K}_{i}$ be the standard subcomplex that contains $\bar{x}$ and has defining graph $\Gamma_{i}$ for $i=1,2$. Denote the hyperplane dual to $e$ by $h$. Since $v\notin\Gamma_{1}$, $h\cap K_{1}=\emptyset$ and $h\cap\bar{K}_{1}=\emptyset$, thus $h$ separates $K_{1}$ and $\bar{K}_{1}$ and $d(K_{1},\bar{K}_{1})=1$. It follows from Corollary \ref{3.2} that $\inc (K_{1},\bar{K}_{1})=(K_{2},\bar{K}_{2})$, in particular $K_{2}\overset{D}{=}K_{1}\cap_{R} \bar{K}_{1}$ for $D$ depending on $R$ and the dimension of $X(\Gamma)$. Now the lemma follows since $\Gamma_{1}$ is stable.
\end{proof}

The next result is a direct consequence of Corollary \ref{3.12}.

\begin{lem}
\label{3.18}
If $\Gamma_{1}$ is stable in $\Gamma$, then there exists $\Gamma_{2}$ which is stable in $\Gamma_{1}$ such that 
\begin{enumerate}
\item $\Gamma_{2}$ is a graph join $\bar{\Gamma}_{1}\circ\bar{\Gamma}_{2}\circ\cdots\circ\bar{\Gamma}_{k}$ where $\bar{\Gamma}_{i}$ is discrete for $1\le i\le k$.
\item $k=dim(X(\Gamma_{1}))$.
\end{enumerate}
\end{lem}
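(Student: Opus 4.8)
The plan is to realize $\Gamma_{2}$ as the defining graph of a top-dimensional \emph{maximal} standard tree product in $X(\Gamma_{1})$, so that the asserted stability becomes an immediate instance of Corollary~\ref{3.12}. Thus the lemma splits into three tasks: (a) producing such a subcomplex, (b) checking that its defining graph has the join-of-discretes form with the correct number of factors, and (c) reading off stability from Corollary~\ref{3.12}. Note that only the structure of $\Gamma_{1}$ as a finite graph enters; one works entirely inside $X(\Gamma_{1})$, and the hypothesis that $\Gamma_{1}$ is stable in $\Gamma$ is the ambient context (so that, via the transitivity remark after Definition~\ref{3.13}, $\Gamma_{2}$ is also stable in $\Gamma$).

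Set $n=\dim X(\Gamma_{1})$. First I would produce a top-dimensional standard flat $F\subset X(\Gamma_{1})$: since the dimension equals the size of a maximal clique in $\Gamma_{1}$, an $n$-clique $\omega\subset\Gamma_{1}$ gives a standard flat $F$ with $\Gamma_{F}=\omega$, and an $n$-clique is the join of $n$ single vertices, so $F$ is already a standard tree product of dimension $n$. Because $\Gamma_{1}$ is finite, the poset of standard tree products containing $F$, ordered by inclusion, has a maximal element $K$; any tree product properly containing $K$ would contain $F$, contradicting maximality, so $K$ is a maximal standard tree product in the sense of Corollary~\ref{3.12}, and it is top-dimensional since it contains $F$. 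I then set $\Gamma_{2}=\Gamma_{K}$. By the characterization of standard tree products recalled before Lemma~\ref{3.11}, $\Gamma_{2}$ admits a join decomposition $\bar{\Gamma}_{1}\circ\cdots\circ\bar{\Gamma}_{k}$ with each $\bar{\Gamma}_{i}$ discrete, and since each tree factor is $1$-dimensional we get $k=\dim K=n=\dim X(\Gamma_{1})$; this gives conclusions (1) and (2). Moreover $\Gamma_{2}$ is a full subgraph, being the support of a standard subcomplex, so Definition~\ref{3.13}(1) holds.

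For stability, any standard subcomplex $K_{0}\subset X(\Gamma_{1})$ with $\Gamma_{K_{0}}=\Gamma_{2}$ is a $G(\Gamma_{1})$-translate of $K$; since left translations are isometries and being a top-dimensional maximal standard tree product is translation-invariant, $K_{0}$ is again such a subcomplex. Hence for any $(L,A)$-quasi-isometry $q\colon X(\Gamma_{1})\to X(\Gamma')$, Corollary~\ref{3.12} yields a standard tree product $K'\subset X(\Gamma')$ — in particular a standard subcomplex — with $q(K_{0})\overset{\infty}{=}K'$, which is exactly Definition~\ref{3.13}(2). The one genuine point to verify, and the place I expect the real work to hide, is \emph{uniformity}: Definition~\ref{3.13} demands $D=D(L,A,\Gamma_{2},\Gamma_{1})$ independent of $q$ and $K_{0}$, whereas Corollary~\ref{3.12} asserts only finite Hausdorff distance. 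I would resolve this by precomposing $q$ with the translating isometry to reduce to the fixed $K$, and then tracking constants through the proofs of Corollary~\ref{3.12} and Lemma~\ref{3.11}: these invoke only Theorem~\ref{2.10} and Lemma~\ref{3.6}, whose bounds depend on $L$, $A$, $n$ and the fixed combinatorial data of $K$. Concretely, each top-dimensional flat $F_{\lambda}\subset K$ maps within $D_{1}(L,A)$ of a flat $F'_{\lambda}\subset K'$, and $K$ coincides up to bounded Hausdorff distance with the union of its top-dimensional flats (as in the estimate $d_{H}(\cup_{\lambda}F_{\lambda},P_{F})\le 1$ used in Lemma~\ref{3.6}), so both inclusions defining $q(K)\overset{\infty}{=}K'$ are bounded by a constant of the required form. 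This yields the uniform $D$ and completes the proof.
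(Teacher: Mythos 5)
Your proof is correct and follows the same route as the paper, which disposes of this lemma in one line as a direct consequence of Corollary~\ref{3.12} applied to a top-dimensional maximal standard tree product in $X(\Gamma_{1})$, whose defining graph is precisely a join of $\dim(X(\Gamma_{1}))$ discrete factors. Your added attention to the uniformity of the constant $D$ demanded by Definition~\ref{3.13} — which Corollary~\ref{3.12}, stated only with $\overset{\infty}{=}$, does not literally supply — addresses a point the paper leaves implicit, and your reduction to a fixed $K$ by translation plus constant-tracking through Lemma~\ref{3.6} and Theorem~\ref{2.10} is the right way to close it.
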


\begin{lem}
\label{3.19}
Let $\Gamma$ be a finite simplicial graph such that there do not exist vertices $v\neq w$ of $\Gamma$ such that $v^{\perp}\subset St(w)$. Then every stable subgraph of $\Gamma$ contains a stable vertex. 
\end{lem}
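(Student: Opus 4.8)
The plan is to induct on the number of vertices of the stable subgraph. Throughout, call a vertex $v\in\Gamma$ a \emph{stable vertex} if the single-vertex full subgraph $\{v\}$ is stable in $\Gamma$. Given a stable subgraph $\Gamma_{1}$, if $|\Gamma_{1}|=1$ then $\Gamma_{1}$ is itself a stable vertex and there is nothing to prove. If $|\Gamma_{1}|\ge 2$, it suffices to produce a \emph{proper} nonempty subgraph $\Gamma_{1}'\subsetneq\Gamma_{1}$ that is still stable in $\Gamma$: by the inductive hypothesis $\Gamma_{1}'$ contains a stable vertex, and that vertex lies in $\Gamma_{1}$. Thus the problem reduces to showing that every stable subgraph with at least two vertices contains a proper nonempty stable subgraph.

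First I would normalize $\Gamma_{1}$ to have product-of-trees geometry. By Lemma \ref{3.18} there is a subgraph $\Gamma_{2}\subseteq\Gamma_{1}$, stable in $\Gamma_{1}$, which is a join of discrete graphs $\bar{\Gamma}_{1}\circ\cdots\circ\bar{\Gamma}_{k}$; by the transitivity of stability noted after Definition \ref{3.13}, $\Gamma_{2}$ is also stable in $\Gamma$. If $\Gamma_{2}\subsetneq\Gamma_{1}$ we are done with $\Gamma_{1}'=\Gamma_{2}$. Hence we may assume $\Gamma_{1}=\bar{\Gamma}_{1}\circ\cdots\circ\bar{\Gamma}_{k}$ is itself a join of discrete graphs with $|\Gamma_{1}|\ge 2$. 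Now I would invoke Lemma \ref{3.17}: it suffices to find a vertex $w\in\Gamma\setminus\Gamma_{1}$ whose link meets $\Gamma_{1}$ in a proper nonempty set, i.e. $\emptyset\neq w^{\perp}\cap\Gamma_{1}\subsetneq\Gamma_{1}$; then the full subgraph spanned by $w^{\perp}\cap\Gamma_{1}$ is stable in $\Gamma$, nonempty, and has strictly fewer vertices, so the induction closes.

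The crux, and the step I expect to be the main obstacle, is producing such a $w$, and this is exactly where the hypothesis enters. I would argue by contradiction: suppose every $w\in\Gamma\setminus\Gamma_{1}$ satisfies $w^{\perp}\cap\Gamma_{1}\in\{\emptyset,\Gamma_{1}\}$ (this subsumes the degenerate case $\Gamma_{1}=\Gamma$ vacuously). Writing $L=\Gamma_{1}^{\perp}$, I compute the full link of a vertex $v$ lying in a discrete factor $\bar{\Gamma}_{i}$: the join structure gives $v^{\perp}\cap\Gamma_{1}=\Gamma_{1}\setminus\bar{\Gamma}_{i}$, while the contradiction hypothesis forces $v^{\perp}\cap(\Gamma\setminus\Gamma_{1})=L$, so $v^{\perp}=(\Gamma_{1}\setminus\bar{\Gamma}_{i})\cup L$. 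If some factor $\bar{\Gamma}_{i}$ has two distinct vertices $v\neq v'$, then $v^{\perp}=(v')^{\perp}\subseteq St(v')$; and if instead all factors are singletons, so $\Gamma_{1}$ is a clique, then for any distinct $v_{i},v_{j}$ one gets $v_{i}^{\perp}=(\Gamma_{1}\setminus\{v_{i}\})\cup L\subseteq\Gamma_{1}\cup L=St(v_{j})$. In either case we obtain distinct vertices with $v^{\perp}\subseteq St(v')$, i.e. a transvection, contradicting the standing hypothesis that no pair $v\neq w$ satisfies $v^{\perp}\subset St(w)$. This contradiction yields the desired $w$ and completes the induction. The essential content is that the transvection-free hypothesis is precisely what forbids the "all-or-nothing" link behaviour that would otherwise block the reduction, and the two link computations (the same-factor case and the clique case) are where the real work lies.
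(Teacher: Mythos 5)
Your proof is correct, and while it shares the paper's scaffolding --- reduce to a smallest stable subgraph (your induction on vertex count is the mirror image of the paper's minimal-counterexample argument), normalize to a join of discrete graphs via Lemma \ref{3.18}, and cut down using Lemma \ref{3.17} --- the way you deploy the transvection-free hypothesis is genuinely different. The paper runs a three-case analysis (discrete; non-discrete with a non-singleton discrete factor; clique), in each case using the hypothesis \emph{forward} to produce a vertex $u\in v^{\perp}\setminus St(w)$ whose link separates two vertices of $\Gamma_{1}$, and it needs Theorem \ref{2.9} (quasi-isometry invariance of the De Rham decomposition) to dispose of the mixed case where some discrete factor has more than one vertex. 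You instead argue by contraposition: assuming every outside vertex meets $\Gamma_{1}$ in all or nothing, you compute $v^{\perp}=(\Gamma_{1}\setminus\bar{\Gamma}_{i})\cup\Gamma_{1}^{\perp}$ exactly for $v\in\bar{\Gamma}_{i}$ and exhibit a forbidden inclusion $v^{\perp}\subset St(v')$ \emph{inside} $\Gamma_{1}$ itself; the two subcases (a repeated vertex in some discrete factor, or a clique) cover everything since $|\Gamma_{1}|\ge 2$. This unifies the paper's three cases into one link computation and eliminates the appeal to Theorem \ref{2.9}, making the step purely graph-theoretic; what you give up is only the parallelism with how the paper reuses the "forward" argument in Lemma \ref{3.22} and Lemma \ref{3.30}. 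The small points one has to check --- that $\Gamma_{1}^{\perp}\subset\Gamma\setminus\Gamma_{1}$, that fullness of $\Gamma_{1}$ makes adjacency in $\Gamma_{1}$ agree with adjacency in $\Gamma$, and that the $w$ you produce lies outside $\Gamma_{1}$ so Lemma \ref{3.17} applies --- all go through.
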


\begin{proof}
Let $\Gamma_{1}$ be a minimal stable subgraph, i.e. it does not properly contain any stable subgraph of $\Gamma$. It suffices to show $\Gamma_{1}$ is a point. 
We argue by contradiction and assume $\Gamma_{1}$ contains more than one point.

First we claim $\Gamma_{1}$ can not be discrete. Suppose the contrary is true. Pick vertices $v,w\in\Gamma_{1}$ and pick vertex $u\in v^{\perp}\setminus St(w)$. By Lemma \ref{3.17}, $u^{\perp}\cap\Gamma_{1}$ is also stable. Note that $v\in u^{\perp}\cap\Gamma_{1}$ and $w\notin u^{\perp}\cap\Gamma_{1}$, which contradicts the minimality of $\Gamma_{1}$.

We claim $\Gamma_{1}$ must be a clique. Since $\Gamma_{1}$ is not discrete, by Lemma \ref{3.18}, we can find a stable subgraph
\begin{equation}
\label{3.20}
\Gamma_{2}=\bar{\Gamma}_{1}\circ\bar{\Gamma}_{2}\circ\cdots\circ\bar{\Gamma}_{m}\subset\Gamma_{1}
\end{equation}
where $\{\bar{\Gamma}_{i}\}_{i=1}^{m}$ are discrete full subgraphs and $m\geq 2$. Then $\Gamma_{2}=\Gamma_{1}$. Suppose some $\bar{\Gamma}_{i}$ contains more than one point, and let $\Gamma_{3}$ be the join of the remaining join factors. Then Theorem \ref{2.9} implies that $\Gamma_{3}$ is stable, contradicting the minimality of $\Gamma_{1}$. Therefore $\Gamma_{1}$ is a clique.

Pick distinct vertices $v_{1},v_{2}\in\Gamma_{1}$. By our assumption, there exists vertex $w\in v_{1}^{\perp}\setminus St(v_{2})$. Since $\Gamma_{1}$ is a clique, $\Gamma_{1}\subset St(v_{2})$, then $w\notin \Gamma_{1}$. Let $\Gamma_{4}$ be the full subgraph spanned by $w^{\perp}\cap \Gamma_{1}$. Then $\Gamma_{4}$ is stable by Lemma \ref{3.17}. Moreover, $\Gamma_{4}\subsetneq\Gamma_{1}$ (since $v_{2}\notin\Gamma_{4}$), which yields a contradiction.
\end{proof}

\begin{lem}
\label{3.22}
Let $\Gamma$ be as in Lemma \ref{3.19} and let $\Gamma_{1}$ be a stable subgraph of $\Gamma$. Then for any vertex $w\in\Gamma_{1}$, there exists a stable vertex $v\in\Gamma_{1}$ such that $d(v,w)\le 1$.
\end{lem}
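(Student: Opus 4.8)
The plan is to prove the lemma by a descent on the distance from $w$ to the nearest stable vertex of $\Gamma_{1}$, using Lemmas \ref{3.14}, \ref{3.15}, \ref{3.16}, \ref{3.17} to push a stable vertex one step closer to $w$ at a time, and closing off each step with Lemma \ref{3.19}. Throughout I would let $d$ be the path metric of $\Gamma_{1}$; since $\Gamma_{1}$ is a full subgraph, the condition $d(v,w)\le 1$ means the same thing in $\Gamma_{1}$ as in $\Gamma$.

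First I would dispose of the degenerate case where $w$ is isolated in $\Gamma_{1}$. If $\Gamma_{1}=\{w\}$ then $w$ is a stable vertex by Lemma \ref{3.19}. Otherwise, given any $u\in\Gamma_{1}\setminus\{w\}$, the hypothesis on $\Gamma$ yields a vertex $s\in w^{\perp}\setminus St(u)$; since $w$ is isolated in $\Gamma_{1}$ and $s$ is adjacent to $w$, we get $s\notin\Gamma_{1}$, so Lemma \ref{3.17} makes $s^{\perp}\cap\Gamma_{1}$ a proper stable subgraph still containing $w$ (with $w$ still isolated in it, as the neighbours of $w$ can only shrink). Iterating finitely often cuts $\Gamma_{1}$ down to $\{w\}$, so $\{w\}$ is stable and we are done. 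Hence I may assume $w$ has a neighbour in $\Gamma_{1}$, so its component $C_{w}$ has at least two vertices; $C_{w}$ is stable by Lemma \ref{3.15}, and it suffices to produce the desired vertex inside $C_{w}$. Replacing $\Gamma_{1}$ by $C_{w}$, I may assume $\Gamma_{1}$ is connected.

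Now, by Lemma \ref{3.19} the connected stable subgraph $\Gamma_{1}$ contains a stable vertex, and I would choose a stable vertex $v_{0}\in\Gamma_{1}$ minimizing $D:=d(v_{0},w)$; the claim is $D\le 1$. Suppose $D\ge 2$ and fix a geodesic $v_{0}=y_{0},y_{1},\dots,y_{D}=w$ in $\Gamma_{1}$. Applying Lemma \ref{3.16} to the stable subgraph $\{v_{0}\}$ shows $St(v_{0})$ is stable, whence by Lemma \ref{3.14} the intersection $B:=St(v_{0})\cap\Gamma_{1}=St(v_{0},\Gamma_{1})$ is stable. Because $d(y_{2},v_{0})=2$ we have $y_{2}\notin B$, so Lemma \ref{3.17} applies and $\Gamma_{*}:=y_{2}^{\perp}\cap B$ is a stable subgraph; it is nonempty since $y_{1}$ is adjacent to $y_{2}$ and $y_{1}\in B$. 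The crux is that every vertex of $\Gamma_{*}$ lies at distance exactly $D-1$ from $w$: for $x\in\Gamma_{*}$, adjacency to $y_{2}$ gives $d(x,w)\le 1+d(y_{2},w)=D-1$, while $x\in St(v_{0},\Gamma_{1})$ gives $d(x,w)\ge d(v_{0},w)-d(v_{0},x)\ge D-1$. Applying Lemma \ref{3.19} once more, $\Gamma_{*}$ contains a stable vertex $v_{1}\in\Gamma_{1}$ with $d(v_{1},w)=D-1<D$, contradicting the minimality of $D$. Hence $D\le 1$.

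The main obstacle, and the one genuinely new idea, is this middle construction. The naive attempt—take a minimal stable subgraph containing $w$ and delete a far vertex—fails, because the vertices of $w^{\perp}$ that separate $w$ from a far vertex may all lie inside the subgraph, so Lemma \ref{3.17} offers no admissible cut. The remedy is not to preserve $w$ but to manufacture, out of the star of the nearest stable vertex $v_{0}$, an entire stable subgraph $\Gamma_{*}$ that is pinched onto the sphere of radius $D-1$ about $w$. The two inequalities that pin the distance to exactly $D-1$ (one from adjacency to $y_{2}$, one from membership in $St(v_{0},\Gamma_{1})$) are precisely what force Lemma \ref{3.19} to return a strictly closer stable vertex and thereby drive the descent; checking that $\Gamma_{*}$ is simultaneously stable, nonempty, and radius-pinched is the only delicate point, and the rest is bookkeeping with the metric on the full subgraph $\Gamma_{1}$.
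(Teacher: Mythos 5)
Your proposal is correct and follows essentially the same route as the paper: reduce to the connected case via Lemma \ref{3.15}, handle the isolated case by iterating Lemma \ref{3.17}, and then run the same descent in which the vertex at distance two from the nearest stable vertex along a $\Gamma_{1}$-geodesic is used with Lemmas \ref{3.16}, \ref{3.14}, \ref{3.17} to produce a nonempty stable subgraph on which Lemma \ref{3.19} yields a strictly closer stable vertex. The only cosmetic difference is that you phrase the descent as a minimality contradiction rather than an induction, which is equivalent.
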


\begin{proof}
Denote the combinatorial distance in $\Gamma$ and $\Gamma_{1}$ by $d$ and $d_{1}$ respectively. Since $\Gamma_{1}$ is a full subgraph, $d(x,y)=1$ if and only if $d_{1}(x,y)=1$ and $d(x,y)\ge 2$ if and only if $d_{1}(x,y)\ge 2$ for vertices $x,y\in\Gamma_{1}$. If $w$ is isolated in $\Gamma_1$, then we can use the argument in the second paragraph of the proof of Lemma \ref{3.19} to get rid of all vertices in $\Gamma_1$ except $w$, which implies $w$ is a stable vertex. If $w$ is not isolated, we can assume $\Gamma_{1}$ is connected by Lemma \ref{3.15}.

By Lemma \ref{3.19}, there exists a stable vertex $u\in\Gamma_{1}$. If $d_{1}(u,w)\le 1$, then we are done, otherwise let $\omega$ be a geodesic in $\Gamma_{1}$ connecting $u$ and $w$ ($\omega$ might not be a geodesic in $\Gamma$) and let $\{v_{i}\}_{i=0}^{n}$ be the consecutive vertices in $\omega$, here $v_{0}=w$, $v_{n}=u$ and $n=d_{1}(w,u)$.

Since $u$ is stable, by Lemma \ref{3.16}, $St(u)$ is also stable. Note that $d_{1}(v_{n-2},u)=2$, so $d(v_{n-2},u)=2$ and $v_{n-2}\notin St(u)$. Lemma \ref{3.17} implies $v_{n-2}^{\perp}\cap St(u)$ is stable and by Lemma \ref{3.14}, $v_{n-2}^{\perp}\cap St(u)\cap\Gamma_{1}$ is also stable. Note that $v_{n-2}^{\perp}\cap St(u)\cap\Gamma_{1}\neq\emptyset$ since it contains $v_{n-1}$. Lemma \ref{3.19} implies there is a stable vertex $u'\in v_{n-2}^{\perp}\cap St(u)\cap\Gamma_{1}$ and it is easy to see $d_{1}(w,u')=n-1$. Now the lemma follows by induction.
\end{proof}

\begin{lem}
\label{3.23}
Let $\Gamma$ be as in Lemma \ref{3.19}. Then every vertex of $\Gamma$ is stable.
\end{lem}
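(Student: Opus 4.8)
The plan is to prove the statement by a \emph{minimal stable subgraph} argument, shrinking a stable subgraph that contains a prescribed vertex until nothing but that vertex is left. Fix an arbitrary vertex $w\in\Gamma$. Since any quasi-isometry $q:X(\Gamma)\to X(\Gamma')$ carries $X(\Gamma)$ onto all of $X(\Gamma')$ (up to bounded Hausdorff distance), the whole graph $\Gamma$ is itself a stable subgraph of $\Gamma$. Hence the set of stable subgraphs of $\Gamma$ containing $w$ is nonempty and finite, and I may choose a minimal such subgraph $\Gamma_{0}$. The goal is to show $\Gamma_{0}=\{w\}$, which is precisely the assertion that $w$ is a stable vertex.

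Suppose, for contradiction, that $\Gamma_{0}$ properly contains $\{w\}$. Applying Lemma \ref{3.22} to the stable subgraph $\Gamma_{0}$ and the vertex $w$, I obtain a stable vertex $v\in\Gamma_{0}$ with $d(v,w)\le 1$. If $v=w$, then $w$ is already a stable vertex, so $\{w\}$ is a stable subgraph sitting strictly inside $\Gamma_{0}$, contradicting minimality (the degenerate case in which $w$ is isolated in $\Gamma_{0}$ also lands here, exactly as in the proof of Lemma \ref{3.22}). Therefore I may assume $v\neq w$ and that $v$ and $w$ are adjacent.

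The key step is to remove $v$ while retaining $w$, using the transvection-free hypothesis of Lemma \ref{3.19}, namely that there are no distinct vertices $a,b$ with $a^{\perp}\subset St(b)$. Applying this to the pair $(w,v)$ gives $w^{\perp}\not\subset St(v)$, so there is a vertex $u\in w^{\perp}$ with $u\notin St(v)$. In particular $u$ is neither equal nor adjacent to $v$, so $v\notin u^{\perp}$, whereas $w\in u^{\perp}$ and $w\in St(v)$. Since $v$ is stable, $St(v)$ is stable by Lemma \ref{3.16}; and since $u\notin St(v)$, Lemma \ref{3.17} shows that the full subgraph $\Sigma$ spanned by $u^{\perp}\cap St(v)$ is stable. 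By construction $w\in\Sigma$ but $v\notin\Sigma$. Now Lemma \ref{3.14} yields that the full subgraph $\Sigma\cap\Gamma_{0}$ is stable; it contains $w$, is contained in $\Gamma_{0}$, and omits $v\in\Gamma_{0}$, hence is a strictly smaller stable subgraph of $\Gamma$ containing $w$. This contradicts the minimality of $\Gamma_{0}$, so $\Gamma_{0}=\{w\}$ and $w$ is stable; as $w$ was arbitrary, every vertex of $\Gamma$ is stable.

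I expect the main obstacle to be precisely the third step: producing a stable subgraph that excludes the nearby stable vertex $v$ yet still contains $w$. The transvection-free hypothesis is exactly what guarantees the auxiliary vertex $u$ (adjacent to $w$ but far from $v$), and the intersection with $\Gamma_{0}$ via Lemma \ref{3.14} is what forces the new stable subgraph to be \emph{strictly} smaller than $\Gamma_{0}$; without either ingredient the induction on $|\Gamma_{0}|$ would stall. Everything else reduces to bookkeeping about full subgraphs, closed stars, and orthogonal complements.
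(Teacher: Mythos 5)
Your proof is correct. It follows the paper's overall strategy -- pass to a minimal stable subgraph $\Gamma_{0}$ containing $w$, invoke Lemma \ref{3.22} to find a stable vertex $v$ with $d(v,w)\le 1$, and contradict minimality -- but the mechanism for the final contradiction is genuinely different. The paper first uses minimality to show that for every $v_{i}\in\Gamma_{w}\setminus\{w\}$ one has $w^{\perp}\subset St(v_{i})\cup\{v_{j}\}_{j\neq i}$, deduces from the transvection-free hypothesis that $\Gamma_{w}\not\subset St(v_{i})$ for any $i$, and then contradicts this by showing $St(u)\cap\Gamma_{w}$ forces $\Gamma_{w}\subset St(u)$ for the stable vertex $u$ from Lemma \ref{3.22}. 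You instead apply the hypothesis directly to the pair $(w,v)$ to produce $u\in w^{\perp}\setminus St(v)$, then feed the stable subgraph $St(v)$ (stable by Lemma \ref{3.16}) into Lemma \ref{3.17} to get a stable subgraph $u^{\perp}\cap St(v)$ containing $w$ but not $v$, and intersect with $\Gamma_{0}$ via Lemma \ref{3.14}. Your route is slightly more direct (it exhibits the smaller stable subgraph explicitly rather than deriving incompatible containments) and mirrors the device already used in the last paragraph of the paper's proof of Lemma \ref{3.19}; the paper's version has the side benefit of recording the structural constraint on $w^{\perp}$ that is reused, in sharpened form, in the proof of Lemma \ref{3.32}. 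Both arguments use exactly the same toolbox and are equally rigorous.
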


\begin{proof}
Let $\Gamma_{w}$ be the intersection of all stable subgraphs that contain $w$. By Lemma \ref{3.14}, $\Gamma_{w}$ is the minimal stable subgraph that contains $w$. It suffices to prove $\Gamma_{w}=\{w\}$. We argue by contradiction and denote the vertices in $\Gamma_{w}\setminus\{w\}$ by $\{v_{i}\}_{i=1}^{k}$. The minimality of $\Gamma_{w}$ implies we can not use Lemma \ref{3.17} to get rid of some $v_{i}$ while keep $w$, thus $w^{\perp}\setminus St(v_{i})\subset\{v_{1}\cdots v_{i-1},v_{i+1}\cdots v_{k}\}$ for any $i$, in other words
\begin{equation}
\label{3.24}
w^{\perp}\subset St(v_{i})\cup\{v_{1}\cdots v_{i-1},v_{i+1}\cdots v_{k}\}
\end{equation}
for $1\le i\le k$. Then there does not exist $i$ such that $\Gamma_{w}\subset St(v_{i})$, otherwise we would have $w^{\perp}\subset St(v_{i})$ by (\ref{3.24}).

On the other hand, Lemma \ref{3.22} implies there exists a stable vertex $u\in\Gamma_{w}$ with $d(w,u)=1$. Then $St(u)$ is stable (Lemma \ref{3.16}) and $St(u)\cap\Gamma_{w}$ is stable (Lemma \ref{3.14}). Note that $w\in St(u)\cap\Gamma_{w}$, by the minimality of $\Gamma_{w}$, $\Gamma_{w}\subset St(u)$, which yields a contradiction.
\end{proof}

\begin{lem}
\label{3.25}
Let $\Gamma$ be a finite simplicial graph and pick stable subgraphs $\Gamma_{1},\Gamma_{2}$ of $\Gamma$. Let $\bar{\Gamma}$ be the full subgraph spanned by $V$ and $V^{\perp}$ where $V=V_{\Gamma_{1}}$. If $\Gamma_{2}\subset\bar{\Gamma}$, then the full subgraph spanned by $\Gamma_{1}\cup\Gamma_{2}$ is stable in $\Gamma$. 
\end{lem}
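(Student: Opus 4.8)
The plan is to realize the target subgraph as a join $\Gamma_{1}\circ\Gamma_{B}$ sitting inside the parallel set $P_{K_{1}}$, and then to transport its product structure across $q$, using $\Gamma_{1}$ to control one factor and $\Gamma_{2}$ to control the other. Write $V=V_{\Gamma_{1}}$ and let $\Lambda$ be the full subgraph spanned by $\Gamma_{1}\cup\Gamma_{2}$; it is full by construction, so only clause (2) of Definition \ref{3.13} must be checked. Because $\Gamma_{2}\subset\bar{\Gamma}$ and $V\cap V^{\perp}=\emptyset$, the vertices of $\Gamma_{2}$ split as $A\sqcup B$ with $A=V_{\Gamma_{2}}\cap V$ and $B=V_{\Gamma_{2}}\cap V^{\perp}$. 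Since every vertex of $B$ is adjacent to all of $V\supseteq A$, I get join decompositions $\Gamma_{2}=\Gamma_{A}\circ\Gamma_{B}$ and $\Lambda=\Gamma_{1}\circ\Gamma_{B}$, where $\Gamma_{A},\Gamma_{B}$ are the full subgraphs on $A,B$; in particular $\Gamma_{2}\subset\Lambda$ and $\Gamma_{B}\subset lk(\Gamma_{1})$. Fixing a base vertex $v$ and taking all standard subcomplexes through $v$, this gives $K=K_{1}\times K_{B}$ and $K_{2}=K_{A}\times K_{B}$ with $K_{2}\subset K$, both lying inside $P_{K_{1}}=K_{1}\times X(lk(\Gamma_{1}))$ by Lemma \ref{3.4}.

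Next I would use stability of $\Gamma_{1}$. For each vertex $x\in K_{B}$ the slice $K_{1}\times\{x\}$ is a parallel copy of $K_{1}$, so stability produces a standard $K_{x}'$ with $q(K_{1}\times\{x\})\overset{D_{1}}{=}K_{x}'$ and $K_{x}'\overset{\infty}{=}K_{1}'$, where $q(K_{1})\overset{D_{1}}{=}K_{1}'$. By Lemma \ref{3.1} the $K_{x}'$ are mutually parallel copies of $K_{1}'$, so writing $P_{K_{1}'}=K_{1}'\times X(lk(\Gamma_{K_{1}'}))$ (Lemma \ref{3.4}) we have $K_{x}'=K_{1}'\times\{z_{x}\}$ for points $z_{x}\in X(lk(\Gamma_{K_{1}'}))$. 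Unioning over $x$ yields
\[
q(K)\overset{D_{1}}{=}K_{1}'\times Z,\qquad Z=\{z_{x}:x\in K_{B}^{(0)}\}\subset X(lk(\Gamma_{K_{1}'})).
\]
The crux is to show that $Z$ is coarsely a \emph{standard} subcomplex; this is the main obstacle, since a priori $Z$ is merely the set of link-coordinates swept out by $q$ and carries no convexity.

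To identify $Z$ I would feed in the second stable graph. Stability of $\Gamma_{2}$ gives a standard $K_{2}'$ with $q(K_{2})\overset{D_{2}}{=}K_{2}'$, and since $K_{2}\subset K$ we have $q(K_{2})\subset_{\infty}K_{1}'\times Z\subset P_{K_{1}'}$; replacing $K_{2}'$ by the coarse intersection $\inc(K_{2}',P_{K_{1}'})$ (Lemma \ref{3.1}, Remark \ref{2.5}) I may assume $K_{2}'\subset P_{K_{1}'}$ is standard. Being a convex subcomplex of a product, $K_{2}'$ splits as $T_{1}\times T_{2}$ with $T_{1}\subset K_{1}'$ and $T_{2}\subset X(lk(\Gamma_{K_{1}'}))$ standard (Lemma \ref{2.2}). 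Let $\rho$ be the $1$-Lipschitz projection of $P_{K_{1}'}$ onto its second factor. Then $\rho(q(K_{2}))\overset{\infty}{=}\rho(K_{2}')=T_{2}$; on the other hand $K_{2}\subset K$ together with the slice description gives $\rho(q(K_{2}))\overset{\infty}{=}Z$, the inclusion $Z\subset_{\infty}\rho(q(K_{2}))$ coming from the base points $(v,x)\in K_{2}$. Hence $Z\overset{\infty}{=}T_{2}$, so $q(K)\overset{\infty}{=}K_{1}'\times T_{2}$, which is genuinely standard: $T_{2}$ has defining graph contained in $lk(\Gamma_{K_{1}'})$, so $\Gamma_{K_{1}'}\circ\Gamma_{T_{2}}$ is a full subgraph of $\Gamma'$ and $K_{1}'\times T_{2}$ is the corresponding standard subcomplex. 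Tracking $D_{1}$, $D_{2}$ and the dimension-dependent coarse-intersection constants shows the final Hausdorff bound depends only on $L,A,\Lambda$ and $\Gamma$, which is exactly clause (2).

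I would emphasize that the reason to route everything through the projection $\rho$, rather than splitting $q|_{P_{K_{1}}}$ as a product quasi-isometry via Theorem \ref{2.9}, is that $K_{1}$ and $X(lk(\Gamma_{1}))$ may share a Euclidean De Rham factor, in which case no such splitting exists. Comparing the link-coordinates of $q(K)$ and $q(K_{2})$ inside the single parallel set $P_{K_{1}'}$ is precisely what circumvents this, and pinning down $Z$ as coarsely standard is the step I expect to demand the most care.
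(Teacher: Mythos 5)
Your proof is correct and follows essentially the same route as the paper: both arguments use stability of $\Gamma_{1}$ (via the slicing/product-compatibility of $q$ on $P_{K_{1}}$, which is exactly the content of the paper's equation~(\ref{3.26}) from the proof of Lemma~\ref{3.16}) to control the first factor, and then stability of $\Gamma_{2}$ together with projection onto the orthogonal factor of $P_{K_{1}'}$ to identify the second factor of the image as a standard subcomplex. Your set $Z$ and its identification with $T_{2}=\rho(K_{2}')$ is the paper's $p_{2}'\circ q(K_{22})\overset{\infty}{=}p_{2}'(K_{2}')$ in different notation, so the two proofs coincide up to bookkeeping.
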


To simplify notation, in the following proof, we will denote $q(K)\approx K'$ where $q,K$ and $K'$ are as in Definition \ref{3.13}. We will also assume without loss of generality that $q(K)\subset K'$. 

\begin{proof}
Let $q:X(\Gamma)\to X(\Gamma')$ be an $(L,A)$-quasi-isometry. Suppose $K_{1}$ and $K$ are standard subcomplexes in $X(\Gamma)$ such that $\Gamma_{K_{1}}=\Gamma_{1}$, $\Gamma_{K}=\bar{\Gamma}$ and $K_{1}\subset K$. Put $K'\approx q(K)$, $K'_{1}\approx q(K_{1})$, $K=K_{1}\times K_{1}^{\perp}$ and $K'=K'_{1}\times K'^{\perp}_{1}$. The proof of Lemma \ref{3.16} implies there exist a quasi-isometry $q':K_{1}^{\perp}\to K'^{\perp}_{1}$ and a constant $D_{1}=D_{1}(L,A,\Gamma_{1},\Gamma)$ such that
\begin{equation}
\label{3.26}
d(q'\circ p_{2}(x),p'_{2}\circ q(x))<D_{1}
\end{equation}
for any $x\in K$ where $p_{2}:K\to K^{\perp}_{1}$ and $p'_{2}:K'\to K'^{\perp}_{1}$ are projections.

Let $\Gamma_{2}=\Gamma_{21}\circ\Gamma_{22}$ where $\Gamma_{21}=\Gamma_{1}\cap\Gamma_{2}$ and let $K_{22},K_{2}$ be standard subcomplexes such that $\Gamma_{K_{22}}=\Gamma_{22}$, $\Gamma_{K_{2}}=\Gamma_{2}$ and $K_{22}\subset K_{2}\subset K$. By (\ref{3.26}), it suffices to prove there exist a standard subcomplex $K'_{22}\subset K'$ and a constant $D=D(L,A,\Gamma_{1},\Gamma_{2},\Gamma)$ such that $d_{H}(p'_{2}\circ q(K_{22}),K'_{22})<D$. Let $K'_{2}\approx q(K_{2})$. Then $K'_{2}\subset K'$ and $p'_{2}(K'_{2})$ is a standard subcomplex. By (\ref{3.26}), $p'_{2}\circ q(K_{22})\overset{\infty}{=}p'_{2}\circ q(K_{2})\overset{\infty}{=}p'_{2}(K'_{2})$, thus we can take $K'_{22}=p'_{2}(K'_{2})$.
\end{proof}

\begin{remark}
\label{3.27}
In general the full subgraph spanned by $\Gamma_{1}\cup\Gamma_{2}$ is not necessarily stable even if $\Gamma_{1}$ and $\Gamma_{2}$ are stable, see Remark \ref{3.36}. 
\end{remark}

The next theorem follows from Lemma \ref{3.23} and Lemma \ref{3.25}.

\begin{thm}
\label{3.28}
Given finite simplicial graph $\Gamma$, the following are equivalent:
\begin{enumerate}
\item $\out(G(\Gamma))$ is transvection free.
\item For any $(L,A)$-quasi-isometry $q: X(\Gamma)\to X(\Gamma')$, there exists positive constant $D=D(L,A,\Gamma)$ such that for any standard flat $F\subset X(\Gamma)$, there exists a standard flat $F'\subset X(\Gamma')$ such that $d_{H}(q(F),F')<D$.
\end{enumerate}
\end{thm}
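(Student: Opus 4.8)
The plan is to establish the equivalence of the two conditions in Theorem \ref{3.28} by proving each direction separately, with the bulk of the work lying in $(1)\Rightarrow(2)$.

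The direction $(2)\Rightarrow(1)$ is the easy one, and I would handle it by contraposition. If $\out(G(\Gamma))$ contains a transvection, then by the list in the introduction there exist distinct vertices $w,v$ with $lk(w)\subset St(v)$, giving an automorphism $\phi$ sending $w\mapsto wv$. Since $\phi$ is in particular a quasi-isometry $X(\Gamma)\to X(\Gamma)$, I would exhibit a standard geodesic $F$ (the one with defining graph $\{w\}$) whose image $\phi(F)$ tracks the subcomplex associated to the element $wv$, which spans two vertices and hence is not Hausdorff-close to any standard geodesic, let alone a standard flat of the correct dimension. This contradicts condition (2), so (2) forces transvection-freeness.

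For the main direction $(1)\Rightarrow(2)$, the first step is to reconcile the hypothesis ``$\out(G(\Gamma))$ transvection free'' with the combinatorial hypothesis appearing in Lemma \ref{3.19}, namely that there are no distinct vertices $v\neq w$ with $v^{\perp}\subset St(w)$. I would check that transvection-freeness of $\out(G(\Gamma))$ is exactly equivalent to this condition: a non-adjacent transvection exists precisely when $lk(w)\subset St(v)$ for non-adjacent $v,w$, while the adjacent case is absorbed into the $v^{\perp}\subset St(w)$ formulation. Granting this translation, the hypothesis of Lemma \ref{3.19}, and hence of Lemmas \ref{3.22} and \ref{3.23}, is satisfied. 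The key input is then Lemma \ref{3.23}, which asserts that under this hypothesis \emph{every vertex of $\Gamma$ is stable}; in the language of Definition \ref{3.13} this says precisely that every standard geodesic is sent by any quasi-isometry to within bounded Hausdorff distance of a standard subcomplex.

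The final step is to bootstrap from stability of single vertices (standard geodesics) up to stability of arbitrary cliques (standard flats), and this is where Lemma \ref{3.25} does the work. A standard flat $F$ has defining graph a clique $\{v_1,\dots,v_k\}$, and since each $v_i$ is adjacent to all the others, each $v_i^{\perp}$ contains the remaining vertices, so the full subgraph spanned by $V_{\{v_i\}}\cup V_{\{v_i\}}^{\perp}$ contains every $v_j$. I would therefore apply Lemma \ref{3.25} inductively: having shown the full subgraph $\Gamma^{(i)}$ on $\{v_1,\dots,v_i\}$ is stable, I take $\Gamma_1=\Gamma^{(i)}$ and $\Gamma_2=\{v_{i+1}\}$ (which is stable by Lemma \ref{3.23}), verify $\Gamma_2\subset\bar{\Gamma}_1$ using that $v_{i+1}$ is adjacent to $v_1,\dots,v_i$, and conclude that $\Gamma^{(i+1)}$ is stable. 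After $k$ steps the clique is stable, which is exactly condition (2) for $F$; a uniform $D$ depending only on $L,A,\Gamma$ follows because there are finitely many cliques up to the $\aut(\Gamma)$-action and each inductive step contributes a constant of the allowed form. The main obstacle I anticipate is twofold: verifying the equivalence between transvection-freeness and the Lemma \ref{3.19} hypothesis cleanly (handling adjacent versus non-adjacent transvections and the sign-change and graph-automorphism generators, which do preserve standard flats and so cause no trouble), and confirming that the output standard subcomplex $F'$ in Lemma \ref{3.25} is forced to be a \emph{flat} of the same dimension rather than a more general standard subcomplex — the latter should follow from Theorem \ref{2.9} on the stability of the De Rham decomposition together with the fact that a clique corresponds to a Euclidean join factor.
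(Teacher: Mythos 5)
Your proposal is correct and follows essentially the same route as the paper, whose proof of Theorem \ref{3.28} is precisely the combination of Lemma \ref{3.23} (every vertex is stable under the transvection-free hypothesis, which is indeed equivalent to the hypothesis of Lemma \ref{3.19} after swapping the roles of $v$ and $w$) with an inductive application of Lemma \ref{3.25} to build up cliques, together with the observation that a transvection yields a quasi-isometry violating (2). The two points you flag as potential obstacles — the translation of hypotheses and the fact that the output standard subcomplex must be a flat of the same dimension — are left implicit in the paper but are handled correctly by your argument.
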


\subsection{Standard flats in general RAAG's}
At this point, we have the following natural questions:
\begin{enumerate}
\item In Theorem \ref{3.28}, is it true that every standard flat in $X(\Gamma')$ comes from some standard flat in $X(\Gamma)$? A related question could be, is condition (1) in Theorem \ref{3.28} a quasi-isometric invariant for right-angled Artin groups?
\item What can we say about the stable subgraphs of $\Gamma$ if we drop condition (1) in Theorem \ref{3.28}? 
\end{enumerate} 
We will first give a negative answer to question (1) in Example \ref{3.29} below. Then we will prove the Theorem \ref{3.35}, which answers question (2). Section \ref{sec_qi implies iso}, and in particular the proof of Theorem \ref{1.1}, will not depend on this subsection. However, we will need Theorem \ref{3.35} and Lemma \ref{3.32} for Section \ref{sec_qi and special subgroups}.

\begin{thm}
\label{3.35}
Let $\Gamma$ be an arbitrary finite simplicial graph. A clique $\Gamma_{1}\subset\Gamma$ is stable if and only if there do not exist vertices $w\in\Gamma_{1}$ and $v\in\Gamma\setminus\Gamma_{1}$ such that $w^{\perp}\subset St(v)$.
\end{thm}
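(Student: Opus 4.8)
The statement is an equivalence, and I would treat the two directions separately.

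\textbf{Necessity.} Arguing contrapositively, I would suppose there are $w\in\Gamma_{1}$ and $v\in\Gamma\setminus\Gamma_{1}$ with $w^{\perp}\subseteq St(v)$. This is exactly the condition under which the transvection $\phi$ sending $w\mapsto wv$ and fixing every other standard generator is an automorphism of $G(\Gamma)$, which I regard as a self-quasi-isometry of $X(\Gamma)$. Let $K$ be the standard flat with $\Gamma_{K}=\Gamma_{1}$ through the base vertex, so $K\cong\mathbb{E}^{|\Gamma_{1}|}$ is the flat of the free abelian subgroup generated by $\Gamma_{1}$, and let $\gamma\subseteq K$ be its $w$-geodesic. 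I would show that $\phi(K)$ is at infinite Hausdorff distance from every standard subcomplex, so that $\Gamma_{1}$ is not stable. The essential point is that $v\notin\Gamma_{1}$: then $\phi(\gamma)$ fellow-travels the axis of $wv$, along which the number of hyperplanes dual to $v$-edges grows linearly. If $\phi(K)\overset{\infty}{=}K'$ for a standard subcomplex $K'$, then since a hyperplane separates two points of the convex set $K'$ only when $K'$ contains an edge dual to it, a separation count along $\phi(\gamma)$ forces $v\in\Gamma_{K'}$; running the same count on the $w$-edges of $\phi(\gamma)$ and on the $u$-geodesics for $u\in\Gamma_{1}\setminus\{w\}$ (which $\phi$ fixes) forces $\Gamma_{1}\subseteq\Gamma_{K'}$. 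As $\phi$ is a quasi-isometry, $\phi(K)$ is a $|\Gamma_{1}|$-quasiflat, so $K'$ is a standard flat of dimension $|\Gamma_{1}|$ and $\Gamma_{K'}$ is a clique with $|\Gamma_{1}|$ vertices; but $\{v\}\cup\Gamma_{1}\subseteq\Gamma_{K'}$ consists of $|\Gamma_{1}|+1$ distinct vertices, a contradiction. (If instead $v\in\Gamma_{1}$, then $\phi$ preserves the subgroup generated by $\Gamma_{1}$ and $\phi(K)=K$, which is precisely why the obstruction requires $v\notin\Gamma_{1}$.)

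\textbf{Sufficiency} is the substantial direction, and the plan is to adapt the machinery of Lemmas \ref{3.14}--\ref{3.25} (which in Lemmas \ref{3.19}--\ref{3.23} produced the global statement Theorem \ref{3.28}) to the present localized hypothesis, by induction on the number of vertices of $\Gamma$. If $\Gamma=\Gamma_{1}$ there is nothing to prove. If $\Gamma$ splits as a nontrivial join $\Gamma=A\circ B$, then $X(\Gamma)=X(A)\times X(B)$ and the clique splits as $\Gamma_{1}=(\Gamma_{1}\cap A)\circ(\Gamma_{1}\cap B)$. Here the hypothesis descends to each factor: an external transvection $w^{\perp}\cap A\subseteq St(v)\cap A$ with $w\in\Gamma_{1}\cap A$ and $v\in A\setminus\Gamma_{1}$ would, since every vertex of $B$ is adjacent to both $w$ and $v$, lift to $w^{\perp}\subseteq St(v)$ in $\Gamma$ with $v\notin\Gamma_{1}$. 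Thus by the inductive hypothesis $\Gamma_{1}\cap A$ is stable in $A$ and $\Gamma_{1}\cap B$ is stable in $B$, and Theorem \ref{2.9} lets me combine these to conclude $\Gamma_{1}$ is stable in $\Gamma$.

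It remains to treat the irreducible case ($\Gamma$ not a join, hence not a clique once $|\Gamma|>1$) with $\Gamma\neq\Gamma_{1}$, and this is where I expect the main difficulty to lie. I would let $\Sigma$ be the minimal stable subgraph of $\Gamma$ containing $\Gamma_{1}$ (well-defined by Lemma \ref{3.14}) and aim to prove $\Sigma=\Gamma_{1}$. From Lemma \ref{3.17} and minimality one extracts the constraint that every vertex outside $\Sigma$ adjacent to all of $\Gamma_{1}$ is in fact adjacent to all of $\Sigma$, while Lemma \ref{3.18} and Theorem \ref{2.9} force every join factor of $\Sigma$ to meet $\Gamma_{1}$ (else dropping a factor yields a smaller stable subgraph containing $\Gamma_{1}$). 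The crux is then to rule out a vertex $y\in\Sigma\setminus\Gamma_{1}$: running the domination combinatorics inside $\Sigma$, exactly as in the clique step of Lemma \ref{3.19}, I would show that such a $y$ either can be discarded by a further application of Lemma \ref{3.17} (contradicting minimality) or produces $v\in\Gamma\setminus\Gamma_{1}$ and $w\in\Gamma_{1}$ with $w^{\perp}\subseteq St(v)$, contradicting the hypothesis. One subtlety to flag is that the naive route of proving each $w\in\Gamma_{1}$ stable individually and assembling $\Gamma_{1}$ by Lemma \ref{3.25} fails, because the hypothesis permits \emph{internal} transvections: if $w,w'\in\Gamma_{1}$ are twins then $w^{\perp}\subseteq St(w')$, so $\{w\}$ alone need not be stable. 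These are absorbed by treating a maximal domination-equivalence sub-clique of $\Gamma_{1}$ as a single Euclidean factor, stable as a unit via Lemma \ref{3.18} and Theorem \ref{2.9}; the join hypothesis of Lemma \ref{3.25} needed to reassemble such units is automatic since $\Gamma_{1}$ is a clique. The heart of the whole proof is thus the extraction, in the irreducible case, of a forbidden transvection from any superfluous vertex of the minimal stable subgraph.
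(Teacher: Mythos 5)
Your necessity argument is correct and is just a fleshed-out version of the paper's one-line indication (apply the transvection $w\mapsto wv$ and count separating hyperplanes along the image of the $w$-geodesic), and the reduction of sufficiency to the irreducible case via Theorem \ref{2.9} is fine. The gap is in the irreducible case, which you correctly identify as the heart of the matter but do not actually prove. The claim that a superfluous vertex $y\in\Sigma\setminus\Gamma_{1}$ can be eliminated ``exactly as in the clique step of Lemma \ref{3.19}'' does not go through: that step uses the \emph{global} no-domination hypothesis to produce, for distinct $v_{1},v_{2}$ in the clique, a vertex $u\in v_{1}^{\perp}\setminus St(v_{2})$, and such a $u$ is automatically outside the clique, so Lemma \ref{3.17} applies and contradicts minimality. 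In your localized setting the hypothesis does give, for each $w\in\Gamma_{1}$ and $y\notin\Gamma_{1}$, some $u\in w^{\perp}\setminus St(y)$, but nothing prevents $u$ from lying \emph{inside} $\Sigma$, in which case Lemma \ref{3.17} cannot be invoked and minimality is not violated. This is exactly the configuration the paper's Lemma \ref{3.32} has to confront (its equation (\ref{3.33}): $\emptyset\neq w^{\perp}\setminus St(u)\subset W'\setminus\{u,w\}$), and resolving it there requires Lemma \ref{3.30}, Lemma \ref{3.31}, the De Rham decomposition of the minimal stable subgraph, and an induction on $\dim X(\Gamma)$ --- none of which appears in your sketch.

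A secondary inaccuracy: your device of ``maximal domination-equivalence sub-cliques, stable as a unit via Lemma \ref{3.18} and Theorem \ref{2.9}'' is not the right invariant and is not justified by those results. The correct statement (Lemma \ref{3.32}) is that the minimal stable subgraph containing a vertex $w$ is spanned by \emph{all} $w'$ with $w^{\perp}\subset St(w')$; since domination is not symmetric, this set is generally larger than the domination-equivalence class of $w$, and there is no reason an equivalence class should be stable on its own. Once Lemma \ref{3.32} is in hand, the paper's sufficiency proof is short: the hypothesis forces each such minimal stable subgraph $\Gamma_{v_{i}}$ ($v_{i}\in\Gamma_{1}$) to lie in $\Gamma_{1}$, and because $\Gamma_{1}$ is a clique, Lemma \ref{3.25} glues the $\Gamma_{v_{i}}$ into a stable subgraph spanning $\Gamma_{1}$. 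I would recommend restructuring your sufficiency argument around proving (or citing) Lemma \ref{3.32} rather than analyzing the minimal stable subgraph containing the whole clique directly.
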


In other words, the clique $\Gamma_{1}$ is stable if and only if the corresponding $\Bbb Z^{n}$ subgroup of $G(\Gamma_{1})$ is invariant under all transvections.

\begin{example}
\label{3.29}
Let $\Gamma_{1}$ be the graph on the left and let $\Gamma_{2}$ be the one on the right. It is easy to see $\out(G(\Gamma_{1}))$ is transvection free while $\out(G(\Gamma_{2}))$ contains non-trivial transvection ($\Gamma_{2}$ has a dead end at vertex $u$). We claim $G(\Gamma_{1})$ and $G(\Gamma_{2})$ are commensurable, and in particular quasi-isometric. 
\
\begin{center}
\begin{tikzpicture}
\tikzstyle{every node}=[circle, draw, inner sep=0pt, minimum width=5pt]
\node (n1) at (2,0) {v};
\node (n2) at (1,1) {w};
\node (n3) at (0,0) {z};
\node (n4) at (0.5,-1) {};
\node (n5) at (1.5,-1) {};
\node (n7) at (2.5,-1) {};
\node (n8) at (3.5,-1) {};
\node (n9) at (4,0) {};
\node (n10) at (3,1) {k};
\node (n11) at (7,1) {};
\node (n12) at (8,1) {};
\node (n13) at (7,-1) {};
\node (n14) at (8,-1) {};
\node (n15) at (7.5,0) {};
\node (n16) at (8.5,0) {};
\node (n17) at (9,1) {};
\node (n18) at (10,1) {};
\node (n19) at (9.5,0) {};
\node (n20) at (10.5,0) {};
\node (n21) at (9,-1) {};
\node (n22) at (10,-1) {};
\node (n23) at (8.5,-1.5) {u};
\node (n24) at (6.5,0) {};

\foreach \from/\to in {n1/n2,n2/n3,n3/n4,n4/n5,n5/n1,n1/n7,n7/n8,n8/n9,n9/n10,n10/n1,n24/n11,n11/n12,n12/n16,n16/n14,n13/n14,n24/n13,n24/n15,n15/n16,n16/n21,n16/n17,n17/n18,n20/n18,n16/n19,n19/n20,n21/n22,n22/n20,n16/n23}
\draw (\from) -- (\to);
\end{tikzpicture}
\end{center}
\

Let $\Gamma\subset\Gamma_{1}$ be the pentagon on the left side and let $Y$ be the Salvetti complex of $\Gamma$. Suppose $X_{1}=Y\sqcup Y\sqcup \Bbb (S^{1}\times [0,1])/\sim$, here the two boundary circles of the annulus are identified with two standard circles which are in different copies of $Y$. Then $\pi_{1}(X_{1})=G(\Gamma_{1})$. Define homomorphism $h_{1}:G(\Gamma)\to\Bbb Z/2$ by sending $w$ to the non-trivial element in $\Bbb Z/2$ and other generators to the identity element. Let $Y'$ be the 2-sheet cover of $Y$ with respect to $\ker(h_{1})$.

Define homomorphism $h_{2}:G(\Gamma_{1})\to\Bbb Z/2$ by sending $w$ and $k$ to the non-trivial element in $\Bbb Z/2$ and other generators to the identity element. Let $X$ be the 2-sheet cover of $X_{1}$ with respect to $\ker(h_{2})$. Then $X$ is made of two copies of $Y'$ and two annuli with the boundaries of the annuli identified with the $v$-circles in $Y'$ (each $Y'$ has two $v$-circles which cover the $v$-circle in $Y$), see the picture below.

$X$ is homotopy equivalent to a Salvetti complex. To see this, let $S_{w}$ be the circle in $Y'$ which covers the $w$-circle in $Y$ two times and let $S_{z}\vee S_{v}$ be a wedge of the two circles in $Y'$ which covers the wedge of $z$-circle and $v$-circle in $Y$. There is a copy of $S_{w}\times(S_{z}\vee S_{v})$ inside $Y'$. Let $I$ be a segment in $S_{w}$ such that its end points are mapped to the base point of $Y$ under the covering map. We collapse $I\times(S_{z}\vee S_{v})$ to $\{pt\}\times(S_{z}\vee S_{v})$ inside each copy of $Y'$ in $X$, and collapse one of the annuli in $X$ to a circle by killing the interval factor. Denote the resulting space by $X'$. Then $X'$ is homotopy equivalent to $X$ and the un-collapsed annulus in $X$ becomes a torus in $X'$. It is not hard to see $X'$ is a Salvetti complex with defining graph $\Gamma_{2}$.

Any standard geodesic in $X(\Gamma_{2})$ which comes from vertex $u$ is not Hausdorff close to a quasi-isometric image of some standard geodesic in $X(\Gamma_{1})$, since $u$ is not a stable vertex while every vertex in $\Gamma_{1}$ is stable.

\begin{center}
\includegraphics[scale=0.48]{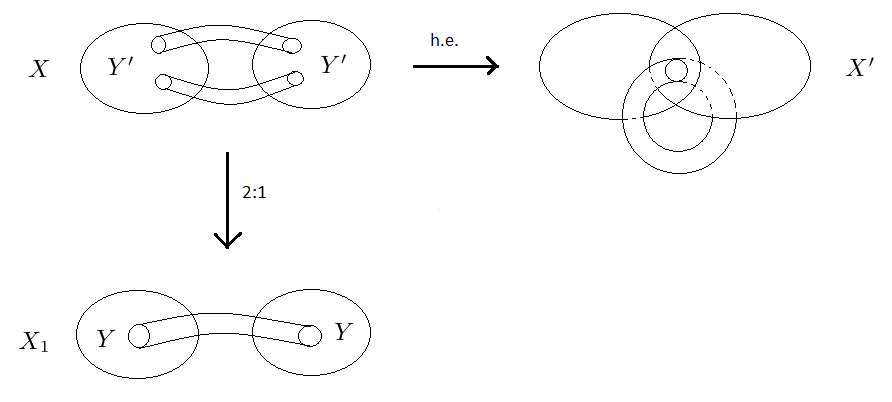}
\end{center}

Here is a generalization of the above example. Suppose $\Gamma$ is a finite simplicial graph such that there exist vertices $v_{1},v_{2}\in\Gamma$ with $d(v_{1},v_{2})=2$ so that they are separated by the intersection of links $lk(v_{1})\cap lk(v_{2})$. Define a homomorphism $h:G(\Gamma)\to\Bbb Z/2$ by sending $v_{1}$ and $v_{2}$ to the non-trivial element in $\mathbb Z/2$ and killing all other generators. Then $\ker(h)$ is also a right-angled Artin group by the same argument as before. To find its defining graph, let $\{C_{i}\}_{i=1}^{n}$ be the components of $\Gamma\setminus(lk(v_{1})\cap lk(v_{2}))$ and suppose $v_{1}\in C_{1}$. Define $\Gamma_{1}=C_{1}\cup (lk(v_{1})\cap lk(v_{2}))$ and $\Gamma_{2}=(\cup_{i=2}^{n}C_{i})\cup (lk(v_{1})\cap lk(v_{2}))$. Then $\Gamma_{1}$ and $\Gamma_{2}$ are full subgraphs of $\Gamma$, moreover, $St(v_{i})\in C_{i}$. For $i=1,2$, let $\Gamma'_{i}$ be the graph obtained by gluing two copies of $\Gamma_{i}$ along $St(v_{i})$ and let $\Gamma'_{3}$ be the join of one point and $lk(v_{1})\cap lk(v_{2})$. Then the defining graph of $\ker(h)$ can be obtained by gluing $\Gamma'_{1}$, $\Gamma'_{2}$ and $\Gamma'_{3}$ along $lk(v_{1})\cap lk(v_{2})$. 

Note that we are taking advantage of separating closed stars while constructing the counterexample. If separating closed stars are not allowed in $\Gamma$, then we have a positive answer to question (1) (see Section \ref{sec_qi and special subgroups}).
\end{example}

In the rest of this subsection, we will prove Theorem \ref{3.35}. $\Gamma$ will be an arbitrary finite simplicial graph in the rest of this subsection. Theorem \ref{3.35} is actually a consequence of the following more general result.
\begin{lem}
	\label{3.32}
	Pick a vertex $w\in\Gamma$ and let $\Gamma_{w}$ be the intersection of all stable subgraphs of $\Gamma$ that contain $w$. Define $W=\{w'\in\Gamma\mid w^{\perp}\subset St(w')\}$. Then $\Gamma_{w}$ is the full subgraph spanned by $W$.
\end{lem}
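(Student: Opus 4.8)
The plan is to prove the two inclusions $\langle W\rangle\subseteq\Gamma_{w}$ and $\Gamma_{w}\subseteq\langle W\rangle$ separately, where $\langle W\rangle$ denotes the full subgraph spanned by $W$. First note that by Lemma \ref{3.14} the intersection $\Gamma_{w}$ of all stable subgraphs containing $w$ is itself stable, so $\Gamma_{w}$ is the \emph{minimal} stable subgraph of $\Gamma$ containing $w$; it is full, being an intersection of full subgraphs. Also $w\in W$, since $w^{\perp}=lk(w)\subset St(w)$ trivially. Throughout I will use the observation recorded after Definition \ref{3.13} that an automorphism of $G(\Gamma)$ is a quasi-isometry of $X(\Gamma)$, and that the image of a stable standard subcomplex under a quasi-isometry is Hausdorff close to a standard subcomplex.

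For $\langle W\rangle\subseteq\Gamma_{w}$ I would show that every $w'\in W$ lies in \emph{every} stable subgraph containing $w$. Fix a stable subgraph $\Gamma_{1}\ni w$ and $w'\in W$, and suppose $w'\notin\Gamma_{1}$. Since $w^{\perp}\subset St(w')$, the assignment $w\mapsto ww'$ defines a transvection $\tau\in\aut(G(\Gamma))$, hence a quasi-isometry $\tau:X(\Gamma)\to X(\Gamma)$. Let $K$ be a standard subcomplex with $\Gamma_{K}=\Gamma_{1}$ based at the identity; stability forces $\tau(K)\overset{\infty}{=}K'$ for some standard subcomplex $K'$. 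The key point is that $\tau$ sends the standard geodesic of $w$ in $K$ to the axis of $ww'$: when $w\sim w'$ this is a ``diagonal'' line in the standard flat $\langle w,w'\rangle\cong\mathbb{E}^{2}$, and when $d(w,w')\ge 2$ it is the translation axis of $ww'$ in the free subgroup $\langle w,w'\rangle$. In either case this axis carries the label $w'$ cofinally, so any standard subcomplex that contains it up to finite Hausdorff distance must contain an edge labelled $w'$, forcing $K'$ to contain a whole $\mathbb{E}^{2}$ (resp.\ a whole tree factor) in which $\tau(K)$ only fills a line; thus $K'\subset_{\infty}\tau(K)$ fails, contradicting $\tau(K)\overset{\infty}{=}K'$. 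Hence $w'\in\Gamma_{1}$. Taking $\Gamma_{1}=\Gamma_{w}$ gives $W\subseteq V(\Gamma_{w})$, and since $\Gamma_{w}$ is full, $\langle W\rangle\subseteq\Gamma_{w}$.

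For the reverse inclusion I would show that each vertex $v\in\Gamma_{w}$ satisfies $w^{\perp}\subset St(v)$, i.e.\ $v\in W$. Suppose not, so there is $u\in w^{\perp}$ with $u\neq v$ and $u\not\sim v$. If $u\notin\Gamma_{w}$, then by Lemma \ref{3.17} the full subgraph on $u^{\perp}\cap\Gamma_{w}$ is stable; it contains $w$ (because $u\sim w$) but not $v$ (because $u\neq v$ and $u\not\sim v$), so it is a stable subgraph properly contained in $\Gamma_{w}$ and still containing $w$, contradicting minimality. This already forces $w^{\perp}\setminus V(\Gamma_{w})\subset(\Gamma_{w})^{\perp}$, so the only surviving possibility is an \emph{internal} witness $u\in w^{\perp}\cap V(\Gamma_{w})$ that fails to be adjacent to every other vertex of $\Gamma_{w}$.

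The internal case is the main obstacle, since Lemma \ref{3.17} only deletes vertices of $\Gamma_{w}$ using \emph{external} vertices. To handle it I would exploit product structure: a non-universal neighbor $u$ of $w$ inside $\Gamma_{w}$, witnessed by $v\not\sim u$, means $u$ does \emph{not} lie in the maximal clique (Euclidean) join factor $Z(\Gamma_{w})$ of $\Gamma_{w}$. After reducing to the connected case by Lemma \ref{3.15}, I would combine Lemma \ref{3.16} (closed stars of stable vertices are stable) with the quasi-isometric rigidity of the De Rham decomposition (Theorem \ref{2.9}, which forces join factors of a stable subgraph to be stable) to split $\Gamma_{w}$ and extract a proper stable subgraph still containing $w$, again contradicting minimality; I expect to run this as an induction on $|V(\Gamma_{w})|$. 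Verifying that such a splitting is compatible with minimality — equivalently, proving directly that $\langle W\rangle$ is stable — is the delicate step where the combinatorial hypothesis on $W$ is really used. Together with the transvection argument of the second paragraph, which rules out every vertex outside $W$, the two inclusions then close up to give $\Gamma_{w}=\langle W\rangle$.
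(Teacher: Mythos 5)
Your first inclusion ($W\subset\Gamma_{w}$ via the transvection $w\mapsto ww'$ contradicting stability) is exactly the paper's argument, and your reduction of the reverse inclusion is also correct: using Lemma \ref{3.17} and minimality you rightly conclude that any $u\in w^{\perp}$ witnessing $w^{\perp}\nsubseteq St(v)$ for some $v\in\Gamma_{w}$ must itself lie in $\Gamma_{w}$, which is precisely the paper's equation (\ref{3.33}). Up to that point the proposal is sound.

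The problem is that the internal-witness case, which is the actual content of the lemma, is left as a plan rather than a proof, and the plan is missing the ingredient that makes the paper's argument work. You say you will ``exploit product structure'' and ``split $\Gamma_{w}$'' using Lemma \ref{3.16} and Theorem \ref{2.9}, but you give no reason why $\Gamma_{w}$ admits a nontrivial join decomposition in the first place: the mere existence of a non-universal vertex $u\in w^{\perp}\cap\Gamma_{w}$ does not produce one. In the paper this comes from Lemma \ref{3.30}, which guarantees a stable discrete or stable clique subgraph in $St(w)$; feeding that into Lemma \ref{3.16} and minimality forces $\Gamma_{w}$ to equal the resulting join. Even granting a join decomposition, Theorem \ref{2.9} only yields stability of the whole Euclidean (clique) factor and of the product of the remaining factors, so when $w$ lies outside the clique factor you cannot directly extract a smaller stable subgraph containing $w$; the paper resolves this with an induction on $\dim(X(\Gamma_{w}))$ together with Lemma \ref{3.31} (a stable subgraph of the non-Euclidean part can be rejoined with the clique factor to give a stable subgraph of $\Gamma$). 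Your proposed induction on $|V(\Gamma_{w})|$ is not set up, and ``proving directly that $\langle W\rangle$ is stable'' is essentially the statement being proved (it is what Lemma \ref{3.32} combined with Lemma \ref{3.25} delivers), so as written this step is circular. You correctly identify where the difficulty lies, but the argument that closes it is not there.
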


In other words, $G(\Gamma_{w})\le G(\Gamma)$ is the minimal standard subgroup containing $w$ with the property that $G(\Gamma_{w})$ is invariant under any transvection.

Now we show how to deduce Theorem \ref{3.35} from Lemma \ref{3.32}
\begin{proof}[Proof of Theorem \ref{3.35}]
	The only if part can be proved by contradiction (choose a transvection which does not preserve the subgroup $G(\Gamma_{1})$). For the converse, let $\{v_{i}\}_{i=1}^{n}$ be the vertex set of $\Gamma_1$ and let $\Gamma_{v_{i}}$ be the minimal stable subgraph that contains $v_{i}$ for $1\le i\le n$. By our assumption and Lemma \ref{3.32}, $\Gamma_{v_{i}}\subset \Gamma_{1}$. Thus the full subgraph spanned by $\cup_{i=1}^{n}\Gamma_{v_{i}}$ is stable by Lemma \ref{3.25}, which means $\Gamma_{1}$ is stable.
\end{proof}

It remains to prove Lemma \ref{3.32}. We first set up several auxiliary lemmas.

\begin{lem}
\label{3.30}
Let $v\in\Gamma$ be a vertex which is not isolated. Then at least one of the following is true:
\begin{enumerate}
\item $v$ is contained in a stable discrete subgraph with more than one vertex.
\item $v$ is contained in a stable clique subgraph.
\item There is a stable discrete subgraph with more than one vertex whose vertex set is in $v^{\perp}$.
\item There is a stable clique subgraph whose vertex set is in $v^{\perp}$.
\end{enumerate}
\end{lem}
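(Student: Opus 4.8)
The plan is to reduce to $\Gamma$ connected, produce one stable top-dimensional tree product, and read off the conclusion from the position of $v$ relative to it; the genuinely new point is the case where $v$ lies far from \emph{every} such tree product.

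First I would reduce to the connected case. Since $\Gamma$ is stable in itself (a quasi-isometry carries $X(\Gamma)$ to within finite Hausdorff distance of $X(\Gamma')$), the component $C$ of $v$ has more than one vertex and is stable in $\Gamma$ by Lemma \ref{3.15}; by transitivity of stability (the remark after Definition \ref{3.13}) any subgraph stable in $C$ is stable in $\Gamma$, and $v^{\perp}$ is the same whether computed in $C$ or in $\Gamma$. So I may assume $\Gamma=C$ is connected with $v$ non-isolated, whence $\dim X(\Gamma)\ge 2$. Applying Lemma \ref{3.18} gives a stable subgraph $\Gamma_{2}=\bar{\Gamma}_{1}\circ\cdots\circ\bar{\Gamma}_{k}$ that is a join of discrete graphs with $k=\dim X(\Gamma)\ge 2$. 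Two auxiliary facts about such a stable join, both obtained from Theorem \ref{2.9} by the same reasoning used in the proof of Lemma \ref{3.19}, will be used repeatedly: each factor $\bar{\Gamma}_{i}$ with at least two vertices is itself stable (it is a non-Euclidean De Rham factor, so its defining standard subcomplex goes to within finite Hausdorff distance of a standard subcomplex), and if every factor is a single point then $\Gamma_{2}$ is a clique, stable directly by Lemma \ref{3.18}. These give an \emph{extraction} step: any nonempty stable subgraph $\Sigma_{0}$ contains a stable subgraph that is a clique or a discrete graph with at least two vertices (apply Lemma \ref{3.18} to $\Sigma_{0}$ and invoke the two facts); when $\Sigma_{0}\subset v^{\perp}$ this already yields alternative (3) or (4).

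If $v\in\Gamma_{2}$ I read off the alternative directly from the unique factor $\bar{\Gamma}_{i}$ containing $v$: if $|\bar{\Gamma}_{i}|\ge 2$ then $\bar{\Gamma}_{i}$ is a stable discrete subgraph through $v$, giving (1); if $\bar{\Gamma}_{i}=\{v\}$ but another factor $\bar{\Gamma}_{j}$ has at least two vertices, then $\bar{\Gamma}_{j}\subset v^{\perp}$ is a stable discrete subgraph, giving (3); and if all factors are single points then $\Gamma_{2}$ is a stable clique through $v$, giving (2). The heart of the matter is the case $v\notin\Gamma_{2}$, which I handle by the following transport claim, proved by induction on $d:=d(v,\Sigma)$: \emph{if $\Sigma$ is a nonempty stable subgraph of the connected graph $\Gamma$ with $v\notin\Sigma$, then one of $(1)$--$(4)$ holds.} For $d=1$ one has $v^{\perp}\cap\Sigma\neq\emptyset$, so Lemma \ref{3.17} makes $v^{\perp}\cap\Sigma$ stable, and the extraction step applied to it inside $v^{\perp}$ gives (3) or (4). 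For $d\ge 2$, take a geodesic $v=y_{0},\dots,y_{d}\in\Sigma$ and set $w=y_{d-1}$, so $w\notin\Sigma$ and $d(w,\Sigma)=1$. Then $\Sigma_{1}:=w^{\perp}\cap\Sigma$ is stable by Lemma \ref{3.17}, and its star $St(\Sigma_{1})$ (the full subgraph on $V(\Sigma_{1})\cup V(\Sigma_{1})^{\perp}$) is stable by Lemma \ref{3.16} and contains $w$, since $w$ is adjacent to every vertex of $\Sigma_{1}\subset w^{\perp}$. Because $d(v,\Sigma)\ge 2$, the vertex $v$ is adjacent to no vertex of $\Sigma\supset\Sigma_{1}$, so $v\notin St(\Sigma_{1})$, while $d(v,St(\Sigma_{1}))\le d(v,w)=d-1$; thus the inductive hypothesis applies. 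Taking $\Sigma=\Gamma_{2}$ finishes the proof.

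The step I expect to be the main obstacle is exactly this transport. For a vertex $v$ that is neither contained in nor adjacent to any top-dimensional tree product, Corollary \ref{3.12} and Lemma \ref{3.18} produce no stable object near $v$, so one must manufacture stability in its neighborhood. The mechanism above---alternately intersecting with an orthogonal complement (Lemma \ref{3.17}) and thickening by a star (Lemma \ref{3.16}) to march a stable subgraph one geodesic step closer to $v$---is what makes this work, and the delicate bookkeeping is to check that each step strictly decreases $d(v,\cdot)$ while keeping $v$ outside the constructed subgraph, so the induction actually terminates. The only other point requiring care is the two auxiliary stability facts for join factors, which rest on the De Rham preservation of Theorem \ref{2.9}.
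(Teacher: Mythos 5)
Your proof is correct and follows essentially the same route as the paper's: reduce to the connected case via Lemma \ref{3.15}, produce a stable join of discrete graphs via Lemma \ref{3.18}, read off the conclusion from the De Rham factors (Theorem \ref{2.9}) when $v$ lies in it, and otherwise march a stable subgraph one step closer to $v$ by alternating the orthogonal-complement intersection of Lemma \ref{3.17} with the star of Lemma \ref{3.16}, inducting on the distance. The only cosmetic differences are that you isolate the ``extraction'' and ``transport'' steps as explicit sub-claims and argue that each individual large join factor is stable rather than the complementary join, as the paper does.
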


\begin{proof}
Since $v$ is not isolated, we can assume $\Gamma$ is connected by Lemma \ref{3.15}. By Lemma \ref{3.18}, we can find a stable subgraph
$\Gamma_{1}=\bar{\Gamma}_{1}\circ\bar{\Gamma}_{2}\circ\cdots\circ\bar{\Gamma}_{n}$ where $\{\bar{\Gamma}_{i}\}_{i=1}^{n}$ are discrete full subgraphs and $n=dim(X(\Gamma))$. If $v\in\Gamma_{1}$, then by the third paragraph of the proof of Lemma \ref{3.19}, we know either (1), (2) or (4) is true.

Suppose $d(v,\Gamma_{1})=1$. Let $\Gamma_{2}$ be the full subgraph spanned by $v^{\perp}\cap\Gamma_{1}$. Then $\Gamma_{2}$ is stable by Lemma \ref{3.17}. The proof of Lemma \ref{3.19} implies every stable subgraph of $\Gamma$ contains either a stable discrete subgraph or a stable clique subgraph (this does not depend on the $v^{\perp}\nsubseteq St(w)$ assumption), thus either (3) or (4) is true.

Suppose $d(v,\Gamma_{1})\ge 2$. Pick vertex $u\in\Gamma_{1}$ such that $d(v,u)=d(v,\Gamma_{1})=n$ and let $\omega$ be a geodesic connecting $v$ and $u$. Suppose $\{v_{i}\}_{i=0}^{n}$ are the consecutive vertices in $\omega$ such that $v_{0}=v$ and $v_{n}=u$. Let $\Gamma'$ be the full subgraph spanned by $v_{n-1}^{\perp}\cap\Gamma$ and let $\Gamma''$ be the full subgraph spanned by $V$ and $V^{\perp}$ where $V=V_{\Gamma'}$ (the vertex set of $\Gamma'$). Then $\Gamma'$ is stable by Lemma \ref{3.17} and $\Gamma''$ is stable by Lemma \ref{3.16}. Note that $d(v,x)\ge n$ for any vertex $x\in V$, so $d(v,y)\ge n-1$ for any vertex $y\in V^{\perp}$. Thus $d(v,\Gamma'')\ge n-1$. However, $v_{n-1}\in\Gamma''$. So $d(v,\Gamma'')=n-1$. Now we can induct on $n$ and reduce to the $d(v,\Gamma_{1})=1$ case.
\end{proof}

It is interesting to see that if $\Gamma$ has large diameter, then there are a lot of non-trivial stable subgraphs.

We record the following lemma which is an easy consequence of Theorem \ref{2.9}.

\begin{lem}
\label{3.31}
Suppose $\Gamma=\Gamma_{1}\circ\Gamma_{2}$ where $\Gamma_{1}$ is the maximal clique join factor of $\Gamma$. If $\Gamma'_{2}$ is stable in $\Gamma_{2}$, then $\Gamma_{1}\circ\Gamma'_{2}$ is stable in $\Gamma$.
\end{lem}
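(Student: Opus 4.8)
The plan is to reduce the statement to the factor $\Gamma_2$ using the stability of the De Rham decomposition (Theorem \ref{2.9}), apply the hypothesis that $\Gamma_2'$ is stable in $\Gamma_2$ to the induced factor quasi-isometry, and then reassemble, the essential point being that $\Gamma_1\circ\Gamma_2'$ contains the \emph{entire} clique factor $\Gamma_1$.

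Write $X(\Gamma)=X(\Gamma_1)\times X(\Gamma_2)$ with $X(\Gamma_1)=\mathbb E^n$ the Euclidean De Rham factor. Since $\Gamma_1$ spans the whole clique factor, any standard subcomplex $K$ with defining graph $\Gamma_1\circ\Gamma_2'$ has the form $K=\mathbb E^n\times K_2$, where $K_2\subset X(\Gamma_2)$ is a standard subcomplex with defining graph $\Gamma_2'$ (the $\Gamma_1$-coordinate of the corresponding coset is all of $\mathbb Z^n$). Let $q:X(\Gamma)\to X(\Lambda)$ be an $(L,A)$-quasi-isometry. By Theorem \ref{2.9}, $X(\Lambda)$ splits as $\mathbb E^n\times X(\Lambda_2)$ with the same Euclidean dimension, and there is an $(L',A')$-quasi-isometry $\bar q:X(\Gamma_2)\to X(\Lambda_2)$, with $L',A'$ depending only on $L,A$, satisfying $d(p_\Lambda\circ q,\bar q\circ p_\Gamma)<D_0=D_0(L,A)$, where $p_\Gamma,p_\Lambda$ denote the projections onto the non-Euclidean factors $X(\Gamma_2),X(\Lambda_2)$.

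First I would produce the target subcomplex: since $\Gamma_2'$ is stable in $\Gamma_2$, applying Definition \ref{3.13} to $\bar q$ yields a standard subcomplex $K_2'\subset X(\Lambda_2)$ with $\bar q(K_2)\overset{\infty}{=}K_2'$, the bound depending only on $L',A',\Gamma_2',\Gamma_2$. Put $K'=\mathbb E^n\times K_2'\subset X(\Lambda)$; because $\Gamma_1$ is joined to everything, $K'$ is again a standard subcomplex and $\Gamma_1\circ\Gamma_2'$ is a full subgraph of $\Gamma$, so condition (1) of Definition \ref{3.13} holds. For condition (2) I would first check $q(K)\subset_\infty K'$: for $x\in K$ one has $p_\Gamma(x)\in K_2$, hence $p_\Lambda(q(x))$ lies within a bounded distance (controlled by $D_0$ and the stability bound) of $K_2'$; and since $K'$ contains the full Euclidean fiber over each point of $K_2'$, the product metric gives $d(q(x),K')=d(p_\Lambda(q(x)),K_2')$, which is bounded. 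For the reverse containment I would run the identical argument for a quasi-inverse $\tilde q$ of $q$, whose factor map (by Theorem \ref{2.9}) is a quasi-inverse $\bar{\tilde q}$ of $\bar q$, so that $\bar{\tilde q}(K_2')\overset{\infty}{=}K_2$ and therefore $\tilde q(K')\subset_\infty K$; applying $q$ and using that $q\circ\tilde q$ is a bounded distance from $\textmd{Id}$ gives $K'\subset_\infty q(K)$, which together with $q(K)\subset_\infty K'$ yields $d_H(q(K),K')<D$ with $D=D(L,A,\Gamma_1\circ\Gamma_2',\Gamma)$.

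The only delicate point — and the sole place the hypothesis on $\Gamma_1$ enters — is this reassembly step: Theorem \ref{2.9} controls only the non-Euclidean projections $p_\Gamma,p_\Lambda$ and says nothing about how $q$ moves the Euclidean coordinate. The resolution is precisely that $K$ and $K'$ are full in the Euclidean direction, so the uncontrolled Euclidean coordinate of $q(x)$ already lies inside $K'$ and contributes nothing to $d(q(x),K')$. This is what makes coarse control in the $X(\Lambda_2)$-factor alone suffice, and it explains why one must take $\Gamma_1$ to be the entire clique factor rather than an arbitrary stable clique.
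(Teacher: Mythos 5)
Your argument is correct and is exactly the route the paper intends: Lemma \ref{3.31} is stated there without proof as ``an easy consequence of Theorem \ref{2.9},'' and your write-up is precisely that consequence spelled out — split off the Euclidean De Rham factor, apply stability of $\Gamma_2'$ to the induced factor quasi-isometry, and reassemble the product. Your closing observation that the Euclidean coordinate of $q(x)$ is uncontrolled but harmless because $K$ and $K'$ contain the full Euclidean fibers is the right justification for why $\Gamma_1$ must be the entire clique factor.
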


Now we are ready to prove Lemma \ref{3.32}.

\begin{proof}[Proof of Lemma \ref{3.32}]
By Lemma \ref{3.14}, $\Gamma_{w}$ is the minimal stable subgraph that contains $w$. If there exists vertex $w'\in W$ such that $w'\notin \Gamma_{w}$, then sending $w\to ww'$ and fixing all other vertices would induce a group automorphism, which gives rise to a quasi-isometry from $X(\Gamma)$ to $X(\Gamma)$. The existence of such quasi-isometry would contradict the stability of $\Gamma_{w}$, thus $W\subset\Gamma_{w}$.

Let $W'$ be the vertex set of $\Gamma_{w}$. It remains to prove $W'\subset W$. Suppose $W\subsetneq W'$ and let $u\in W'\setminus W$. Then $\emptyset\neq w^{\perp}\setminus St(u)$. The minimality of $\Gamma_{w}$ implies we can not use Lemma \ref{3.17} to get rid of $u$ while keep $w$, then $w^{\perp}\setminus St(u)\subset W'\setminus\{u,w\}$. In summary,
\begin{equation}
\label{3.33}
\emptyset\neq w^{\perp}\setminus St(u)\subset W'\setminus\{u,w\}.
\end{equation}
In particular, $w$ is not isolated in $\Gamma_{w}$ and
\begin{equation}
\label{3.34}
\Gamma_{w}\nsubseteq St(u).
\end{equation}

Now we apply Lemma \ref{3.30} to $\Gamma_{w}$ and $w$, and recall that if a subgraph is stable in $\Gamma_{w}$, then it is stable in $\Gamma$. If case (1) in Lemma \ref{3.30} is true, then we will get a contradiction since $w$ is not isolated in $\Gamma_{w}$. If case (2) is true, then $\Gamma_{w}$ sits inside some clique, which is contradictory to (\ref{3.34}).

If case (3) is true, let $\Gamma_{1}\subset\Gamma_{w}$ be the corresponding stable discrete subgraph. Let $V_{1}=V_{\Gamma_{1}}$ and let $V'_{1}=\{u\in\Gamma_{w}\mid d(u,v)=1$ for any $v\in V_{1}\}$. Suppose $\Gamma'_{w}$ is the full subgraph spanned by $V_{1}$ and $V'_{1}$. Then $\Gamma'_{w}$ is stable by Lemma \ref{3.16}, hence $\Gamma'_{w}=\Gamma_{w}$. Let $\Gamma_{w}=\bar{\Gamma}_{1}\circ\bar{\Gamma}_{2}\circ\cdots\circ\bar{\Gamma}_{k}$ be the join decomposition induced by the De Rahm decomposition of $X(\Gamma_{w})$. Then $k\ge 2$ and $u$ does not sit inside the clique factor by (\ref{3.34}).

If there is no clique factor, then each join factor is stable by Theorem \ref{2.9} and $w$ is inside one of the join factors, which contradict the minimality of $\Gamma_{w}$. If the clique factor exists and $w$ sits inside the clique factor, then by Theorem \ref{2.9}, the clique factor is stable and we have the same contradiction as before. If the clique factor exists and $w$ sits outside the clique factor, this reduces to the next case.

If case (4) is true, let $\Gamma_{2}\subset\Gamma_{w}$ be the corresponding stable clique subgraph. We can also assume without loss of generality that $w$ is not contained in a stable clique. Let $V_{2}=V_{\Gamma_{2}}$ and $V'_{2}=\{u\in\Gamma_{w}\mid d(u,v)=1$ for any $v\in V_{2}\}$. Suppose $\Gamma''_{w}$ is the full subgraph spanned by $V_{2}$ and $V'_{2}$. Then $\Gamma''_{w}=\Gamma_{w}$ as before. Let $\Gamma_{w}=\Gamma'_{1}\circ\Gamma'_{2}$ where $\Gamma'_{1}$ corresponds to the Euclidean De Rahm factor of $X(\Gamma_{w})$. Note that $\Gamma'_{2}$ is non-trivial and $w,u\in\Gamma'_{2}$ as in the discussion of case (3). Equation (\ref{3.33}) implies that $w^{\perp}\nsubseteq St(u)$ is still true if we take the orthogonal complement of $w$ and the closed star of $u$ in $\Gamma'_{2}$, in particular, $w$ is not isolated in $\Gamma'_{2}$. Moreover, $dim(X(\Gamma'_{2}))<dim(X(\Gamma_{w}))\le dim(X(\Gamma))$. 

If $dim(X(\Gamma))=2$, then $\Gamma'_{2}$ has to be discrete, which is contradictory to that $w$ is not isolated in $\Gamma'_{2}$. If $dim(X(\Gamma))=n>2$, by induction we can assume the lemma is true for all lower dimensional graphs. Then there exists $\bar{\Gamma}_{w}$ stable in $\Gamma'_{2}$ such that $w\in\bar{\Gamma}_{w}$ and $u\notin\bar{\Gamma}_{w}$. By Lemma \ref{3.31}, $\bar{\Gamma}_{w}\circ\Gamma'_{1}$ is stable in $\Gamma_{w}$, hence in $\Gamma$, which contradicts the minimality of $\Gamma_{w}$.
\end{proof}

\begin{remark}
\label{3.36}
It is nature to ask whether Theorem \ref{3.35} is still true if we do not require $\Gamma_1$ to be a clique. There turns out to be counterexamples. Let $\Gamma$ be the graph as below and $\Gamma_{1}\subset\Gamma$ be the disjoint union of $v$ and $w$. It is easy to check there do not exist $v_{1}\in\Gamma_{1}$ and $v_{2}\in\Gamma\setminus\Gamma_{1}$ such that $v_{1}^{\perp}\subset St(v_{2})$. Note that $St(u)$ separates $\Gamma$, then we get a partial conjugation that sends $v\to v$ and $w\to u^{-1}wu$, which implies $\Gamma_{1}$ is not stable.

\
\begin{center}
\begin{tikzpicture}
\tikzstyle{every node}=[circle, draw, inner sep=0pt, minimum width=8pt]
\node (n1) at (2,0) {$u$};
\node (n2) at (1,1) {};
\node (n3) at (0,0) {$v$};
\node (n4) at (0.5,-1) {};
\node (n5) at (1.5,-1) {};
\node (n6) at (3,1) {};
\node (n7) at (4,0) {$w$};
\node (n8) at (3.5,-1) {};
\node (n9) at (2.5,-1) {};

\foreach \from/\to in {n1/n2,n2/n3,n3/n4,n4/n5,n5/n1,n1/n6,n6/n7,n7/n8,n8/n9,n9/n1}
\draw (\from) -- (\to);
\end{tikzpicture}
\end{center}
\

A more interesting example (but of the same nature) is the following. Let $\Gamma_{1}$ be the graph in the left side as below and $\Gamma_{2}$ be the graph in the right side. Then $G(\Gamma_{1})$ is quasi-isometric to $G(\Gamma_{2})$ by the discussion in Section 11 of \cite{MR2421136}. Let $q:X(\Gamma_{2})\to X(\Gamma_{1})$ be a quasi-isometry and let $K$ be a standard subcomplex in $X(\Gamma_{2})$ such that its defining graph $\Gamma_{K}$ is a pentagon in $\Gamma_{2}$. Suppose $q(K)$ is Hausdorff close to a standard subcomplex $K'$ in $X(\Gamma)$. Then $\Gamma_{K'}$ must be a connected proper subgraph of $\Gamma_{1}$, hence is a tree. But this is impossible by the results in \cite{behrstock2008quasi}.

\
\begin{center}
\begin{tikzpicture}
\tikzstyle{every node}=[circle, draw, inner sep=0pt, minimum width=5pt]
\node (n1) at (2,0) {};
\node (n2) at (1,1) {};
\node (n3) at (0,0) {};
\node (n4) at (0.5,-1) {};
\node (n5) at (1.5,-1) {};
\node (n6) at (5,0) {};
\node (n7) at (5,1) {};
\node (n8) at (4,1) {};
\node (n9) at (4,-1) {};
\node (n10) at (5,-1) {};
\node (n11) at (6,-1) {};
\node (n12) at (6,1) {};

\foreach \from/\to in {n1/n2,n2/n3,n3/n4,n4/n5,n5/n1,n6/n7,n7/n8,n8/n9,n9/n10,n10/n6,n10/n11,n11/n12,n12/n7}
\draw (\from) -- (\to);
\end{tikzpicture}
\end{center}
\

\end{remark}

\section{From quasi-isometries to isomorphisms}
\label{sec_qi implies iso}
\subsection{The extension complexes} 
\label{subsec_a boundary map}
\subsubsection{Extension complexes and standard flats}
Let $q:X(\Gamma)\to X(\Gamma')$ be a quasi-isometry. Usually $q$ does not induce a well-defined boundary map, see \cite{croke2000spaces}. However, Theorem \ref{3.28} implies we still have control on a subset of the Tits boundaries when $\out(G(\Gamma))$ and $\out(G(\Gamma'))$ are transvection free. In this subsection, we will reorganize this piece of information in terms of extension complexes.

Recall that we identify the vertex set of $\Gamma$ with a standard generating set $S$ of $G(\Gamma)$. And we also label the standard circles in the Salvetti complex by elements in $S$. By choosing an orientation for each standard circle, we obtain a directed labeling of edges in $X(\Gamma)$.

Denote the extension complex of $\Gamma$ by $\mathcal{P}(\Gamma)$. We give an alternative definition of $\mathcal{P}(\Gamma)$ here, which is natural for our purposes. The vertices of $\mathcal{P}(\Gamma)$ are in 1-1 correspondence with the parallel classes of standard geodesics in $X(\Gamma)$ (two standard geodesics are in the same parallel class if they are parallel). Two distinct vertices $v_{1},v_{2}\in\mathcal{P}(\Gamma)$ are connected by an edge if and only if we can find standard geodesic $l_{i}$ in the parallel class associated with $v_{i}$ ($i=1,2$) such that $l_{1}$ and $l_{2}$ span a standard 2-flat. The next observation follows from Lemma \ref{3.1} and Lemma \ref{2.3}:
\begin{ob}
\label{4.1}
If $v_{1}\neq v_{2}$, then $v_{1}$ and $v_{2}$ are joined by an edge if and only if there exist $l'_{i}$ in the parallel class associated with $v_{i}$ ($i=1,2$) and $R>0$ such that $l'_{1}\subset N_{R}(P_{l'_{2}})$.
\end{ob}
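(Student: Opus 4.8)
The statement to prove is Observation \ref{4.1}. Write $a$ and $b$ for the $\Gamma$-labels of the standard geodesics in the parallel classes $v_{1}$ and $v_{2}$. By Lemma \ref{3.4}, the parallel set $P_{l'_{2}}$ is a standard subcomplex that splits canonically as $P_{l'_{2}}=l'_{2}\times X(lk(b))$, so that its defining graph is the closed star $St(b)=\{b\}\circ lk(b)$. The plan is to translate the metric condition $l'_{1}\subset N_{R}(P_{l'_{2}})$ into a statement about how the standard geodesic $l'_{1}$ sits inside this product, using the coarse-intersection calculus of Section \ref{subsec_coarse intersection}. The forward direction is immediate: if $l_{1}$ and $l_{2}$, representing $v_{1}$ and $v_{2}$, span a standard $2$-flat $F$, then $a$ and $b$ are adjacent, $a\in lk(b)$, and $F$ is the standard $2$-flat with defining graph $\{a,b\}$ through the common point of $l_{1}$ and $l_{2}$. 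Since $l_{1}\subset F\subset P_{l_{2}}$, taking $l'_{1}=l_{1}$, $l'_{2}=l_{2}$ and any $R>0$ gives $l'_{1}\subset N_{R}(P_{l'_{2}})$ at once.

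The substantive half is the converse. Assume $l'_{1}\subset N_{R}(P_{l'_{2}})$ and set $\inc(l'_{1},P_{l'_{2}})=(Y_{1},Y_{2})$. By Lemma \ref{3.1} both $Y_{1}\subset l'_{1}$ and $Y_{2}\subset P_{l'_{2}}$ are standard subcomplexes, and by Lemma \ref{2.3} they are parallel (and at most one-dimensional). The coarse containment forces $Y_{1}$ to be all of $l'_{1}$: indeed $l'_{1}\subset N_{R}(P_{l'_{2}})$ gives $l'_{1}\subset\bigl(l'_{1}\cap_{R}P_{l'_{2}}\bigr)$, which by Remark \ref{2.5} lies in a bounded neighborhood of $Y_{1}$, so $Y_{1}\overset{\infty}{=}l'_{1}$; as $Y_{1}$ is a sub-standard-geodesic of $l'_{1}$ coarsely equal to $l'_{1}$, it must equal $l'_{1}$. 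Hence $l'_{1}$ is parallel to the standard geodesic $Y_{2}\subset P_{l'_{2}}$, and in particular $Y_{2}$ carries the label $a$.

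It then remains to locate $Y_{2}$ inside the product $P_{l'_{2}}=l'_{2}\times X(lk(b))$. By Lemma \ref{2.2}, $Y_{2}$ splits as $A\times B$ with $A\subset l'_{2}$ and $B\subset X(lk(b))$ convex subcomplexes; since $Y_{2}$ is one-dimensional, either $A=l'_{2}$ with $B$ a point, or $A$ is a point with $B$ a standard geodesic. In the first case $Y_{2}$ would be parallel to $l'_{2}$, hence so would $l'_{1}$, forcing $v_{1}=v_{2}$, which is excluded. So $A=\{s_{0}\}$ and $B$ is a standard geodesic in $X(lk(b))$ of label $a$; thus $a\in lk(b)$, i.e. $a$ and $b$ are adjacent. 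Finally, fixing $b_{0}\in B$, the two standard geodesics $l'_{2}\times\{b_{0}\}$ (in class $v_{2}$) and $\{s_{0}\}\times B$ (in class $v_{1}$) meet orthogonally at $(s_{0},b_{0})$ and span the standard $2$-flat $l'_{2}\times B$ with defining graph $\{a,b\}$, which is exactly an edge between $v_{1}$ and $v_{2}$.

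The only delicate point, and the step I expect to be the main obstacle, is ruling out the degenerate splitting $Y_{2}\parallel l'_{2}$ in the last paragraph: this is precisely where the hypothesis $v_{1}\neq v_{2}$ must be invoked, since without it coarse containment would hold vacuously. Everything else reduces to direct applications of Lemma \ref{3.1}, Lemma \ref{2.3}, Lemma \ref{2.2}, Lemma \ref{3.4} and the fact that parallel standard geodesics share the same label.
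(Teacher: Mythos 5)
Your proof is correct, and it follows the same route the paper intends: the paper disposes of this observation with a one-line citation of Lemma \ref{3.1} and Lemma \ref{2.3}, and your argument is a careful expansion of exactly that — computing $\inc(l'_{1},P_{l'_{2}})$, upgrading the coarse containment to $Y_{1}=l'_{1}$ via Remark \ref{2.5}, and then reading off the position of $Y_{2}$ in the splitting $P_{l'_{2}}=l'_{2}\times X(lk(b))$ from Lemmas \ref{3.4} and \ref{2.2}. The one point you flag as delicate (excluding $Y_{2}$ parallel to $l'_{2}$ using $v_{1}\neq v_{2}$) is handled correctly.
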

$\mathcal{P}(\Gamma)$ is defined to be the flag complex of its 1-skeleton.

\begin{lem}
\label{4.2}
$\mathcal{P}(\Gamma)$ is isomorphic to the extension complex of $\Gamma$.
\end{lem}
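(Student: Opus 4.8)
The plan is to build an explicit simplicial isomorphism by first matching vertices and then matching edges, using the dictionary between standard geodesics and cosets of cyclic standard subgroups. For the vertex correspondence, every standard geodesic $L\subset X(\Gamma)$ is a standard coset $g\langle v\rangle$ for a unique generator $v\in S$ (the directed label carried by all edges of $L$) and a representative $g$; assign to it the element $w_{L}:=gvg^{-1}\in G(\Gamma)$. Replacing $g$ by $gv^{k}$ leaves $w_{L}$ unchanged, so $w_{L}$ depends only on the coset, and by construction it is conjugate to a generator, hence a vertex of $\Gamma^{e}$ (Definition \ref{2.11}).

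Next I would check that $L\mapsto w_{L}$ descends to a bijection between parallel classes and vertices of $\Gamma^{e}$. By Lemma \ref{3.4} the parallel set of $L$ splits as $L\times X(lk(v))$, and the standard geodesics parallel to $L$ are exactly the fibres $L\times\{x\}$ through vertices $x$; writing such a fibre as a coset $gh\langle v\rangle$ with $h\in G(lk(v))$ and noting that $h$ commutes with $v$ shows that all of them have the same associated element $w_{L}$. Conversely, if $w_{L_{1}}=w_{L_{2}}$, then comparing abelianizations forces the two underlying generators to coincide, and the representatives differ by an element of the centralizer $G(St(v))=\langle v\rangle\times G(lk(v))$, which by Lemma \ref{3.4} places $L_{1}$ and $L_{2}$ in a common parallel set as parallel fibres. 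Thus the map is a well-defined injection, and it is visibly surjective since $gvg^{-1}$ is realized by the coset $g\langle v\rangle$.

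It remains to match edges; since both $\mathcal{P}(\Gamma)$ and the extension complex are flag complexes over their $1$-skeleta, an isomorphism of graphs extends canonically. In one direction, if representatives $l_{1}\in[L_{1}]$ and $l_{2}\in[L_{2}]$ span a standard $2$-flat $F=g(\langle a\rangle\times\langle b\rangle)$, then $w_{l_{1}}=gag^{-1}$ and $w_{l_{2}}=gbg^{-1}$ are conjugates of the commuting generators $a,b$, so $[w_{L_{1}},w_{L_{2}}]=g[a,b]g^{-1}=1$ and the vertices are adjacent in $\Gamma^{e}$. For the converse, suppose $w_{1}:=w_{L_{1}}=g_{1}v_{1}g_{1}^{-1}$ and $w_{2}:=w_{L_{2}}=g_{2}v_{2}g_{2}^{-1}$ commute and are distinct. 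Then $w_{2}$ lies in the centralizer $g_{1}G(St(v_{1}))g_{1}^{-1}$, so $u:=g_{1}^{-1}w_{2}g_{1}$ lies in $\langle v_{1}\rangle\times G(lk(v_{1}))$ and is conjugate to $v_{2}$. Examining abelianizations rules out $v_{1}=v_{2}$ (a conjugate of $v$ inside $\langle v\rangle\times G(lk(v))$ is central, hence equals $v$, which would force $w_{1}=w_{2}$) and shows $u\in G(lk(v_{1}))$ with $v_{2}\in lk(v_{1})$. The axis $A$ of $u$ is then the standard geodesic $g_{1}^{-1}L_{2}$, supported in $X(lk(v_{1}))$, and since $v_{1}$ commutes with all of $G(lk(v_{1}))$ the product $g_{1}(\langle v_{1}\rangle\times A)$ is a genuine standard $2$-flat with defining edge $\{v_{1},v_{2}\}$. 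It contains the direction-$v_{1}$ geodesic $g_{1}(\langle v_{1}\rangle\times\{x\})\in[L_{1}]$ and the direction-$v_{2}$ geodesic $g_{1}A=L_{2}\in[L_{2}]$, which span it, so $[L_{1}]$ and $[L_{2}]$ are joined by an edge of $\mathcal{P}(\Gamma)$.

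The main obstacle is precisely this last construction: converting the algebraic relation $[w_{1},w_{2}]=1$ into an honest standard $2$-flat. The two subtle points are (i) showing that the axis of the centralizer element $u$ is a standard geodesic supported in $lk(v_{1})$, which is exactly what the abelianization computation guarantees via $v_{2}\in lk(v_{1})$, and (ii) confirming that $g_{1}(\langle v_{1}\rangle\times A)$ is standard rather than merely flat, for which Lemma \ref{3.4} and the join structure of $St(v_{1})$ are what is needed. Having established the graph isomorphism on $1$-skeleta, passing to flag complexes completes the identification of $\mathcal{P}(\Gamma)$ with the extension complex.
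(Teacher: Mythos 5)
Your vertex correspondence is essentially the paper's: both send the parallel class of $L=g\langle v\rangle$ to the conjugate $gvg^{-1}$, and your well-definedness and injectivity checks (via $C(v)=G(St(v))=\langle v\rangle\times G(lk(v))$ and Lemma \ref{3.4}) are fine. The problem is in the converse direction of the edge correspondence. The paper invokes the Servatius centralizer theorem in its strong form: $g_1s_1g_1^{-1}$ and $g_2s_2g_2^{-1}$ commute if and only if $[s_1,s_2]=1$ \emph{and} there is a single $g$ with $\alpha_i=gs_ig^{-1}$ for $i=1,2$; the common conjugator hands you the standard $2$-flat $g\langle s_1,s_2\rangle$ containing representatives of both parallel classes. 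You instead use only the weaker fact that $w_2$ lies in $g_1G(St(v_1))g_1^{-1}$ and then try to build the flat by hand, and this is where the argument breaks: from $u=g_1^{-1}w_2g_1\in G(lk(v_1))$ you conclude that the particular axis $A=g_1^{-1}L_2$ of $u$ lies in $X(lk(v_1))$. That does not follow. The element $u$ has an entire parallel set of axes (the min-set of $u$ is $P_{A}$), and only \emph{some} axis of $u$ need lie in $X(lk(v_1))$; the coset $g_1^{-1}L_2=g_1^{-1}g_2\langle v_2\rangle$ need not be that one, since $g_1^{-1}g_2$ need not lie in $G(St(v_1))$ even though $(g_1^{-1}g_2)v_2(g_1^{-1}g_2)^{-1}$ does.

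Concretely, take $\Gamma$ the path $a-b-c$, $L_1=\langle a\rangle$ (so $g_1=\id$, $w_1=a$) and $L_2=c\langle b\rangle$ (so $w_2=cbc^{-1}=b$). Then $w_1$ and $w_2$ commute, $u=b\in G(lk(a))=\langle b\rangle$, but $A=c\langle b\rangle$ is not contained in $X(St(a))$, and your set $g_1(\langle v_1\rangle\times A)=\{a^kcb^m\}$ is not a flat at all: $c$ and $ac$ lie in it but $d(c,ac)=|c^{-1}ac|=3$. In particular no standard $2$-flat contains $L_2$ itself together with a parallel copy of $L_1$ here; the adjacency is witnessed only after replacing $L_2$ by the parallel copy $\langle b\rangle$ inside $\langle a,b\rangle$. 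The gap is repairable: all axes of $u$ are mutually parallel, and since $u\in G(lk(v_1))$ is conjugate in $G(\Gamma)$ to the generator $v_2$, it is conjugate to $v_2$ by an element $h'\in G(lk(v_1))$ (this conjugacy-realization step itself needs justification, e.g.\ via normal forms), so $h'\langle v_1,v_2\rangle$ is a standard $2$-flat containing $h'\langle v_1\rangle\parallel g_1^{-1}L_1$ and $h'\langle v_2\rangle\parallel g_1^{-1}L_2$; translating by $g_1$ finishes. But as written, the claim that your $2$-flat exists and contains $L_2$ is false, so you should either insert this repair or simply quote the centralizer theorem in the common-conjugator form, as the paper does.
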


\begin{proof}
It suffices to show the 1-skeleton of $\mathcal{P}(\Gamma)$ is isomorphic to the extension graph $\Gamma^e$. Pick vertex $v\in\mathcal{P}(\Gamma)$ and let $l$ be a standard geodesic in the parallel class associated with $v$. We identify $l$ with $\Bbb R$ in an orientation-preserving way (the orientation in $l$ is induced by the directed labeling). Recall that $G(\Gamma)\curvearrowright X(\Gamma)$ by deck transformations. Let $\alpha_{v}\in G(\Gamma)$ be the element such that $\alpha_{v}(l)=l$ and $\alpha_{v}(x)=x+1$ for any $x\in l$. It is easy to see $\alpha_{v}$ is conjugate to an element in $S$, thus $\alpha_{v}$ gives rise a vertex $\alpha_{v}\in\Gamma^{e}$ by Definition \ref{2.11}. Note that $\alpha_{v}$ does not depend the choice of $l$ in the parallel class, so we have a well-defined map from the vertex set of $\mathcal{P}(\Gamma)$ to the vertex set of $\Gamma^{e}$. Moreover, if $v_{1}$ and $v_{2}$ are adjacent, then $\alpha_{v_{1}}$ and $\alpha_{v_{2}}$ commute.

Now we define an inverse map. Pick $\alpha=gsg^{-1}\in\Gamma^{e}$ ($s\in S$). Then all standard geodesics which are stabilized by $\alpha$ are in the same parallel class. Let $v_{\alpha}$ be the vertex in $\mathcal{P}(\Gamma)$ associated with this parallel class. We map the vertex $\alpha$ of $\Gamma^e$ to the vertex $v_{\alpha}$. Now we show this map extends to the 1-skeleton. For $i=1,2$, let $\alpha_{i}=g_{i}s_{i}g^{-1}_{i}\in\Gamma^{e}$. By the centralizer theorem of \cite{servatius1989surface}, $\alpha_{1}$ and $\alpha_{2}$ commute if and only if $[s_{1},s_{2}]=1$ and there exists $g\in G(\Gamma)$ such that $\alpha_{i}=gs_{i}g^{-1}$. Thus $v_{\alpha_{1}}$ and $v_{\alpha_{2}}$ are adjacent in $\mathcal{P}(\Gamma)$.
\end{proof}

Since every edge in the standard geodesics of the same parallel class has the same label, the labeling of the edges of $X(\Gamma)$ induces a labeling of the vertices of $\mathcal{P}(\Gamma)$. Moreover, since $G(\Gamma)\curvearrowright X(\Gamma)$ by label-preserving cubical isomorphisms, we obtain an induced action $G(\Gamma)\curvearrowright\mathcal{P}(\Gamma)$ by label-preserving simplicial isomorphisms. Moreover, the unique label-preserving map from the vertices of $\mathcal{P}(\Gamma)$ to the vertices of $F(\Gamma)$ extends to a simplicial map
\begin{equation}
\label{projection}
\pi:\mathcal{P}(\Gamma)\to F(\Gamma).
\end{equation}

Pick arbitrary vertex $p\in X(\Gamma)$, one can obtain a simplicial embedding $i_{p}$ from the flag complex $F(\Gamma)$ of $\Gamma$ to $\mathcal{P}(\Gamma)$ by considering the collection of standard geodesics passing through $p$. We will denote the image of $i_{p}$ by $(F(\Gamma))_{p}$. Note that for each vertex $p\in X(\Gamma)$, $\pi\circ i_p:F(\Gamma)\to F(\Gamma)$ is the identity map. 

Pick $(k-1)$-simplex in $\mathcal{P}(\Gamma)$ with vertex set $\{v_{i}\}_{i=1}^{k}$ and pick standard geodesic $l_{i}$ in the parallel class associated with $v_{i}$ for $1\le i\le k$. Since $P_{l_{i}}\cap P_{l_{j}}\neq\emptyset$ for $1\le i\neq j\le k$, by Lemma \ref{2.1}, $\cap_{i=1}^{k}P_{l_{i}}\neq\emptyset$. By Corollary \ref{3.2} and Lemma \ref{3.4}, there exist standard geodesics $\{l'_{i}\}_{i=1}^{k}$ satisfying
\begin{enumerate}
\item $l'_{i}$ is parallel to $l_{i}$ for each $i$.
\item The convex hull of $\{l'_{i}\}_{i=1}^{k}$ is a standard $k$-flat, denoted by $F_{k}$.
\item $\cap_{i=1}^{k}P_{l_{i}}=P_{F_{k}}$.
\end{enumerate}
Thus we have a 1-1 correspondence between the $(k-1)$-simplexes of $\mathcal{P}(\Gamma)$ and parallel classes of standard $k$-flats in $X(\Gamma)$. In particular, maximal simplexes in $\mathcal{P}(\Gamma)$, namely those simplexes which are not properly contained in some larger simplexes of $\mathcal{P}(\Gamma)$, are in 1-1 correspondence with maximal standard flats in $X(\Gamma)$. For standard flat $F\subset X(\Gamma)$, we denote the simplex in $\mathcal{P}(\Gamma)$ associated with the parallel class containing $F$ by $\Delta(F)$. 
\begin{ob}
\label{4.3}
Let $\Delta_{1}$, $\Delta_{2}$ be two simplexes in $\mathcal{P}(\Gamma)$ such that $\Delta=\Delta_{1}\cap\Delta_{2}\neq\emptyset$. For $i=1,2$, let $F_{i}\subset X(\Gamma)$ be a standard flat such that $\Delta(F_{i})=\Delta_{i}$. Set $(F'_{1},F'_{2})=\inc (F_{1},F_{2})$. Then $\Delta(F'_{1})=\Delta(F'_{2})=\Delta$.
\end{ob}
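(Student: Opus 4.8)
The plan is to first dispose of the equality $\Delta(F'_{1})=\Delta(F'_{2})$ and then identify this common simplex with $\Delta$ by two inclusions. For the first point: by Lemma \ref{3.1} the pair $(F'_{1},F'_{2})=\inc(F_{1},F_{2})$ consists of standard subcomplexes, and since each $F'_{i}\subset F_{i}$ is a standard subcomplex of a standard flat (whose defining graph is a clique), its defining graph is a subclique, so $F'_{i}$ is itself a standard flat. By Lemma \ref{2.3} the nearest-point sets $F'_{1}$ and $F'_{2}$ are parallel, hence lie in the same parallel class of standard flats, so $\Delta(F'_{1})=\Delta(F'_{2})$. It therefore suffices to show this common simplex equals $\Delta$.

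The inclusion $\Delta(F'_{1})\subseteq\Delta$ is direct. If $v$ is a vertex of $\Delta(F'_{1})$, choose a standard geodesic $l'\subset F'_{1}$ in the parallel class of $v$. Since $F'_{1}\subset F_{1}$ we get $v\in\Delta(F_{1})=\Delta_{1}$. Since $F'_{1}$ is parallel to $F'_{2}\subset F_{2}$, the geodesic $\pi_{F'_{2}}(l')$ is a standard geodesic parallel to $l'$ lying in $F_{2}$, so it also represents $v$ and gives $v\in\Delta(F_{2})=\Delta_{2}$. Thus $v\in\Delta_{1}\cap\Delta_{2}=\Delta$.

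The reverse inclusion $\Delta\subseteq\Delta(F'_{1})$ is the main point. Given a vertex $v\in\Delta$, choose standard geodesics $l_{1}\subset F_{1}$ and $l_{2}\subset F_{2}$, both in the parallel class of $v$; being in the same parallel class, they are parallel to each other and share a common parallel set $P:=P_{l_{1}}=P_{l_{2}}$. By Lemma \ref{3.4}, $P$ splits as $P=\mathbb{E}^{1}\times X(lk(s))$ with the $\mathbb{E}^{1}$ factor in the $v$-direction, where $s$ is the label of $v$. Both $F_{1}$ and $F_{2}$ lie in $P$, and since each contains a line in the $\mathbb{E}^{1}$ direction, Lemma \ref{2.2} yields splittings $F_{1}=\mathbb{E}^{1}\times A_{1}$ and $F_{2}=\mathbb{E}^{1}\times A_{2}$ with $A_{i}\subset X(lk(s))$ standard flats. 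As $P$ is convex in $X(\Gamma)$, the coarse intersection $\inc(F_{1},F_{2})$ may be computed inside $P$; because in $P=\mathbb{E}^{1}\times X(lk(s))$ one has $d((x,a),F_{2})=d(a,A_{2})$ (the $\mathbb{E}^{1}$-coordinates being freely matchable), the nearest-point set $F'_{1}=\{y\in F_{1}\mid d(y,F_{2})=d(F_{1},F_{2})\}$ splits as $\mathbb{E}^{1}\times B_{1}$, where $(B_{1},B_{2})=\inc(A_{1},A_{2})$ in $X(lk(s))$. Since $B_{1}\neq\emptyset$ by Lemma \ref{2.3}, $F'_{1}$ contains a standard geodesic in the $\mathbb{E}^{1}$-direction, i.e.\ in the parallel class of $v$, whence $v\in\Delta(F'_{1})$.

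The hard part is this last paragraph: one must recognize that both flats sit in the common parallel set of the chosen geodesic, split that parallel set as a product with the $v$-direction as one factor, and verify that forming coarse intersections commutes with the product splitting. The splitting of $\inc$ over the product $\mathbb{E}^{1}\times X(lk(s))$ is the key computation, since it reduces the matching of parallel classes across the two distinct flats — which is exactly what $\Delta_{1}\cap\Delta_{2}$ records, and which is \emph{not} detected by labels alone — to the nonemptiness of nearest-point sets in the orthogonal factor $X(lk(s))$.
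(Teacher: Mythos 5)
Your proof is correct, but it takes a different route from the paper's. The paper treats Observation \ref{4.3} (and its generalization to arbitrary standard subcomplexes, the unnumbered lemma that follows it) as a consequence of Remark \ref{2.5}: since $\partial_{T}F'_{1}=\partial_{T}F'_{2}=\partial_{T}F_{1}\cap\partial_{T}F_{2}$ and $\mathcal{P}(\Gamma)$ is obtained from the reduced Tits boundary $\bar{\partial}_{T}(X(\Gamma))$ by collapsing the boundary sphere of each standard flat to a simplex, the equality $\Delta(F'_{1})=\Delta(F'_{2})=\Delta_{1}\cap\Delta_{2}$ drops out of the compatibility of that collapse with intersections. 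You instead argue entirely at the level of the cube-complex geometry: you identify $F'_{i}$ as standard flats via Lemma \ref{3.1}, get $\Delta(F'_{1})=\Delta(F'_{2})$ from the parallelism in Lemma \ref{2.3}, prove $\Delta(F'_{1})\subseteq\Delta$ by pushing a coordinate geodesic across the parallelism, and prove $\Delta\subseteq\Delta(F'_{1})$ by placing both flats in the parallel set $P_{l}\cong\mathbb{E}^{1}\times X(lk(s))$ of a common coordinate direction (Lemma \ref{3.4}, Lemma \ref{2.2}) and checking that $\inc$ splits over this product, so that the $\mathbb{E}^{1}$-factor survives into $F'_{1}$. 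Each step checks out — in particular the computation $d((x,a),F_{2})=d(a,A_{2})$ and the resulting splitting $F'_{1}=\mathbb{E}^{1}\times B_{1}$ with $B_{1}\neq\emptyset$ are valid. What the paper's route buys is brevity and immediate generality (the same one-line argument handles arbitrary standard subcomplexes, not just flats, once the Tits-boundary formalism is in place); what your route buys is that it is self-contained at the level of convex subcomplexes and makes explicit the matching of parallel classes between the two flats, which the boundary argument leaves implicit in the claim that the sphere-to-simplex collapse commutes with intersections.
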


We define the \textit{reduced Tits boundary}, denoted $\bar{\partial}_{T}(X(\Gamma))$, to be the subset of $\partial_{T}(X(\Gamma))$ which is the union of Tits boundaries of standard flats in $X(\Gamma)$. For standard flat $F\subset X(\Gamma)$, we triangulate $\partial_{T}F$ into all-right spherical simplexes which are the Tits boundaries of orthant subcomplexes in $F$. Pick another standard flat $F'\subset X(\Gamma)$, then $\partial_{T}F\cap\partial_{T}F'$ is a subcomplex in both $\partial_{T}F$ and $\partial_{T}F'$ by Lemma \ref{3.1} and Remark \ref{2.5}. Thus we can endow $\bar{\partial}_{T}(X(\Gamma))$ with the structure of an all-right spherical complex. 

Now we look at the relation between $\bar{\partial}_{T}(X(\Gamma))$ and $\mathcal{P}(\Gamma)$. For each standard flat $F\subset X(\Gamma)$, we can associate $\partial_{T}F$ with $\Delta(F)\subset\mathcal{P}(\Gamma)$. This induces a surjective simplicial map $s:\bar{\partial}_{T}(X(\Gamma))\to\mathcal{P}(\Gamma)$ ($s$ can be defined by induction on dimension). Note that the inverse image of each simplex in $\mathcal{P}(\Gamma)$ under $s$ is a sphere in $\bar{\partial}_{T}(X(\Gamma))$. Then one can construct $\bar{\partial}_{T}(X(\Gamma))$ from $\mathcal{P}(\Gamma)$ as follows. We start with a collection of $\Bbb S^{0}$'s which are in 1-1 correspondence to vertices of $\mathcal{P}(\Gamma)$ and form a join of $n$ copies of $\Bbb S^{0}$'s if and only if the corresponding $n$ vertices in $\mathcal{P}(\Gamma)$ span a $(n-1)$-simplex. In other words, $\bar{\partial}_{T}(X(\Gamma))$ is obtained by applying the spherical complex construction in the sense of \cite[Definition 2.1.22]{brady2007geometry} to $\mathcal{P}(\Gamma)$.

Let $K_{1}\subset X(\Gamma)$ be a standard subcomplex. We define $\bar{\partial}_{T}(K_{1})$ to be the union of Tits boundaries of standard flats in $K_{1}$. Note that $\bar{\partial}_{T}(K_{1})=\bar{\partial}_{T}(X(\Gamma))\cap\partial_{T}K_{1}$. $\bar{\partial}_{T}(K_{1})$ descends to a subcomplex in $\mathcal{P}(\Gamma)$, which will be denoted by $\Delta(K_{1})$. 

\begin{lem}
Let $K_1$ and $K_2$ be two standard subcomplexes of $X(\Gamma)$. Put $(K'_1,K'_2)=\inc(K_1,K_2)$. Then $\Delta(K'_{1})=\Delta(K'_{2})=\Delta(K_{1})\cap\Delta(K_{2})$.
\end{lem}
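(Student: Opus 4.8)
The plan is to prove the statement on the reduced Tits boundary and then transport it down to $\mathcal{P}(\Gamma)$ through the collapsing map $s$. Write $\Delta=d(K_{1},K_{2})$ and $(K_{1}',K_{2}')=(Y_{1},Y_{2})=\inc(K_{1},K_{2})$; by Lemma \ref{3.1} the sets $Y_{1},Y_{2}$ are again standard subcomplexes, so $\Delta(K_{1}')$ and $\Delta(K_{2}')$ are defined. The essential input is the last sentence of Remark \ref{2.5}: since $K_{1},K_{2}$ are convex, $\partial_{T}K_{1}\cap\partial_{T}K_{2}=\partial_{T}Y_{1}=\partial_{T}Y_{2}$.

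First I would rewrite everything over the Tits boundary. By construction $\Delta(K_{i})=s\big(\bar{\partial}_{T}(K_{i})\big)$, where $\bar{\partial}_{T}(K_{i})=\bar{\partial}_{T}(X(\Gamma))\cap\partial_{T}K_{i}$ and this formula holds for any standard subcomplex, in particular for $K_{i}'=Y_{i}$. Intersecting and feeding in the identity from Remark \ref{2.5} gives
\[
\bar{\partial}_{T}(K_{1})\cap\bar{\partial}_{T}(K_{2})=\bar{\partial}_{T}(X(\Gamma))\cap\big(\partial_{T}K_{1}\cap\partial_{T}K_{2}\big)=\bar{\partial}_{T}(X(\Gamma))\cap\partial_{T}Y_{1}=\bar{\partial}_{T}(K_{1}'),
\]
and the same computation with $Y_{2}$ shows this also equals $\bar{\partial}_{T}(K_{2}')$. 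Thus at the level of reduced Tits boundaries the three objects already coincide.

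The one genuine point is that images under $s$ do not, in general, commute with intersections, so I must check that each $\bar{\partial}_{T}(K_{i})$ is \emph{$s$-saturated}, i.e.\ a union of fibers of $s$. This follows from the parallel-invariance of the Tits boundary: the fiber of $s$ over a closed simplex $\Delta(F)$ is exactly the sphere $\partial_{T}F$, and $\partial_{T}F$ depends only on the parallel class of $F$, so if a single representative flat of that class lies in $K_{i}$, then the entire fiber lies in $\bar{\partial}_{T}(K_{i})$. For $s$-saturated sets $A,B$ one has $s(A)\cap s(B)=s(A\cap B)$, since any fiber meeting both $A$ and $B$ is contained in $A\cap B$. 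Applying this,
\[
\Delta(K_{1})\cap\Delta(K_{2})=s\big(\bar{\partial}_{T}(K_{1})\big)\cap s\big(\bar{\partial}_{T}(K_{2})\big)=s\big(\bar{\partial}_{T}(K_{1}')\big)=\Delta(K_{1}'),
\]
and likewise $=\Delta(K_{2}')$, which is the claim.

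I expect the saturation check to be the only real obstacle; everything else is formal once Remark \ref{2.5} is in hand. If one prefers to avoid reasoning about $s$ on fibers, there is a parallel combinatorial route that isolates the same difficulty: a simplex $\sigma\in\mathcal{P}(\Gamma)$ lies in $\Delta(K_{1})\cap\Delta(K_{2})$ exactly when its parallel class of standard flats has representatives $F_{1}\subset K_{1}$ and $F_{2}\subset K_{2}$, which are then parallel; hence $F_{1}\subset K_{1}$ while $F_{1}\overset{\infty}{=}F_{2}\subset K_{2}$ forces $F_{1}\subset_{\infty}(K_{1}\cap_{r}K_{2})\overset{\infty}{=}Y_{1}$ by Remark \ref{2.5}, and projecting $F_{1}$ onto the convex subcomplex $Y_{1}$ yields a parallel representative inside $Y_{1}$, so $\sigma\in\Delta(K_{1}')$. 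Either way, the symmetry between $K_{1}$ and $K_{2}$ (applying $\inc$ to the pair $(K_{2},K_{1})$) also delivers $\Delta(K_{1}')=\Delta(K_{2}')$.
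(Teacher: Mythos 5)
Your proof is correct and follows essentially the same route as the paper: Remark \ref{2.5} gives $\partial_{T}K'_{1}=\partial_{T}K'_{2}=\partial_{T}K_{1}\cap\partial_{T}K_{2}$, one intersects with $\bar{\partial}_{T}(X(\Gamma))$, and then descends to $\mathcal{P}(\Gamma)$. The only difference is that you explicitly verify the $s$-saturation of $\bar{\partial}_{T}(K_{i})$ (which rests on parallel flats having identical Tits boundaries) to justify that $s$ commutes with the intersection — a step the paper's one-line proof leaves implicit — and your verification is sound.
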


\begin{proof}
By Remark \ref{2.5}, we have $\partial_{T}K'_{1}=\partial_{T}K'_{2}=\partial_{T}K_{1}\cap\partial_{T}K_{2}$, hence $\bar{\partial}_{T}K'_{1}=\bar{\partial}_{T}K'_{2}=\bar{\partial}_{T}K_{1}\cap\bar{\partial}_{T}K_{2}$ and $\Delta(K'_{1})=\Delta(K'_{2})=\Delta(K_{1})\cap\Delta(K_{2})$.
\end{proof}

Now we study how the extension complexes behave under quasi-isometries. 

\begin{lem}
	\label{4.7}
	Pick $\Gamma_{1}$ and $\Gamma_{2}$ such that $\out(G(\Gamma_{i}))$ is transvection free for $i=1,2$. Then any quasi-isometry $q: X(\Gamma_{1})\to X(\Gamma_{2})$ induces a simplicial isomorphism $q_{\ast}:\mathcal{P}(\Gamma_{1})\to\mathcal{P}(\Gamma_{2})$. If only $\out(G(\Gamma_{1}))$ is assumed to be transvection free, we still have a simplicial embedding $q_{\ast}:\mathcal{P}(\Gamma_{1})\to\mathcal{P}(\Gamma_{2})$.
\end{lem}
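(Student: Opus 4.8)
The plan is to construct $q_{\ast}$ directly from the bijection between vertices of $\mathcal{P}(\Gamma_i)$ and parallel classes of standard geodesics, transporting standard geodesics across $q$ by means of Theorem \ref{3.28}. First I would define $q_{\ast}$ on vertices: given $v\in\mathcal{P}(\Gamma_1)$, choose a standard geodesic $l$ in the associated parallel class. Since $\out(G(\Gamma_1))$ is transvection free, Theorem \ref{3.28} produces a standard geodesic $l'\subset X(\Gamma_2)$ with $q(l)\overset{\infty}{=}l'$, and $l'$ is unique up to parallelism because two standard geodesics at finite Hausdorff distance are parallel. If $l_1\parallel l_2$ both represent $v$, then $q(l_1)\overset{\infty}{=}q(l_2)$, so their companions are parallel; hence $[l']$ depends only on $v$, and I set $q_{\ast}(v)=[l']$. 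This assignment is injective: if $q_{\ast}(v_1)=q_{\ast}(v_2)$ then $q(l_1)\overset{\infty}{=}q(l_2)$, and applying a quasi-inverse $\bar q$ (which preserves finite Hausdorff distance) gives $l_1\overset{\infty}{=}l_2$, whence $v_1=v_2$. Note this step uses only that $q$ is a quasi-isometry.

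The crux is showing $q_{\ast}$ carries edges to edges. If $v_1\sim v_2$, pick representatives $l_1,l_2$ spanning a standard $2$-flat $F$; by Theorem \ref{3.28} there is a standard $2$-flat $F'\subset X(\Gamma_2)$ with $q(F)\overset{\infty}{=}F'$. For each $i$, the representative $m_i$ of $q_{\ast}(v_i)$ satisfies $m_i\overset{\infty}{=}q(l_i)\subset_{\infty}F'$, so $m_i$ lies in a bounded neighborhood of $F'$. Then $d(\cdot,F')$ is a bounded convex function on the line $m_i$, hence constant, so in the notation of Lemma \ref{2.3} we have $Y_1=m_i$ and $Y_2=\pi_{F'}(m_i)\subset F'$ is parallel to $m_i$. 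By Lemma \ref{3.1} this $Y_2$ is a standard geodesic, and a standard geodesic inside the standard $2$-flat $F'$ must be parallel to one of its two factor directions. Therefore $q_{\ast}(v_i)$ is one of the two vertices of the simplex $\Delta(F')$. Since $q_{\ast}$ is injective, $q_{\ast}(v_1)$ and $q_{\ast}(v_2)$ are the two \emph{distinct} vertices of $\Delta(F')$, hence adjacent.

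Because $\mathcal{P}(\Gamma_1)$ and $\mathcal{P}(\Gamma_2)$ are flag complexes, an injective vertex map sending edges to edges automatically sends simplices to simplices, so $q_{\ast}$ is a simplicial embedding; this establishes the second assertion, which requires only that $\out(G(\Gamma_1))$ be transvection free. For the isomorphism statement I would use the quasi-inverse $\bar q$: since $\out(G(\Gamma_2))$ is now also transvection free, the same construction yields a simplicial embedding $\bar q_{\ast}:\mathcal{P}(\Gamma_2)\to\mathcal{P}(\Gamma_1)$. For $v=[l]$ with $q_{\ast}(v)=[m]$ we have $\bar q(m)\overset{\infty}{=}\bar q(q(l))\overset{\infty}{=}l$, so $\bar q_{\ast}\circ q_{\ast}$ is the identity on vertices, and symmetrically $q_{\ast}\circ\bar q_{\ast}$ is the identity. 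Thus $q_{\ast}$ is a bijection on vertices whose inverse is simplicial; as both maps send edges to edges, $q_{\ast}$ both preserves and reflects adjacency, so it is a simplicial isomorphism.

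The main obstacle is precisely the edge step: one must rule out the possibility that the image of a standard geodesic lying coarsely inside $F'$ is parallel to some non-standard direction in $\partial_{T}F'$ rather than to an actual factor. This is where the combinatorial rigidity of standard subcomplexes (Lemma \ref{3.1}, via the coarse intersection in Lemma \ref{2.3}) is indispensable, since it forces the parallel copy inside $F'$ to be a genuine standard geodesic and hence pins $q_{\ast}(v_i)$ down to a vertex of $\Delta(F')$.
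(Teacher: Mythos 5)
Your proof is correct, and its overall skeleton --- transport parallel classes of standard geodesics via Theorem \ref{3.28}, get injectivity from a quasi-inverse, and reduce everything to the $1$-skeleton by flagness --- is the same as the paper's. Where you genuinely diverge is the edge step. The paper invokes Observation \ref{4.1}: adjacency of $v_1,v_2$ is equivalent to $l_1\subset_{\infty}P_{l_2}$, and since parallel sets of standard geodesics are standard subcomplexes (Lemma \ref{3.4}) that $q$ preserves up to finite Hausdorff distance (Lemma \ref{3.16}), this coarse characterization transfers in both directions at once, giving ``adjacent if and only if images adjacent'' in a single stroke. You instead work directly with the definition of adjacency via a spanning standard $2$-flat $F$, push $F$ forward by Theorem \ref{3.28}, and use Lemma \ref{2.3} and Lemma \ref{3.1} to force each image geodesic to be parallel to a \emph{standard} geodesic inside $F'$, hence to one of its two factor directions; injectivity then pins the images down to the two distinct vertices of $\Delta(F')$. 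Your route buys a slightly more transparent treatment of the embedding case, since it only ever uses stability on the $\Gamma_1$ side and defers the converse implication to the quasi-inverse (available exactly when $\out(G(\Gamma_2))$ is also transvection free); the cost is that each application yields only one implication, so you must run the symmetric argument to conclude the isomorphism, whereas Observation \ref{4.1} is symmetric from the outset. Both arguments are sound, and your identification of the crux --- ruling out that the image line is merely coarsely inside $F'$ without being parallel to a factor, which is exactly what Lemma \ref{3.1} excludes --- is accurate.
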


\begin{proof}
	We only proof the case when both $\out(G(\Gamma_{1}))$ and $\out(G(\Gamma_{2}))$ are transvection free. The other case is similar. By Theorem \ref{3.28}, every vertex in $\Gamma_{1}$ is stable, thus $q$ sends any parallel class of standard geodesics in $X(\Gamma_{1})$ to another parallel class of standard geodesics in $X(\Gamma_{2})$ up to finite Hausdorff distance. This induces a well-defined map $q_{\ast}$ from the 0-skeleton of $\mathcal{P}(\Gamma_{1})$ to the 0-skeleton of $\mathcal{P}(\Gamma_{2})$. The map $q_{\ast}$ is a bijection by considering the quasi-isometry inverse. Moreover, Observation \ref{4.1} implies two vertices in $\mathcal{P}(\Gamma_{1})$ are adjacent if and only if their images under $q_{\ast}$ are adjacent. So we can extend $q_{\ast}$ to be a graph isomorphism between the 1-skeleton of $\mathcal{P}(\Gamma_{1})$ and the 1-skeleton of $\mathcal{P}(\Gamma_{2})$. Since both $\mathcal{P}(\Gamma_{1})$ and $\mathcal{P}(\Gamma_{2})$ are flag complexes, we can extend the isomorphism to the whole complex.
\end{proof}

\subsubsection{Extension complexes and their relatives}
Now we discuss the relation between $\mathcal{P}(\Gamma)$ with several other objects in the literature. The material in this subsection will not be used later.

We can endow $F(\Gamma)$ with the structure of complex of groups, which gives us an alternative definition of $\mathcal{P}(\Gamma)$. More specifically, $\mathcal{P}(\Gamma)=F(\Gamma)\times G(\Gamma)/\sim$, here $St(v)\times g_{1}$ and $St(v)\times g_{2}$ ($v\in F(\Gamma)$ is a vertex) are identified if and only if there exists an integer $m$ such that $g^{-1}_{1}g_{2}=v^{m}$ (we also view $v$ as one of the generators of $G(\Gamma)$). Hence for $k$-simplex $\Delta^{k}\subset F(\Gamma)$ with vertex set $\{v_{i}\}_{i=1}^{k}$, $St(\Delta^{k})\times g_{1}$ and $St(\Delta^{k})\times g_{2}$ are identified if and only if $g^{-1}_{1}g_{2}$ belongs to the $\Bbb Z^{k}$ subgroup of $G(\Gamma)$ generated by $\{v_{i}\}_{i=1}^{k}$. One can compare this with a similar construction for Coxeter group in \cite{davis1983groups}.

There is another important object associated with a right-angled Artin group, called the modified Deligne complex in \cite{MR1368655} and the \textit{flat space} in \cite{MR2421136}.

\begin{definition}
\label{4.4}
Let $\Bbb P(\Gamma)$ be poset of left cosets of standard abelian subgroups of $G(\Gamma)$ (include the trivial subgroup) such that the partial order is induced by inclusion of sets. Then the \textit{modified Deligne complex} is defined to be the geometric realization of the derived poset of $\Bbb P(\Gamma)$.
\end{definition}

Recall that elements in the \textit{derived poset} of a poset $\Bbb P$ are totally ordered finite chains in $\Bbb P$. It can be viewed as an abstract simplex. 

The extension complex $\mathcal{P}(\Gamma)$ can be viewed as a coarse version of the modified Deligne complex. Let $A,B$ be two subsets of a metric space. We say $A$ and $B$ are \textit{coarsely equivalent} if $A\overset{\infty}{=}B$, and $A$ are \textit{coarsely contained} in $B$ if $A\subset_{\infty}B$. Let $\Bbb P'(\Gamma)$ be the poset whose elements are coarsely equivalent classes of left cosets of non-trivial standard abelian subgroups of $G(\Gamma)$, and the partial order is induced by coarse inclusion of sets.

\begin{ob}
\label{4.5}
The poset $\Bbb P'(\Gamma)$ is an abstract simplicial complex and it is isomorphic to $\mathcal{P}(\Gamma)$.
\end{ob}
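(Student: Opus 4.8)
The plan is to produce an explicit poset isomorphism $\Phi\colon\Bbb P'(\Gamma)\to\mathcal{P}(\Gamma)$, where $\mathcal{P}(\Gamma)$ is regarded as its face poset (the set of its non-empty simplices ordered by inclusion). Since the face poset of a simplicial complex is an abstract simplicial complex and conversely determines it, producing such a $\Phi$ simultaneously establishes both assertions of the observation. First I would translate the definition of $\Bbb P'(\Gamma)$ into geometry: a non-trivial standard abelian subgroup is $G(\Gamma')$ for a non-empty clique $\Gamma'\subset\Gamma$, and under the identification of $G(\Gamma)$ with $X(\Gamma)^{(0)}$ its left cosets are exactly the vertex sets of standard flats. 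Thus the elements of $\Bbb P'(\Gamma)$ are coarse-equivalence classes of standard flats.

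The next step is to identify coarse equivalence with parallelism. If standard flats $F_1,F_2$ satisfy $F_1\overset{\infty}{=}F_2$, then the whole flats realize the minimal distance, so in the notation of Lemma \ref{2.3} we have $Y_i=F_i$; part (2) of that lemma then shows $F_1$ and $F_2$ are parallel, while Lemma \ref{3.1} guarantees the relevant subcomplexes are again standard. Conversely parallel standard flats are at finite Hausdorff distance, and flats of different dimensions can never be coarsely equivalent. Hence the elements of $\Bbb P'(\Gamma)$ are precisely the parallel classes of standard flats. I would then invoke the bijection $F\mapsto\Delta(F)$ between parallel classes of standard $k$-flats and $(k-1)$-simplices of $\mathcal{P}(\Gamma)$ established in the discussion preceding Observation \ref{4.3}, and set $\Phi([F])=\Delta(F)$; this is a well-defined bijection onto the non-empty simplices of $\mathcal{P}(\Gamma)$.

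It remains to check that $\Phi$ and its inverse preserve the respective orders, i.e.\ that $F_1\subset_\infty F_2$ if and only if $\Delta(F_1)\subseteq\Delta(F_2)$. For the forward direction, assuming $F_1\subset_\infty F_2$ and writing $(F_1',F_2')=\inc(F_1,F_2)$, coarse containment forces $F_1'\overset{\infty}{=}F_1$; applying the lemma stated just before Lemma \ref{4.7} (that $\Delta(K_1')=\Delta(K_1)\cap\Delta(K_2)$) yields $\Delta(F_1)=\Delta(F_1)\cap\Delta(F_2)$, hence $\Delta(F_1)\subseteq\Delta(F_2)$. For the reverse direction, if $\Delta(F_1)\subseteq\Delta(F_2)$ then $\Delta(F_1)$ is a face of the simplex $\Delta(F_2)$; under the flat--simplex correspondence the vertices of $\Delta(F_2)$ are the parallel classes of the coordinate geodesics of $F_2$, so a face corresponds to a standard sub-flat of $F_2$ spanned by a subset of these geodesics, whence $F_1$ is parallel to that sub-flat and $F_1\subset_\infty F_2$. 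Putting these together, $\Phi$ is a poset isomorphism and the observation follows.

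I expect the main obstacle to be the reverse direction of the last step: making precise that a face of $\Delta(F_2)$ is realized by an honest standard sub-flat of $F_2$ (using that a standard flat is determined up to parallelism by the set of parallel classes of its axes), and keeping the bookkeeping between a flat, its parallel class, and its coarse-equivalence class clean throughout. The remaining ingredients — coarse equivalence equals parallelism, and $\Delta$ of a coarse intersection equals the intersection of the $\Delta$'s — are already packaged in Lemma \ref{2.3}, Lemma \ref{3.1}, Remark \ref{2.5} and the cited lemma, so no new estimates are needed.
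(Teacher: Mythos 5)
Your proof is correct and matches the argument the paper leaves implicit: Observation \ref{4.5} is stated without proof, and the intended justification is exactly the identification of coarse-equivalence classes of cosets of standard abelian subgroups with parallel classes of standard flats, combined with the bijection $F\mapsto\Delta(F)$ between such parallel classes and simplices of $\mathcal{P}(\Gamma)$ established just before Observation \ref{4.3}. Your verification that coarse equivalence of standard flats is the same as parallelism (via Lemma \ref{2.3}) and that coarse inclusion corresponds to inclusion of simplices (via the lemma preceding Lemma \ref{4.7}) supplies precisely the details the paper omits.
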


Roughly speaking, $\Bbb P(\Gamma)$ captures the combinatorial pattern of how standard flats in $X(\Gamma)$ intersect with each other, and $\mathcal{P}(\Gamma)$ is about how they coarsely intersect with each other, thus $\Bbb P(\Gamma)$ contains more information than $\mathcal{P}(\Gamma)$. However, in certain cases, it is possible to recover information about $\Bbb P(\Gamma)$ from $\mathcal{P}(\Gamma)$, and this enable us to prove quasi-isometric classification/rigidity results for RAAG's.

We can define the poset $\Bbb P'(\Gamma)$ for arbitrary Artin group by considering the collection of coarse equivalent classes of spherical subgroups in an Artin group under coarse inclusion. Then the geometric realization of the derived poset of $\Bbb P'(\Gamma)$ would be a natural candidate for the extension complex of an Artin group. It is interesting to ask how much of the results in \cite{kim2014geometry} can be generalized to this context.

There is also a link between $\mathcal{P}(\Gamma)$ and the structure of hyperplanes in $X(\Gamma)$. Recall that for every $CAT(0)$ cube complex $X$, the \textit{crossing graph} of $X$, denoted by $C(X)$, is a graph whose vertices are in 1-1 correspondence to the hyperplanes in $X$, and two vertices are adjacent if and only if the corresponding hyperplanes intersect. The \textit{contact graph}, introduced in \cite{hagen2014weak} and denoted by $\mathcal{C}(X)$, has the same vertex set as $C(X)$, and two vertices are adjacent if and only if the carriers of the corresponding hyperplanes intersect.

There is a natural surjective simplicial map $p:C(X(\Gamma))\to\Gamma^{e}$ defined as follows. Pick a vertex $v\in C(X(\Gamma))$ and let $h$ be the corresponding hyperplane. Since all standard geodesics which intersect $h$ at one point are in the same parallel class, we define $p(v)$ to be the vertex in $\Gamma^{e}$ associated with this parallel class (see Lemma \ref{4.2}). It is clear that if $v_{1},v_{2}\in C(X(\Gamma))$ are adjacent vertices, then $p(v_{1})$ and $p(v_{2})$ are adjacent, so $p$ extends to a simplicial map. Pick vertex $w\in\Gamma^{e}$, then $p^{-1}(e)$ is the collection of hyperplanes dual to a standard geodesic.

\begin{thm}[\cite{kim2013embedability,hagen2014weak,quasitree}]
\label{4.6}
If $\Gamma$ is connected, then $C(X(\Gamma))$, $\mathcal{C}(X(\Gamma))$ and $\mathcal{P}(\Gamma)$ are quasi-isometric to each other, moreover, they are quasi-isometric to a tree.
\end{thm}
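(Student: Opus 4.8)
The plan is to split the three spaces into two comparisons: first reduce $\mathcal{P}(\Gamma)$ to the extension graph $\Gamma^{e}$, then compare the crossing graph $C(X(\Gamma))$, the contact graph $\mathcal{C}(X(\Gamma))$ and $\Gamma^{e}$ directly, and finally import the quasi-tree property from the literature and transport it along the quasi-isometries. Throughout I assume $\Gamma$ has at least two vertices (otherwise all three spaces are bounded and the statement is vacuous); then $\Gamma$ connected forces $lk(v)\neq\emptyset$ for every vertex $v$ and $\operatorname{diam}(\Gamma)<\infty$, and both facts will be used.

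First I would observe that $\mathcal{P}(\Gamma)$ is quasi-isometric to its $1$-skeleton. By the correspondence between $(k-1)$-simplices of $\mathcal{P}(\Gamma)$ and standard $k$-flats, one has $\dim\mathcal{P}(\Gamma)\le\dim X(\Gamma)-1<\infty$, so $\mathcal{P}(\Gamma)$ is a finite-dimensional flag complex and the inclusion of its $1$-skeleton is a quasi-isometry; by Lemma \ref{4.2} that $1$-skeleton is $\Gamma^{e}$. Hence it suffices to compare $\Gamma^{e}$, $C(X(\Gamma))$ and $\mathcal{C}(X(\Gamma))$. Next I would show that the natural projection $p:C(X(\Gamma))\to\Gamma^{e}$ is a quasi-isometry. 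It is surjective and $1$-Lipschitz; the crux is that its fibers are uniformly bounded in $C$. Fix $w\in\Gamma^{e}$ and a standard geodesic $l$ in the associated parallel class. By Lemma \ref{3.4} the parallel set splits as $P_{l}=l\times X(lk(\Gamma_{l}))$, and every standard geodesic in the class $w$ is of the form $l\times\{x\}$; consequently each hyperplane in $p^{-1}(w)$ is a slice $\{t\}\times X(lk(\Gamma_{l}))$ over the midpoint $t$ of an edge of $l$, so $p^{-1}(w)$ is a $\mathbb{Z}$-indexed family. Since $lk(\Gamma_{l})\neq\emptyset$, choosing a standard geodesic $m$ orthogonal to $l$ gives a $2$-flat $l\times m\subset P_{l}$ in which a single hyperplane dual to an edge of $m$ crosses every slice, whence $\operatorname{diam}_{C}(p^{-1}(w))\le 2$. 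A coarse lower bound then follows by lifting a geodesic $w_{0},\dots,w_{k}$ of $\Gamma^{e}$: each edge $w_{i}w_{i+1}$ records two parallel classes spanning a standard $2$-flat (Observation \ref{4.1}), hence admits crossing representative hyperplanes, and the bounded fibers splice these choices together to give $d_{C}(h_{1},h_{2})\le 3\,d_{\Gamma^{e}}(p(h_{1}),p(h_{2}))+C$. Thus $p$ is a quasi-isometry.

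Then I would compare $C(X(\Gamma))$ with $\mathcal{C}(X(\Gamma))$ through the identity on vertices. Every crossing is a contact, so this map is $1$-Lipschitz from $C$ to $\mathcal{C}$; only a reverse bound on single contact edges is needed. A contact that is not a crossing is an osculation, so there are edges $e_{1},e_{2}$ sharing a vertex $x\in X(\Gamma)$, dual to the two hyperplanes, whose labels $a=V_{e_{1}},b=V_{e_{2}}$ are non-adjacent in $\Gamma$ (were they adjacent, $e_{1}$ and $e_{2}$ would span a square and the hyperplanes would cross). As $\Gamma$ is connected, pick a path $a=a_{0},\dots,a_{n}=b$ in $\Gamma$ with $n\le\operatorname{diam}(\Gamma)$; the hyperplanes through $x$ in the directions $a_{i},a_{i+1}$ span a square and hence cross, so the osculating pair is joined in $C$ by a path of length $\le\operatorname{diam}(\Gamma)$. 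Concatenating over the edges of a $\mathcal{C}$-geodesic yields $d_{C}\le\operatorname{diam}(\Gamma)\cdot d_{\mathcal{C}}$, so the identity is a quasi-isometry.

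Finally, all three spaces being mutually quasi-isometric, it remains to see they are quasi-isometric to a tree, and here I would simply import one such result and transport it: either that the contact graph of any $CAT(0)$ cube complex is a quasi-tree \cite{hagen2014weak}, or that $\Gamma^{e}$ is a quasi-tree for connected $\Gamma$ \cite{kim2014geometry}. I expect the main obstacle to be the fiber-boundedness step for $p$: one must check that an entire parallel class of standard geodesics, which may be spread widely across $X(\Gamma)$, nevertheless contributes only a $\mathbb{Z}$-chain of hyperplanes that collapses to bounded diameter in $C$. This is precisely where the splitting of the parallel set (Lemma \ref{3.4}) and the connectedness of $\Gamma$ (supplying a nonempty link, hence a transverse crossing direction) are both indispensable; the $C$-versus-$\mathcal{C}$ comparison is comparatively routine once connectedness is in hand.
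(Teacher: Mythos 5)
The paper does not prove Theorem \ref{4.6} at all: it is imported wholesale from \cite{kim2013embedability,hagen2014weak,quasitree}, so there is no internal argument to compare yours against. Judged on its own, your sketch is essentially correct and gives a reasonable self-contained derivation of the mutual quasi-isometries, deferring only the quasi-tree property to the literature. The reduction of $\mathcal{P}(\Gamma)$ to $\Gamma^{e}$ via finite dimensionality, the identification of the fiber $p^{-1}(w)$ with the $\mathbb{Z}$-family of slices $\{t\}\times X(lk(\Gamma_{l}))$ via Lemma \ref{3.4}, the diameter-$2$ bound on fibers using a transverse crossing direction supplied by connectedness, and the splicing estimate $d_{C}\le 3\,d_{\Gamma^{e}}+2$ are all sound. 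You are also right to exclude the one-vertex case: there $C(X(\Gamma))$ is an infinite discrete set while $\mathcal{P}(\Gamma)$ is a point, so the statement as literally written fails rather than being vacuous, and the intended hypothesis is that $\Gamma$ is connected with an edge.

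One case is missing from your $C$-versus-$\mathcal{C}$ comparison. When two hyperplanes osculate, the dual edges $e_{1},e_{2}$ at the shared vertex $x$ need not carry distinct labels: they may be the two edges at $x$ of a single standard geodesic $l$, so that $a=b$, your path $a=a_{0},\dots,a_{n}=b$ in $\Gamma$ degenerates to a point, and the crossing chain you build is empty. This case is nevertheless covered by your own fiber bound, since both hyperplanes then lie in $p^{-1}(\Delta(l))$ and are at $C$-distance at most $2$; but as written the argument silently assumes $a\neq b$, so you should add that sentence. With this patch one gets $d_{C}\le\max(\operatorname{diam}(\Gamma),2)\cdot d_{\mathcal{C}}$, and the remaining input (connectivity and the quasi-tree property of the contact graph from \cite{hagen2014weak}, or of $\Gamma^{e}$ from \cite{kim2014geometry}) genuinely must be imported, exactly as you propose.
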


From this viewpoint, $\mathcal{P}(\Gamma)$ captures both the geometric information of $X(\Gamma)$ (the standard flats) and the combinatorial information (the hyperplanes).

\subsection{Reconstruction of quasi-isometries}
\label{subsec_reconstruct}
We show the boundary map $q_{\ast}:\mathcal{P}(\Gamma)\to\mathcal{P}(\Gamma')$ in Lemma \ref{4.7} induces a well defined map from $G(\Gamma)$ to $G(\Gamma')$.
\begin{lem}
\label{4.8}
Let $F_{1}$ and $F_{2}$ be two maximal standard flats in $X(\Gamma)$ and let $\Delta_{1}$, $\Delta_{2}$ be the corresponding maximal simplexes in $\mathcal{P}(\Gamma)$. If $F_{1}$ and $F_{2}$ are separated by a hyperplane $h$, then there exist vertices $v_{i}\in\Delta_{i}$ for $i=1,2$ and $v\in \mathcal{P}(\Gamma)$ such that $v_{1}$ and $v_{2}$ are in different connected component of $\mathcal{P}(\Gamma)\setminus St(v)$.
\end{lem}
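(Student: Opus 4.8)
The plan is to take $v$ to be the vertex of $\mathcal{P}(\Gamma)$ determined by $h$ itself, namely the parallel class of standard geodesics dual to $h$ (the image $p(h)\in\Gamma^{e}$ of $h$ under the map $p\colon C(X(\Gamma))\to\Gamma^{e}$). Write $H^{+}$ and $H^{-}$ for the two open halfspaces of $h$, with $F_{1}\subset H^{+}$ and $F_{2}\subset H^{-}$. I want to show that $St(v)$ cuts $\mathcal{P}(\Gamma)$ according to these two halfspaces, and that $\Delta_{1}$ and $\Delta_{2}$ reach opposite pieces.

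First I would prove a trichotomy for a vertex $w\neq v$, represented by a parallel class $[l_{w}]$ with parallel set $P_{l_{w}}$. Suppose $P_{l_{w}}\cap h\neq\emptyset$. If $h$ crosses the $l_{w}$-direction, then a geodesic parallel to $l_{w}$ is dual to $h$, so $[l_{w}]=v$, a contradiction. Otherwise, using the product splitting $P_{l_{w}}=l_{w}\times N$ of Lemma \ref{3.4}, the hyperplane $h$ restricts to a hyperplane of the factor $N$, and hence $h$ contains a geodesic $l'$ parallel to $l_{w}$; inside the carrier $N_{h}\cong h\times[0,1]$ this $l'$ spans a standard $2$-flat with a geodesic dual to $h$, so $w\in St(v)$. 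Contrapositively, for $w\notin St(v)\cup\{v\}$ the convex set $P_{l_{w}}$ is disjoint from $h$ and therefore lies in a single halfspace. This partitions the vertices of $\mathcal{P}(\Gamma)\setminus St(v)$ into $\mathcal{V}^{+}=\{w:P_{l_{w}}\subset H^{+}\}$ and $\mathcal{V}^{-}=\{w:P_{l_{w}}\subset H^{-}\}$. I expect this trichotomy to be the main obstacle: one must argue carefully that when $P_{l_{w}}$ meets $h$ without $h$ crossing the $l_{w}$-direction, the product structure of the parallel set together with the carrier of $h$ really does produce a parallel copy of $l_{w}$ spanning a $2$-flat with a dual edge, which is exactly what makes ``$St(v)$ separates along $h$'' precise.

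Next I would check that no edge of $\mathcal{P}(\Gamma)$ joins $\mathcal{V}^{+}$ to $\mathcal{V}^{-}$. An edge $w_{1}w_{2}$ yields geodesics $l_{1}\in[l_{w_{1}}]$ and $l_{2}\in[l_{w_{2}}]$ spanning a standard $2$-flat $F=l_{1}\times l_{2}$. If $w_{1}\in\mathcal{V}^{+}$ and $w_{2}\in\mathcal{V}^{-}$, then $l_{1}\subset H^{+}$ and $l_{2}\subset H^{-}$, so the connected complex $F$ meets $h$. But a hyperplane meeting $F$ is dual to an edge in one of its two directions, and the geodesic through that edge, being parallel to $l_{1}$ or to $l_{2}$, crosses $h$; this forces $w_{1}=v$ or $w_{2}=v$, a contradiction. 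Since $\mathcal{V}^{+}\sqcup\mathcal{V}^{-}$ exhausts the vertices of $\mathcal{P}(\Gamma)\setminus St(v)$ and no edge joins the two parts, any two vertices drawn from opposite parts lie in distinct components.

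Finally I would locate $v_{1}\in\Delta_{1}\cap\mathcal{V}^{+}$ and $v_{2}\in\Delta_{2}\cap\mathcal{V}^{-}$. Since $F_{1}\subset H^{+}$ does not cross $h$, no generating direction of $F_{1}$ is dual to $h$, so $v\notin\Delta_{1}$. The key point, where maximality of $F_{1}$ enters, is that $\Delta_{1}\not\subset St(v)$: if every generating direction $w$ of $F_{1}$ were adjacent to $v$, then each label $V_{l_{w}}$ would commute with $V_{h}$ in $\Gamma$ (here I use only the safe implication, that adjacent vertices of $\mathcal{P}(\Gamma)$ have commuting labels), whence $\{V_{h}\}\cup\{V_{l_{w}}\}_{w}$ would be a clique strictly larger than the maximal clique defining $F_{1}$, contradicting maximality. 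Hence some generating geodesic $l_{w}\subset F_{1}\subset H^{+}$ has $w\notin St(v)\cup\{v\}$, so $P_{l_{w}}\subset H^{+}$ and we may set $v_{1}=w\in\mathcal{V}^{+}$; symmetrically $v_{2}\in\Delta_{2}\cap\mathcal{V}^{-}$. By the previous paragraph $v_{1}$ and $v_{2}$ lie in different connected components of $\mathcal{P}(\Gamma)\setminus St(v)$, as required.
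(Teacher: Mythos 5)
Your proof is correct, and while the skeleton matches the paper's (you choose the same $v$, namely the class of standard geodesics dual to $h$, and the same $v_i\in\Delta_i\setminus St(v)$ obtained from maximality of $\Delta_i$), the core separation step is argued by a genuinely different and more elementary route. The paper proceeds by contradiction along a hypothetical path in $\mathcal{P}(\Gamma)\setminus St(v)$: it promotes each edge of the path to a maximal standard flat, finds a consecutive pair separated by $h$, and then invokes the coarse-intersection machinery (Lemma \ref{2.3}, Observation \ref{4.3}) together with Lemma \ref{2.7} to force the shared vertex of that pair into $St(v)$. You instead two-color the vertices outside $St(v)$ by the open halfspace containing their parallel set --- your trichotomy is exactly the dichotomy for hyperplanes of the product $P_{l_w}=l_w\times N$, with the ``$w\in St(v)$'' branch following from $h\subset P_l$ and Observation \ref{4.1} --- and then rule out crossing edges by the elementary observation that adjacent classes admit representatives that genuinely intersect (indeed $l_1\cap l_2\neq\emptyset$ already contradicts $l_1\subset H^{+}$, $l_2\subset H^{-}$, so you don't even need to pass through ``$F$ meets $h$''). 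This avoids Lemma \ref{2.7} entirely and yields slightly more information: an explicit partition of $\mathcal{P}(\Gamma)\setminus St(v)$ into two nonadjacent blocks indexed by the halfspaces of $h$. Two small points worth making explicit: in the trichotomy's second branch, the line $l_w\times\{y\}\subset h$ need not be a standard geodesic if $y$ is not a vertex, so one should pass to a parallel standard geodesic in the carrier (or just quote Observation \ref{4.1}); and in your maximality argument, the strictness of the clique enlargement uses that adjacent vertices of $\mathcal{P}(\Gamma)$ carry distinct labels, which holds because $\Gamma$ is simple --- the paper sidesteps this by citing flagness of $\mathcal{P}(\Gamma)$ directly. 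Neither affects correctness.
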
  

\begin{proof}
Let $e$ be an edge dual to $h$ and let $l$ be the standard geodesic that contains $e$. Set $v=\Delta(l)\in\mathcal{P}(\Gamma)$. By Lemma \ref{3.4}, the parallel set $P_{l}$ of $l$ is isometric to $h\times \Bbb E^{1}$. Thus every standard geodesic parallel to $l$ must have non-trivial intersection with $h$. Since $F_{1}\cap h=\emptyset$, $F_{1}$ can not contain any standard geodesic parallel to $l$, which means $v\notin \Delta_{1}$. Moreover, $\Delta_{1}\nsubseteq St(v)$ since $\Delta_{1}$ is a maximal simplex. Similarly, $\Delta_{2}\nsubseteq St(v)$, thus we can find vertices $v_{i}\in\Delta_{i}\setminus St(v)$ for $i=1,2$. We claim $v_{1}$, $v_{2}$ and $v$ are the vertices we are looking for.

If there is a path $\omega\subset \mathcal{P}(\Gamma)\setminus St(v)$ connecting $v_{1}$ and $v_{2}$, we can assume $\omega$ is consist of a sequence of edges $\{e_{i}\}_{i=1}^{k}$ with $v_{1}\in e_{1}$ and $v_{2}\in e_{k}$. For each $e_{i}$, pick a maximal simplex $\Delta'_{i}$ that contains $e_{i}$ and let $F'_{i}$ be the maximal standard flat such that $\Delta(F'_{i})=\Delta'_{i}$. Then $v\notin\Delta'_{i}$ for $1\le i\le k$, hence $F'_{i}\cap h=\emptyset$. 

Set $\Delta'_{0}=\Delta_{1}$, $\Delta'_{k+1}=\Delta_{2}$, $F'_{0}=F_{1}$ and $F'_{k+1}=F_{2}$. Since $\Delta'_{i}\cap\Delta'_{i+1}$ contains a vertex in $\omega$, we have 
\begin{equation}
\label{4.9}
(\Delta'_{i}\cap\Delta'_{i+1})\setminus St(v)\neq\emptyset
\end{equation}
for $0\le i\le k$. Since $F'_{0}$ and $F'_{k+1}$ are in different sides of $h$, there exists $i_{0}$ such that $h$ separates $F'_{i_{0}}$ and $F'_{i_{0}+1}$. Let $(F''_{i_{0}},F''_{i_{0}+1})=\inc (F'_{i_{0}},F'_{i_{0}+1})$. By Observation \ref{4.3}, $\Delta(F''_{i_{0}})=\Delta(F''_{i_{0}+1})=\Delta'_{i}\cap\Delta'_{i+1}$. However, by Lemma \ref{2.7}, there exists a convex subset of $h$ parallel to $F''_{i_{0}}$, thus $F''_{i_{0}}\subset_{\infty} h\subset P_{l}$. It follows from Observation \ref{4.1} that $\Delta'_{i}\cap\Delta'_{i+1}\subset St(v)$, which contradicts (\ref{4.9}).
\end{proof}

Denote the Cayley graph of $G(\Gamma)$ with respect to the standard generating set $S$ by $C(\Gamma)$. We pick an identification between $C(\Gamma)$ and the $1$-skeleton of $X(\Gamma)$. Thus $G(\Gamma)$ is identified with the vertex set of $X(\Gamma)$. 

\begin{lem}
\label{4.10}
Let $\Gamma_{1}$ be a finite simplicial such that
\begin{enumerate}
\item There is no separating closed star in $F(\Gamma_{1})$.
\item $F(\Gamma_{1})$ is not contained in a union of two closed stars.
\end{enumerate} 
Then any simplicial isomorphism $s:\mathcal{P}(\Gamma_{1})\to\mathcal{P}(\Gamma_{2})$ induces a unique map $s':G(\Gamma_{1})\to G(\Gamma_{2})$ such that for any maximal standard flat $F_{1}\subset X(\Gamma_{1})$, vertices in $F_{1}$ are mapped by $s'$ to vertices lying in a maximal standard flat $F_{2}\subset X(\Gamma_{2})$ with $\Delta(F_{2})=s'(\Delta(F_{1}))$.
\end{lem}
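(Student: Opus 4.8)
The plan is to define $s'$ on the vertex set of $X(\Gamma_1)$ (identified with $G(\Gamma_1)$) one point at a time, recovering each image as an intersection of parallel sets dictated by $s$. Fix a vertex $p\in X(\Gamma_1)$. The standard geodesics through $p$ are indexed by the vertices of $\Gamma_1$, and their parallel classes form the vertex set of the copy $(F(\Gamma_1))_p\subset\mathcal{P}(\Gamma_1)$; recall that $\pi$ restricts to an isomorphism $(F(\Gamma_1))_p\to F(\Gamma_1)$. For each such class $w$, push it forward to $s(w)\in\mathcal{P}(\Gamma_2)$ and let $P_{s(w)}\subset X(\Gamma_2)$ be the parallel set of any standard geodesic in that class, a convex subcomplex by Lemma \ref{3.4}. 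I would define $s'(p)$ to be $\bigcap_{w}P_{s(w)}$, once this is shown to be a single vertex. The maximal standard flats through $p$ correspond to the maximal simplices of $(F(\Gamma_1))_p$, and $s$ carries these to maximal simplices of $\mathcal{P}(\Gamma_2)$; hence any point of $\bigcap_{w}P_{s(w)}$ lies, for each maximal flat $F_1\ni p$ with clique $C$, on the standard flat through it spanned by the geodesics in the classes $\{s(w)\}_{w\in C}$, which is a maximal flat of class $s(\Delta(F_1))$. This is exactly the asserted flat-mapping property, so the whole content is to prove that $\bigcap_{w}P_{s(w)}$ is nonempty and reduces to one vertex.

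For nonemptiness I would use the Helly property (Lemma \ref{2.1}) for the finitely many convex subcomplexes $P_{s(w)}$, so it suffices to prove they pairwise intersect. For adjacent classes $w_1,w_2$ this is immediate: $s(w_1),s(w_2)$ remain adjacent, and Observation \ref{4.1} produces representatives spanning a standard $2$-flat, which lies in $P_{s(w_1)}\cap P_{s(w_2)}$. The essential case is a non-adjacent pair, which I treat by contradiction: if $P_{s(w_1)}\cap P_{s(w_2)}=\emptyset$ they are separated by a hyperplane $h$, and running the argument of Lemma \ref{4.8} with the dual geodesic $l_e$ and $v=\Delta(l_e)$ shows that $s(w_1)$ and $s(w_2)$ lie in distinct components of $\mathcal{P}(\Gamma_2)\setminus St(v)$ (Lemma \ref{2.7} guarantees that a chain of flats avoiding $v$ cannot cross $h$). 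Transporting through $s^{-1}$, the classes $w_1,w_2\in(F(\Gamma_1))_p$ then lie in distinct components of $\mathcal{P}(\Gamma_1)\setminus St(u)$, where $u=s^{-1}(v)$.

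The payoff is read off inside the section $(F(\Gamma_1))_p$: because $\pi$ is injective there, $w_1,w_2\notin St(u)$ forces their labels to avoid the separating set $A:=\pi\big(St(u)\cap(F(\Gamma_1))_p\big)\subseteq St(\pi(u))$, and $\pi$ identifies the separation of $w_1,w_2$ with a separation of $\pi(w_1),\pi(w_2)$ in $F(\Gamma_1)$ by $A$. If the ambient closed star $St(\pi(u))$ also separates $F(\Gamma_1)$, this contradicts hypothesis (1); the remaining degenerate possibility, in which the component of some $\pi(w_i)$ cut off by $A$ is absorbed into $St(\pi(u))$, is exactly where hypothesis (2) enters, forcing $F(\Gamma_1)$ to be covered by $St(\pi(u))$ together with one more closed star, again a contradiction. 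I expect this descent of a hyperplane separation in $\mathcal{P}(\Gamma_2)$ to a forbidden configuration in $F(\Gamma_1)$ to be the main obstacle, since it is where both hypotheses are consumed and where the bookkeeping between labels and parallel classes is most delicate.

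For single-pointedness, $\bigcap_{w}P_{s(w)}$ is a convex subcomplex, so if it were not a single vertex it would contain an edge, whose geodesic class $t$ is adjacent or equal to every $s(w)$ (its direction lies in the support $St(\cdot)$ of each $P_{s(w)}$, by Lemma \ref{3.4}). Pulling back by $s^{-1}$ and projecting by $\pi$, the label $\pi(s^{-1}(t))$ would be adjacent or equal to every vertex of $\Gamma_1$, i.e. a cone vertex, which hypothesis (2) (ruling out $F(\Gamma_1)=St(v)$) excludes. Hence $\bigcap_{w}P_{s(w)}$ is a single vertex $q$, and I set $s'(p)=q$. Uniqueness of $s'$ then follows, since any map with the stated property must send $p$ into $\bigcap_{F_1\ni p}P_{s(\Delta(F_1))}\subseteq\bigcap_{w}P_{s(w)}=\{q\}$. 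To finish the verification that for a fixed maximal flat $F_1$ all its vertices land in one flat $F_2$, I would compare the sections at adjacent vertices of $F_1$ and check that an edge of $F_1$ in direction $a$ is sent to an edge in direction $s(a)$, so that $s'(F_1)$ stays inside a single standard flat of class $s(\Delta(F_1))$.
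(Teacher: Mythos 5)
Your overall architecture matches the paper's: define $s'(p)$ as the intersection of the convex sets dictated by $s$ on the section $(F(\Gamma_1))_p$, reduce nonemptiness to pairwise intersections via Helly, convert an empty pairwise intersection into a hyperplane separation, and transport that separation through Lemma \ref{4.8} and $s^{-1}$ into a separation of $K_p:=(F(\Gamma_1))_p$ by $St(u)\cap K_p$. The single-pointedness and uniqueness arguments are also essentially the paper's. But there is a genuine gap at the central step, namely the case $u\notin K_p$. You claim that the separating set $A=\pi(St(u)\cap K_p)$ is contained in $St(\pi(u))$ and that the separation therefore either promotes to a separation of $F(\Gamma_1)$ by the closed star $St(\pi(u))$ (contradicting (1)) or degenerates into a two-star cover (contradicting (2)). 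Neither implication holds. The promotion to $St(\pi(u))$ requires that $\pi(w_1)$ and $\pi(w_2)$ both lie \emph{outside} $St(\pi(u))$, but you only know they lie outside $A$; since $\mathcal{P}(\Gamma_1)$ contains many vertices of each label, $w_i$ can be non-adjacent to $u$ while $\pi(w_i)$ is adjacent to $\pi(u)$, and then removing the larger set $St(\pi(u))$ may simply swallow the component of $\pi(w_i)$ rather than separate the graph. And in that situation nothing forces $F(\Gamma_1)$ to be covered by $St(\pi(u))$ together with a single additional closed star — you never identify a candidate second star, and the other component need not lie in any closed star.

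What the paper actually does here is substantially more work, and both hypotheses are consumed differently from how you use them. First it proves the combinatorial identity (\ref{4.12}): $St(v)\cap K_{p}=\cap_{i=0}^{n}(St(w_{i})\cap K_{p})$, where $w_0\in K_p$ is the vertex with the same label as $v$ and $w_1,\dots,w_n\in K_p$ carry the labels of edges dual to the hyperplanes separating $p$ from the parallel set $P_l$. This identity — whose proof is a genuine geometric argument about carriers and parallel sets, not just label bookkeeping — replaces your single containment $A\subseteq St(\pi(u))$ by an exact expression of the separating set as an intersection of traces of closed stars of vertices \emph{inside} $K_p$. Second, it runs a Mayer–Vietoris induction on $n$ to show that if such an intersection separates $K_p$ then one of the individual $St(w_{i})\cap K_{p}$ already separates $K_p$, which is where hypothesis (1) finally bites; hypothesis (2) is used to guarantee $A\cap B\neq\emptyset$ so that the reduced Mayer–Vietoris sequence applies. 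Without an analogue of (\ref{4.12}) and this induction, knowing that some subset of a closed star separates $F(\Gamma_1)$ is simply not a contradiction with (1), so your proof does not close.
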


\begin{proof}
Pick vertex $p\in G(\Gamma_{1})$. Let $\{F_{i}\}_{i=1}^{k}$ be the collection of maximal standard flats containing $p$. For $1\le i\le k$, define $\Delta_{i}=\Delta(F_{i})$ and $\Delta'_{i}=s(\Delta_{i})$. Let $F'_{i}\subset X(\Gamma_{2})$ be the maximal standard flat such that $\Delta(F'_{i})=\Delta'_{i}$. Let $K_{p}=(F(\Gamma_1))_{p}=\cup_{i=1}^{k}\Delta_{i}$ (recall that $K_{p}\cong F(\Gamma_{1})$). We claim 
\begin{equation}
\label{4.11}
\cap_{i=1}^{k}F'_{i}\neq\emptyset.
\end{equation}
The lemma will then follows from (\ref{4.11}). To see this, we deduce from condition (2) that $\cap_{i=1}^{k}\Delta_i=\emptyset$. Hence $\cap_{i=1}^{k} F_i=\{p\}$. It follows that $\cap_{i=1}^{k}\Delta'_i=\emptyset$. This together with (\ref{4.11}) imply that $\cap_{i=1}^{k}F'_{i}$ is exactly one point. We define $s'$ by sending $p$ to this point. One readily verifies that $s'$ has the required properties.

It remains to prove (\ref{4.11}).

Suppose (\ref{4.11}) is not true. Then by Lemma \ref{2.1}, there exist $i_{1}$ and $i_{2}$ such that $F'_{i_{1}}\cap F'_{i_{2}}=\emptyset$. Thus $F'_{i_{1}}$ and $F'_{i_{2}}$ are separated by a hyperplane. It follows from Lemma \ref{4.8} that there exist vertices $v'\in\mathcal{P}(\Gamma_{2})$, $v'_{1}\in\Delta'_{i_{1}}$ and $v'_{2}\in\Delta'_{i_{2}}$ such that $v'_{1}$ and $v'_{2}$ are in different connected components of $\mathcal{P}(\Gamma_{2})\setminus St(v')$. Let $v=s^{-1}(v')$, $v_{1}=s^{-1}(v'_{1})$ and $v_{2}=s^{-1}(v'_{2})$. Then $K_{p}\setminus (K_{p}\cap St(v))$ is disconnected (since $v_{1},v_{2}\in K_{p}$ and they are separated by $St(v)$). 

If $v\in K_{p}$, then $K_{p}$ would contain a separating closed star, which yields a contradiction, thus (\ref{4.11}) is true in this case. 

Suppose $v\notin K_{p}$. Pick a standard geodesic $l$ such that $\Delta(l)=v$ and let $\{h_{i}\}_{i=1}^{n}$ be the collection of hyperplanes in $X(\Gamma)$ such that each $h_{i}$ separates $p$ from the parallel set $P_{l}$ of $l$ (note that $p\notin P_{l}$). For $1\le i\le n$, pick an edge $e_{i}$ dual to $h_{i}$ and let $w_{i}$ be the unique vertex in $K_{p}$ that has the same label as $e_{i}$. Let $w_{0}\in K_{p}$ be the unique vertex which has the same label as $v$. We claim 
\begin{equation}
\label{4.12}
St(v)\cap K_{p}=\cap_{i=0}^{n}(St(w_{i})\cap K_{p}).
\end{equation}

For every $u \in K_p$, let $l_u$ be the unique standard geodesic such that $\Delta(l_{u})=u$ and $p\in l_{u}$. 

Pick $u\in St(v)\cap K_{p}$. Observation \ref{4.1} implies $\inc (l_{u},P_{l})=(l_{u},l'_{u})$, where $l'_{u}$ is some standard geodesic in $P_{l}$. Then for $1\le i\le n$, $h_{i}$ separates $l_{u}$ from $P_{l}$, otherwise $h_{i}\cap l_{u}\neq\emptyset$ and Lemma \ref{2.6} implies $h_{i}\cap P_{l}\neq\emptyset$, which is a contradiction. It follows from Corollary \ref{3.2} that $u$ and $w_{i}$ are adjacent for $0\le i\le n$, thus $u\in \cap_{i=0}^{n}(St(w_{i})\cap K_{p})$. Therefore $St(v) \cap K_p \subset \cap_{i=0}^n (St(w_i)\cap K_p)$. 

Pick $u\in \cap_{i=0}^{n}(St(w_{i})\cap K_{p})$. First we show $l_u\cap P_l=\emptyset$. Suppose there is a vertex $z$ in $l_{u}\cap P_{l}$. Since $v$ and $w_{0}$ have the same label and $u \in St(w_{0})$, it follows that the edge in $l_{u}$ which contains $z$ belongs to the parallel set $P_{l}$. Then $l_{u}\subset P_{l}$, contradicting the fact that $p\notin P_{l}$. Therefore $l_u\cap P_l =\emptyset$.

Now we pick an edge path $\omega$ of shortest combinatorial length that travels from $l_{u}$ to $P_{l}$. Let $\{f_{j}\}_{j=1}^{m}$ be the consecutive edges in $\omega$ such that $f_{1}\cap l_{u}\neq\emptyset$. For each $f_j$, let $\bar{h}_{j}$ be the hyperplane dual to $f_{j}$. Then $\bar{h}_{j}$ separates $l_{u}$ from $P_{l}$ (otherwise $\omega$ would not be the shortest edge path), hence separates $p$ from $P_{l}$. This and $u\in \cap_{i=0}^{n}(St(w_{i})\cap K_{p})$ imply that $d(\pi(u),V_{f_{j}})\le 1$ for each $j$, where $\pi$ is the map in (\ref{projection}) and $V_{f_j}$ is the label of the edge $f_j$. It follows that $\omega$ is contained in the parallel set $P_{l_u}$, and hence the intersection $P_{l_u}\cap P_l$ contains some vertex $z$. Again, since $u \in St(w_0)$, and $w_0$ has the same label as $v$, we find that the standard geodesic $l_u' \subset P_{l_u}$ that is parallel to $l_u$ and passes through $z$, is contained in $P_l$. Therefore $u\in St(v)\cap K_{p}$. Then (\ref{4.12}) follows. 

By condition (2) of Lemma \ref{4.10}, we have 
\begin{equation}
\label{proper subset}
(St(w_{0})\cap K_{p})\cup(\cap_{i=1}^{n}(St(w_{i})\cap K_{p}))\subsetneq K_{p}.
\end{equation}
Let $A=K_p\setminus (St(w_{0})\cap K_{p})$ and let $B=K_p\setminus (\cap_{i=1}^{n}(St(w_{i})\cap K_{p}))$. Then (\ref{proper subset}) implies $A\cap B\neq\emptyset$. Thus we have the following Mayer-Vietoris sequence for reduced homology.
\begin{equation*}
\cdots \to\tilde{H}_0(A\cap B)\to \tilde{H}_0(A)\oplus \tilde{H}_0(B)\to \tilde{H}_0(A\cup B)\to 0.
\end{equation*}
Recall that $K_p\setminus(K_p\cap St(v))$ is disconnected, we deduce that $\tilde{H}_0(A\cup B)$ is nontrivial from (\ref{4.12}). Thus $\tilde{H}_0(A)\oplus \tilde{H}_0(B)$ is nontrivial, which implies either $\cap_{i=1}^{n}(St(w_{i})\cap K_{p})$ or $St(w_{0})\cap K_{p}$ would separate $K_{p}$. Thus we can induct on $n$ to deduce that there exists $i_{0}$ such that $St(w_{i_{0}})\cap K_{p}$ separates $K_{p}$. This yields a contradictory to condition (1) of Lemma \ref{4.10}.
\end{proof}

There are counterexamples if we only assume (1) in Lemma \ref{4.10}. For example, let $\Gamma_1$ and $\Gamma_2$ be discrete graphs made of two points. Then $\mathcal{P}(\Gamma_1)$ and $\mathcal{P}(\Gamma_2)$ are discrete sets. Now it is not hard to construct a permutation of a discrete set to itself which does not satisfy the conclusion of Lemma \ref{4.10}. If we go back to the proof of Lemma \ref{4.10}, then the step using Mayer-Vietoris sequence will fail, since we need $A\cap B\neq\emptyset$ in order to use the reduced version of Mayer-Vietoris sequence.

\begin{cor}
\label{4.13}
Suppose $G(\Gamma_{1})$ and $G(\Gamma_{2})$ both satisfy the assumption of Lemma \ref{4.10}. Then they are isomorphic if and only if $\mathcal{P}(\Gamma_{1})$ and $\mathcal{P}(\Gamma_{2})$ are isomorphic as simplicial complexes.
\end{cor}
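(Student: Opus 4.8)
The plan is to prove the two implications separately, with the forward one essentially a bookkeeping statement and the converse the substantive direction. The forward implication is immediate from earlier material: if $G(\Gamma_1)\cong G(\Gamma_2)$, then by the isomorphism rigidity of right-angled Artin groups (Droms, \cite{droms1987isomorphisms}) we have $\Gamma_1\cong\Gamma_2$ as graphs. Since $\mathcal{P}(\Gamma)$ is, by Lemma \ref{4.2}, the extension complex, which is manufactured canonically from the pair $(G(\Gamma),S)$ and hence depends only on the isomorphism type of $\Gamma$, a graph isomorphism $\Gamma_1\cong\Gamma_2$ induces a simplicial isomorphism $\mathcal{P}(\Gamma_1)\cong\mathcal{P}(\Gamma_2)$.

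For the converse, suppose $s\colon\mathcal{P}(\Gamma_1)\to\mathcal{P}(\Gamma_2)$ is a simplicial isomorphism. Since both $G(\Gamma_1)$ and $G(\Gamma_2)$ satisfy the hypotheses of Lemma \ref{4.10}, I would apply that lemma to $s$ and to $s^{-1}$ to obtain vertex-set maps $s'\colon G(\Gamma_1)\to G(\Gamma_2)$ and $(s^{-1})'\colon G(\Gamma_2)\to G(\Gamma_1)$, each compatible with the corresponding boundary isomorphism on maximal standard flats. The first step is to verify that these two maps are mutually inverse: the composite $(s^{-1})'\circ s'$ satisfies the flat-compatibility condition of Lemma \ref{4.10} relative to $s^{-1}\circ s=\mathrm{Id}$ (a maximal flat $F$ through a vertex is sent into a maximal flat $F''$ with $\Delta(F'')=\Delta(F)$), and $\mathrm{Id}$ on $G(\Gamma_1)$ also satisfies this condition, so the uniqueness clause of Lemma \ref{4.10} forces $(s^{-1})'\circ s'=\mathrm{Id}$; the other composite is handled symmetrically. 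Hence $s'$ is a bijection of vertex sets.

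The heart of the argument is then a purely local reconstruction of $F(\Gamma_1)$ from $\mathcal{P}(\Gamma_1)$. Fix a vertex $p\in X(\Gamma_1)$ and set $p'=s'(p)$; recall the subcomplex $(F(\Gamma_1))_{p}\subset\mathcal{P}(\Gamma_1)$ spanned by the standard geodesics through $p$, which is isomorphic to $F(\Gamma_1)$ via $i_{p}$ and is a full subcomplex (a set of such geodesics spans a simplex of $\mathcal{P}(\Gamma_1)$ exactly when their labels form a clique of $\Gamma_1$). I would show that $s$ carries $(F(\Gamma_1))_{p}$ isomorphically onto $(F(\Gamma_2))_{p'}$. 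The maximal simplices of $(F(\Gamma_1))_{p}$ are exactly the $\Delta(F)$ for maximal standard flats $F\ni p$; by the defining property of $s'$ these are sent by $s$ to simplices $\Delta(F')$ of maximal standard flats $F'\ni p'$, and applying the mutual-inverse property of $(s^{-1})'$ at $p'$ shows conversely that every maximal standard flat through $p'$ arises in this way. Thus $s$ restricts to a bijection between the maximal simplices of $(F(\Gamma_1))_{p}$ and those of $(F(\Gamma_2))_{p'}$; as both are full and $s$ is simplicial, this upgrades to an isomorphism $(F(\Gamma_1))_{p}\cong(F(\Gamma_2))_{p'}$. Composing with $i_{p}$ and $i_{p'}$ yields $F(\Gamma_1)\cong F(\Gamma_2)$, hence $\Gamma_1\cong\Gamma_2$ by passing to $1$-skeleta, and therefore $G(\Gamma_1)\cong G(\Gamma_2)$ by the definition of a right-angled Artin group.

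The step I expect to require the most care is pinning down the exact correspondence of maximal standard flats through $p$ with those through $p'$, i.e.\ that $(F(\Gamma_2))_{p'}$ carries no maximal simplices beyond the images $s(\Delta(F))$; this is precisely where bijectivity of $s'$ (equivalently the uniqueness clause of Lemma \ref{4.10}) enters. Conceptually, this is also the reason for working locally. A global attempt to promote $s'$ to a cubical isomorphism $X(\Gamma_1)\to X(\Gamma_2)$ would run into the order-permuting (``flip'') phenomena described in the introduction, since the hypotheses of Lemma \ref{4.10} are strictly weaker than finiteness of $\out$ and do permit transvections; such a global map may fail to respect $\ell^1$-adjacency within a flat. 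Restricting to the star of a single vertex avoids this entirely, because the configuration of standard flats meeting at a point is rigid even when the global combinatorics of $\mathcal{P}(\Gamma)$ admits nontrivial automorphisms.
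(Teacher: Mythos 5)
Your argument is correct and takes essentially the same route as the paper: both directions rest on Lemma \ref{4.10} together with a comparison of the local subcomplexes $(F(\Gamma_1))_{p}$ and $(F(\Gamma_2))_{s'(p)}$. The only difference is in the endgame --- the paper produces full embeddings $\Gamma_1\hookrightarrow\Gamma_2$ and $\Gamma_2\hookrightarrow\Gamma_1$ and invokes finiteness of the graphs, whereas you use the uniqueness clause of Lemma \ref{4.10} to show $s'$ and $(s^{-1})'$ are mutually inverse and then verify directly that $s$ maps $(F(\Gamma_1))_{p}$ onto $(F(\Gamma_2))_{s'(p)}$; both closings are valid.
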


\begin{proof}
The only if direction follows from the fact that $G(\Gamma_1)$ and $G(\Gamma_2)$ are isomorphic if and only if $\Gamma_1$ and $\Gamma_2$ are isomorphic, see \cite{droms1987isomorphisms}. It remains to prove the if direction. Pick an isomorphism $s:\mathcal{P}(\Gamma_{1})\to\mathcal{P}(\Gamma_{2})$, and let $s':G(\Gamma_1)\to G(\Gamma_2)$ be the map in Lemma \ref{4.10}. Pick vertex $p\in G(\Gamma_1)$ and let $q=s(p)$. We define $(F(\Gamma_1))_p\subset \mathcal{P}(\Gamma_1)$ and $(F(\Gamma_2))_q\subset \mathcal{P}(\Gamma_2)$ as in the first paragraph of the proof of Lemma \ref{4.10}. Then (\ref{4.11}) implies $s((F(\Gamma_1))_p)\subset (F(\Gamma_2))_q$. This induces a graph embedding $\Gamma_1 \hookrightarrow \Gamma_2$. By repeating the previous discussion for $s^{-1}$, we obtain another graph embedding $\Gamma_2 \hookrightarrow \Gamma_1$. Since both $\Gamma_1$ and $\Gamma_2$ are finite simplicial graphs, they are isomorphic. Hence $G(\Gamma_{1})\cong G(\Gamma_2)$.
\end{proof}

\begin{lem}
\label{4.14}
Let $G(\Gamma)$ be a RAAG such that $\out(G(\Gamma))$ is finite and $G(\Gamma)\ncong \mathbb Z$. Then $F(\Gamma)$ satisfies the assumption of Lemma \ref{4.10}.
\end{lem}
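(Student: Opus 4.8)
The plan is to verify the two hypotheses of Lemma~\ref{4.10} separately, using the combinatorial criterion recalled in the introduction: $\out(G(\Gamma))$ is finite if and only if $\Gamma$ has no separating closed star and there do not exist distinct vertices $x,y$ with $lk(x)\subset St(y)$. I would dispatch condition~(1) immediately. Since $F(\Gamma)$ is a flag complex, its connectivity is governed by the $1$-skeleton, so the full subcomplex on $V(\Gamma)\setminus V(St(v))$ is disconnected exactly when the full subgraph $\Gamma\setminus St(v)$ is disconnected. Hence ``$F(\Gamma)$ has a separating closed star'' is literally the statement that $\Gamma$ has one, which is excluded by finiteness of $\out(G(\Gamma))$.

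The real content is condition~(2), and here I would argue by contradiction: assume $F(\Gamma)=St(v)\cup St(w)$ for two vertices $v,w$ and manufacture a forbidden transvection. The organizing device I would introduce is the partition $V(\Gamma)=A\sqcup B\sqcup C$ with $A=V(St(v))\setminus V(St(w))$, $B=V(St(w))\setminus V(St(v))$, and $C=V(St(v))\cap V(St(w))$. The step I expect to be the main subtlety is that one must use that the covering is by \emph{simplices}, not merely by vertices: if $\{a,b\}$ were an edge with $a\in A$ and $b\in B$, then this edge lies in neither $St(v)$ (as $b\notin V(St(v))$) nor $St(w)$ (as $a\notin V(St(w))$), contradicting $F(\Gamma)=St(v)\cup St(w)$. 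So the plan hinges on the observation that there are no edges between $A$ and $B$. This distinction is essential and not cosmetic: the pentagon $C_5$ has finite $\out$ yet its vertex set \emph{is} covered by two closed stars, so any vertex-only reading of condition~(2) would be false, and only the absence of $A$--$B$ edges rescues it.

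Granting the no-edge observation, the remainder should go quickly. For $a\in A$ the neighborhood $lk(a)$ avoids $B$, so $lk(a)\subset A\cup C=V(St(v))$, i.e. $lk(a)\subset St(v)$; if $a\neq v$ this is the transvection $a\mapsto av$, contradicting finiteness, whence $A\subseteq\{v\}$ (and symmetrically $B\subseteq\{w\}$). I would then split into two cases. If $A=\emptyset$ then $V(St(v))\subseteq V(St(w))$, so $F(\Gamma)=St(w)$ and $w$ is adjacent to every other vertex; since $G(\Gamma)\ncong\mathbb Z$ there is a vertex $u\neq w$, and $lk(u)\subset V(St(w))=St(w)$ gives a transvection. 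If instead $A=\{v\}$, then $v\notin V(St(w))$ (so $v\neq w$) while $lk(v)=C\subseteq V(St(w))$, yielding the transvection $v\mapsto vw$. Either way finiteness of $\out(G(\Gamma))$ is contradicted, which settles condition~(2). I note in advance that the hypothesis $G(\Gamma)\ncong\mathbb Z$ is used at exactly one point, the cone-point case, which is unavoidable since a single vertex already violates condition~(2).
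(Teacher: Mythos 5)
Your proof is correct and rests on the same key observation as the paper's: since $F(\Gamma)=St(v)\cup St(w)$ is a union of full subcomplexes, any edge meeting a vertex outside $V(St(w))$ must lie entirely in $St(v)$, which forces a forbidden link containment $lk(a)\subset St(v)$. The only difference is organizational: the paper first applies the no-transvection hypothesis to the pair $(v,w)$ to produce a witness $u\in v^{\perp}\setminus St(w)$ and derives the contradiction there, whereas you exhaust the remaining possibilities ($A=\emptyset$ or $A=\{v\}$) by a short case analysis; both routes are equally valid.
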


\begin{proof}
It is clear that $F(\Gamma)$ should satisfy condition (1) of Lemma \ref{4.10} since no nontrivial partial conjugation is allowed. If $F(\Gamma)$ is contained in a closed star, then $\Gamma$ is a point. So if (2) is not true, then $F(\Gamma)=St(v)\cup St(w)$ for distinct vertices $v,w\in\Gamma$. Since the orthogonal complement $v^{\perp}$ satisfies $v^{\perp}\nsubseteq St(w)$, there exists $u\in v^{\perp}$ such that $d(u,w)\ge 2$. Pick any edge $e$ such that $u\in e$, then $e\nsubseteq St(w)$, so $e\subset St(v)$. This implies $u^{\perp}\subset St(v)$, hence $\out(G(\Gamma))$ is infinite, which yields a contradiction.
\end{proof}

By Lemma \ref{4.7}, Lemma \ref{4.14} and Corollary \ref{4.13}, we have following result, which in particular establishes Theorem \ref{1.1} of the introduction.
\begin{thm}
\label{4.15}
Let $\Gamma_{1}$ and $\Gamma_{2}$ be two finite simplicial graphs such that $\out(G(\Gamma_{i}))$ is finite for $i=1,2$. Then $G(\Gamma_{1})$ and $G(\Gamma_{2})$ are quasi-isometric if and only if they are isomorphic. Moreover, for any $(L,A)$-quasi-isometry $q:X(\Gamma_{1})\to X(\Gamma_{2})$, there exist a bijection $q':G(\Gamma_{1})\to G(\Gamma_{2})$ and a constant $D=D(L,A,\Gamma_{1})$ such that 
\begin{enumerate}
\item $d(q(v),q'(v))<D$ for any $v\in G(\Gamma_{1})$.
\item For any standard flat $F_{1}\subset X(\Gamma_{1})$, there exists a standard flat $F_{2}\subset X(\Gamma_{2})$ such that $q'$ induces a bijection between $F_{1}\cap G(\Gamma_{1})$ and $F_{2}\cap G(\Gamma_{2})$.
\end{enumerate}
If $G(\Gamma_1)\neq \Bbb Z$, then such $q'$ is unique.
\end{thm}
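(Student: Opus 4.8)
The plan is to assemble Lemma \ref{4.7}, Lemma \ref{4.14} and Corollary \ref{4.13}, and then upgrade the resulting combinatorial isomorphism to the explicit bijection $q'$ by means of Lemma \ref{4.10}. The direction ``isomorphic $\Rightarrow$ quasi-isometric'' is immediate, since isomorphic finitely generated groups are quasi-isometric. For the converse I would first dispose of the degenerate case: if $G(\Gamma_{1})\cong\mathbb{Z}$ then $X(\Gamma_{1})$ is two-ended, so any group quasi-isometric to it is two-ended; but a connected defining graph with at least two vertices yields a one-ended RAAG and a disconnected one yields an infinitely-ended free product, so the only two-ended RAAG is $\mathbb{Z}$. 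Hence $G(\Gamma_{2})\cong\mathbb{Z}\cong G(\Gamma_{1})$ and properties (1)--(2) hold trivially, with $q'$ non-unique, exactly as the statement allows.

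Assume now that neither group is $\mathbb{Z}$. Since $\out(G(\Gamma_{i}))$ is finite it is in particular transvection free, so Lemma \ref{4.7} applies and any $(L,A)$-quasi-isometry $q$ induces a simplicial isomorphism $q_{\ast}:\mathcal{P}(\Gamma_{1})\to\mathcal{P}(\Gamma_{2})$. By Lemma \ref{4.14} both $F(\Gamma_{1})$ and $F(\Gamma_{2})$ satisfy the hypotheses of Lemma \ref{4.10} (no separating closed star and no covering by two closed stars), so Corollary \ref{4.13} yields $G(\Gamma_{1})\cong G(\Gamma_{2})$, finishing the ``only if'' part. To produce $q'$ I would feed $s=q_{\ast}$ into Lemma \ref{4.10}, obtaining a map $q':=s'$ that sends the vertex set of each maximal standard flat $F_{1}$ into the vertex set of a maximal standard flat $F_{2}$ with $\Delta(F_{2})=q_{\ast}(\Delta(F_{1}))$; this is precisely property (2) for maximal flats. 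For a general standard flat I would recover (2) by realizing $F_{1}$, through a chosen base vertex, as the coarse intersection of the maximal flats containing it and transporting the incidence pattern recorded by $q_{\ast}$ using the coarse-intersection calculus of Lemma \ref{3.1} and Remark \ref{2.5}; the fact that $q'$ is a bijection onto $F_{2}\cap G(\Gamma_{2})$ then follows by symmetry.

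The heart of the argument is property (1), the uniform bound $d(q(v),q'(v))<D$, and this is where I expect the most care to be needed. Fix $p\in G(\Gamma_{1})$ and let $\{F_{i}\}_{i=1}^{k}$ be the maximal standard flats through $p$; condition (2) of Lemma \ref{4.10} forces $\cap_{i}\Delta(F_{i})=\emptyset$, hence $\cap_{i}F_{i}=\{p\}$, just as in the proof of Lemma \ref{4.10}. By Theorem \ref{3.28} there is a uniform $D_{0}=D_{0}(L,A,\Gamma_{1})$ and standard flats $\hat F_{i}$ with $d_{H}(q(F_{i}),\hat F_{i})<D_{0}$ and $\Delta(\hat F_{i})=q_{\ast}(\Delta(F_{i}))$, so $q(p)\in\bigcap_{i}N_{D_{0}}(\hat F_{i})$. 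By the quasi-isometry invariance of coarse intersections (Lemma \ref{3.1}, Remark \ref{2.5}, as in the proof of Lemma \ref{3.6}), the coarse intersection of the family $\{\hat F_{i}\}$ is the $q$-image of $\cap_{i}F_{i}=\{p\}$ and is therefore bounded, whence $\bigcap_{i}N_{D_{0}}(\hat F_{i})$ has diameter bounded in terms of $L,A,\Gamma_{1}$. Since $q'(p)$ lies in the exact intersection of the representatives furnished by Lemma \ref{4.10} (this is where Helly's Lemma \ref{2.1} and the non-emptiness statement (\ref{4.11}) enter), both $q(p)$ and $q'(p)$ lie in this bounded set, giving (1). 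The delicate point is to match the representatives $\hat F_{i}$ close to $q(F_{i})$ with the representatives through $q'(p)$ built inside Lemma \ref{4.10}, i.e. to check that the single point of $\bigcap_{i}\hat F_{i}$ is exactly $q'(p)$ and not merely a parallel copy; this is exactly what the uniqueness clause of Lemma \ref{4.10} guarantees.

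Finally, for bijectivity and uniqueness I would run the same construction on a quasi-inverse $\bar q$ of $q$, obtaining $\bar q':G(\Gamma_{2})\to G(\Gamma_{1})$. The composites $\bar q'\circ q'$ and $q'\circ\bar q'$ satisfy the defining property of Lemma \ref{4.10} for the identity automorphisms $\mathrm{id}_{\mathcal{P}(\Gamma_{i})}$ (each sends a maximal flat's vertices into a parallel maximal flat of the same class), and the identity maps of the vertex sets do as well; by the uniqueness clause of Lemma \ref{4.10}, which is available precisely because $G(\Gamma_{i})\neq\mathbb{Z}$ and Lemma \ref{4.14} applies, the composites are the identity. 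Hence $q'$ is a bijection, and the same uniqueness clause shows $q'$ is the unique map with properties (1)--(2), completing the proof.
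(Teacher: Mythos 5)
Your proposal is correct and follows essentially the same route as the paper: Lemma \ref{4.7} produces $q_{\ast}$, Lemma \ref{4.14} verifies the hypotheses of Lemma \ref{4.10}, Corollary \ref{4.13} gives the isomorphism, and $q'$ is obtained by applying Lemma \ref{4.10} to $q_{\ast}$ and $q_{\ast}^{-1}$, with uniqueness coming from the fact that each vertex is the intersection of the maximal standard flats through it. Your extra paragraph verifying property (1) via Theorem \ref{3.28} and the coarse-intersection calculus of Remark \ref{2.5} supplies a detail the paper leaves implicit (note that the matching of representatives you flag as delicate is automatic, since a maximal clique has empty orthogonal complement and hence a maximal standard flat is determined by its parallel class).
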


\begin{proof}
It suffices to look at the case $G(\Gamma_1)\neq \Bbb Z$. Then $G(\Gamma_2)\neq \Bbb Z$. In this case, every vertex $v$ in $\Gamma_1$ or $\Gamma_2$ is the intersection of maximal cliques that contain $v$ (otherwise there exist vertex $w$ such that $w\neq v$ and $v^{\perp}\subset St(w)$). It follows that every standard geodesic in $X(\Gamma_1)$ or $X(\Gamma_2)$ is the intersection of finitely many maximal standard flats, so is every standard flat. Let $q_{\ast}:\mathcal{P}(\Gamma_1)\to \mathcal{P}(\Gamma_2)$ be the map in Lemma \ref{4.7}. We apply Lemma \ref{4.10} to $q_{\ast}$ and $q^{-1}_{\ast}$ to obtain $q'$ with the required properties. Note that each vertex of $X(\Gamma)$ is the intersection of maximal standard flats that contain it, thus $q'$ is unique.
\end{proof}

\subsection{The automorphism groups of extension complexes}
\label{auto of extension complexes}
Suppose $\out(G(\Gamma))$ is finite, by Theorem \ref{4.15}, each element in the simplicial automorphism group $\aut(\mathcal{P}(\Gamma))$ of $\mathcal{P}(\Gamma)$ induces a bijection $G(\Gamma)\to G(\Gamma)$. However, this bijection does not extend to an isomorphism from $X(\Gamma)$ to itself in general. We start by looking at the following example which was first pointed out in \cite[Section 11]{MR2421136} in a slightly different form.

\begin{example}
\label{flip order}

Let $l\subset X(\Gamma)$ be a standard geodesic let $\pi_{l}: X(\Gamma)\to l$ be the CAT(0) projection. We identify the vertex set of $l$ with $\Bbb Z$. Let $X^{(0)}(\Gamma)$ be the vertex set of $X(\Gamma)$. Then the above projection induces a map $\pi_{l}: X^{(0)}(\Gamma)\to \Bbb Z$.

Recall that each edge of $X(\Gamma)$ is oriented and labeled, and $G(\Gamma)$ acts on $X(\Gamma)$ by transformations that preserve labels and orientations. There is a unique element $\alpha\in G(\Gamma)$ such that $\alpha$ translates $l$ one unit towards the positive direction.

We want to define a bijection $q: X^{(0)}(\Gamma) \to X^{(0)}(\Gamma)$ which basically flips $\pi_{l}^{-1}(0)$ and $\pi_{l}^{-1}(1)$. More precisely:
\begin{center}
$q(x)$=$\begin{cases}
x & \text{if $\pi_{l}(x)\neq 0,1$}\\
\alpha(x) & \text{if $x\in\pi_{l}^{-1}(0)$}\\
\alpha^{-1}(x) & \text{if $x\in\pi_{l}^{-1}(1)$}
\end{cases}$
\end{center}

One can check the following:
\begin{enumerate}
\item $q$ is a quasi-isometry.
\item $q$ does not respect the word metric.
\item $q$ maps vertices in a standard flat to vertices in another standard flat. Thus $q$ induces an element in $\aut(\mathcal{P}(\Gamma))$.
\end{enumerate}

\end{example}

The above example implies that in general, elements in $\aut(\mathcal{P}(\Gamma))$ do not respect the order along the standard geodesics of $X(\Gamma)$. There is another metric on $G(\Gamma)$ which \textquotedblleft forgets about\textquotedblright\ the ordering. Following \cite{kim2014geometry}, we define the \textit{syllable length} of a word $\omega$ to be minimal $l$ such that $\omega$ can be written as a product of $l$ elements of form $v_{i}^{k_{i}}$, where $v_{i}$ is a standard generator and $k_{i}$ is an integer. 

An alternative definition is the following. Let $\{h_{i}\}_{i=1}^{k}$ be the collection of hyperplanes separating $\omega\in G(\Gamma)$ and the identity element (recall that we have identified $G(\Gamma)$ with the $0$-skeleton of $X(\Gamma)$). For each $i$, pick a standard geodesic $l_{i}$ dual to $h_{i}$. Then the syllable length of $\omega$ is the number of elements in $\{\Delta(l_{i})\}_{i=1}^{k}$. The syllable length induces a left invariant metric on $G(\Gamma)$, which will be denoted by $d_{r}$. Note that the map in Example \ref{flip order} is an isometry with respect to $d_{r}$.

Denote the word metric on $G(\Gamma)$ with respect to the standard generators by $d_{w}$. 

\begin{cor}
\label{4.16}
Let $\Gamma$ be a graph such that $\out(G(\Gamma))$ is finite and denote the simplicial automorphism group of $\mathcal{P}(\Gamma)$ by $\aut(\mathcal{P}(\Gamma))$. Then 
\begin{equation*}
\aut(\mathcal{P}(\Gamma))\cong \isom(G(\Gamma),d_{r})
\end{equation*}
\end{cor}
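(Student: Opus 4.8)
The plan is to exhibit an explicit isomorphism $\Phi:\aut(\mathcal{P}(\Gamma))\to\isom(G(\Gamma),d_{r})$ and verify it is bijective, after first reinterpreting both sides combinatorially. Throughout I assume $G(\Gamma)\neq\mathbb{Z}$ (i.e. $\Gamma$ has at least two vertices); by Lemma \ref{4.14} this guarantees $F(\Gamma)$ satisfies the hypotheses of Lemma \ref{4.10}. The central reformulation is that $d_{r}$ is the path metric of the \emph{collinearity graph} $\mathcal{G}$ on the vertex set $G(\Gamma)=X(\Gamma)^{(0)}$, where $x\sim y$ iff $x^{-1}y$ is a power of a standard generator, i.e. iff $x$ and $y$ lie on a common standard geodesic: indeed $d_{r}$ is exactly the word metric of $G(\Gamma)$ for the infinite generating set $\{v^{k}\mid v\in S,\ k\in\mathbb{Z}\}$, so that $\isom(G(\Gamma),d_{r})=\aut(\mathcal{G})$. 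A short argument identifies the maximal cliques of $\mathcal{G}$ with the vertex sets of standard geodesics: any three pairwise-collinear vertices already lie on one geodesic, since two distinct nontrivial syllables never multiply to a single syllable, while each standard geodesic is itself a clique.

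For the forward map, given $s\in\aut(\mathcal{P}(\Gamma))$, Lemma \ref{4.10} produces a unique $s':G(\Gamma)\to G(\Gamma)$ carrying the vertices of each maximal standard flat $F$ onto those of a maximal standard flat $F'$ with $\Delta(F')=s(\Delta(F))$. Applying Lemma \ref{4.10} to $s^{-1}$ and invoking the uniqueness clause (noting $\id_{\mathcal{P}(\Gamma)}$ induces $\id_{G(\Gamma)}$) shows $(s^{-1})'=(s')^{-1}$, so $s'$ is a bijection and $s\mapsto s'$ is a homomorphism, since $s_{1}'\circ s_{2}'$ satisfies the defining property for $s_{1}s_{2}$. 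Because every standard geodesic is an intersection of finitely many maximal standard flats when $G(\Gamma)\neq\mathbb{Z}$, $s'$ carries the vertex set of each standard geodesic $l$ onto that of a standard geodesic $l'$ with $\Delta(l')=s(\Delta(l))$; hence $s'$ preserves collinearity in both directions and lies in $\aut(\mathcal{G})=\isom(G(\Gamma),d_{r})$. Injectivity of $\Phi$ is then immediate: if $s'=\id$ then $\Delta(l)=s(\Delta(l))$ for every standard geodesic $l$, so $s$ fixes every vertex of $\mathcal{P}(\Gamma)$ and, being simplicial, is the identity.

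For surjectivity, let $f\in\isom(G(\Gamma),d_{r})=\aut(\mathcal{G})$. Since $f$ permutes the maximal cliques of $\mathcal{G}$, it permutes the vertex sets of standard geodesics. Once one knows $f$ respects parallelism, it induces a permutation $s$ of $\mathcal{P}(\Gamma)^{(0)}\cong\Gamma^{e}$; if in addition $s$ preserves the adjacency relation (two classes are adjacent iff suitable representatives span a standard $2$-flat), then, $\mathcal{P}(\Gamma)$ being a flag complex, $s$ extends to an element of $\aut(\mathcal{P}(\Gamma))$. One then checks $s'=f$ via the uniqueness in Lemma \ref{4.10}, by confirming that $f$ itself sends vertices of maximal flats to vertices of maximal flats compatibly with $s$.

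I expect the surjectivity step to be the main obstacle, specifically showing that an arbitrary $d_{r}$-isometry $f$ respects parallelism of standard geodesics and the ``spanning a $2$-flat'' adjacency; equivalently, that the simplicial structure of $\mathcal{P}(\Gamma)$ is reconstructible purely from the collinearity graph $\mathcal{G}$. This is delicate precisely because a $d_{r}$-isometry need not be a quasi-isometry for the word metric (it may move parallel geodesics arbitrarily far apart), so the coarse criteria of Observation \ref{4.1} and Lemma \ref{2.3} are not directly available. The plan is to characterize both parallelism and $2$-flat-adjacency of geodesics internally to $\mathcal{G}$ (for instance through the pattern of collinearity edges between the vertices of two cliques, using the Helly property of Lemma \ref{2.1}), and to isolate where this characterization breaks for flexible examples such as $G(\Gamma)=\mathbb{Z}^{2}$. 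This is exactly the point at which the finiteness of $\out(G(\Gamma))$, i.e. the absence of transvections and partial conjugations that underlies Lemma \ref{4.10}, must be used to ensure that $\mathcal{G}$ retains enough of the geometry of $X(\Gamma)$ to rebuild $\mathcal{P}(\Gamma)$ rigidly.
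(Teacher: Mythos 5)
Your setup of the forward direction coincides with the paper's: Lemma \ref{4.10} applied to $s$ and $s^{-1}$ gives the map $\aut(\mathcal{P}(\Gamma))\to\isom(G(\Gamma),d_{r})$, using that every standard geodesic is an intersection of finitely many maximal standard flats, and your identification of $d_{r}$ with the path metric of the collinearity graph $\mathcal{G}$ (together with the observation that three pairwise collinear points lie on one standard geodesic) is correct and matches the paper's remark that a $d_r$-triangle with all side lengths $1$ is degenerate. The genuine gap is the surjectivity step, which you explicitly leave as a plan rather than a proof: you never show that a $d_{r}$-isometry $\varphi$ (a) carries parallel standard geodesics to parallel standard geodesics and (b) preserves the ``spans a standard $2$-flat'' relation. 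The paper closes exactly this gap with a local angle-rigidity statement (equation (\ref{4.17})): for $v_{1},v_{2},v_{3}$ with $d_{r}(v_{1},v_{i})=1$, orthogonality $\angle_{v_{1}}(v_{2},v_{3})=\pi/2$ is witnessed by a fourth vertex $v_{4}$ completing a flat rectangle, and the $d_{r}$-distances among the four points ($1$ on the sides, $2$ on the diagonals) are preserved by $\varphi$; since any two distinct standard geodesics through a common vertex of $X(\Gamma)$ meet at angle $\geq\pi/2$, the image $4$-gon has all Alexandrov angles $\geq\pi/2$ and hence, by $CAT(0)$ comparison, bounds a flat rectangle, so the two image germs span a square and the corresponding vertices of $\mathcal{P}(\Gamma)$ are adjacent. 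Iterating this across squares also yields preservation of parallelism, so $\varphi$ induces a well-defined element of $\aut(\mathcal{P}(\Gamma))$. This is the missing idea: orthogonality and parallelism are detectable \emph{locally} from four-point $d_{r}$-configurations plus $CAT(0)$ geometry, not via the coarse criteria of Observation \ref{4.1} or a Helly-type reconstruction of $\mathcal{P}(\Gamma)$ from $\mathcal{G}$.

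Your diagnosis of where $\out(G(\Gamma))$-finiteness enters is also inverted. The direction $\isom(G(\Gamma),d_{r})\to\aut(\mathcal{P}(\Gamma))$ that you flag as the obstacle in fact holds for \emph{arbitrary} $\Gamma$ (see Remark \ref{4.18}); the argument above uses nothing about $\out(G(\Gamma))$. The hypothesis is needed for the direction you treat as routine, namely $\aut(\mathcal{P}(\Gamma))\to\isom(G(\Gamma),d_{r})$: it is what guarantees, via Lemma \ref{4.14}, that $F(\Gamma)$ has no separating closed stars and is not a union of two closed stars, so that Lemma \ref{4.10} applies and maximal standard flats intersect in single points. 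Your $\mathbb{Z}^{2}$ example illustrates precisely this: there $\isom(G,d_{r})$ is enormous while $\aut(\mathcal{P})$ is finite, so it is the map \emph{from} $\aut(\mathcal{P})$ that cannot be defined, not the map into it.
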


\begin{proof}
We have a group homomorphism $h_{1}:\aut(\mathcal{P}(\Gamma))\to Perm(G(\Gamma))$ by Lemma \ref{4.10}, here $Perm(G(\Gamma))$ is the permutation group of elements in $G(\Gamma)$. Take $\phi\in \aut(\mathcal{P}(\Gamma))$, by Lemma \ref{4.14}, $\varphi=h_{1}(\phi)$ and $\varphi^{-1}=h_{1}(\phi^{-1})$ satisfy the conclusion of Lemma \ref{4.10}. Since every standard geodesic is the intersection of finitely many maximal standard flats, points in a standard geodesic are mapped to points in a standard geodesic by $\phi$, which implies $d_{r}(\varphi(v_{1}),\varphi(v_{2}))\le d_{r}(v_{1},v_{2})$ if $d_{r}(v_{1},v_{2})\le 1$. By triangle inequality, we have $d_{r}(\varphi(v_{1}),\varphi(v_{2}))\le d_{r}(v_{1},v_{2})$ for any $v_{1},v_{2}\in G(\Gamma)$. Similarly, $d_{r}(\varphi^{-1}(v_{1}),\varphi^{-1}(v_{2}))\le d_{r}(v_{1},v_{2})$. Thus $\varphi\in \isom(G(\Gamma),d_{r})$ and we have a homomorphism $h_{1}:\aut(\mathcal{P}(\Gamma))\to \isom(G(\Gamma),d_{r})$.

Now pick $\varphi\in \isom(G(\Gamma),d_{r})$. Let $v_{1},v_{2},v_{3}\in G(\Gamma)$ such that $d_{r}(v_{1},v_{i})=1$ for $i=2,3$. We claim 
\begin{equation}
\label{4.17}
\angle_{v_{1}}(v_{2},v_{3})=\pi/2\Leftrightarrow\angle_{\varphi(v_{1})}(\varphi(v_{2}),\varphi(v_{3}))=\pi/2.
\end{equation}
If $\angle_{v_{1}}(v_{2},v_{3})=\pi/2$, then we can find $v_{4}\in G(\Gamma)$ such that $\{v_{i}\}_{i=1}^{4}$ are the vertices of a flat rectangle in $X(\Gamma)$. Note that $d_{r}(v_{1},v_{4})=d_{r}(v_{2},v_{3})=2$ and $d_{r}(v_{4},v_{2})=d_{r}(v_{4},v_{3})=1$, so  $d_{r}(\varphi(v_{1}),\varphi(v_{4}))=d_{r}(\varphi(v_{2}),\varphi(v_{3}))=2$ and $d_{r}(\varphi(v_{4}),\varphi(v_{2}))=d_{r}(\varphi(v_{4}),\varphi(v_{3}))=1$. Now we consider the 4-gon formed by $\overline{\varphi(v_{1})\varphi(v_{2})}$, $\overline{\varphi(v_{2})\varphi(v_{4})}$, $\overline{\varphi(v_{4})\varphi(v_{3})}$ and $\overline{\varphi(v_{3})\varphi(v_{1})}$. Then the angles at the four vertices of this 4-gon are bigger or equal to $\pi/2$. It follows from $CAT(0)$ geometry that the angles are exactly $\pi/2$ and the 4-gon actually bounds a flat rectangle. Thus one direction of (\ref{4.17}) is proved, the other direction is similar.

We need another observation as follows. If three points $v_{1},v_{2},v_{3}\in G(\Gamma)$ satisfies $d_{r}(v_{i},v_{j})=1$ for $1\le i\neq j\le 3$, then the angle at each vertex of the triangle $\Delta(v_{1},v_{2},v_{3})$ could only be $0$ or $\pi$, thus $\{v_{i}\}_{i=1}^{3}$ are inside a standard geodesic. It follows from this observation that points in a standard geodesic are mapped by $\varphi$ to points in a standard geodesic. 

We define $\phi:\mathcal{P}(\Gamma)\to\mathcal{P}(\Gamma)$ as follows. For vertex $w\in \mathcal{P}(\Gamma)$, let $l$ be a standard geodesic such that $\Delta(l)=w$. Suppose $l'\subset X(\Gamma)$ is the standard geodesic such that $\phi(v(l))\subset l'$ ($v(l)$ denotes the vertex set of $l$). Suppose $w'=\Delta(l')$. We define $w'=\phi(w)$. (\ref{4.17}) implies $w'$ does not depend on the choice of $l$, and $\phi(w_{1})$ and $\phi(w_{2})$ are adjacent if vertices $w_{1},w_{2}\in\mathcal{P}(\Gamma)$ are adjacent. Thus $\phi$ is a well-defined simplicial map. Note that $\varphi^{-1}$ also induces a simplicial map from $\mathcal{P}(\Gamma)$ to itself in a similar way, so $\phi\in \aut(\mathcal{P}(\Gamma))$. We define $\phi=h_{2}(\varphi)$. One readily verify that $h_{2}:\isom(G(\Gamma),d_{r})\to \aut(\mathcal{P}(\Gamma))$ is a group homomorphism and $h_{2}\circ h_{1}=h_{1}\circ h_{2}=\textmd{Id}$. Thus the corollary follows.
\end{proof}

\begin{remark}
\label{4.18}
If we drop the assumption in the above corollary about $\Gamma$, then there is still a monomorphism $h:\isom(G(\Gamma),d_{r})\to \aut(\mathcal{P}(\Gamma))$, moreover, any $\varphi\in \isom(G(\Gamma),d_{r})$ maps vertices in a standard flat to vertices in a standard flat of the same dimension. The homomorphism $h$ is surjective if $\out(G(\Gamma))$ is finite.
\end{remark}

\begin{remark}
\label{4.19}
For any finite simplicial graphs $\Gamma_{1}$ and $\Gamma_{2}$, $G(\Gamma_{1})\cong G(\Gamma_{2})$ if and only if $(G(\Gamma_{1}),d_{r})$ and $(G(\Gamma_{2}),d_{r})$ are isometric as metric spaces. The if only direction follows from \cite{droms1987isomorphisms,laurence1995generating}. For the other direction, let $\varphi:(G(\Gamma_{1}),d_{r})\to(G(\Gamma_{2}),d_{r})$ be an isometry. Pick $v\in G(\Gamma_{1})$ and let $\{l_{i}\}_{i=1}^{k}$ be the collection of standard geodesics passing through $v$. Pick $v_{i}\in G(\Gamma_{1})$ such that $v_{i}\in l_{i}\setminus\{v\}$. Then $d_{r}(v,v_{i})=1$ for $1\le i\le k$ and $d_{r}(v_{i},v_{j})=2$ for $1\le i\neq j\le k$. So $d_{r}(\varphi(v),\varphi(v_{i}))=1$ for $1\le i\le k$ and $d_{r}(\varphi(v_{i}),\varphi(v_{j}))=2$ for $1\le i\neq j\le k$, and $\angle_{v}(v_{i},v_{j})=\pi/2$ if and only if $\angle_{\varphi(v)}(\varphi(v_{i}),\varphi(v_{j}))=\pi/2$ by (\ref{4.17}). This induces a graph embedding $\Gamma_1\to \Gamma_2$. By considering $\varphi^{-1}$, we obtain another graph embedding $\Gamma_2\to \Gamma_1$. Hence $\Gamma_{1}$ and $\Gamma_{2}$ are isomorphic.
\end{remark}

\begin{cor}
\label{4.20}
If $\out(G(\Gamma))$ is finite, then we have the following commutative diagram, where $i_{1}$, $i_{2}$ and $i_{3}$ are injective homomorphisms:
\begin{center}
\begin{tikzpicture}
\node (k-1) at (0,3) {$\isom(G(\Gamma),d_{w})$};
\node (k0) [right=of k-1] {$\qi(G(\Gamma))$};
\node (k1) [right=of k0] {$\isom(G(\Gamma),d_{r})$};
\draw[->]
(k-1) edge node[auto] {$i_{1}$} (k0)
(k0) edge node[auto] {$i_{2}$} (k1);
\draw[->]
(k-1) edge [bend right] node[below] {$i_{3}$} (k1);
\end{tikzpicture}
\end{center}
\end{cor}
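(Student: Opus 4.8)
The plan is to build the two horizontal arrows, verify that each is an injective group homomorphism, and then \emph{define} $i_{3}:=i_{2}\circ i_{1}$, so that commutativity of the triangle is automatic and injectivity of $i_{3}$ follows from injectivity of $i_{1}$ and $i_{2}$. Throughout I work under the (harmless) standing hypothesis $G(\Gamma)\neq\Bbb Z$, which is the setting in which the inputs Lemma \ref{4.14} and Corollary \ref{4.16} apply; note that $\out(G(\Gamma))$ finite together with $G(\Gamma)\neq\Bbb Z$ forces $G(\Gamma)$ to have trivial center and $\Gamma$ to have no isolated vertex, since a central generator or an isolated vertex would produce a transvection and make $\out$ infinite.

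First I would construct $i_{2}:\qi(G(\Gamma))\to\isom(G(\Gamma),d_{r})$. Given a quasi-isometry $q$, finiteness of $\out(G(\Gamma))$ makes it transvection free, so Lemma \ref{4.7} yields a simplicial automorphism $q_{\ast}\in\aut(\mathcal{P}(\Gamma))$, which I identify with an element of $\isom(G(\Gamma),d_{r})$ via Corollary \ref{4.16}. Since $q_{\ast}$ is read off from the action of $q$ on parallel classes of standard geodesics up to finite Hausdorff distance, it depends only on the bounded-distance class of $q$, so $i_{2}$ is well defined on $\qi(G(\Gamma))$, and functoriality of the construction makes it a homomorphism. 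For injectivity, suppose $q_{\ast}=\id$. By Lemma \ref{4.14} the complex $F(\Gamma)$ satisfies the hypotheses of Lemma \ref{4.10}, so Theorem \ref{4.15} produces a bijection $q'$ with $d(q(v),q'(v))<D$ for all $v$, where $q'$ is precisely the map induced by $q_{\ast}$ through Lemma \ref{4.10}; the uniqueness clause in Lemma \ref{4.10} forces $q'=\id$ because the identity already has the required flat-preserving property. Hence $q$ is at bounded distance from the identity, i.e. $[q]=[\id]$, and $i_{2}$ is injective.

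Next I would treat $i_{1}:\isom(G(\Gamma),d_{w})\to\qi(G(\Gamma))$, sending an isometry to its quasi-isometry class; this is manifestly a homomorphism. The substantive point, and the main obstacle, is injectivity: I must show a $d_{w}$-isometry $f$ with $\sup_{x}d_{w}(f(x),x)=C<\infty$ is the identity. The idea is to use the cube-complex structure. Since $d_{w}$ is the $\ell^{1}$ metric on $X(\Gamma)^{(1)}$ and $X(\Gamma)^{(0)}$ is a median metric space, $f$ preserves medians, hence permutes the hyperplanes of $X(\Gamma)$ and carries halfspaces to halfspaces. I would then show $f$ fixes every hyperplane and halfspace: a vertex lying deep inside a halfspace $\mathfrak{h}^{+}$ (at $\ell^{1}$-distance $>C$ from the hyperplane $h$) cannot be pushed across $h$ by a displacement of at most $C$, so $f(\mathfrak{h}^{+})$ agrees with $\mathfrak{h}^{+}$ outside the $C$-neighborhood of $h$; as $f(\mathfrak{h}^{+})$ is a halfspace of $f(h)$, and distinct hyperplanes (or the two sides of one hyperplane) differ on an unbounded set precisely because every hyperplane of $X(\Gamma)$ is unbounded, this forces $f(h)=h$ and $f(\mathfrak{h}^{+})=\mathfrak{h}^{+}$. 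The unboundedness is exactly where $\Gamma$ having no isolated vertex enters: the hyperplane dual to a $v$-edge is isometric to $X(lk(v))$, unbounded as soon as $lk(v)\neq\emptyset$. Once $f$ fixes every halfspace, the canonical halfspace-coordinates of the median complex give $f(x)=x$ for all vertices $x$, so $f=\id$.

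Finally I set $i_{3}:=i_{2}\circ i_{1}$, so the triangle commutes by construction and $i_{3}$ is injective as a composite of injections. The only genuinely geometric step, as opposed to bookkeeping with Lemma \ref{4.7}, Lemma \ref{4.10}, Theorem \ref{4.15} and Corollary \ref{4.16}, is the bounded-displacement argument for $i_{1}$, and it is exactly where the hypothesis $G(\Gamma)\neq\Bbb Z$ (equivalently, unbounded hyperplanes) cannot be removed: for $\Bbb Z$ the hyperplanes are single points, the displacement estimate fails, and the translations form a nontrivial kernel of $i_{1}$.
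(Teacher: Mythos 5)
Your constructions of $i_1$ and $i_2$, the well-definedness and homomorphism checks, and the injectivity argument for $i_2$ (via the uniqueness clause of Lemma \ref{4.10}/Theorem \ref{4.15}) all match the paper. Where you genuinely diverge is the injectivity of $i_1$: the paper never argues this directly. It takes $i_3$ to be the \emph{natural} map $\isom(G(\Gamma),d_w)\to\isom(G(\Gamma),d_r)$ (the same underlying bijection of $G(\Gamma)$, which is a $d_r$-isometry because a $d_w$-isometry preserves standard flats), observes that $i_3$ is then trivially injective and that $i_3=i_2\circ i_1$ by the uniqueness in Theorem \ref{4.15}, and deduces injectivity of $i_1$ formally. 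You instead define $i_3:=i_2\circ i_1$ (which does satisfy the literal statement) and shoulder the burden of proving $i_1$ injective by a median/halfspace argument.

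That argument has a gap at its last step. You show that $f(\mathfrak{h}^{+})$ is a halfspace whose symmetric difference with $\mathfrak{h}^{+}$ is contained in $N_{C}(h)$, and conclude $f(h)=h$ on the grounds that distinct halfspaces differ on an unbounded set. But $N_{C}(h)$ is itself unbounded, so ``unbounded symmetric difference'' is not incompatible with ``symmetric difference contained in $N_{C}(h)$.'' Concretely, if $h_{i}$ and $h_{i+1}$ are consecutive hyperplanes dual to a standard geodesic $l$, then $d_{H}(h_{i},h_{i+1})=1$ and the symmetric difference of the corresponding halfspaces is the slab $\pi_{l}^{-1}(i+1)$, which is unbounded yet lies in $N_{1}(h_{i})$. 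So your argument does not exclude the possibility that $f$ shifts every hyperplane dual to $l$ by one step along $l$ --- exactly the behavior of translation by a central element, which is a genuine counterexample for $\Bbb Z^{n}$ and must be excluded here using the full strength of $\out(G(\Gamma))$ being finite, not just the unboundedness of hyperplanes. To repair the proof you would either have to rule out such coherent shifts directly (essentially re-proving that a bounded-displacement flat-preserving bijection is the identity, which is what the uniqueness clause of Theorem \ref{4.15} encodes), or simply take the paper's route: note that the natural $i_3$ is injective on the nose and factors through $i_1$.
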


$\qi(G(\Gamma))$ is the quasi-isometry group of $G(\Gamma)$.
\begin{proof}
The homomorphism $i_{1}$ and $i_{3}$ are obvious and $i_{2}$ is given by Lemma \ref{4.7} and Corollary \ref{4.16}. It is clear that $i_{2}$ is a group homomorphism and $i_{3}=i_{2}\circ i_{1}$. Note that $i_{3}$ is injective, so $i_{1}$ is injective. Pick $\alpha\in \qi(G(\Gamma))$, by Corollary \ref{4.16}, we know $i_{2}(\alpha)=\textmd{Id}$ implies the image of every standard flat under $\alpha$ is uniformly Hausdorff close to itself, thus $\alpha$ is of bounded distance from the identity map.
\end{proof}

\section{Quasi-isometries and special subgroups}
\label{sec_qi and special subgroups}
Let $G(\Gamma)$ be a RAAG with finite outer automorphism group. In this section we characterize all other RAAG's quasi-isometric to $G(\Gamma)$.
 
\subsection{Preservation of extension complex}
\label{subsec_preservation of extesion complex}
\begin{lem}
\label{5.1}
Let $\Gamma$ be a finite simplicial graph. Pick a vertex $w\in\Gamma$ and let $\Gamma_{w}$ be the minimal stable subgraph containing $w$. Denote $\Gamma_{1}=lk(w)$ and $\Gamma_{2}=lk(\Gamma_{1})$ $($see Section \ref{subsec_notation} for definition of links$)$. Then either of the following is true:
\begin{enumerate}
\item $\Gamma_{w}$ is a clique. In this case $St(w)$ is a stable subgraph.
\item Both $\Gamma_{1}$ and $\Gamma_{1}\circ\Gamma_{2}$ are stable subgraphs of $\Gamma$. Moreover, $\Gamma_{2}$ is disconnected.
\end{enumerate}
\end{lem}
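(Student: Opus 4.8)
The plan is to determine $\Gamma_{w}$ explicitly and read both alternatives off its join structure. By Lemma~\ref{3.32}, $\Gamma_{w}$ is spanned by $W=\{w'\mid w^{\perp}\subset St(w')\}$, and since $w^{\perp}=lk(w)=\Gamma_{1}$, a vertex $w'$ lies in $W$ exactly when $\Gamma_{1}\subset St(w')$. I would first record three elementary facts: every vertex of $\Gamma_{2}=lk(\Gamma_{1})$ is adjacent to all of $\Gamma_{1}$, so $\Gamma_{2}\subset W$; if $w'\in W\setminus\Gamma_{1}$ then $\Gamma_{1}\subset lk(w')$, i.e. $w'\in\Gamma_{2}$; and $W\cap\Gamma_{1}$ is precisely the set $C_{1}$ of vertices of $\Gamma_{1}$ adjacent to all the others, i.e. the maximal clique join factor of $\Gamma_{1}$. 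Hence $\Gamma_{w}=\Gamma_{2}\circ C_{1}$, a genuine join since $\Gamma_{1}\cap\Gamma_{2}=\emptyset$ and $C_{1}\subset\Gamma_{1}$. Two consequences are immediate: $w$ is isolated in $\Gamma_{2}$ (its neighbours there are $lk(w)\cap\Gamma_{2}=\Gamma_{1}\cap\Gamma_{2}=\emptyset$), and, because a join of a clique with $\Gamma_{2}$ is a clique iff $\Gamma_{2}$ is, while an isolated vertex of a clique forces that clique to be a single point,
\[
\Gamma_{w}\text{ is a clique}\iff \Gamma_{2}\text{ is a clique}\iff \Gamma_{2}=\{w\}.
\]
This splits the proof into the two stated cases.

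Next I would show, in both cases at once, that $\Gamma_{1}\circ\Gamma_{2}$ is stable by applying Lemma~\ref{3.16} to the stable graph $\Gamma_{w}$. From $lk(lk(lk(w)))=lk(w)$ one gets $lk(\Gamma_{2})=\Gamma_{1}$, hence
\[
V_{\Gamma_{w}}^{\perp}=lk(\Gamma_{w})=lk(\Gamma_{2})\cap lk(C_{1})=\Gamma_{1}\cap lk(C_{1})=\Gamma_{1}\setminus C_{1},
\]
the last step because each vertex of $C_{1}$ is adjacent to every other vertex of $\Gamma_{1}$. Thus the full subgraph spanned by $V_{\Gamma_{w}}\cup V_{\Gamma_{w}}^{\perp}=(\Gamma_{2}\cup C_{1})\cup(\Gamma_{1}\setminus C_{1})=\Gamma_{1}\cup\Gamma_{2}$ is stable, and this subgraph is exactly $\Gamma_{1}\circ\Gamma_{2}$. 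In case (1), where $\Gamma_{2}=\{w\}$, it equals $lk(w)\circ\{w\}=St(w)$, which is what that case asserts. In case (2) the stability of $\Gamma_{1}\circ\Gamma_{2}$ is one of the claims, and since $\Gamma_{2}$ then contains the isolated vertex $w$ together with at least one further vertex, $\Gamma_{2}$ is disconnected, giving a second claim.

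The remaining and hardest point is that $\Gamma_{1}$ itself is stable in case (2). I would follow the scheme of Theorem~\ref{3.35}: for each $u\in\Gamma_{1}$ let $\Gamma_{u}$ be its minimal stable subgraph and reassemble $\Gamma_{1}$ from the $\Gamma_{u}$ via Lemma~\ref{3.25}. The crucial input is $\Gamma_{u}\subset\Gamma_{1}$ for every $u\in\Gamma_{1}$: writing $\Gamma_{u}$ as the span of $W_{u}=\{w'\mid lk(u)\subset St(w')\}$ and using $u\sim w$ (so $w\in lk(u)$), any $w'\in W_{u}$ satisfies $w'=w$ or $w'\sim w$; the option $w'=w$ is impossible, since some $w''\in\Gamma_{2}\setminus\{w\}$ lies in $lk(u)$ but, as $w$ is isolated in $\Gamma_{2}$, not in $St(w)$, whence $lk(u)\not\subset St(w)$. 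Therefore $W_{u}\subset lk(w)=\Gamma_{1}$. To assemble I would use the canonical join decomposition $\Gamma_{1}=C_{1}\circ F_{1}\circ\cdots\circ F_{r}$: a short computation with $St(w')$ gives $\Gamma_{u}\subset C_{1}\circ F_{i}$ when $u\in F_{i}$ (an irreducible non-clique factor has no central vertex), while the sub-cliques $\Gamma_{u}$ with $u\in C_{1}$ assemble to show $C_{1}$ is stable. Distinct join factors lie in each other's closed stars, so Lemma~\ref{3.25} glues $C_{1}$ to the factors once each $F_{i}$ is known to be stable. \emph{This within-factor step is the genuine obstacle}: the star hypothesis of Lemma~\ref{3.25} is automatic across distinct factors and for cliques (as in Theorem~\ref{3.35}) but not inside a single irreducible factor, so I expect to establish stability of each $F_{i}$ by induction on $\dim X(\Gamma)$, stripping the clique factor via Lemma~\ref{3.31} to drop to strictly lower dimension.
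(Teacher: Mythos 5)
Your identification of $\Gamma_{w}$ as $\Gamma_{2}\circ C_{1}$ via Lemma \ref{3.32}, the resulting dichotomy $\Gamma_{w}$ clique $\iff\Gamma_{2}=\{w\}$, the disconnectedness of $\Gamma_{2}$ in case (2), and the stability of $\Gamma_{1}\circ\Gamma_{2}$ via Lemma \ref{3.16} are all correct and essentially reproduce the paper's argument (your $C_{1}$ is the paper's $\Gamma_{11}$, your $\Gamma_{1}\setminus C_{1}$ its $\Gamma_{12}$); this also disposes of case (1), since there $\Gamma_{1}\circ\Gamma_{2}=St(w)$.

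The genuine gap is exactly where you flag it: the stability of $\Gamma_{1}$ in case (2). Your plan — assemble $\Gamma_{1}$ from the minimal stable subgraphs $\Gamma_{u}$, $u\in\Gamma_{1}$, via Lemma \ref{3.25} — stalls on the irreducible non-clique join factors $F_{i}$ of $\Gamma_{1}$, because the hypothesis of Lemma \ref{3.25} (that one subgraph lie in the span of the other together with its orthogonal complement) fails for two vertices inside a single $F_{i}$, and the proposed induction on $\dim X(\Gamma)$ via Lemma \ref{3.31} is not set up: Lemma \ref{3.31} transfers stability \emph{from} a join factor \emph{to} the ambient join, but you have no lower-dimensional ambient graph in which $F_{i}$ is already known to be stable. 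Moreover, Remark \ref{3.36} shows that non-clique subgraphs assembled from stable pieces need not be stable, so this route needs a real new idea rather than a routine induction. The paper closes the step in one line with a tool you already have on the table: $\Gamma_{1}\circ\Gamma_{2}$ is stable, and $\Gamma_{2}$, being disconnected, is an irreducible non-clique factor of the canonical join decomposition of $\Gamma_{1}\circ\Gamma_{2}$; hence $X(\Gamma_{1})$ is the product of all the remaining De Rham factors of $X(\Gamma_{1}\circ\Gamma_{2})=X(\Gamma_{1})\times X(\Gamma_{2})$, and Theorem \ref{2.9}, applied to the induced quasi-isometry from $X(\Gamma_{1})\times X(\Gamma_{2})$ to the standard subcomplex Hausdorff-close to its image, sends $X(\Gamma_{1})\times\{pt\}$ to within uniformly bounded Hausdorff distance of a standard subcomplex. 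No per-factor analysis or induction is needed. If you replace your last paragraph by this appeal to Theorem \ref{2.9}, the proof is complete.
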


Recall that we use $(\Gamma')^{\perp}$ to denote the orthogonal complement of the subgraph $\Gamma'\subset\Gamma $ (see Section \ref{subsec_notation}) and we assume $(\emptyset)^{\perp}=\Gamma$.
\begin{proof}
If $\Gamma_{w}\subset St(w)$, then $\Gamma_{w}$ is a clique by Lemma \ref{3.32}. We also deduce from Lemma \ref{3.32} that each vertex of $St(w)\setminus \Gamma_{w}$ is in $\Gamma_{w}^{\perp}$. Moreover, $\Gamma_{w}^{\perp}\subset w^{\perp}$ since $w\in\Gamma_w$. Thus $St(w)$ is the full subgraph spanned by vertices in $\Gamma_w$ and $\Gamma^{\perp}_w$. So $St(w)$ is stable by Lemma \ref{3.16}. 

If $\Gamma_{w}\nsubseteq St(w)$, let $\Gamma_{11}$ be the full subgraph spanned by vertices in $\Gamma_{w}\cap lk(w)$ and let $\Gamma'_{2}$ be the full subgraph spanned by vertices in $\Gamma_{w}\setminus\Gamma_{11}$. By Lemma \ref{3.32}, $\Gamma_{w}=\Gamma_{11}\circ\Gamma'_{2}$ and $\Gamma'_{2}=\Gamma_{2}$. Note that $\Gamma_{2}$ is disconnected with isolated point $w\in\Gamma_{2}$, and $\Gamma_{11}$ may be empty. 

Let $V_{w}=v(\Gamma_{w})$ be the vertex set of $\Gamma_w$ and let $\Gamma_{12}$ be the full subgraph spanned by $V_{w}^{\perp}$. Then $\Gamma_{w}\circ\Gamma_{12}=\Gamma_{11}\circ\Gamma_{2}\circ\Gamma_{12}$ is stable by Lemma \ref{3.16}. Pick vertex $v\in\Gamma_{1}\setminus\Gamma_{11}$, then $v\in w^{\perp}\subset St(u)$ for any vertex $u\in\Gamma_{w}$ by Lemma \ref{3.32}, thus $v\in\Gamma_{12}$ and $\Gamma_{1}\subset\Gamma_{11}\circ\Gamma_{12}$. On the other hand, $w\in\Gamma_{2}$, so $\Gamma_{11}\circ\Gamma_{12}\subset\Gamma_{1}$ and $\Gamma_{1}=\Gamma_{11}\circ\Gamma_{12}$. Since $\Gamma_{2}$ does not contain any clique factor and $\Gamma_{11}\circ\Gamma_{2}\circ\Gamma_{12}=\Gamma_{1}\circ\Gamma_{2}$ is stable, we know $\Gamma_{1}$ is stable in $\Gamma$ by Theorem \ref{2.9}.
\end{proof}

\begin{remark}
\label{5.2}
In the above proof, $\Gamma_{12}$ may be empty. But if $\Gamma_{12}\neq\emptyset$, then it does not contain any clique join factor. Thus $\Gamma_{11}$ is the maximal clique join factor of $\Gamma_{11}\circ\Gamma_{2}\circ\Gamma_{12}$.
\end{remark}

The next result answers the question at the end of Example \ref{3.29}.

\begin{thm}
\label{5.3}
Suppose $\out(G(\Gamma))$ is finite and let $q:X(\Gamma)\to X(\Gamma')$ be a quasi-isometry. Then $q$ induces a simplicial isomorphism $q_{\ast}:\mathcal{P}(\Gamma)\to\mathcal{P}(\Gamma')$, in particular, $\out(G(\Gamma'))$ is transvection free.
\end{thm}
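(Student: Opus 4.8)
The plan is to deduce the theorem from Lemma \ref{4.7} by upgrading the simplicial embedding it furnishes to an isomorphism. Since $\out(G(\Gamma))$ is finite it is in particular transvection free, so Lemma \ref{4.7} already gives a simplicial embedding $q_{\ast}:\mathcal{P}(\Gamma)\to\mathcal{P}(\Gamma')$. By the first half of Lemma \ref{4.7}, once we know $\out(G(\Gamma'))$ is transvection free this embedding is automatically an isomorphism; thus everything reduces to proving the transvection-freeness of $\out(G(\Gamma'))$. By Lemma \ref{3.32} a vertex $w'$ is stable (i.e. $\Gamma'_{w'}=\{w'\}$) precisely when no $v'\neq w'$ satisfies $(w')^{\perp}\subset St(v')$, so \emph{every vertex of $\Gamma'$ being stable} is equivalent to $\out(G(\Gamma'))$ being transvection free (the reverse implication being Lemma \ref{3.23}). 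Hence it suffices to show $\Gamma'_{w'}=\{w'\}$ for every vertex $w'$ of $\Gamma'$.

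The engine driving the argument is the \emph{transfer principle} recorded after Definition \ref{3.13}: if $\Sigma\subset\Gamma'$ is stable, then $q^{-1}$ carries the standard subcomplex with defining graph $\Sigma$ to within finite Hausdorff distance of a standard subcomplex of $X(\Gamma)$ whose defining graph is stable in $\Gamma$. Moreover this image is quasi-isometric to $X(\Sigma)$, so by Theorem \ref{2.10} it has the same dimension as $X(\Sigma)$ and by Theorem \ref{2.9} its De Rham factors match those of $X(\Sigma)$. Against this I will play the rigidity of $\Gamma$: finiteness of $\out(G(\Gamma))$ forces $\Gamma$ connected, with no separating closed star and no transvection, and hence (Lemma \ref{3.23}) every vertex of $\Gamma$ stable. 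I argue by contradiction: assume $\Gamma'_{w'}\neq\{w'\}$ and apply Lemma \ref{5.1}, which splits into the clique case (1) and the case (2) where $lk(w')$ and $lk(w')\circ lk(lk(w'))$ are stable with $lk(lk(w'))$ disconnected.

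Case (1) is clean. Here $\Gamma'_{w'}=C$ is a clique with at least two vertices, and $C$ is stable; let $F'\subset X(\Gamma')$ be the associated standard flat. By the transfer principle $q^{-1}(F')\overset{\infty}{=}F$ for a standard subcomplex $F$ of $X(\Gamma)$ with stable defining graph quasi-isometric to $\mathbb{E}^{|C|}$; by the dimension statement this defining graph is a clique of the same size, so $F$ is a standard flat of dimension $|C|$ and $q(F)\overset{\infty}{=}F'$. Consequently $q_{\ast}$ maps the simplex $\Delta(F)$ onto the simplex $\Delta(F')$, and since $q_{\ast}$ is an injective simplicial map between simplices of equal dimension it is a bijection on their vertices. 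In particular the vertex $\Delta(l'_{w'})$ of $\mathcal{P}(\Gamma')$ determined by the standard geodesic through $w'$ lies in the image of $q_{\ast}$, which says exactly that $q^{-1}(l'_{w'})$ is Hausdorff close to a standard geodesic, i.e. $w'$ is stable. This contradicts $\Gamma'_{w'}=C\neq\{w'\}$.

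Case (2) is where the real difficulty lies, and I expect it to be the main obstacle. Transferring the stable join $lk(w')\circ lk(lk(w'))$ through $q^{-1}$ and applying Theorem \ref{2.9} produces a stable full subgraph $\Lambda_{1}\circ\Lambda_{2}\subset\Gamma$ in which $\Lambda_{2}$ is quasi-isometric to $G(lk(lk(w')))$; since $lk(lk(w'))$ is disconnected the corresponding RAAG is infinitely-ended, and infinitely-endedness is a quasi-isometry invariant, so $\Lambda_{2}$ is disconnected as well. Because the transfer respects the orthogonality relation of Lemma \ref{3.4}, we moreover retain $\Lambda_{2}=lk_{\Gamma}(\Lambda_{1})$ together with an isolated vertex of $\Lambda_{2}$ coming from $w'$. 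The plan is then to show that a connected $\Gamma$ carrying such a biorthogonal pair $\Lambda_{1},\Lambda_{2}$ with $\Lambda_{2}=lk_{\Gamma}(\Lambda_{1})$ disconnected must contain a separating closed star, contradicting the finiteness of $\out(G(\Gamma))$. Establishing this implication cleanly — correctly tracking the isolated-vertex structure through the De Rham matching and converting the disconnectedness of a link into a genuine separating star of $\Gamma$ — is the delicate step, and it is precisely here that the hypothesis ruling out partial conjugations in $G(\Gamma)$ (not merely transvections) must be used.
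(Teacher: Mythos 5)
Your reduction to showing $\out(G(\Gamma'))$ is transvection free is legitimate (Lemma \ref{4.7} does upgrade the embedding to an isomorphism once both sides are transvection free, and Lemma \ref{3.32} does identify vertex-stability with the absence of transvections), and your treatment of case (1) of Lemma \ref{5.1} is essentially sound. The gap is exactly where you locate the difficulty, in case (2), but it is worse than a delicate step: the implication you propose to prove there is false. A connected graph with no separating closed star can perfectly well contain a stable pair $\Lambda_{1},\Lambda_{2}$ with $\Lambda_{2}=lk_{\Gamma}(\Lambda_{1})$ disconnected, $\Lambda_{1}=lk_{\Gamma}(\Lambda_{2})$, and every vertex of $\Lambda_{2}$ isolated in $\Lambda_{2}$. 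Take $\Gamma$ the pentagon $v_{1}v_{2}v_{3}v_{4}v_{5}$ (so $\out(G(\Gamma))$ is finite), $\Lambda_{1}=\{v_{1}\}$ and $\Lambda_{2}=lk(v_{1})=\{v_{2},v_{5}\}$: this is a biorthogonal pair, $\Lambda_{2}$ is disconnected, both its vertices are isolated in it, both subgraphs are stable (Lemma \ref{3.16} plus Theorem \ref{2.9}), and yet $\Gamma$ has no separating closed star. So no contradiction can be extracted from the transferred configuration alone, and your case (2) cannot close.

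What is actually used to kill case (2) is an input your argument never generates: a \emph{separation} of $\mathcal{P}(\Gamma')$. The paper does not prove vertex-stability of $\Gamma'$ head-on; it proves the Helly-type statement that the maximal standard flats $F'_{i}$ corresponding to the maximal standard flats through a fixed vertex $p\in X(\Gamma)$ have a common point. Assuming this fails, two of the $F'_{i}$ are separated by a hyperplane, and Lemma \ref{4.8} produces a vertex $v'\in\mathcal{P}(\Gamma')$ whose star separates the corresponding simplices. Pulling $St(v')$ back through $q_{\ast}$ (using (\ref{5.5}), i.e.\ $q_{\ast}^{-1}(M'_{1})=M_{1}$) yields a subcomplex $N\subset N_{1}$ that separates the copy $K_{p}\cong F(\Gamma)$; only then do the two vertices $u_{1},u_{2}$ in different components of $N_{2}$ with $d(u_{1},u_{2})\ge 2$ and $N\subset St(u_{i})\setminus\{u_{i}\}$ enter, and the Mayer--Vietoris argument forces either a separating closed star or $F(\Gamma)=St(u_{1})\cup St(u_{2})$, contradicting Lemma \ref{4.14}. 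Surjectivity of $q_{\ast}$ (hence transvection-freeness of $\out(G(\Gamma'))$) is then deduced from this intersection property, by showing every hyperplane of $X(\Gamma')$ meets some $F'_{\lambda}$. To repair your proof you would need to reintroduce this separation mechanism; the purely local transfer of the stable join through $q^{-1}$ is not enough.
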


In the following proof, we identify $\Gamma$ with the one-skeleton of $F(\Gamma)$, which is the flag complex of $\Gamma$. Also recall that there are label-preserving projections $\pi:\mathcal{P}(\Gamma)\to F(\Gamma)$ and $\pi:\mathcal{P}(\Gamma')\to F(\Gamma')$.

\begin{proof}
By Lemma \ref{4.7}, there is a simplicial embedding $q_{\ast}:\mathcal{P}(\Gamma)\to\mathcal{P}(\Gamma')$. Note that $q_{\ast}(\mathcal{P}(\Gamma))$ is a full subcomplex in $\mathcal{P}(\Gamma')$. To see this, pick a simplex $\Delta'\subset\mathcal{P}(\Gamma')$ with its vertices in $q_{\ast}(\mathcal{P}(\Gamma))$. Then each vertex of $\Delta'$ comes from a stable standard geodesic line in $X(\Gamma')$. Thus there exists a stable standard flat $F'\subset X(\Gamma')$ such that $\Delta(F')=\Delta'$ by Lemma \ref{3.25}. By considering the quasi-isometry inverse of $q$, we know $F'$ is Hausdorff close to the $q$-image of a stable standard flat in $X(\Gamma)$. Thus $\Delta(F')=\Delta'\subset q_{\ast}(\mathcal{P}(\Gamma))$.

Pick vertex $p\in X(\Gamma)$ and let $\{\Delta_{i}\}_{i=1}^{k}$, $\{F_{i}\}_{i=1}^{k}$, $\{\Delta'_{i}\}_{i=1}^{k}$ and $\{F'_{i}\}_{i=1}^{k}$ be as in the proof of Lemma \ref{4.10}. We claim
\begin{equation}
\label{5.4}
\cap_{i=1}^{k}F'_{i}\neq\emptyset.
\end{equation}
Suppose (\ref{5.4}) is not true. Then there exist $1\le i_{1}\neq i_{2}\le k$ and hyperplane $h'\subset X(\Gamma)$ such that $h'$ separates $F'_{i_{1}}$ and $F'_{i_{2}}$. Let $l'$ be a standard geodesic that intersects $h'$ transversely and let $v'=\Delta(l')$. By the discussion in Lemma \ref{4.8}, we can find vertices $v'_{1}\in\Delta'_{i_{1}}$ and $v'_{2}\in\Delta'_{i_{2}}$ such that $v'_{1}$ and $v'_{2}$ are separated by $St(v')$. If there exists $i_{0}$ such that $F'_{i_{0}}\cap h\neq\emptyset$, then $v'\in q_{\ast}(\mathcal{P}(\Gamma'))$ and we can prove (\ref{5.4}) as in Lemma \ref{4.10}. Now we assume $F'_{i}\cap h'=\emptyset$ for any $i$. Let $w'=\pi(v')\in\Gamma'$ and let $\Gamma_{w'}$ be the minimal stable subgraph of $\Gamma'$ that contains $w'$.

We apply Lemma \ref{5.1} to $w'\in\Gamma'$, if case (1) is true, let $F'$ be the standard flat in $X(\Gamma')$ such that $l'\subset F'$ and $\Gamma_{F'}=\Gamma_{w'}$. Since $\Gamma_{w'}$ is stable, $\Delta(F')\subset q_{\ast}(\mathcal{P}(\Gamma'))$, in particular, $v'\in q_{\ast}(\mathcal{P}(\Gamma'))$ and we can prove (\ref{5.4}) as in Lemma \ref{4.10}.

If case (2) is true, let $\Gamma'_{1}=lk(w')$ and let $\Gamma'_{2}=lk(\Gamma'_{1})$. Take $K'_{1}$ and $K'$ to be the standard subcomplexes in $X(\Gamma')$ such that (1) the defining graphs $\Gamma_{K'_1},\Gamma_{K'}$ of $K'_{1}$ and $K'$ satisfy $\Gamma_{K'_{1}}=\Gamma'_{1}$ and $\Gamma_{K'}=\Gamma'_{1}\circ\Gamma'_{2}$; (2) $l'\subset K'$ and $K'_{1}\subset K'$. Set $M'_{1}=\Delta(K'_{1})$ and $M'=\Delta(K')$. Let $K'_{2}$ be an orthogonal complement of $K'_{1}$ in $K'$, i.e. $K'_{2}$ is a standard subcomplex such that $\Gamma_{K'_{2}}=\Gamma'_{2}$ and $K'=K'_{1}\times K'_{2}$. It follows that $M'=M'_{1}\ast M'_{2}$ for $M'_{2}=\Delta(K'_{2})$. By construction, $v'\in M'$ and $lk(v')= M'_{1}$.

Since $K'$ and $K'_{1}$ are stable, there exist stable standard subcomplexes $K$ and $K_{1}$ in $X(\Gamma)$ such that $q(K)\overset{\infty}{=}K'$ and $q(K_{1})\overset{\infty}{=}K_{1}'$. Moreover, by applying Theorem \ref{2.9} to the quasi-isometry between $K$ and $K'$, there exists a standard subcomplex $K_{2}\subset K$ such that $K=K_{1}\times K_{2}$ and $K_{2}$ is quasi-isometric to $K'_{2}$. Thus $\Gamma_{K_{2}}$ is also disconnected. Let $M_{i}=\Delta(K_{i})\subset\mathcal{P}(\Gamma)$ for $i=1,2$ and $M=M_{1}\ast M_{2}=\Delta(K)$. Then $q_{\ast}(M_{1})\subset M'_{1}$ (at this stage we may not know $q_{\ast}(M_{1})=M'_{1}$) and
\begin{equation}
\label{5.5}
q_{\ast}^{-1}(M'_{1})=M_{1}.
\end{equation}
To see this, pick a simplex $\Delta\subset\mathcal{P}(\Gamma)$ with $q_{\ast}(\Delta)\subset M'_{1}$. Suppose $\Delta=\Delta(F)$ for a stable standard flat $F\subset X(\Gamma)$. Then $q(F)\subset_{\infty}K'_{1}$, hence $F\subset_{\infty}K_{1}$ and $\Delta\subset M_{1}$.

Let $L=\cup_{i=1}^{k}\Delta_{i}$ and $L'=\cup_{i=1}^{k}\Delta'_{i}$. By the proof of Lemma \ref{4.10}, $L'\setminus (St(v')\cap L')$ is disconnected, thus $L\setminus q_{\ast}^{-1}(St(v')\cap L')$ is disconnected. Recall that $lk(v')=M'_1$, and we are assuming $v'\notin L'$. Thus $(St(v')\cap L')\subset M'_{1}$. Then $q_{\ast}^{-1}(St(v')\cap L')\subset q_{\ast}^{-1}(M'_{1})$, hence $q_{\ast}^{-1}(St(v')\cap L')\subset M_{1}$ by (\ref{5.5}).

Let $N=\pi(q_{\ast}^{-1}(St(v')\cap L'))$ and let $N_{i}=\pi(M_{i})$ for $i=1,2$. Then $N$ separates $F(\Gamma)$, $N\subset N_{1}$ and $N_{2}$ is disconnected. Pick vertices $u_{1},u_{2}$ in different connected components of $N_{2}$, then $d(u_{1},u_{2})\ge 2$ (since $N_{2}$ is the full subcomplex spanned by $\Gamma_{K_{2}}$). Since $\pi(M)=N_{1}\ast N_{2}\subset F(\Gamma)$, $N\subset St(u_{i})\setminus\{u_{i}\}$ for $i=1,2$. Let $\{C_{j}\}_{j=1}^{d}$ be the connected components of $F(\Gamma)\setminus N$. Then at most one of $C_{j}$ is contained in $St(u_{1})$. If $d\ge 3$, then $St(u_{1})$ would separate $F(\Gamma)$, which is a contradiction. Now we suppose $d=2$. Note that for $i=1,2$, there must exist $j$ such that $C_{j}\subset St(u_{i})$, otherwise $St(u_{i})$ would separate $F(\Gamma)$. Moreover, if $C_{j}\subset St(u_{i})$, then $u_{i}\in C_{j}$. So we can assume without loss of generality that $C_{1}\subset St(u_{1})$ and $C_{2}\subset St(u_{2})$, which implies $F(\Gamma)=St(u_{1})\cup St(u_{2})$, contradiction again (Lemma \ref{4.14}). Thus case (2) is impossible and (\ref{5.4}) is true.

Let $\{F_{\lambda}\}_{\lambda\in\Lambda}$ be the collection of maximal standard flats in $X(\Gamma)$. Then $X(\Gamma)=\cup_{\lambda\in\Lambda}F_{\lambda}$. For each $\lambda$, let $F'_{\lambda}$ be the unique maximal standard flat in $X(\Gamma')$ such that $q(F_{\lambda})\overset{\infty}{=}F'_{\lambda}$. Then 
\begin{equation}
\label{5.6}
X(\Gamma')\overset{\infty}{=}\cup_{\lambda\in\Lambda}F'_{\lambda}.
\end{equation}
Let $h\subset X(\Gamma')$ be an arbitrary hyperplane. Then $h\cap(\cup_{\lambda\in\Lambda}F'_{\lambda})\neq\emptyset$, otherwise $\cup_{\lambda\in\Lambda}F'_{\lambda}$ would stay on one side of the hyperplane since it is a connected set by (\ref{5.4}), and this contradicts (\ref{5.6}). Pick any standard geodesic $r\subset X(\Gamma')$ and let $h_{r}$ be a hyperplane dual to $r$. Then there exists $\lambda\in\Lambda$ such that $F'_{\lambda}\cap h_{r}\neq\emptyset$. It follows that $r\subset_{\infty} F'_{\lambda}$. So $\Delta(r)\in\Delta(F'_{\lambda})\subset q_{\ast}(\mathcal{P}(\Gamma))$, which implies $q_{\ast}$ is surjective on the vertices. However, $q_{\ast}(\mathcal{P}(\Gamma))$ is a full subcomplex in $\mathcal{P}(\Gamma')$, so $q_{\ast}$ is surjective.
\end{proof}

\subsection{Coherent ordering and coherent labeling} 
\label{subsec_coherent ordering and labelling}
Throughout this section, we assume $\out(G(\Gamma))$ is finite and $G(\Gamma)\ncong\mathbb Z$. If $q:G(\Gamma)\to G(\Gamma')$ is a quasi-isometry, then $G(\Gamma')$ has a quasi-action (cf. \cite[Definition 2.2]{kleiner2001groups}) on $G(\Gamma)$, which induces a group homomorphism:
\begin{equation*}
H:G(\Gamma')\to \qi(G(\Gamma))
\end{equation*}
On the other hand, since $G(\Gamma)$ acts by isometries on $X(\Gamma)$, we can identify $G(\Gamma)$ as a subgroup of $\qi(G(\Gamma))$ (more precisely, we embed $G(\Gamma)$ into $\isom(G(\Gamma),d_{w})$ and embed $\isom(G(\Gamma),d_{w})$ into $\qi(G(\Gamma))$ by Corollary \ref{4.20}). In this subsection, we will understand the following question.
\begin{center}
Does there exist $g\in \qi(G(\Gamma))$ such that $g\cdot H(G(\Gamma'))\cdot g^{-1}\subset G(\Gamma)$?
\end{center}

Recall that we have picked an identification between $G(\Gamma)$ and the 0-skeleton of $X(\Gamma)$. Each circle in the $1$-skeleton of the Salvetti complex of $G(\Gamma)$ is labeled by an element in the standard generating set $S$ of $G(\Gamma)$. Moreover, we have chosen an orientation for each such circle. By pulling back the labeling and orientation of edges to the universal cover $X(\Gamma)$, we obtain a $G(\Gamma)$-invariant directed labeling of edges in $X(\Gamma)$. Moreover, both the labeling and orientation of edges in $X(\Gamma)$ are compatible with parallelism between edges. This also induces an associated $G(\Gamma)$-invariant labeling of vertices in $\mathcal{P}(\Gamma)$.

Let $\{l_{\lambda}\}_{\lambda\in\Lambda}$ be the collection of standard geodesics in $X(\Gamma)$ and let $V_{\lambda}=v(l_{\lambda})$ be the vertex set of $l_{\lambda}$. A \textit{coherent ordering} of $G(\Gamma)$ is obtained by assigning a collection of bijections $f_{\lambda}:V_{\lambda}\to \Bbb Z$ for each $\lambda\in\Lambda$ such that if $l_{\lambda_{1}}$ and $l_{\lambda_{2}}$ are parallel, then the $f_{\lambda_{2}}\circ p\circ f^{-1}_{\lambda_{1}}:\Bbb Z\to\Bbb Z$ is a translation, where $p:V_{\lambda_{1}}\to V_{\lambda_{2}}$ is the map induced by parallelism. The map $f_{\lambda}$ pulls back the total order on $\Bbb Z$ to $V_{\lambda}$, which we denote by $\le_{\lambda}$. Then $p:V_{\lambda_{1}}\to V_{\lambda_{2}}$ is order-preserving. 

Two coherent ordering $\Omega_1$ and $\Omega_2$ are \emph{equivalent}, denoted by $\Omega_1=\Omega_2$, if their collections of bijections agree up to a translation of $\mathbb Z$. Recall that we have a $G(\Gamma)$-invariant orientation of edges in $X(\Gamma)$ which is compatible with parallelism between edges. This induces a unique coherent ordering $\Omega$ of $G(\Gamma)$ up to the equivalence relation defined before. Moreover, for any element $g\in G(\Gamma)$, the pull back $g^{\ast}(\Omega)$ is also a coherent ordering, moreover, $g^{\ast}(\Omega)=\Omega$.

Recall that for any vertex $v\in X(\Gamma)$, there is a label-preserving simplicial embedding $i_{v}:F(\Gamma)\to\mathcal{P}(\Gamma)$ by considering the standard geodesics passing through $v$. A \textit{coherent labeling} of $G(\Gamma)$ is a simplicial map $a:\mathcal{P}(\Gamma)\to F(\Gamma)$ such that $a\circ i_{v}: F(\Gamma)\to F(\Gamma)$ is a simplicial isomorphism for every vertex $v\in X(\Gamma)$. 

The label-preserving projection $L:\mathcal{P}(\Gamma)\to F(\Gamma)$ gives rise to a coherent labeling of $G(\Gamma)$. Recall that $G(\Gamma)$ acts on $\mathcal{P}(\Gamma)$ by simplicial automorphisms, and the labeling of vertices in $\mathcal{P}(\Gamma)$ is $G(\Gamma)$-invariant. Thus for any element $g\in G(\Gamma)$, the pull back $g^{\ast}(L)$ is also a coherent labeling and $g^{\ast}(L)=L$.

We have the following alternative characterization of elements in $\isom(G(\Gamma),d_{r})$.

\begin{lem}
\label{5.7}
There is a 1-1 correspondence which associates each element of $\isom(G(\Gamma),d_{r})$ to a triple consisting of:
\begin{enumerate}
\item A point $v\in G(\Gamma)$.
\item A coherent ordering of $G(\Gamma)$ (up to the equivalence relation defined above).
\item A coherent labeling of $G(\Gamma)$.
\end{enumerate} 
\end{lem}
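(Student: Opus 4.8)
The plan is to construct explicit maps in both directions between $\isom(G(\Gamma),d_r)$ and the set of triples, and to check they are mutually inverse. The key structural input is Corollary \ref{4.16}, which identifies $\isom(G(\Gamma),d_r)$ with $\aut(\mathcal{P}(\Gamma))$, together with Lemma \ref{4.10} (and Lemma \ref{4.14}), which says that an automorphism of $\mathcal{P}(\Gamma)$ induces a bijection $G(\Gamma)\to G(\Gamma)$ sending vertices of a standard flat to vertices of a standard flat. So I will really be encoding an element $\phi\in\aut(\mathcal{P}(\Gamma))$ by how it moves a fixed reference configuration.

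First, from an isometry to a triple. Given $\varphi\in\isom(G(\Gamma),d_r)$, let $\phi=h_2(\varphi)\in\aut(\mathcal{P}(\Gamma))$ and let $\varphi'\colon G(\Gamma)\to G(\Gamma)$ be the induced bijection of vertices from Lemma \ref{4.10}. I would fix once and for all the base point and the standard directed labeling/orientation of $X(\Gamma)$, which determines the reference coherent ordering $\Omega$ and reference coherent labeling $L$ described in the text. The point in (1) is $v=\varphi'(\id)$, the image of the identity. For the coherent labeling in (3), I set $a=L\circ\phi^{-1}$: since $\phi$ is a simplicial automorphism of $\mathcal{P}(\Gamma)$ and, for each vertex $p\in X(\Gamma)$, $\phi$ carries $i_p(F(\Gamma))$ to $i_{\varphi'(p)}(F(\Gamma))$ (this is exactly the flat-to-flat property of Lemma \ref{4.10}), the composite $a\circ i_p=L\circ\phi^{-1}\circ i_p$ restricts to a simplicial isomorphism $F(\Gamma)\to F(\Gamma)$, so $a$ is a coherent labeling. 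For the coherent ordering in (2), I pull back $\Omega$ along $\varphi'$: on each standard geodesic $l_\lambda$ the bijection $\varphi'$ carries $V_\lambda$ to some $V_{\lambda'}$, and I transport the reference bijection $f_{\lambda'}$ back through $\varphi'|_{V_\lambda}$ to get $f_\lambda^{\mathrm{new}}$. The only thing to verify is that this respects parallelism up to a $\mathbb{Z}$-translation, which follows because $\varphi'$ preserves parallelism of standard geodesics (parallel standard geodesics correspond to the same vertex of $\mathcal{P}(\Gamma)$, and $\phi$ is a simplicial automorphism).

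Second, the inverse construction. Given a triple $(v,\Omega_0,a)$, I want to build a bijection $\varphi'$ of $G(\Gamma)$ and show it is a $d_r$-isometry. The idea is to recover $\phi\in\aut(\mathcal{P}(\Gamma))$ from $a$ and $\Omega_0$, then appeal to Corollary \ref{4.16}. Given a vertex $u\in\mathcal{P}(\Gamma)$, choose any $p\in X(\Gamma)$ lying on a standard geodesic in the class $u$; then $a\circ i_p$ is a simplicial isomorphism of $F(\Gamma)$, and composing with $i_p^{-1}$ picks out a vertex of $F(\Gamma)$, hence (reading off the fiber over that vertex) a target vertex of $\mathcal{P}(\Gamma)$; this defines $\phi(u)$. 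One checks $\phi$ is well defined independent of the choice of $p$ exactly because $a$ is \emph{coherent} (the compatibility $a\circ i_p\cong\mathrm{id}$ forces consistency across overlapping stars), and $\phi$ is simplicial, hence in $\aut(\mathcal{P}(\Gamma))$. The data $(v,\Omega_0)$ then rigidify the associated vertex bijection: $v$ fixes the translational ambiguity of the base point, and $\Omega_0$ fixes the direction along each standard geodesic (recall Example \ref{flip order} shows that, without ordering data, an element of $\aut(\mathcal{P}(\Gamma))$ can reverse/permute levels along a geodesic). Concretely, $\Omega_0$ tells me the order in which $\varphi'$ traverses each standard geodesic through $v$, and propagating outward along the tree-like combinatorial structure of $X(\Gamma)$ (via parallelism, which $\Omega_0$ orders coherently) determines $\varphi'$ on all of $G(\Gamma)$. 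The resulting $\varphi'$ is a $d_r$-isometry by Corollary \ref{4.16}, since it realizes $\phi$.

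Finally I would check that these two assignments are mutually inverse: starting from $\varphi$, the triple I extract re-determines the same $\phi=h_2(\varphi)$ (because $a=L\circ\phi^{-1}$ recovers $\phi$, and $(v,\Omega_0)$ recover the same vertex bijection), and conversely. \textbf{The main obstacle} I anticipate is the well-definedness and rigidity in the reverse direction: showing that a coherent labeling $a$ together with a coherent ordering $\Omega_0$ and base point $v$ determine a \emph{unique} simplicial automorphism and a \emph{unique} compatible vertex bijection, with no leftover ambiguity. This is where the coherence conditions must be used in full — the labeling pins down the simplicial automorphism of $\mathcal{P}(\Gamma)$ up to the order-reversing/level-permuting freedom exhibited in Example \ref{flip order}, and the ordering $\Omega_0$ is precisely the data needed to kill that freedom. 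Verifying that $\Omega_0$ propagates consistently across parallel classes (so that the induced map on all of $X(\Gamma)^{(0)}$, not just on geodesics through $v$, is unambiguous) is the crux, and I would handle it by induction on $d_r$-distance from $v$, using at each step that two standard geodesics meeting a common one have their orderings compatibly fixed.
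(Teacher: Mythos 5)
Your forward direction is essentially the paper's: extract the image of the identity, the pulled-back ordering $\phi^{\ast}\Omega$, and a labeling of the form $L\circ\varphi^{\pm 1}$, and check coherence using the flat-to-flat property from Lemma \ref{4.10}. That part is fine.

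The reverse direction, however, has a genuine gap, and it sits exactly where you flagged the ``main obstacle.'' You propose to first recover $\phi\in\aut(\mathcal{P}(\Gamma))$ from the coherent labeling $a$ alone, by sending a vertex $u\in\mathcal{P}(\Gamma)$ to something read off from $a\circ i_p$ and ``the fiber over'' the vertex $a(u)\in F(\Gamma)$. This is not well defined: the fiber of the label-preserving projection over a vertex of $F(\Gamma)$ is an infinite set of parallel classes, and nothing in $a$ selects one. Worse, a coherent labeling simply does not determine an element of $\aut(\mathcal{P}(\Gamma))$ --- the paper notes that $g^{\ast}L=L$ for every $g\in G(\Gamma)$, so all left translations induce the same labeling. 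The datum that would let you pick out $i_{\varphi'(p)}(a(u))$ rather than $i_p(a(u))$ is the vertex bijection $\varphi'$ itself, which you have not yet constructed; your step is circular. The paper avoids this by reversing the order of construction: it builds the vertex map $G(\Gamma)\to G(\Gamma)$ first, inductively along a word $a_1\cdots a_n$, using the labeling at each letter to choose which standard geodesic to move along from the current image point and the ordering to choose how far, and only afterwards extracts the simplicial automorphism. The essential verification --- which your ``propagate outward by induction on $d_r$-distance'' sketch does not supply --- is that the result is independent of the chosen word, i.e.\ invariant under free reduction and under swapping commuting letters; the commuting case is where the coherence of both the labeling and the ordering is used to close up a flat rectangle in $X(\Gamma)$. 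Without that path-independence argument the map is not known to exist, so the bijection is not established.
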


\begin{proof}
Pick $\phi\in \isom(G(\Gamma),d_{r})$ and let $\varphi=h(\phi):\mathcal{P}(\Gamma)\to\mathcal{P}(\Gamma)$, where $h$ is the monomorphism in Remark \ref{4.18}. Then $\varphi^{\ast}L=L\circ \varphi:\mathcal{P}(\Gamma)\to F(\Gamma)$ is a coherent labeling of $G(\Gamma)$. Pick a standard geodesic $l_{1}\subset X(\Gamma)$. Then the parallel set $P_{l_{1}}$ admits a splitting $P_{l_{1}}=l_{1}\times l_{1}^{\perp}$. Since $\phi$ maps vertices in a standard flat bijectively to vertices in a standard flat, there exists a standard geodesic $l_{2}\subset X(\Gamma)$ such that $\phi(v(l_{1}))=v(l_{2})$ and $\phi(v(P_{l_{1}}))=P_{l_{2}}$, moreover, $\phi$ respects the product structure on $P_{l_{1}}$. Thus the pull-back $\phi^{\ast}\Omega$ is a coherent ordering of $G(\Gamma)$. Now we can set up the correspondence in one direction:
\begin{center}
$\phi$ $\rightsquigarrow$ $\phi(\id)$, $\phi^{\ast}\Omega$ and $\varphi^{\ast}L$
\end{center}
here $\id$ denotes the identity element of $G(\Gamma)$.

Conversely, given a point $v\in G(\Gamma)$, a coherent ordering $\Omega'$ and a coherent labeling $L'$, we can construct a map $\phi$ as follows. Set $\phi(\id)=v$. For $u\in G(\Gamma)$, pick a word $w_{u}=a_{1}a_{2}\cdots a_{n}$ representing $u$. Let $u_{i}$ be the point in $G(\Gamma)$ represented by the word $a_{1}a_{2}\cdots a_{i}$ for $1\le i\le n$ and let $u_{0}=\id$. We define $q_{i}=\phi(a_{1}a_{2}\cdots a_{i})\in G(\Gamma)$ inductively as follows. Set $q_{0}=v$ and suppose $q_{i-1}$ is already defined. Denote the standard geodesic containing $u_{i-1}$ and $u_{i}$ by $l_{i}$. Let $v_{i}=L'(\Delta(l_{i}))$ which is a vertex of $\Gamma$, and let $l'_{i}$ be the standard line that contains $q_{i-1}$ and is labeled by $v_i$. Denote the vertex set of $l_{i}$ with the order from $\Omega'$ by $(v(l_{i}),\le_{\Omega'})$. Suppose $k:(v(l_{i}),\le_{\Omega'})\to(v(l'_{i}),\le_{\Omega})$ is the unique order preserving bijection such that $k(u_{i-1})=q_{i-1}$. Then we define $q_{i}=k(u_{i})$. 

We claim that for any other word $w'_{u}$ representing $u$, $\phi(w_{u})=\phi(w'_{u})$, hence there is a well-defined map $\phi: G(\Gamma)\to G(\Gamma)$. To see this, recall that one can obtain $w_{u}$ from $w'_{u}$ by performing the following two basic moves:
\begin{enumerate}
\item $w_{1}aa^{-1}w_{2}\to w_{1}w_{2}$.
\item $w_{1}abw_{2}\to w_{1}baw_{2}$ when $a$ and $b$ commute.
\end{enumerate}
It is clear that $\phi(w_{1}aa^{-1}w_{2})=\phi(w_{1}w_{2})$. For the second move, let $u_{i-1},u_{i},u'_{i}$, and $u_{i+1}$ be points in $G(\Gamma)$ represented by $w_{1},w_{1}a,w_{1}b$ and $w_{1}ab=w_{1}ba$ respectively. Define $q_{i-1}=\phi(w_{1}),q_{i}=\phi(w_{1}a),q'_{i}=\phi(w_{1}b),q_{i+1}=\phi(w_{1}ab)$ and $q'_{i+1}=\phi(w_{1}ba)$. Since $L'$ is a coherent labeling, $\angle_{q_{i}}(q_{i+1},q_{i-1})=\angle_{q_{i-1}}(q_{i},q'_{i})=\angle_{q'_{i}}(q_{i-1},q'_{i+1})=\pi/2$, moreover, the standard geodesic containing $q_{i}$ and $q_{i+1}$ is parallel to the standard geodesic containing $q_{i-1}$ and $q'_{i}$. Since $\Omega'$ is a coherent ordering, $d(q_{i},q_{i+1})=d(q_{i-1},q'_{i})$, thus $\overline{q_{i}q_{i+1}}$ and $\overline{q_{i-1}q'_{i}}$ are parallel. Similarly, $\overline{q_{i-1}q_{i}}$ and $\overline{q'_{i}q'_{i+1}}$ are parallel, thus $q_{i+1}=q'_{i+1}$.

Now we define another map $\phi':G(\Gamma)\to G(\Gamma)$, which serves as the inverse of $\phi$. Set $\phi'(v)=\id$ and pick word $w=a_{1}a_{2}\cdots a_{n}$. Let $r_{i}$ be the point in $G(\Gamma)$ represented by $va_{1}a_{2}\cdots a_{i}$ for $1\le i\le n$ and $r_{0}=v$. We define $p_{i}=\phi'(va_{1}a_{2}\cdots a_{i})$ inductively as follows. Put $p_{0}=\id$ and suppose $p_{i-1}$ is already defined. Since $L'$ is a coherent labeling, there exists a unique standard geodesic $l_{i}$ containing $p_{i-1}$ such that $L'(\Delta(l_{i}))$ and the edge $\overline{r_{i-1}r_{i}}$ share the same label. Let $l'_{i}$ be the unique standard geodesic containing $r_{i-1}$ and $r_{i}$ and let $k':(v(l'_{i}),\le_{\Omega})\to(v(l_{i}),\le_{\Omega'})$ be the unique order preserving bijection such that $k'(r_{i-1})=p_{i-1}$. Put $p_{i}=k'(r_{i})$. By a similar argument as above, $\phi': G(\Gamma)\to G(\Gamma)$ is well-defined. It is not hard to deduce the following properties from our construction: 
\begin{enumerate}
\item $\phi'\circ\phi=\phi\circ\phi'=\textmd{Id}$.
\item $d_{r}(\phi(v_{1}),\phi(v_{2}))\le d_{r}(v_{1},v_{2})$ and $d_{r}(\phi'(v_{1}),\phi'(v_{2}))\le d_{r}(v_{1},v_{2})$ for any vertices $v_{1},v_{2}\in G(\Gamma)$.
\item If $L'=L$ and $\Omega'=\Omega$, then $\phi$ is a left translation. If in addition $v=\id$, then $\phi=\textmd{Id}$.
\end{enumerate}
It follows from (1) and (2) that $\phi\in \isom(G(\Gamma),d_{r})$. Moreover, $v=\phi(\id)$, $L'=\varphi^{\ast}L$ ($\varphi=h(\phi)$ where $h$ is the monomorphism in Remark \ref{4.18}) and $\Omega'=\phi^{\ast}\Omega$, thus we have established the required 1-1 correspondence.
\end{proof}

Pick finite simplicial graphs $\Gamma$ and $\Gamma'$ such that (1) $\out(G(\Gamma))$ is finite; (2) there exists a simplicial isomorphism $s:\mathcal{P}(\Gamma)\to\mathcal{P}(\Gamma')$. By Lemma \ref{4.10}, $s$ induces a map $\phi:G(\Gamma)\to G(\Gamma')$. For every $g'\in G(\Gamma')$, there is a left translation $\bar{\phi}_{g'}:G(\Gamma')\to G(\Gamma')$, which gives rise to a simplicial isomorphism $\bar{s}_{g'}:\mathcal{P}(\Gamma')\to\mathcal{P}(\Gamma')$. Let $s_{g'}=s^{-1}\circ \bar{s}_{g'}\circ s$. Then $s_{g'}$ gives rise to a map $\phi_{g'}\in \isom(G(\Gamma),d_{r})$ by Corollary \ref{4.16}, moreover, by Lemma \ref{4.10},
\begin{equation}
\label{5.8}
\bar{\phi}_{g'}\circ\phi=\phi\circ\phi_{g'}
\end{equation}
for any $g'\in G(\Gamma')$. So $G(\Gamma')$ acts on $G(\Gamma)$, and we can define a homomorphism $\Phi:G(\Gamma')\to \isom(G(\Gamma),d_{r})$ by sending $g'$ to $\phi_{g'}$. $\Phi$ is injective since each step in defining $\Phi$ is injective.

\begin{lem}
\label{5.9}
In the above setting, there exists an element $\phi_{1}\in \isom(G(\Gamma),d_{r})$ such that it conjugates the image of $\Phi$ to a finite index subgroup of $G(\Gamma)$.
\end{lem}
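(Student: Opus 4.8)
The plan is to use the dictionary of Lemma~\ref{5.7}, which identifies each element of $\isom(G(\Gamma),d_{r})$ with a triple (a point, a coherent ordering, a coherent labeling), together with property (3) in its proof: an element is a left translation—i.e.\ lies in the copy of $G(\Gamma)$ inside $\isom(G(\Gamma),d_{r})$—precisely when its coherent ordering is the standard ordering $\Omega$ and its coherent labeling is the standard labeling $L$. Since $\isom(G(\Gamma),d_{r})$ acts transitively on the set of pairs (coherent ordering, coherent labeling), conjugating $\Phi(G(\Gamma'))$ into $G(\Gamma)$ is equivalent to producing a single pair $(\Omega_{1},L_{1})$ fixed by every $\phi_{g'}$: if $\phi_{1}$ is the isometry that Lemma~\ref{5.7} associates with $(\id,\Omega_{1},L_{1})$, then for each $g'$ the conjugate $\phi_{1}\phi_{g'}\phi_{1}^{-1}$ fixes $(\Omega,L)$ and is therefore a left translation. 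Thus the whole problem reduces to finding a $\Phi(G(\Gamma'))$-invariant coherent ordering and a $\Phi(G(\Gamma'))$-invariant coherent labeling of $G(\Gamma)$.

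For the ordering I would transport the standard one. The left-translation action of $G(\Gamma')$ on itself preserves the standard coherent ordering $\Omega'$ of $G(\Gamma')$ (the analogue, for $\Gamma'$, of the $G(\Gamma)$-invariance $g^{\ast}\Omega=\Omega$ recorded before Lemma~\ref{5.7}). Because $\phi$ sends each standard geodesic of $X(\Gamma)$ into a standard geodesic of the parallel class prescribed by $s$ and respects the splittings $P_{l}=l\times l^{\perp}$, I can pull $\Omega'$ back along $\phi$ to a coherent ordering $\Omega_{1}$; the intertwining relation~(\ref{5.8}), $\bar{\phi}_{g'}\circ\phi=\phi\circ\phi_{g'}$, together with the $G(\Gamma')$-invariance of $\Omega'$, shows at once that $\Omega_{1}$ is fixed by every $\phi_{g'}$. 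Left translations never reverse orientations, so no sign ambiguity survives.

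The main obstacle is the labeling, where naive transport fails: the standard labeling $L'=\pi'$ of $G(\Gamma')$ is $F(\Gamma')$-valued, and since $\Gamma$ need not be isomorphic to $\Gamma'$, pulling $L'$ back along $s$ only yields a coloring $\pi'\circ s\colon\mathcal P(\Gamma)\to F(\Gamma')$ whose restriction to a local picture $(F(\Gamma))_{v}$ is an embedding $F(\Gamma)\hookrightarrow F(\Gamma')$ depending on $v$, not a genuine $F(\Gamma)$-valued coherent labeling. The key point to establish is that $\Phi(G(\Gamma'))$ nevertheless fixes some coherent labeling. I would first note that the set of coherent labelings of $G(\Gamma)$ is \emph{finite}: a coherent labeling is determined by its value on a single local picture because $\mathcal P(\Gamma)$ is connected (Theorem~\ref{4.6}), so there are at most $|\aut(F(\Gamma))|$ of them. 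Consequently $\Phi(G(\Gamma'))$ acts on this finite set through a finite quotient, and the content is to exhibit a fixed point. The invariance of the $F(\Gamma')$-coloring $\pi'\circ s$ under $\{s_{g'}\}$—inherited from the \emph{exact} label-preservation of the left-translation action $\{\bar s_{g'}\}$ on $\mathcal P(\Gamma')$—pins down the label-permutation cocycle: building a labeling on a fundamental domain for the (free) action of $\Phi(G(\Gamma'))$ on the vertices of $X(\Gamma)$ and extending it equivariantly, the only obstruction is an $\aut(\Gamma)$-valued holonomy, which I would show vanishes using precisely this $\{s_{g'}\}$-invariance of $\pi'\circ s$. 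This produces the desired invariant coherent labeling $L_{1}$; I expect verifying its coherence across overlapping local pictures to be the most delicate step.

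Finally, finite index. The relation~(\ref{5.8}) shows that $\Phi(G(\Gamma'))$ acts on the vertex set $G(\Gamma)$ freely—if $\phi_{g'}(x)=x$ then $g'\phi(x)=\phi(\phi_{g'}(x))=\phi(x)$, forcing $g'=\id$—and that $\phi$ maps every $\Phi(G(\Gamma'))$-orbit bijectively onto $G(\Gamma')$; hence the orbits are in bijection with the fibers of $\phi$. Since $\phi$ lies within bounded distance of the quasi-isometry realizing the isomorphism $s$ and $X(\Gamma),X(\Gamma')$ are locally finite, $\phi$ is finite-to-one, so there are only finitely many orbits. After conjugating by $\phi_{1}$ into $G(\Gamma)$, the image is a subgroup of left translations whose orbits on $G(\Gamma)$ are exactly its right cosets; finitely many orbits then means finite index, completing the proof.
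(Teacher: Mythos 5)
Your high-level strategy is the same as the paper's: reduce the lemma to producing a coherent ordering $\Omega_{1}$ and a coherent labeling $L_{1}$ that are invariant under every $\phi_{g'}$, then invoke Lemma \ref{5.7} (in particular property (3) of its proof) to see that conjugating by the isometry $\phi_{1}$ realizing $(\id,\Omega_{1},L_{1})$ turns each $\phi_{g'}$ into a left translation; your finite-index argument via the finiteness of the fibers of $\phi$ also matches the paper. However, both of the central constructions have genuine gaps. First, the ordering: $\phi$ is \emph{not} injective on the vertex set of a standard geodesic --- after straightening it has the form $a\mapsto\lfloor a/d\rfloor+r$ with $d\ge 1$, so each fiber meets a standard geodesic in $d$ consecutive points. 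Pulling $\Omega'$ back along $\phi$ therefore only yields a preorder with ties on the fibers, not a coherent ordering. Breaking those ties equivariantly and coherently is a substantive part of the argument: the paper does it by translating any two points of a common fiber into the reference fiber $\phi^{-1}(q)$ via the unique $g'$ supplied by (\ref{5.8}) and comparing them there using the original $\Omega$, and then must verify coherence separately in the cases $\phi(p_{11})\neq\phi(p_{12})$, $\phi(p_{11})=\phi(p_{12})\neq\phi(p_{21})$, and $\phi(p_{11})=\phi(p_{12})=\phi(p_{21})$. None of this is present in your sketch, and without it $\Omega_{1}$ is not defined.

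Second, the labeling: you correctly identify that $L'\circ s$ is only $F(\Gamma')$-valued, but your proposed remedy does not close the gap. A group acting on a finite set through a finite quotient need not have a fixed point, and the ``$\aut(\Gamma)$-valued holonomy'' whose vanishing you defer is exactly the content of the lemma at this step --- you have restated the problem rather than solved it. The missing idea is the explicit formula the paper uses: fix a reference vertex $q\in\textmd{Im}\,\phi$ and set $L_{1}=L\circ s^{-1}\circ i_{q}\circ L'\circ s$, where $i_{q}:F(\Gamma')\to\mathcal{P}(\Gamma')$ is the canonical embedding at $q$. The point is that on each local picture $s(K_{p})$ the composite $i_{q}\circ L'$ agrees with the translation $\bar{s}_{g_{1}'}$ carrying $\phi(p)$ to $q$, so $L_{1}\circ i_{p}=L\circ s_{g_{1}'}\circ i_{p}$ is a simplicial isomorphism by Lemma \ref{4.10}, and invariance $(s_{g'})^{\ast}L_{1}=L_{1}$ follows from $L'\circ\bar{s}_{g'}=L'$. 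Without this (or an actual proof that your holonomy vanishes), the invariant labeling --- and hence $\phi_{1}$ --- is not constructed.
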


We identify $G(\Gamma)$ as a subgroup of $\isom(G(\Gamma),d_{r})$ via the left action of $G(\Gamma)$ on itself.

\begin{proof}
Pick a reference point $q\in \textmd{Im}\ \phi$ and let $K_{q}=(F(\Gamma'))_{q}$. Denote the points in $\phi^{-1}(q)$ by $\{p_{\lambda}\}_{\lambda\in\Lambda}$ and let $K_{p_{\lambda}}=(F(\Gamma))_{p_{\lambda}}$. Since $\{\phi(K_{p_{\lambda}})\}_{\lambda\in\Lambda}$ are distinct subcomplexes of $K_{q}$, $\Lambda$ is a finite set.

Let $L:\mathcal{P}(\Gamma)\to F(\Gamma)$ and $\Omega$ be the coherent labeling and coherent ordering induced by the $G(\Gamma)$-invariant labeling of $X(\Gamma)$ and $\mathcal{P}(\Gamma)$. We can obtain a coherent labeling $L':\mathcal{P}(\Gamma')\to F(\Gamma')$ and a coherent ordering $\Omega'$ for $G(\Gamma')$ in a similar fashion which are invariant under the $G(\Gamma')$-action, i.e.
\begin{equation}
\label{5.10}
(\bar{s}_{g'})^{\ast}L'=L'\ \textmd{and}\ (\bar{\phi}_{g'})^{\ast}\Omega'=\Omega'.
\end{equation}
Our goal is to find a coherent labeling $L_{1}$ and a coherent ordering $\Omega_{1}$ of $G(\Gamma)$ such that $(s_{g'})^{\ast}L_{1}=L_{1}$ and $(\phi_{g'})^{\ast}\Omega_{1}=\Omega_{1}$ for any $g'\in G(\Gamma')$.

Let $i_{q}:F(\Gamma')\to \mathcal{P}(\Gamma')$ be the canonical embedding and let
\begin{equation*}
L_{1}=L\circ s^{-1}\circ i_{q}\circ L'\circ s
\end{equation*}
be the simplicial map from $\mathcal{P}(\Gamma)$ to $F(\Gamma)$. Pick arbitrary $p\in G(\Gamma)$ and let $i_{p}:F(\Gamma)\to\mathcal{P}(\Gamma)$ be the canonical embedding. We need to show $L_{1}\circ i_{p}$ is a simplicial isomorphism. Let $K_{p}=i_{p}(F(\Gamma))$ and let $g_{1}'\in G(\Gamma')$ such that $g_{1}'\cdot \phi(p)=q$. Then $i_{q}\circ L'|_{s(K_{p})}=\bar{s}_{g_{1}'}|_{s(K_{p})}$. Thus 
\begin{align*}
L_{1}\circ i_{p}=L\circ s^{-1}\circ i_{q}\circ L'\circ s\circ i_{p}=L\circ s^{-1}\circ \bar{s}_{g'_{1}}\circ s\circ i_{p}=L\circ s_{g'_{1}}\circ i_{p},
\end{align*}
which is a simplicial isomorphism by Lemma \ref{4.10}. It follows that $L_{1}$ is a coherent labeling, moreover,
\begin{align*}
(s_{g'})^{\ast}L_{1} &= (L\circ s^{-1}\circ i_q\circ L'\circ s)\circ(s^{-1}\circ \bar{s}_{g'}\circ s)=L\circ s^{-1}\circ i_q\circ L'\circ \bar{s}_{g'}\circ s \\
&=L\circ s^{-1}\circ i_q\circ L'\circ s=L_{1}
\end{align*}
for any $g'\in G(\Gamma')$, where the third equality follows from (\ref{5.10}). So $L_{1}$ is the required coherent labeling.

To simplify notation, we will write $x<_{\Omega}y$ if $x<y$ under the ordering $\Omega$. We define $\Omega_{1}$ as follows. Let $p_{1},p_{2}\in G(\Gamma)$ be two distinct points in a standard geodesic line. If $\phi(p_{1})\neq\phi(p_{2})$, then we set $p_{1}<_{\Omega_{1}} p_{2}$ if and only if $\phi(p_{1})<_{\Omega'}\phi(p_{2})$. If $\phi(p_{1})=\phi(p_{2})$, then by (\ref{5.8}), there exists a unique $g'\in G(\Gamma')$ such that $\phi_{g'}(p_{i})\in \phi^{-1}(q)$ for $i=1,2$ and we set $p_{1}<_{\Omega_{1}}p_{2}$ if and only if $\phi_{g'}(p_{1})<_{\Omega}\phi_{g'}(p_{2})$. It follows from (\ref{5.10}), (\ref{5.8}) and our construction that $p_{1}<_{\Omega_{1}} p_{2}$ if and only if $\phi_{g'}(p_{1})<_{\Omega_{1}}\phi_{g'}(p_{2})$ for any $p_{1}, p_{2}$ in the same standard geodesic line and any $g'\in G(\Gamma')$, thus $(\phi_{g'})^{\ast}\Omega_{1}=\Omega_{1}$.

To verify $\Omega_{1}$ is coherent, pick parallel standard geodesics $l_{1}$ and $l_{2}$ in $X(\Gamma)$ and pick distinct vertices $p_{11},p_{12}\in l$. Let $p_{21},p_{22}$ be the corresponding vertices in $l_{2}$ via parallelism. We assume $p_{11}<_{\Omega_{1}}p_{12}$, it suffices to prove $p_{21}<_{\Omega_{1}}p_{22}$. 

\textit{Case 1:} We assume $\phi(p_{11})\neq\phi(p_{12})$. Recall that $l_{1}$ can be realized as an intersection of finitely many maximal standard flats, so by Lemma \ref{4.10}, there exists a standard geodesic line $l'_{1}\subset X(\Gamma')$ such that $\phi(v(l_{1}))\subset v(l'_{1})$ and $\phi(v(P_{l_{1}}))\subset v(P_{l'_{1}})$, moreover, $\phi$ respects the product structures of $P_{l_{1}}$ and $P_{l'_{1}}$. Thus $\overline{\phi(p_{11})\phi(p_{21})}$ and $\overline{\phi(p_{21})\phi(p_{22})}$ are the opposite sides of a flat rectangle in $X(\Gamma')$. Now $p_{21}<_{\Omega_{1}}p_{22}$ follows since $\Omega'$ is coherent.

\textit{Case 2:} We assume $\phi(p_{11})=\phi(p_{12})\neq\phi(p_{21})$. In this case, we can assume without loss of generality that $\phi(p_{11})=\phi(p_{12})=q$ (since $(\phi_{g'})^{\ast}\Omega_{1}=\Omega_{1}$) and the points $p_{11}$ and $p_{21}$ stay in the same standard geodesic. For $i=1,2$, let $r_{i}$ be the standard geodesic passing $p_{1i}$ and $p_{2i}$. Take $r'_{i}\subset X(\Gamma')$ and $l'_{i}\subset X(\Gamma')$ to be the standard geodesics such that $\phi(v(r_{i}))\subset v(r'_{i})$ and $\phi(v(l_{i}))\subset v(l'_{i})$ respectively. Denote $q'=\phi(p_{21})$. Since $\phi$ restricted on $v(P_{l_{1}})$ respects the product structure, $\phi(p_{21})=\phi(p_{22})=q'$ and $r'_{1}=r'_{2}$. 

Let $\overline{\phi}_{g'}$ be the left translation such that $\overline{\phi}_{g'}(q')=q$. Since $q'\in r'_{1}$ and $q\in r'_{1}$,  $\overline{\phi}_{g'}$ is a translation along $r'_{1}$ and $\overline{s}_{g'}$ fixes every point in $St(\Delta(r'_{1}))$, hence $s_{g'}$ fixes every point in $s^{-1}(St(\Delta(r'_{1})))=St(\Delta(r_{1}))$ and 
\begin{equation}
\label{5.11}
\phi_{g'}(r_{i})=r_{i}
\end{equation}
for $i=1,2$. Let $l_{3}=\phi_{g'}(l_{2})$. Then $l_{3}$ is parallel to $l_{1}$ (or $l_{2}$). To see this, note that $\Delta(l_{1})\in St(\Delta(r_{1}))$, hence $\Delta(l_{1})$ is fixed by $s_{g'}$. Put $p_{3i}=\phi_{g'}(p_{2i})$ for $i=1,2$. Then $p_{3i}\in r_{i}$ by (\ref{5.11}), hence $\overline{p_{11}p_{12}}$ and $\overline{p_{31}p_{32}}$ are the opposite sides of a flat rectangle. Moreover $p_{3i}\in\phi^{-1}(q)$ for $i=1,2$ by (\ref{5.8}), then $p_{31}<_{\Omega_{1}} p_{32}$ since $\Omega$ is coherent and $\Omega=\Omega_{1}$ while restricted on $\phi^{-1}(q)$. Now the $G(\Gamma')$-invariance of $\Omega_{1}$ implies $p_{21}<_{\Omega_{1}}p_{22}$.

\textit{Case 3:} If $\phi(p_{11})=\phi(p_{12})=\phi(p_{21})$, then we can assume without loss of generality that they all equal to $q$. It follows that $\phi(p_{22})=q$ since $\phi$ respects the product structure while restricted on $v(P_{l_{1}})$. Thus $p_{21}<_{\Omega_{1}}p_{22}$ by definition.

By Lemma \ref{5.7}, there exists $\phi_{1}\in \isom(G(\Gamma),d_{r})$ such that $\phi^{\ast}_{1}\Omega=\Omega_{1}$ and $s^{\ast}_{1}L=L_{1}$ ($s_{1}=h(\phi_{1})$ where $h$ is the monomorphism in Remark \ref{4.18}). Thus 
\begin{align*}
(\phi_{1}\circ\phi_{g'}\circ\phi^{-1}_{1})^{\ast}\Omega =(\phi^{-1}_{1})^{\ast}\circ(\phi_{g'})^{\ast}\circ(\phi_{1}^{\ast}\Omega)= (\phi^{-1}_{1})^{\ast}\circ(\phi_{g'})^{\ast}\Omega_{1}= (\phi^{-1}_{1})^{\ast}\Omega_{1}=\Omega
\end{align*}
for any $g'\in G(\Gamma')$. Similarly, $(s_{1}\circ s_{g'}\circ s^{-1}_{1})^{\ast}L=L$ for any $g'\in G(\Gamma')$. Note that $s_{1}\circ s_{g'}\circ s^{-1}_{1}=h(\phi_{1}\circ\phi_{g'}\circ\phi^{-1}_{1})$, thus by Lemma \ref{5.7}, $G(\Gamma')$ acts on $G(\Gamma)$ by left translations via $g'\to\phi_{1}\circ\phi_{g'}\circ\phi^{-1}_{1}$. This induces a monomorphism $G(\Gamma')\to G(\Gamma)$. Moreover, by (\ref{5.8}) and the fact that $\phi^{-1}(q)$ is finite, this action has finite quotient, thus we can realize $G(\Gamma')$ as a finite index subgroup of $G(\Gamma)$.
\end{proof}

The next result basically says under suitable conditions, if there exists a quasi-isometry $q:G(\Gamma)\to G(\Gamma')$, then there exists a very \textquotedblleft nice\textquotedblright\  quasi-isometry $q':G(\Gamma)\to G(\Gamma')$. However, we do not insist that $q'$ is of bounded distance away from $q$ (compared to Theorem \ref{4.15}). 
\begin{thm}
\label{5.12}
Let $\Gamma$ and $\Gamma'$ be finite simplicial graphs such that $\out(G(\Gamma))$ is finite and $G(\Gamma')$ is quasi-isometric to $G(\Gamma)$. Then there exists a cubical map (cf. Definition \ref{cubical}) $\varphi:X(\Gamma)\to X(\Gamma')$ such that
\begin{enumerate}
	\item The map $\varphi$ is onto, and $\varphi$ maps any standard flat in $X(\Gamma)$ onto a standard flat in $X(\Gamma')$ of the same dimension.
	\item The map $\varphi$ maps combinatorial geodesics in the 1-skeleton of $X(\Gamma)$ to combinatorial geodesics in the 1-skeleton of $X(\Gamma')$.
	\item The map $\varphi$ is a quasi-isometry.
\end{enumerate}
\end{thm}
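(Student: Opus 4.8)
Throughout I assume $G(\Gamma)\ncong\mathbb Z$ (if $G(\Gamma)\cong\mathbb Z$ then $\out(G(\Gamma))$ finite forces $\Gamma$ to be a point, $G(\Gamma')\cong\mathbb Z$, and one takes $\varphi=\textmd{Id}$). The plan is to manufacture a vertex map from the boundary data, straighten it so that it respects the standard order along every standard geodesic, and then promote the straightened map from a mere quasi-isometry to an exact cubical collapse. First I would fix an $(L,A)$-quasi-isometry $q:X(\Gamma)\to X(\Gamma')$. By Theorem~\ref{5.3} it induces a simplicial isomorphism $q_{\ast}:\mathcal P(\Gamma)\to\mathcal P(\Gamma')$, and in particular $\out(G(\Gamma'))$ is transvection free. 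Since $\out(G(\Gamma))$ is finite and $G(\Gamma)\ncong\mathbb Z$, Lemma~\ref{4.14} shows $F(\Gamma)$ satisfies the hypotheses of Lemma~\ref{4.10}, so $q_{\ast}$ induces a map $\phi:G(\Gamma)\to G(\Gamma')$ carrying the vertices of each maximal standard flat $F$ into a maximal standard flat $F'$ with $\Delta(F')=q_{\ast}(\Delta(F))$. As $q_{\ast}$ is a simplicial isomorphism it preserves dimensions of simplices, whence $\dim F'=\dim F$; and since every standard flat is an intersection of finitely many maximal standard flats, $\phi$ carries each standard flat (in particular each standard geodesic $l$) into a standard flat (resp. geodesic $l'$) of the same dimension.

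Next I would straighten $\phi$. Exactly as in the proof of Lemma~\ref{5.9}, pull back the standard coherent ordering $\Omega'$ and coherent labeling $L'$ of $G(\Gamma')$ through $\phi$ to obtain a coherent ordering $\Omega_{1}$ and coherent labeling $L_{1}$ of $G(\Gamma)$, with respect to which $\phi$ is order preserving and label compatible. Let $\phi_{1}\in\isom(G(\Gamma),d_{r})$ be the element furnished by Lemma~\ref{5.7} that realizes the triple $(\id,\Omega_{1},L_{1})$, so that $\phi_{1}^{\ast}\Omega=\Omega_{1}$ and $h(\phi_{1})^{\ast}L=L_{1}$, and set $\varphi:=\phi\circ\phi_{1}^{-1}$. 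Because $\phi_{1}^{-1}$ is order preserving from $\Omega$ to $\Omega_{1}$ and (by Remark~\ref{4.18}) maps each standard geodesic bijectively and adjacency-preservingly onto a parallel standard geodesic, the composite $\varphi$ is monotone from the standard order $\Omega$ to $\Omega'$ and label compatible along every standard geodesic; this is precisely what kills the ``flip'' obstruction of Example~\ref{flip order}. Keeping the notation of Lemma~\ref{5.9}, for $g'\in G(\Gamma')$ let $\phi_{g'}\in\isom(G(\Gamma),d_{r})$ be the associated automorphism satisfying (\ref{5.8}) and put $h_{g'}:=\phi_{1}\phi_{g'}\phi_{1}^{-1}$. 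Then Lemma~\ref{5.9} identifies $G(\Gamma')$ with a finite-index subgroup $H\le G(\Gamma)$ acting by left translations, and a direct computation from (\ref{5.8}) yields the equivariance
\[
\varphi(h_{g'}\cdot x)=g'\cdot\varphi(x)\qquad(x\in G(\Gamma),\ g'\in G(\Gamma')).
\]

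The decisive step, and the one I expect to be the main obstacle, is to show that $\varphi$ extends to a \emph{cubical} map, i.e.\ that it sends each edge to an edge or to a single vertex. Since $\varphi$ is already monotone along every standard geodesic, this is equivalent to the assertion that $\varphi$ is $1$-Lipschitz there, which in turn is equivalent to surjectivity of $\varphi$ onto each image geodesic $l'$ (a monotone map $\mathbb Z\to\mathbb Z$ has all steps in $\{0,1\}$ precisely when its image is an interval with no gaps). I emphasize that the straightening of Step~2 only corrects orientation, not ``speed'', so this non-stretching property must be extracted from the geometry of $\phi$ and of the equivariance above, not from $\phi_{1}$. Global surjectivity of $\varphi$ on vertices is immediate: $G(\Gamma')\cdot\varphi(\id)=\varphi(H\cdot\id)$ is all of $G(\Gamma')$ by transitivity. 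To localize this to each flat I would use the stabilizers: $\mathrm{Stab}_{G(\Gamma')}(F')\cong\mathbb Z^{\dim F'}$ acts simply transitively on $v(F')$, while $\mathrm{Stab}_{H}(F)$ has finite index in $\mathrm{Stab}_{G(\Gamma)}(F)\cong\mathbb Z^{\dim F}$ and acts cocompactly on $v(F)$, so the equivariant monotone map $\varphi|_{v(F)}:\mathbb Z^{\dim F}\to\mathbb Z^{\dim F}$ is a coarse bijection with uniformly bounded fibers. The delicate point is the index/translation-length bookkeeping that shows such an equivariant coarse bijection, being monotone, must actually be a $1$-Lipschitz collapse onto $v(F')$ rather than a map with gaps; I would carry this out by comparing $[\mathrm{Stab}(F):\mathrm{Stab}_{H}(F)]$ with $[\mathrm{Stab}(F'):\text{image}]$ and invoking boundedness of the fibers of $\varphi$, or alternatively by a local analysis at a single edge $e=[p,ps]$ using its dual hyperplane together with the single-point-intersection description of $\phi$ in Lemma~\ref{4.10} (decomposing the maximal flats through $p$ and $ps$ into those containing $e$ and those not). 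Granting $1$-Lipschitzness, monotonicity makes the geodesic fibers intervals, the compatibility of $\varphi$ with the product structure of standard flats (Theorem~\ref{2.9}) makes the images of squares, hence of all cubes, again cubes of $X(\Gamma')$, and $\varphi$ extends uniquely to a cubical map.

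Finally I would read off the three properties. Conclusion (1) is exactly the surjectivity and dimension statements established above. For (2), a cubical, monotone, label-compatible map does not fold: the edges of a combinatorial geodesic $\omega$ are dual to distinct hyperplanes, and since $\varphi$ respects each standard direction monotonically and maps hyperplanes coherently via $q_{\ast}$, the surviving (non-collapsed) image edges remain dual to distinct hyperplanes, so $\varphi(\omega)$ is again a combinatorial geodesic. For (3), a cubical map is automatically $1$-Lipschitz for $d_{w}$, and the equivariance of $\varphi$ with respect to the geometric (proper, cocompact) action of $H\cong G(\Gamma')$ on the two complexes, together with the uniform boundedness of its fibers, supplies the matching lower bound, so $\varphi$ is a quasi-isometry. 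The heart of the argument is thus the promotion in the third paragraph; everything else is organizational bookkeeping built on Theorem~\ref{5.3}, Lemma~\ref{4.10}, and Lemma~\ref{5.9}.
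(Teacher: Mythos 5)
Your proposal follows the paper's proof essentially step for step: Theorem \ref{5.3} and Lemma \ref{4.10} produce $\phi$, Lemma \ref{5.9} supplies the straightening $\phi_{1}$, one sets $\varphi=\phi\circ\phi_{1}^{-1}$ and shows $\varphi(a)=\lfloor a/d\rfloor+r$ on each standard geodesic, and then the cubical extension, the preservation of combinatorial geodesics, and the two-sided quasi-isometry bounds (upper bound from monotone non-stretching, lower bound from the uniform fiber size $k$) are read off exactly as you describe. The ``delicate index/translation-length bookkeeping'' you anticipate at the decisive step closes by a one-line transitivity argument rather than any index comparison: writing $F=\cap_{i}F_{i}$ as an intersection of maximal standard flats, every element of $\mathrm{Stab}_{G(\Gamma')}(v(F'))$ fixes each $\Delta(F'_{i})$, hence via $s^{-1}$ (and the fact that a maximal standard flat is alone in its parallel class) stabilizes each $v(F_{i})$ and therefore $v(F)$, so by the equivariance (\ref{5.8}) the nonempty subset $\phi(v(F))\subset v(F')$ is invariant under the simply transitive $\mathrm{Stab}(v(F'))$-action and must equal $v(F')$ with all fibers of equal cardinality, which is exactly the gap-free surjectivity you need.
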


\begin{proof}
Let $f: G(\Gamma)\to G(\Gamma')$ be a quasi-isometry. By Theorem \ref{5.3}, $f$ induces a simplicial isomorphism $s:\mathcal{P}(\Gamma)\to\mathcal{P}(\Gamma')$. By Lemma \ref{4.10}, $s$ induces a map $\phi:G(\Gamma)\to G(\Gamma')$ such that $d_{w}(f(x),\phi(x))<D$ for any $x\in G(\Gamma)$. Let $\phi_{1}$ be the map in Lemma \ref{5.9} and let $\varphi=\phi\circ\phi_{1}^{-1}$. We will use the same notation as in the proof of Lemma \ref{5.9}. 

We claim that if $F=\cap_{i=1}^{h}F_{i}$ where each $F_{i}$ is a maximal standard flat, then there exists a unique standard flat $F'\subset G(\Gamma')$ such that $\phi(v(F))=v(F')$. To see this, let $F'_{i}$ be the maximal standard flat in $X(\Gamma')$ such that $\Delta(F'_{i})=s(\Delta(F_{i}))$ for $1\le i\le h$ and let $F'=\cap_{i=1}^{h}F'_{i}$. Then it follows from Lemma \ref{4.10} that $\phi(v(F))\subset v(F')$. Recall that $G(\Gamma')$ acts on $G(\Gamma')$, $\mathcal{P}(\Gamma')$, $G(\Gamma)$ and $\mathcal{P}(\Gamma)$. The stabilizer $Stab(v(F'))$ fixes $\Delta(F'_{i})$ for all $i$, hence it fixes $\Delta_{i}$ for all $i$ and $Stab(v(F'))\subset Stab(v(F))$. Since $Stab(v(F'))$ acts on $v(F')$ transitively, (\ref{5.8}) implies $\phi(v(F))=v(F')$ and $|\phi^{-1}(y)\cap F|=|\phi^{-1}(y')\cap F|$ for any $y,y'\in v(F')$. It also follows that $Stab(v(F))\subset Stab(v(F'))$, thus $Stab(v(F'))= Stab(v(F))$.

Note that the above claim is also true for $\varphi$, and any standard geodesic satisfies the assumption of the claim. Moreover, $\varphi$ is surjective since $\phi_{1}$ is surjective by (\ref{5.8}). Pick standard geodesic $l\subset X(\Gamma)$ and $l'\subset X(\Gamma')$ such that $v(l')=\varphi(v(l))$, and we identify $v(l)$ and $v(l')$ with $\Bbb Z$ in an order-preserving way. Then the above claim and the construction of $\phi_{1}$ imply that $\varphi|_{v(l)}$ is of form 
\begin{equation}
\label{5.13}
\varphi(a)=\lfloor a/d \rfloor+r
\end{equation}
for some integers $r$ and $d$ ($d\ge 1$). In particular, $\varphi$ can be extended to a simplicial map from the Cayley graph $C(\Gamma)$ of $G(\Gamma)$ to $C(\Gamma')$.

Pick a combinatorial geodesic $\omega\subset C(\Gamma)$ connecting vertices $x$ and $y$, we claim that $\omega'=\phi(\omega)$ is also a geodesic in $C(\Gamma')$ (it could be a point). Let $\{v_{i}\}_{i=0}^{n}$ be vertices in $\omega$ such that for $0\le i\le n-1$, $[v_{i},v_{i+1}]$ is a maximal sub-segment of $\omega$ that is contained in a standard geodesic ($v_{0}=x$ and $v_{n}=y$). Denote the corresponding standard geodesic by $l_{i}$. For $0\le i\le n-1$, let $l'_{i}\subset X(\Gamma')$ be the standard geodesic such that $v(l'_{i})=\varphi(v(l_{i}))$ and $\omega'_{i}=\phi([v_{i},v_{i+1}])$. Then $\omega'_{i}$ is a (possibly degenerate) segment in $l'_{i}$ by (\ref{5.13}). Since $\omega$ is a geodesic, none of two geodesics in $\{l_{i}\}_{i=0}^{n-1}$ are parallel. Note that $\varphi$ is induced by a simplicial isomorphism between $\mathcal{P}(\Gamma)$ and $\mathcal{P}(\Gamma')$, thus the same property is true for the collection of geodesics $\{l'_{i}\}_{i=0}^{n-1}$. It follows that no hyperplane in $X(\Gamma')$ could intersect $\omega'$ at more than one point, hence $\omega'$ is a combinatorial geodesic.

Let $u_{i}=\varphi(v_{i})$. Then $d_{w}(u_{i},u_{i+1})\le d_{w}(v_{i},v_{i+1})$ by (\ref{5.13}) (recall that $d_w$ denotes the word metric on the corresponding group). Thus
\begin{equation}
d_{w}(\varphi(x),\varphi(y))=\sum_{i=0}^{n-1}d_{w}(u_{i},u_{i+1})\le\sum_{i=0}^{n-1}d_{w}(v_{i},v_{i+1})=d_{w}(x,y)
\end{equation}
for any $x,y\in G(\Gamma)$.

Pick $p\in G(\Gamma')$ and let $k=|\varphi^{-1}(p)|$. Then $k$ does not depend on $p$ by (\ref{5.8}). It follows that $d_{w}(\varphi(x),\varphi(y))\ge 1$ whenever $d_{w}(x,y)\ge k+1$. Now we can cut $\omega$ into pieces of length $k+1$. Since $\varphi(\omega)$ is a combinatorial geodesic,
\begin{equation*}
d_{w}(\varphi(x),\varphi(y))\ge \frac{d_{w}(x,y)}{k+1}-1.
\end{equation*}
Note that $\varphi$ naturally extends to a cubical map from $X(\Gamma)$ to $X(\Gamma')$, which satisfies all the required properties.
\end{proof}

\begin{thm}
\label{5.15}
If $\Gamma$ and $\Gamma'$ are finite simplicial graphs such that $\out(G(\Gamma))$ is finite, then the following are equivalent:
\begin{enumerate}
\item $G(\Gamma')$ is quasi-isometric to $G(\Gamma)$.
\item $\mathcal{P}(\Gamma')$ is isomorphic to $\mathcal{P}(\Gamma)$ as simplicial complexes.
\item $G(\Gamma')$ is isomorphic to a subgroup of finite index in $G(\Gamma)$.
\end{enumerate}
\end{thm}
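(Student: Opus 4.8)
The plan is to prove the cyclic chain $(1)\Rightarrow(2)\Rightarrow(3)\Rightarrow(1)$, each link reducing to a result already established. First I would dispose of the degenerate case $G(\Gamma)\cong\mathbb Z$. Here $\mathcal P(\Gamma)$ is a single vertex; a finitely generated group quasi-isometric to $\mathbb Z$ is two-ended, hence a RAAG quasi-isometric to $\mathbb Z$ is isomorphic to $\mathbb Z$; and an extension complex consisting of one vertex forces $\Gamma'$ to be a single point, since any $\Gamma'$ with at least two vertices has at least two vertices in $(\Gamma')^{e}$. Thus in this case all three conditions are equivalent to $G(\Gamma')\cong\mathbb Z$, and the theorem is immediate. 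From now on I assume $G(\Gamma)\ncong\mathbb Z$, which is the standing hypothesis under which Lemma \ref{5.9} is available.

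For $(1)\Rightarrow(2)$ I would invoke Theorem \ref{5.3} directly: since $\out(G(\Gamma))$ is finite, a quasi-isometry $X(\Gamma)\to X(\Gamma')$ induces a simplicial isomorphism $q_{\ast}:\mathcal P(\Gamma)\to\mathcal P(\Gamma')$, which is exactly (2). The implication $(3)\Rightarrow(1)$ is the standard fact that a finite-index subgroup of a finitely generated group is quasi-isometric to the ambient group; so if $G(\Gamma')$ is isomorphic to a finite-index subgroup $H\le G(\Gamma)$, then $G(\Gamma')\cong H$ is quasi-isometric to $G(\Gamma)$.

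The one substantive link is $(2)\Rightarrow(3)$, and here I would feed a simplicial isomorphism $s:\mathcal P(\Gamma)\to\mathcal P(\Gamma')$ into the construction preceding Lemma \ref{5.9}. That construction yields an injective homomorphism $\Phi:G(\Gamma')\to\isom(G(\Gamma),d_{r})$ realizing the left-translation action of $G(\Gamma')$ on $G(\Gamma)$ transported through $s$ (see (\ref{5.8})). Lemma \ref{5.9} then provides $\phi_{1}\in\isom(G(\Gamma),d_{r})$ conjugating $\Phi(G(\Gamma'))$ onto a finite-index subgroup of $G(\Gamma)$, where $G(\Gamma)$ is identified inside $\isom(G(\Gamma),d_{r})$ via left translations. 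Since $\Phi$ is injective and conjugation is an automorphism of $\isom(G(\Gamma),d_{r})$, we obtain $G(\Gamma')\cong\Phi(G(\Gamma'))\cong\phi_{1}\,\Phi(G(\Gamma'))\,\phi_{1}^{-1}$, a finite-index subgroup of $G(\Gamma)$, which is (3).

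At the level of this theorem there is essentially no new difficulty: every implication is a repackaging of earlier results. The genuine obstacle lies one layer down, in Lemma \ref{5.9}, whose proof must straighten the merely abstract action of $G(\Gamma')$ on $G(\Gamma)$ through $s$ into an honest left-translation action of a finite-index subgroup, using the coherent-ordering and coherent-labeling bookkeeping of Section \ref{subsec_coherent ordering and labelling}. The only points requiring attention when writing the proof of Theorem \ref{5.15} itself are the separate treatment of $G(\Gamma)\cong\mathbb Z$ and the careful identification of $\Phi(G(\Gamma'))$ with its conjugate inside $G(\Gamma)$.
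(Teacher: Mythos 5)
Your proof is correct and follows the paper's own route exactly: $(1)\Rightarrow(2)$ via Theorem \ref{5.3}, $(2)\Rightarrow(3)$ via the construction preceding Lemma \ref{5.9} together with that lemma, and $(3)\Rightarrow(1)$ as the standard finite-index fact. Your separate treatment of the case $G(\Gamma)\cong\mathbb Z$ is a reasonable extra precaution (since Section \ref{subsec_coherent ordering and labelling} carries the standing hypothesis $G(\Gamma)\ncong\mathbb Z$), but otherwise the argument coincides with the paper's.
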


\begin{proof}
$(1)\Rightarrow(2)$ follows from Theorem \ref{5.3}. $(2)\Rightarrow(3)$ follows from Lemma \ref{5.9}. $(3)\Rightarrow(1)$ is trivial.
\end{proof}

This establishes Theorem \ref{1.3} in the introduction.

\section{The geometry of finite index RAAG subgroups}
\label{sec_geometry of f.i. raag subgroups}
Throughout this section, we assume $G(\Gamma)\ncong \Bbb Z$, since the main results of this section (Theorem \ref{6.15} and Theorem \ref{6.20}) are trivial when $G(\Gamma)\cong\Bbb Z$.

\subsection{Constructing finite index RAAG subgroups} 
\label{subsec_construct finite index}
A \textit{right-angled Artin subgroup} is a subgroup which is also a right-angled Artin group. In this section, we introduce a process to obtain finite index RAAG subgroups of an arbitrary RAAG.

\begin{lem}
\label{6.1}
Let $X$ be a $CAT(0)$ cube complex, let $l\subset X$ be a geodesic in the 1-skeleton and let $\{h_{i}\}_{i\in \Bbb Z}$ be consecutive hyperplanes dual to $l$. Let $\pi_l:X\to l$ be the $CAT(0)$ projection. Then
\begin{enumerate}
\item For every edge $e\subset X$, if $e\cap h_{i}=\emptyset$ for all $i$, then $\pi_{l}(e)$ is a vertex in $l$, if $e\cap h_{i}\neq\emptyset$ for some $i$, then $\pi_{l}(e)$ is an edge in $l$.
\item If $K$ is any connected subcomplex such that $e\cap h_{i}=\emptyset$ for all $i$, then $\pi_{l}(K)$ is a vertex in $l$, moreover, if $K$ stays between $h_{i}$ and $h_{i+1}$, then $\pi_{l}(K)$ is the vertex in $l$ that stays between $h_{i}$ and $h_{i+1}$.
\item For every interval $[a,b]\subset l$, $\pi^{-1}_{l}([a,b])$ is a convex set in $X$. In particular, if $x\in l$ is a vertex, then $\pi^{-1}_{l}(x)$ is a convex subcomplex of $X$.
\item If $K$ is a convex subcomplex such that $K\cap l\neq\emptyset$, then $\pi_{l}(K)=K\cap l$.
\end{enumerate}
\end{lem}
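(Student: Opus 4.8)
The plan is to reduce every assertion to two facts already available in the excerpt: the behaviour of the CAT(0) projection $\pi_e$ onto a single edge (its point-preimages $\pi_e^{-1}(t)$, $0\le t\le 1$, are convex and mutually parallel), and the Section~2 criterion that a subcomplex is convex iff it is full and $\ell^1$-convex. Throughout I regard $l$ as a convex subcomplex isometric to $\mathbb R$ (this is what makes $\pi_l$ a well-defined $1$-Lipschitz map \emph{onto} $l$; in the applications $l$ is a standard geodesic, hence convex), identify its vertex set with $\mathbb Z$, and let $h_i$ be the hyperplane dual to the edge $[i,i+1]\subset l$, so that $h_i$ meets $l$ only at the midpoint $i+\tfrac12$. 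I will also use freely that $t\mapsto d(x,l(t))$ is convex, so the nearest-point set is governed by the two edges of $l$ adjacent to the minimum.

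First I would record the combinatorial description of $\pi_l$ on vertices: for a vertex $x$, the projection $\pi_l(x)$ is the $\ell^1$-gate, i.e. the unique vertex $g\in l$ for which the hyperplanes separating $x$ from $g$ are exactly those separating $x$ from all of $l$. The crucial remark is that \emph{no} $h_i$ separates $x$ from its gate, since each $h_i$ meets $l$ on both sides and hence never separates $x$ from $l$. This drives (1) and (2). For an edge $e$ with dual hyperplane $h_e$, its endpoints $u,v$ differ only across $h_e$ (an edge meets only its own dual hyperplane, so $e\cap h_i=\emptyset$ for all $i$ is equivalent to $h_e\cap l=\emptyset$). When $h_e\cap l=\emptyset$, comparing separating-hyperplane sets shows $u$ and $v$ have the same gate $m$; when $h_e=h_i$, the same bookkeeping, together with the fact that $g_u,g_v$ must lie on the sides of every $h_j$ dictated by $u,v$, forces $g_u=i$ and $g_v=i+1$.

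Next I would prove (3), which I regard as the technical heart, because the CAT(0) projection onto $l$ need not carry subcomplexes to subcomplexes nor have convex fibres over arbitrary convex subsets, so everything must be pulled back to the edge projections $\pi_e$. The key point is that if $\pi_l(x)$ lies in the interior of $[i,i+1]$ then it equals $\pi_{[i,i+1]}(x)$, while $\pi_l(x)=m$ (a vertex) holds iff $\pi_{[m-1,m]}(x)=m$ and $\pi_{[m,m+1]}(x)=m$ (using convexity of $t\mapsto d(x,l(t))$); thus each fibre $\pi_l^{-1}(l(t))$ is either one set $\pi_e^{-1}(t')$ or an intersection of two sets $\pi_e^{-1}(\mathrm{endpoint})$, hence convex, and the fibres are mutually parallel. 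I would then obtain $\pi_l^{-1}([a,b])$ by clamping: it is the locus where the projection onto the convex subsegment $[a,b]\subset l$ avoids the two endpoints, together with the two end-fibres, and I would check its convexity directly from this description. The ``in particular'' statement is immediate once the vertex-fibre is recognised as a convex subcomplex via the Section~2 criterion.

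With (3) in hand, (1) and (2) follow cleanly. If $e\cap h_i=\emptyset$ for all $i$, both endpoints project to one vertex $m$, and since $\pi_l^{-1}(m)$ is a convex (hence full) subcomplex containing both endpoints, it contains $e$, so $\pi_l(e)=\{m\}$. If $e\cap h_i\neq\emptyset$, the endpoints project to adjacent vertices $i,i+1$, so by $1$-Lipschitz-ness and connectedness the image is exactly the edge $[i,i+1]$. For (2), every edge of a connected subcomplex $K$ disjoint from all $h_i$ falls in the first case, so all vertices of $K$ share one gate $m$; applying fullness of $\pi_l^{-1}(m)$ to each cube of $K$ gives $\pi_l(K)=\{m\}$, and $K\subset h_i^{+}\cap h_{i+1}^{-}$ pins $m=i+1$. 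For (4), given a vertex $x\in K$ and any $z\in K\cap l$, a separating-hyperplane computation places the gate $\pi_l(x)$ on a combinatorial geodesic from $x$ to $z$, which lies in the convex subcomplex $K$; hence $\pi_l(x)\in K\cap l$, and combined with $\pi_l|_{K\cap l}=\mathrm{Id}$ and the convexity of $K\cap l$ from (3) this gives $\pi_l(K)=K\cap l$. The main obstacle is (3): bridging the clean combinatorial (gate) picture on the $0$-skeleton and the genuine CAT(0) projection on all of $X$, for which the reduction to the already-established convexity and parallelism of the fibres $\pi_e^{-1}(t)$ is the essential device.
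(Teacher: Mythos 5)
Your argument is correct, but it takes a genuinely different route from the paper's. The paper disposes of (1)--(3) in one sentence by appealing to the carrier structure $N_{h}\cong h\times[0,1]$ and the listed properties of the edge projections $\pi_{e}$, and proves (4) by a purely metric argument: applying Lemma \ref{2.2} to $K\cap N_{h_{i}}$ inside the carrier of a hyperplane $h_{i}$ meeting both $K$ and $l$, it deduces that $d(\cdot,K)$ is constant on the dual edge $e_{i}\subset l$, and then convexity of $d(\cdot,K)$ along $l$ forces that constant to be zero, so $e_{i}\subset K$. You instead work combinatorially with gates and separating hyperplanes. The key observation that no $h_{i}$ can separate a vertex from all of $l$, hence never separates a vertex from its gate, gives (1) and (2) cleanly, and your proof of (4) --- the gate of a vertex $x\in K$ lies on an $\ell^{1}$-geodesic from $x$ to any $z\in K\cap l$, hence in the combinatorially convex set $K$ --- is arguably more transparent than the paper's. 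Your identification of the fibres of $\pi_{l}$ with fibres (or intersections of two fibres) of edge projections is also correct and is essentially what the paper means by ``every hyperplane has a carrier.'' You are also right to flag that the lemma implicitly requires $l$ to be a convex subcomplex (as standard geodesics are), since otherwise $\pi_{l}$ is not even single-valued.

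The one step I would not accept as written is the convexity of $\pi_{l}^{-1}([a,b])$ in (3): you present the set as a union of three pieces and propose to ``check its convexity directly,'' but a union of convex sets is not convex in general, and that check is where the actual work lies. The clean fix is to use an intersection rather than a union: by convexity of $t\mapsto d(x,l(t))$ one has $\pi_{l}^{-1}([a,+\infty))=\pi_{e_{a}}^{-1}(J_{a})$, where $e_{a}$ is the edge of $l$ containing $a$ and $J_{a}\subset e_{a}$ is the subinterval from $a$ to the right endpoint (and symmetrically for $(-\infty,b]$), so $\pi_{l}^{-1}([a,b])$ is the intersection of two sets of the form $\pi_{e}^{-1}(J)$ for $J$ a subinterval of an edge containing an endpoint. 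Each such set is the union of the convex set $\pi_{e}^{-1}(1)$ with a slab $h_{e}\times[t,1]$ in the carrier, and its convexity follows from a short argument with the convex function $d(\cdot,\pi_{e}^{-1}(1))$ together with the fact that $\pi_{e}$ is $1$-Lipschitz. With that replacement the rest of your proof goes through.
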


\begin{proof}
Here (1) and (3) follow from the fact the every hyperplane has a carrier, and (2) follows from (1). To see (4), it suffices to show for $i$ such that $h_{i}\cap l\neq\emptyset$ and $h_{i}\cap K\neq\emptyset$, we have $e_{i}\subset K$ ($e_{i}$ is the edge in $l$ dual to $h_{i}$). Let $N_{h_{i}}$ be the carrier of $h_{i}$. By Lemma \ref{2.2}, $d(x,N_{h_{i}}\cap K)\equiv c$ for any $x\in e_{i}$. Moreover, $d(x,N_{h_{i}}\cap K)=d(x,K)$ for $x$ in the interior of $e_{i}$, so we must have $c=0$, otherwise the convexity of $d(\cdot,K)$ would imply $K\cap l=\emptyset$.
\end{proof}

\begin{lem}
\label{6.2}
Let $l\subset X(\Gamma)$ be a standard geodesic. Then there is a map $\pi_{\Delta(l)}:v(\mathcal{P}(\Gamma)\setminus St(\Delta(l)))\to v(l)$ (recall that $v(\mathcal{P}(\Gamma)\setminus St(\Delta(l))$ is the collection of vertices in $\mathcal{P}(\Gamma)\setminus St(\Delta(l)$) such that if $v_{1}$ and $v_{2}$ are in the same connected component of $\mathcal{P}(\Gamma)\setminus St(\Delta(l))$, then $\pi_{\Delta(l)}(v_{1})=\pi_{\Delta(l)}(v_{2})$.
\end{lem}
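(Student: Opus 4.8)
The plan is to define the map $\pi_{\Delta(l)}$ by $CAT(0)$ projection onto $l$, and to reduce every property that must be checked to a single geometric fact: a standard subcomplex whose associated simplex avoids $St(\Delta(l))$ cannot cross any hyperplane dual to $l$. First I would set up notation. Write $a=\pi(\Delta(l))$ for the label of $l$, let $\{h_i\}_{i\in\mathbb Z}$ be the consecutive hyperplanes dual to $l$, and let $\pi_l:X(\Gamma)\to l$ be the $CAT(0)$ projection. Given a vertex $v'\in\mathcal P(\Gamma)\setminus St(\Delta(l))$, I choose a standard geodesic $l_{v'}$ in the parallel class of $v'$ and set $\pi_{\Delta(l)}(v')=\pi_l(l_{v'})$, once I have shown this is a single vertex of $l$ independent of the choice of $l_{v'}$. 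By Lemma~\ref{6.1}(2), $\pi_l(l_{v'})$ is a single vertex as soon as $l_{v'}$ crosses none of the $h_i$, so the whole construction rests on the following claim.

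\emph{Claim A (the main step):} if a standard geodesic $m$ crosses some $h_i$, then $m$ is parallel to $l$. I would prove this by identifying the carrier $N_{h_i}$ with the parallel set of the edge $e_i=l\cap N_{h_i}$ and then applying Lemma~\ref{3.4}, which gives $N_{h_i}=e_i\times X(lk(a))\subset l\times X(lk(a))=P_l$. Every edge dual to $h_i$ therefore lies in $P_l$ and has the form $e_i\times\{x\}$, so it is parallel to $e_i$, and its standard-geodesic extension is the line $l\times\{x\}\subset P_l$, which is parallel to $l$. The content of the claim is that the hyperplanes dual to $l$ only spread through squares whose transverse edges are labeled in $lk(a)$, which keeps them inside $P_l$. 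Since $v'\notin St(\Delta(l))$, Observation~\ref{4.1} shows $l_{v'}$ is not coarsely contained in $P_l$, in particular not parallel to $l$; by Claim~A it crosses no $h_i$, and $\pi_l(l_{v'})$ is a single vertex.

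Independence of the representative and constancy on components both follow by upgrading Claim~A from lines to the relevant flat pieces. For independence, two geodesics in the class $v'$ cobound a flat strip $S$; its edges are either horizontal copies of $l_{v'}$ (again not parallel to $l$, hence non-crossing by Claim~A) or connecting edges orthogonal to $l_{v'}$. If a connecting edge were dual to some $h_i$ it would be labeled $a$, so $a\in lk(b')$ where $b'$ is the label of $v'$, and by Claim~A its extension $m$ would be parallel to $l$; then $l_{v'}$ and $m$ are orthogonal inside $P_{l_{v'}}=l_{v'}\times X(lk(b'))$, so they span a flat and $l_{v'}\subset_\infty P_m=P_l$, contradicting $v'\notin St(\Delta(l))$. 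Hence $S$ crosses no $h_i$ and $\pi_l(S)$ is a single vertex. For constancy, if $v_1'$ and $v_2'$ are adjacent vertices of $\mathcal P(\Gamma)\setminus St(\Delta(l))$, I choose representatives spanning a standard $2$-flat $F$; each edge of $F$ is parallel to $l_{v_1'}$ or to $l_{v_2'}$, neither of which is parallel to $l$, so by Claim~A no edge of $F$ is dual to any $h_i$, whence $\pi_l(F)$ is a single vertex and $\pi_{\Delta(l)}(v_1')=\pi_l(l_{v_1'})=\pi_l(l_{v_2'})=\pi_{\Delta(l)}(v_2')$. Since adjacency in $\mathcal P(\Gamma)\setminus St(\Delta(l))$ just means adjacency in $\mathcal P(\Gamma)$ with both endpoints outside $St(\Delta(l))$, iterating this equality along an edge-path inside a connected component yields the stated constancy.

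The main obstacle will be Claim~A together with the verification that the connecting edges of the strips and $2$-flats cannot be dual to the $h_i$; once these ``no crossing'' statements are in place, the remaining steps are routine bookkeeping with the projection $\pi_l$ and Lemma~\ref{6.1}.
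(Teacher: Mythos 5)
Your proposal is correct and follows essentially the same route as the paper: define $\pi_{\Delta(l)}$ via the $CAT(0)$ projection $\pi_l$, show that the relevant geodesics, strips, and $2$-flats cross no hyperplane dual to $l$, and conclude with Lemma \ref{6.1}(2). The only difference is organizational: you isolate the key geometric fact as Claim A (carriers of hyperplanes dual to $l$ lie in $P_l$, so any standard geodesic crossing one is parallel to $l$), whereas the paper derives the same non-crossing statements from Lemma \ref{3.1} and Corollary \ref{3.2} and handles well-definedness by reducing to a single separating hyperplane.
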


\begin{proof}
Let $\pi_{l}: X(\Gamma)\to l$ be the CAT(0) projection and let $l_{1}\subset X(\Gamma)$ be a standard geodesic such that $d(\Delta(l_{1}),\Delta(l))\ge 2$. Then $\pi_{l}(l_{1})$ is a vertex in $l$ by Lemma \ref{3.1} and Corollary \ref{3.2}. Moreover, we claim $\pi_{l}(l_{1})=\pi_{l}(l_{2})$ if $l_{2}$ is a standard geodesic parallel to $l_{1}$. It suffices to prove the case when there is a unique hyperplane $h$ separating $l_{1}$ from $l_{2}$. Note that $d(\Delta(l_{1}),\Delta(l))\ge 2$ yields $h\cap l=\emptyset$, so $l_{1}$ and $l_{2}$ are pinched by two hyperplanes dual to $l$, then the claim follows from Lemma \ref{6.1}. Thus $\pi_{l}$ induces a well-defined map $\pi_{\Delta(l)}:v(\mathcal{P}(\Gamma)\setminus St(\Delta(l)))\to v(l)$. If $\Delta(l_{1})$ and $\Delta(l_{2})$ are connected by an edge, then there exist standard geodesics $l'_{1}$ and $l'_{2}$ such that $l'_{1}\cap l'_{2}\neq\emptyset$ and $l'_{i}$ is parallel to $l_{i}$ for $i=1,2$. Thus $\pi_{l}(l_{1})=\pi_{l}(l'_{1})=\pi_{l}(l'_{2})=\pi_{l}(l_{2})$ and $\pi_{\Delta(l)}(\Delta(l_{1}))=\pi_{\Delta(l)}(\Delta(l_{2}))$.
\end{proof}

Pick a standard generating set $S$ of $G(\Gamma)$ and let $C(\Gamma,S)$ be the Cayley graph. We identify $G(\Gamma)$ as a subset of $C(\Gamma,S)$ and attach higher dimensional cubes to $C(\Gamma,S)$ to obtain a $CAT(0)$ cube complex $X(\Gamma,S)$, which is basically the universal cover of the Salvetti complex. Here we would like to think $G(\Gamma)$ as a fixed set and $C(\Gamma,S),X(\Gamma,S)$ as objects formed by adding edges and cubes to $G(\Gamma)$ in a particular way determined by $S$, so we write $S$ explicitly. We will choose a $G(\Gamma)$-equivariant orientation for edges in $X(\Gamma,S)$ as before.

An \textit{$S$-flat} (or an \textit{$S$-geodesic}) in $G(\Gamma)$ is defined to be the vertex set of a standard flat (or geodesic) in $X(\Gamma,S)$. We define $\mathcal{P}(\Gamma,S)$ as before such that its vertices correspond to coarse equivalence classes of $S$-geodesics.

We define an isometric embedding $I: G(\Gamma)\to \ell^{1}(v(\mathcal{P}(\Gamma,S)))$ which depends on $S$ and the orientation of edges in $X(\Gamma,S)$. Pick standard geodesic $l\subset X(\Gamma,S)$ and let $\pi_{l}: X(\Gamma,S)\to l$ be the $CAT(0)$ projection. We identify $v(l)$ with $\Bbb Z^{\Delta(l)}$ in an orientation preserving way such that $\pi_{l}(\id)=0$ ($\id$ is the identity element in $G(\Gamma)$). Then $\pi_{l}$ induces a coordinate function $I_{\Delta(l)}:G(\Gamma)\to \Bbb Z^{\Delta(l)}$. If we change $l$ to a standard geodesic $l_{1}$ parallel to $l$, then $I_{\Delta(l)}$ and $I_{\Delta(l_{1})}$ are identical by Lemma \ref{6.1}. Thus for every vertex $v\in \mathcal{P}(\Gamma)$, there is a well-defined coordinate function $I_{v}:G(\Gamma)\to \Bbb Z^{v}$. These coordinate functions induce a map $I:G(\Gamma)\to \Bbb Z^{(v(\mathcal{P}(\Gamma)))}$. 

$I$ is an embedding since every two points in $G(\Gamma)$ are separated by some hyperplane. $I(G(\Gamma))\subset \ell^{1}(v(\mathcal{P}(\Gamma)))$ since for any $g\in G(\Gamma)$, there are only finitely many hyperplanes separating $\id$ and $g$. $I$ naturally extends to a map $I: X(\Gamma,S)\to \ell^{1}(v(\mathcal{P}(\Gamma)))$ and it maps combinatorial geodesics to geodesics by the argument in Theorem \ref{5.12}. Thus $I$ is an isometric embedding with respect to the $\ell^{1}$ metric on $X(\Gamma,S)$. We say a convex subcomplex $K\subset X(\Gamma,S)$ is \textit{non-negative} if each point in $I(K)$ has non-negative coordinates (this notion depends on the orientation of edges in $X(\Gamma,S)$). Let $CN(\Gamma,S)$ be the collection of compact, convex, non-negative subcomplexes of $X(\Gamma,S)$ that contain the identity. 

For any $K\in CN(\Gamma,S)$, we find a maximal collection of standard geodesics $\{c_{i}\}_{i=1}^{s}$ such that $c_{i}\cap K\neq\emptyset$ for all $i$ and $\Delta(c_{i})\neq \Delta(c_{j})$ for any $i\neq j$. Let $g_{i}\in S$ be the label of edges in $c_{i}$ and let $\alpha_{i}=\pi_{c_{i}}(\id)$. Put $n_{i}=|v(K\cap c_{i})|$ and $v_{i}=\alpha_{i}g^{n_{i}}_{i}\alpha^{-1}_{i}$. Let $G'$ be the subgroup generated by $\{v_{i}\}_{i=1}^{s}$. It follows from the convexity of $K$ that if a standard geodesic $c$ is parallel to $c_{i}$ and $c\cap K\neq\emptyset$, then $|v(K\cap c_{i})|=|v(K\cap c)|$. Thus $\{v_{i}\}_{i=1}^{s}$ and $G'$ do not depend on the choice of $c_{i}$'s. 

\begin{lem}
$G'$ is a finite index subgroup of $G(\Gamma)$. 
\end{lem}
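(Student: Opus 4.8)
The plan is to prove finiteness of index by showing that the finite vertex set $v(K)$ is a coarse fundamental domain, i.e. that the $G'$-translates of $v(K)$ cover the whole group: $G'\cdot v(K)=v(X(\Gamma,S))=G(\Gamma)$. Granting this, finiteness of $v(K)$ forces $[G(\Gamma):G']\le |v(K)|<\infty$, since the identity $g=g'k$ (with $g'\in G'$, $k\in v(K)$) shows every right coset $G'g=G'k$ meets $v(K)$. So the entire content is the covering statement, which I would establish by induction on the $\ell^{1}$-distance $d_{\ell^1}(g,v(K))$ from a vertex $g$ to the compact convex subcomplex $K$; the base case $d_{\ell^1}(g,v(K))=0$ is $g\in v(K)$.

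For the inductive step, take $g\notin v(K)$ and let $k_{0}=\pi_{K}(g)$ be its nearest-point projection, which is a vertex because $K$ is a convex subcomplex. Choose a combinatorial geodesic from $k_{0}$ to $g$; its first edge $e$ lies in a unique standard geodesic $l$, and since $k_{0}\in l\cap K$, the maximality of the family $\{c_{i}\}$ forces $\Delta(l)=\Delta(c_{i})$ for some $i$. The hyperplane dual to $e$ separates $g$ from all of $K$, and $l$ is parallel to $c_{i}$, so $l\subset P_{c_{i}}$ and $|v(K\cap l)|=n_{i}$. The mechanism driving the induction is the behaviour of the generator $v_{i}=\alpha_{i}g_{i}^{n_{i}}\alpha_{i}^{-1}$: by Lemma \ref{3.4} the parallel set $P_{c_{i}}=P_{l}$ splits canonically as $c_{i}\times X(lk(g_{i}))$, and $v_{i}$ stabilizes $P_{l}$, acting as translation by $n_{i}$ on the $c_{i}$-factor and as the identity on the complementary factor.

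I would then gate through $P_{l}$. Since $k_{0}\in l\subset P_{l}$, the projection factors as $\pi_{K}(g)=\pi_{K\cap P_{l}}(\pi_{P_{l}}(g))$, whence $d_{\ell^1}(g,K)=d_{\ell^1}(g,P_{l})+d_{\ell^1}(\pi_{P_{l}}(g),K\cap P_{l})$. As $v_{i}^{-1}$ is an isometry stabilizing $P_{l}$, applying it preserves $d_{\ell^1}(\cdot,P_{l})$ and commutes with $\pi_{P_{l}}$, so it suffices to show that $v_{i}^{-1}$ strictly decreases $d_{\ell^1}(\pi_{P_{l}}(g),K\cap P_{l})$ inside $P_{l}$. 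Here I invoke Lemma \ref{2.2} to write $K\cap P_{l}=K^{(1)}\times K^{(2)}$ with $K^{(1)}\subset c_{i}$ an interval of exactly $n_{i}$ consecutive vertices; the $\ell^{1}$-distance splits over the two factors, $v_{i}^{-1}$ moves only the first coordinate by $-n_{i}$, and because the separating hyperplane forces $\pi_{P_{l}}(g)$ to lie strictly beyond $K^{(1)}$ on the $g$-side while $K^{(1)}$ has width exactly $n_{i}$, subtracting $n_{i}$ drops the first-factor distance by $\min(n_{i},\text{overshoot})\ge 1$ and never produces undershoot. This yields $d_{\ell^1}(v_{i}^{-1}g,K)<d_{\ell^1}(g,K)$ and closes the induction.

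The main obstacle is exactly this single-step reduction, and it is worth emphasizing that it hinges on the numerical matching between the translation length $n_{i}$ of $v_{i}$ and the width $n_{i}=|v(K\cap l)|$ of $K$ in the direction $\Delta(c_{i})$; this is the whole reason $n_{i}$ is defined as it is. Making it rigorous requires three ingredients that are already in place: the product decomposition of $v_{i}$ on $P_{l}$ (Lemma \ref{3.4}), the gating identity together with the splitting $K\cap P_{l}=K^{(1)}\times K^{(2)}$ (Lemma \ref{2.2}), and Lemma \ref{2.1} and Lemma \ref{6.1} to control the nearest-point data and the additivity of $\ell^{1}$-distances across the gate. Once $G'\cdot v(K)=G(\Gamma)$ is in hand, the same bookkeeping in fact shows that each right coset meets $v(K)$ in a \emph{single} point, so that $v(K)$ is an exact fundamental domain; this refinement is what feeds the $1$–$1$ correspondence of Theorem \ref{1.4}, but for the present lemma only the bound $[G(\Gamma):G']\le |v(K)|$ is needed.
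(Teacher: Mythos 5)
Your proof is correct and rests on the same engine as the paper's: both reduce the lemma to the covering statement $G'\cdot v(K)=G(\Gamma)$, and both exploit the fact that $v_{i}$ stabilizes the parallel set of any standard geodesic parallel to $c_{i}$, translating the geodesic factor by exactly $n_{i}=|v(K\cap c_{i})|$ while fixing the orthogonal factor. The difference is the induction scheme. The paper inducts on the syllable distance $d_{r}(\alpha,\id)$: writing the predecessor as $\beta=\beta_{1}\beta_{2}$ with $\beta_{1}\in G'$ and $\beta_{2}\in v(K)$, the points $\beta_{2}$ and $\beta_{1}^{-1}\alpha$ lie on a single standard geodesic $c$ meeting $K$, and since the translates $v_{i}^{d}(v(K\cap c))$ tile $v(c)$, one power of $v_{i}$ lands $\beta_{1}^{-1}\alpha$ exactly in $v(K)$ --- no gating or distance estimates are needed. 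Your induction on $d_{\ell^{1}}(g,v(K))$ is equally valid, but pays for the more geometric parameter with the gate identity $d(g,K)=d(g,P_{l})+d(\pi_{P_{l}}(g),K\cap P_{l})$, the splitting $K\cap P_{l}=K^{(1)}\times K^{(2)}$, and the overshoot/undershoot case analysis (plus the choice of sign in $v_{i}^{\pm 1}$, which you leave implicit); all of these steps check out, so this is a matter of length rather than of correctness. One caveat: your closing assertion that ``the same bookkeeping'' shows each coset meets $v(K)$ in a single point is not actually established by the argument you give --- that $v(K)$ is a strict fundamental domain is the content of Lemma \ref{6.13}(3) and requires the retraction machinery developed afterwards --- but, as you note, only the bound $[G(\Gamma):G']\le|v(K)|$ is needed for the present lemma.
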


\begin{proof}
We prove this by showing $G'\cdot v(K)=G(\Gamma)$. Let $d_r$ be the syllable metric on $G(\Gamma)$ defined in Section \ref{auto of extension complexes}. Pick word $\alpha\in G(\Gamma)$ and assume $\alpha\in G'\cdot v(K)$ when $d_{r}(\alpha,\id)\le k-1$. If $d_{r}(\alpha,\id)=k$, then there exists $\beta\in G(\Gamma)$ such that $d_{r}(\id,\beta)=k-1$ and $d_{r}(\beta,\alpha)=1$. Let $\beta=\beta_{1}\beta_{2}$ for $\beta_{1}\in G'$ and $\beta_{2}\in v(K)$. Then $d_{r}(\beta_{2},\beta^{-1}_{1}\alpha)=1$. Suppose $c$ is the standard geodesic containing $\beta_{2}$ and $\beta^{-1}_{1}\alpha$. Then there exists $i$ such that $c_{i}$ and $c$ are parallel. Note that $P_{c}\cap K$ is a convex set in the parallel set $P_{c}$, hence respects the natural splitting $P_{c}=c\times c^{\perp}$, moreover, the left action of $v_{i}$ translates the $c$ factor by $n_{i}$ units and fixes the other factor. Thus there exists $d\in \Bbb Z$ and $\beta'_{2}\in K\cap c$ such that $v^{d}_{i}\beta'_{2}=\beta^{-1}_{1}\alpha$, which implies $\alpha=\beta_{1}v^{d}_{i}\beta'_{2}\in G'\cdot v(K)$.
\end{proof}

Let $\Gamma'$ be the full subgraph of $\mathcal{P}(\Gamma)$ spanned by points $\{\Delta(c_{i})\}_{i=1}^{s}$. Then there is a natural homomorphism $G(\Gamma')\to G'$.

\begin{lem}
	\label{raag subgroup}
The homomorphism $G(\Gamma')\to G'$ is actually an isomorphism. Hence $G'$ is a finite index RAAG subgroup of $G(\Gamma)$.
\end{lem}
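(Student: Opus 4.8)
The plan is to show that the natural homomorphism $\rho\colon G(\Gamma')\to G'$, sending the standard generator $x_i$ of $G(\Gamma')$ at the vertex $\Delta(c_i)$ to $v_i$, is a well-defined isomorphism. Surjectivity is immediate since $G'=\langle v_1,\dots,v_s\rangle$ by definition, so the real content is well-definedness and injectivity. For well-definedness, recall from Lemma \ref{4.2} that the $1$-skeleton of $\mathcal{P}(\Gamma)$ is the extension graph $\Gamma^{e}$ and that the vertex $\Delta(c_i)$ corresponds to the element $u_i=\alpha_i g_i\alpha_i^{-1}\in G(\Gamma)$, the conjugate of the generator $g_i$ that stabilizes $c_i$ and translates it by one unit. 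Hence $v_i=\alpha_i g_i^{n_i}\alpha_i^{-1}=u_i^{n_i}$. If $\Delta(c_i)$ and $\Delta(c_j)$ are joined by an edge of $\Gamma'$, then they are adjacent in $\Gamma^{e}$, so by Definition \ref{2.11} the words $u_i$ and $u_j$ commute in $G(\Gamma)$; consequently their powers $v_i$ and $v_j$ commute. Thus $x_i\mapsto v_i$ respects every defining relation of $G(\Gamma')$ and $\rho$ is a homomorphism.

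For injectivity I would factor $\rho$ as a composite of two injective maps. First, $\Gamma'$ is by construction a finite full subgraph of $\mathcal{P}(\Gamma)$, hence of $\Gamma^{e}$, so the extension-graph embedding theorem of Kim--Koberda \cite{kim2013embedability} gives an injection $\iota\colon G(\Gamma')\hookrightarrow G(\Gamma)$ with $\iota(x_i)=u_i$. Second, let $\psi\colon G(\Gamma')\to G(\Gamma')$ be the endomorphism $x_i\mapsto x_i^{n_i}$; this is well-defined because raising commuting generators to powers preserves the commutation relations. Since $\iota\circ\psi(x_i)=u_i^{n_i}=v_i=\rho(x_i)$, we have $\rho=\iota\circ\psi$, and it suffices to prove $\psi$ injective.

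To see that $\psi$ is injective I would invoke the normal form for RAAGs: a word is trivial if and only if it can be reduced to the empty word by shuffling commuting letters and cancelling adjacent inverse pairs. Under $\psi$ each letter $x_i^{\pm 1}$ becomes a block $x_i^{\pm n_i}$; two blocks can be shuffled adjacent exactly when the underlying letters can, and a block $x_i^{n_i}$ cancels a block $x_i^{-n_i}$ exactly when the letters $x_i,x_i^{-1}$ cancel. Hence the admissible reductions of $\psi(W)$ are in bijection with those of $W$, so $\psi(W)=\id$ if and only if $W=\id$, giving injectivity of $\psi$ and therefore of $\rho$. Combined with the earlier lemma that $G'$ has finite index, this shows $G'\cong G(\Gamma')$ is a finite index RAAG subgroup.

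The main obstacle is the injectivity step: the delicate point is the bookkeeping that the power endomorphism $\psi$ introduces no new cancellations, and the commutation step is precisely where the hypothesis that $\Delta(c_i),\Delta(c_j)$ are \emph{adjacent in} $\mathcal{P}(\Gamma)$ (not merely that $v_i,v_j$ happen to commute) enters, via Lemma \ref{4.2} and Definition \ref{2.11}. An alternative, more self-contained route avoiding the Kim--Koberda theorem would be to prove directly that the translates $\{g\cdot v(K):g\in G'\}$ tile $G(\Gamma)$ without overlap, so that $v(K)$ is a strict fundamental domain of cardinality $[G(\Gamma):G']$, and then coarsen $X(\Gamma,S)$ to a $G'$-equivariant cube complex whose quotient is the Salvetti complex $S(\Gamma')$; this would use the isometric embedding $I\colon X(\Gamma,S)\to\ell^{1}(v(\mathcal{P}(\Gamma)))$ together with the convexity and non-negativity of $K$, but it requires extra work to check that the links of the coarsened complex are the correct flag complexes.
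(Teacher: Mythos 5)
There is a genuine gap at the injectivity step. Your factorization $\rho=\iota\circ\psi$ hinges on the claim that Kim--Koberda's embedding theorem provides an \emph{injective} homomorphism $\iota:G(\Gamma')\to G(\Gamma)$ with $\iota(x_i)=u_i$, where $u_i=\alpha_i g_i\alpha_i^{-1}$ is the vertex of $\Gamma^{e}$ itself. That theorem does not say this: it says that if $\Gamma'$ is a finite full subgraph of $\Gamma^{e}$ then $G(\Gamma')$ embeds in $G(\Gamma)$, and the embedding it produces sends each vertex to a \emph{sufficiently high power} of the corresponding conjugate. The map $x_i\mapsto u_i$ is typically not injective. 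Concretely, take $G(\Gamma)=F_2=\langle a,b\rangle$ and $K$ the edge from $\id$ to $a$; then the standard geodesics meeting $K$ give $u_1=a$, $u_2=b$, $u_3=aba^{-1}$, and $\Gamma'$ is three isolated vertices, so $G(\Gamma')=F_3$. The map $x_i\mapsto u_i$ kills $x_3^{-1}x_1x_2x_1^{-1}$ and is not injective, even though $\rho$ (which sends $x_1\mapsto a^{2}$) is. So injectivity of $\rho$ cannot be deduced from your two factors: $\psi$ is indeed injective (that part of your argument is a correct standard fact about raising generators to powers), but $\iota$ is not, and a composition $\iota\circ\psi$ with non-injective $\iota$ gives you nothing. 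The difficulty is unavoidable in this form, because the lemma must work for the \emph{specific} exponents $n_i=|v(K\cap c_i)|$ dictated by $K$ (some of which may equal $1$), not merely for all sufficiently large exponents; indeed the paper's Remark \ref{6.18} runs the implication in the opposite direction, recovering the Kim--Koberda embedding from the present construction.

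The paper's proof closes exactly this gap by a direct ping-pong argument: it applies the RAAG ping-pong lemma (Theorem \ref{6.9}) to the action of $G(\Gamma')$ on $X(\Gamma,S)$, with ping-pong sets $X_i=\pi_{c_i}^{-1}((-\infty,-1/2])\cup\pi_{c_i}^{-1}([n_i-1/2,\infty))$ built from the $CAT(0)$ projections to the geodesics $c_i$. The convexity of $K$ and Lemma \ref{2.1}/Lemma \ref{6.1} are what guarantee that for non-adjacent $\Delta(c_i),\Delta(c_j)$ the projection $\pi_{c_i}(X_j)$ lands in $[0,n_i-1]$, so that $v_i^{k}(X_j)\subset X_i$ with the exact exponents $n_i$; this is the geometric input your proposal is missing. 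Your second, sketched alternative (showing the $G'$-translates of $v(K)$ tile $G(\Gamma)$) is closer in spirit to what the paper does later in Lemma \ref{6.13} and in Remark \ref{6.14}, but as you note it is not carried out, and in the paper that tiling statement is itself derived after injectivity is established.
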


We will follow the strategy in \cite{koberda2012ping}, where the following version of ping-pong lemma for right-angled Artin groups was used.

\begin{thm}[Theorem 4.1 of \cite{koberda2012ping}]
\label{6.9}
Let $G=G(\Gamma)$ and let $X$ be a set with a $G$-action. Suppose the following hold.
\begin{enumerate}
\item For each vertex $v_i$ of $\Gamma$, there exists subset $X_i\subset X$, such that the union of all $X_i$'s is properly contained in $X$.
\item For each nonzero $k\in \Bbb Z$ and vertices $v_{i},v_{j}$ joined by en edge, $v^{k}_{i}(X_{j})\subset X_{j}$.
\item For each nonzero $k\in \Bbb Z$ and vertices $v_{i},v_{j}$ not joined by en edge, $v^{k}_{i}(X_{j})\subset X_{i}$.
\item There exists $x_{0}\in X\setminus\cup_{i\in V}X_{i}$ ($V$ is the vertex set of $\Gamma$) such that for each nonzero $k\in\Bbb Z$, $v^{k}_{i}(x_{0})\in X_{i}$.
\end{enumerate}
Then the $G$-action is faithful.
\end{thm}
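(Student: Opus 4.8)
The plan is to run a ping-pong argument driven by the normal form theory for right-angled Artin groups. Since $G(\Gamma)\curvearrowright X$ is a left action, it suffices to show that every nontrivial $g\in G(\Gamma)$ moves the distinguished point $x_0$ supplied by condition (4). Concretely, I would fix a reduced expression $g=v_{i_1}^{k_1}v_{i_2}^{k_2}\cdots v_{i_n}^{k_n}$ (each $k_j\neq 0$, $n\geq 1$) realizing the syllable length of $g$, and prove the tracking statement: \emph{if $w=v_{i_1}^{k_1}\cdots v_{i_n}^{k_n}$ is a reduced expression whose leftmost syllable is $v_{i_1}^{k_1}$, then $w(x_0)\in X_{i_1}$.} Granting this, faithfulness is immediate: for $g\neq\id$ we obtain $g(x_0)\in X_{i_1}\subset\bigcup_i X_i$, whereas $x_0\notin\bigcup_i X_i$, so $g(x_0)\neq x_0$ and $g$ acts nontrivially.

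The two combinatorial facts about reduced words I would invoke (both standard, going back to Servatius and Green) are: (a) a suffix of a reduced expression is again reduced, and any two reduced expressions of the same element differ by a finite sequence of \emph{shuffles}, i.e.\ transpositions of adjacent commuting syllables; and (b) if $w=v_{i_1}^{k_1}w'$ is reduced, then $v_{i_1}$ cannot appear as a leftmost syllable of $w'$, for otherwise a shuffle bringing that occurrence adjacent to $v_{i_1}^{k_1}$ would let the two syllables combine, shortening $w$ and contradicting reducedness.

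The tracking statement is then proved by induction on $n$. The base case $n=1$ is exactly condition (4): $v_{i_1}^{k_1}(x_0)\in X_{i_1}$. For the inductive step, write $w=v_{i_1}^{k_1}w'$ with $w'$ reduced of length $n-1$, and split into two cases. \emph{Subcase (i):} some reduced expression of $w'$ has leftmost syllable $v_m^{k}$ with $v_m$ \emph{not} joined to $v_{i_1}$ by an edge (in particular $v_m\neq v_{i_1}$). Then the inductive hypothesis gives $w'(x_0)\in X_m$, and condition (3) gives $v_{i_1}^{k_1}(X_m)\subset X_{i_1}$, so $w(x_0)\in X_{i_1}$. \emph{Subcase (ii):} otherwise every leftmost syllable of $w'$ is some $v_m$ joined to $v_{i_1}$ by an edge (none equal to $v_{i_1}$, by (b)); choose one and write $w'=v_m^{k}w''$. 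Since $v_m$ and $v_{i_1}$ commute, a shuffle gives $w=v_m^{k}\bigl(v_{i_1}^{k_1}w''\bigr)$, where by (a) the suffix $v_{i_1}^{k_1}w''$ is reduced of length $n-1$ with leftmost syllable $v_{i_1}^{k_1}$. The inductive hypothesis yields $\bigl(v_{i_1}^{k_1}w''\bigr)(x_0)\in X_{i_1}$, and then condition (2) gives $v_m^{k}(X_{i_1})\subset X_{i_1}$, so again $w(x_0)\in X_{i_1}$.

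I expect Subcase (ii) to be the main obstacle: it is exactly where the non-free relations of the RAAG intervene, and handling it forces one to re-express $w$ by a shuffle so that the length-$(n-1)$ word fed back into the induction still begins with $v_{i_1}^{k_1}$. Verifying that this shuffled word is genuinely reduced before applying the inductive hypothesis is where fact (a) does the essential work; the remaining steps are mechanical applications of conditions (2)--(4). Finally, note that condition (1) is needed only to ensure that $\bigcup_i X_i$ is a proper subset of $X$, which is what makes the point $x_0$ of condition (4) available and guarantees $x_0\neq w(x_0)$ at the end.
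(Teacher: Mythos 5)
Your proof is correct. Note that the paper itself gives no proof of this statement---it is quoted as Theorem 4.1 of the cited reference (Koberda)---so the only comparison available is with that source, and your induction on syllable length, peeling off the leftmost syllable and splitting into the commuting and non-commuting subcases, is essentially the argument given there. The two points that require care are both handled: the shuffled word $v_{i_1}^{k_1}w''$ in Subcase (ii) is verified to be reduced with the correct leftmost syllable before the inductive hypothesis is applied, and your fact (b) ensures condition (3) is only ever invoked for genuinely distinct, non-adjacent vertices, so the argument does not depend on how one reads condition (3) when $i=j$.
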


\begin{proof}[Proof of Lemma \ref{raag subgroup}]
We will apply Theorem \ref{6.9} with $X=X(\Gamma,S)$ and $G=G(\Gamma')$. For $1\le i\le s$, we identify $c_{i}$ and $\Bbb R$ in an orientation preserving way such that $\pi_{c_{i}}(\id)$ corresponds to $0\in \Bbb R$. Define $X^{+}_{i}=\pi^{-1}_{c_{i}}([n_{i}-1/2,\infty))$, $X^{-}_{i}=\pi^{-1}_{c_{i}}((-\infty,-1/2])$ and $X_{i}=X^{+}_{i}\cup X^{-}_{i}$. It clear that the identity element $\id\notin X_{i}$ for all $i$, so (1) of Theorem \ref{6.9} is true. Each $v_{i}=\alpha_{i}g^{n_{i}}_{i}\alpha^{-1}_{i}$ translates $c_{i}$ by $n_{i}$ units, so (4) is also true with $x_{0}=\id$.

If $\Delta(c_{i})$ and $\Delta(c_{j})$ are connected by an edge in $\mathcal{P}(\Gamma)$, then $v_{i}$ stabilizes every hyperplane dual to $v_{j}$, thus $v^{k}_{i}(X_{j})=X_{j}$ and (2) is true. If 
\begin{equation}
\label{6.10}
d(\Delta(c_{i}),\Delta(c_{j}))\ge 2,
\end{equation}
then $\pi_{c_{j}}(c_{i})$ is a point. Lemma \ref{6.1} and $c_{i}\cap K\neq\emptyset$ yield that $\pi_{c_{j}}(c_{i})\subset\pi_{c_{j}}(K)=c_{j}\cap K=[0,n_{j}-1]$, thus 
\begin{equation}
\label{6.11}
c_{i}\cap X_{j}=\emptyset.
\end{equation}
Similarly $c_{i}\cap X_{j}=\emptyset$. Let $h=\pi^{-1}_{c_{j}}(-1/2)$ be the boundary of $X^{-}_{j}$ and let $N_{h}$ be the carrier of $h$. Then (\ref{6.10}) implies that $h$ has empty intersection with any hyperplane dual to $c_{i}$, so is $N_{h}$. It follows from Lemma \ref{6.1} that $\pi_{c_{i}}(h)=\pi_{c_{i}}(N_{h})=p$ is a vertex in $c_{i}$. If $h_{1}=\pi^{-1}_{c_{i}}(p-1/2)$ and $h_{2}=\pi^{-1}_{c_{i}}(p+1/2)$ are two hyperplanes that pinch $p$, then $h\cap h_{k}=\emptyset$ for $k=1,2$. This and (\ref{6.11}) yield $X^{-}_{j}\cap h_{k}=\emptyset$, hence $\pi_{c_{i}}(X^{-}_{j})=p$ by Lemma \ref{6.1}. Similarly $\pi_{c_{i}}(X^{+}_{j})=p$, so $p=\pi_{c_{i}}(X_{j})=\pi_{c_{i}}(c_{j})\subset \pi_{c_{i}}(K)=c_{i}\cap K=[0,n_{i}-1]$. Note that $(\pi_{c_{i}}\circ v^{k}_{i})(X_{j})=(v^{k}_{i}\circ \pi_{c_{i}})(X_{j})=v^{k}_{i}(p)=p+kn_{i}$, so $v^{k}_{i}(X_{j})\subset X_{i}$ for $k\neq 0$. 
\end{proof}


The discussion in this subsection yields a well-defined map 
\begin{equation*}
\Theta_{S}:CN(\Gamma,S)\to \{\textmd{Finite\ index\ RAAG\ subgroups\ of\ }G(\Gamma)\}.
\end{equation*}

The images of $\Theta_{S}$ are called \textit{$S$-special subgroups of $G(\Gamma)$}. A subgroup of $G(\Gamma)$ is \textit{special} if it is $S$-special for some standard generating set $S$ of $G(\Gamma)$.

\subsection{Rigidity of RAAG subgroups}
In this subsection, we will assume $G(\Gamma')$ is finite index RAAG subgroup in $G(\Gamma)$ and $\out(G(\Gamma))$ is finite. We will show that under such condition, $G(\Gamma')$ must arise from the process described in the previous subsection. We will prove this in 3 steps. First we produce a convex subcomplex of $X(\Gamma,S)$ from $G(\Gamma')$. Then we will modify this convex subcomplex such that it is an element in $CN(\Gamma,S)$. Thus we have defined a map from finite index RAAG subgroups of $G(\Gamma)$ to elements in $CN(\Gamma,S)$. In the last step, we show the map defined in step 2 is an inverse to the map $\Theta_{S}$ defined in Section \ref{subsec_construct finite index}.

Also near the end of this subsection, we will leave several relatively long remarks which discuss relevant material in the literature. The reader can skip these remarks at first reading.

Recall that $\out(G(\Gamma))$ is finite and $\out(G(\Gamma'))$ is transvection free (Theorem \ref{5.3}), so any two standard generating sets of $G(\Gamma)$ (or $G(\Gamma')$) differ by a sequence of conjugations or partial conjugations. Then given any two standard generating sets $S$ and $S_{1}$ for $G(\Gamma)$, there is a canonical way to identify $\mathcal{P}(\Gamma,S)$ and $\mathcal{P}(\Gamma,S_{1})$ (every $S$-geodesic is Hausdorff close to an $S_{1}$-geodesic). Thus we will write $\mathcal{P}(\Gamma)$ and $\mathcal{P}(\Gamma')$ and omit the generating set.

\begin{lem}
\label{6.3}
Let $\phi,s$ be as in the discussion before Lemma \ref{5.9}. Let $l\subset X(\Gamma)$ and $l'\subset X(\Gamma')$ be standard geodesics such that $\phi(v(l))=v(l')$. Then $\phi\circ\pi_{\Delta(l)}=\pi_{\Delta(l')}\circ s$. 
\end{lem}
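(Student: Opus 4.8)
The plan is to reduce the identity $\phi\circ\pi_{\Delta(l)}=\pi_{\Delta(l')}\circ s$ to the equality of a single pair of vertices, and then to exploit the fact that, although $\phi$ is only coarsely defined, it carries each standard geodesic of $X(\Gamma)$ into a \emph{definite} standard geodesic of $X(\Gamma')$ prescribed by $s$. First I would record the preliminary identity $s(\Delta(l))=\Delta(l')$: by the defining property of $\phi$ (Lemma~\ref{4.10}, together with the claim in the proof of Theorem~\ref{5.12}), the standard geodesic $l$ satisfies $\phi(v(l))\subseteq v(\tilde{l})$ for the standard geodesic $\tilde{l}$ with $\Delta(\tilde{l})=s(\Delta(l))$; since $\phi(v(l))=v(l')$ by hypothesis, we get $v(l')\subseteq v(\tilde{l})$ and hence $\Delta(l')=\Delta(\tilde{l})=s(\Delta(l))$. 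As $s$ is a simplicial isomorphism, it carries $St(\Delta(l))$ onto $St(\Delta(l'))$, so a vertex $w\in\mathcal{P}(\Gamma)\setminus St(\Delta(l))$ represented by a standard geodesic $l_1$ has image $s(w)=\Delta(l_1')\in\mathcal{P}(\Gamma')\setminus St(\Delta(l'))$; thus both $\pi_{\Delta(l)}(w)$ and $\pi_{\Delta(l')}(s(w))$ are defined. Writing $p=\pi_{\Delta(l)}(w)=\pi_l(l_1)\in v(l)$ and $q=\pi_{\Delta(l')}(s(w))=\pi_{l'}(l_1')\in v(l')$ (both single vertices, since the relevant $\mathcal{P}$-distances are $\ge 2$, as in the proof of Lemma~\ref{6.2}), and noting $\phi(p)\in v(l')$, the lemma reduces to proving $\phi(p)=q$.

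The key step is the following fibre-compatibility statement: \emph{if vertices $a,b$ of $X(\Gamma)$ satisfy $\pi_l(a)=\pi_l(b)$, then $\pi_{l'}(\phi(a))=\pi_{l'}(\phi(b))$.} To prove it I would join $a$ and $b$ by a combinatorial geodesic $\tau$. Since $a,b$ lie in the convex fibre $\pi_l^{-1}(\pi_l(a))$ (Lemma~\ref{6.1}(3)), convexity gives $\tau\subseteq\pi_l^{-1}(\pi_l(a))$, so $\pi_l$ is constant along $\tau$ and $\pi_l(e)$ is a single vertex for every edge $e$ of $\tau$. Consequently no edge $e$ of $\tau$ can be parallel to $l$: if its carrying standard geodesic $m_e$ had $\Delta(m_e)=\Delta(l)$, then $m_e$ would be parallel to $l$ and $\pi_l(e)$ would be an edge of $l$, contradicting that $\pi_l(e)$ is a vertex. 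Hence $\Delta(m_e)\neq\Delta(l)$, and therefore $\Delta(m_e')=s(\Delta(m_e))\neq s(\Delta(l))=\Delta(l')$, where $m_e'$ is the standard geodesic of $X(\Gamma')$ with $\phi(v(m_e))\subseteq v(m_e')$.

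For each edge $e=[x,y]$ of $\tau$ the points $\phi(x),\phi(y)$ both lie on $m_e'$, so any combinatorial segment joining them crosses only hyperplanes dual to $m_e'$, all of $\mathcal{P}$-class $\Delta(m_e')\neq\Delta(l')$, hence none dual to $l'$. Concatenating these segments over $\tau$ produces a path from $\phi(a)$ to $\phi(b)$ crossing no hyperplane dual to $l'$, so no such hyperplane separates $\phi(a)$ from $\phi(b)$; this is exactly $\pi_{l'}(\phi(a))=\pi_{l'}(\phi(b))$. I would then apply this with $a=p\in v(l)$ and $b$ an arbitrary vertex of $l_1$: since $\pi_l(l_1)=\{p\}$ we have $\pi_l(b)=p=\pi_l(a)$, whence $\pi_{l'}(\phi(p))=\pi_{l'}(\phi(b))$. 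On one side $\phi(p)\in v(l')$ gives $\pi_{l'}(\phi(p))=\phi(p)$; on the other $\phi(b)\in\phi(v(l_1))\subseteq v(l_1')$ and all of $l_1'$ projects to $q$, so $\pi_{l'}(\phi(b))=q$. Therefore $\phi(p)=q$, as required.

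I expect the main obstacle to be precisely the passage from coarse to exact control. In general $\phi$ is neither injective nor an isometry, so the exact projection vertex cannot be identified by any Hausdorff-distance estimate; the whole point is to pin down $\phi(p)$ \emph{on the nose}. The decisive input that makes the argument exact is the parallelism dichotomy above: edges lying in a single $\pi_l$-fibre are never parallel to $l$, so $s$ transports their carrying geodesics to geodesics whose dual hyperplanes avoid those of $l'$. Verifying this dichotomy carefully, and checking that the concatenated image path genuinely detects (the absence of) hyperplane separation, is where the real work of the proof lies.
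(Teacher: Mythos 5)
Your proposal is correct and follows essentially the same route as the paper: record $s(\Delta(l))=\Delta(l')$ via Lemma \ref{4.10}, reduce to the pointwise statement $\phi\circ\pi_l=\pi_{l'}\circ\phi$ on vertices, connect the relevant vertices by a combinatorial geodesic inside the convex fibre of $\pi_l$, observe that its constituent standard geodesics are not parallel to $l$, and push forward by $\phi$ so that the images lie on standard geodesics not parallel to $l'$ and hence project to single points of $l'$. The only cosmetic difference is that you work edge by edge with a hyperplane-separation count where the paper cuts the geodesic into maximal subsegments and invokes Corollary \ref{3.2}; the substance is identical.
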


\begin{proof}
Pick standard geodesics $r\subset X(\Gamma)$ and $r'\subset X(\Gamma')$ such that $\phi(v(r))=v(r')$, then $s(\Delta(r))=\Delta(r')$ by Lemma \ref{4.10} (recall that $r$ is the intersection of maximal standard flats). Therefore, by the definition of $\pi_{\Delta(l)}$, it suffices to show $\phi\circ\pi_{l}(x)=\pi_{l'}\circ\phi(x)$ for any vertex $x\in X(\Gamma)$. Let $y$ be a vertex such that $y\notin l$ and let $x=\pi_{l}(y)$. By Lemma \ref{6.1}, we can approximate $\overline{xy}$ by a combinatorial geodesic $\omega$ in the 1-skeleton of $\pi_{l}^{-1}(y)$, then no hyperplane could intersect both $l$ and $\omega$. Let $\{v_{i}\}_{i=0}^{n}$ be vertices in $\omega$ such that for $0\le i\le n-1$, $[v_{i},v_{i+1}]$ is a maximal sub-segment of $\omega$ that is contained in a standard geodesic ($v_{0}=x$ and $v_{n}=y$). Denote the corresponding standard geodesic by $l_{i}$. Then $\Delta(l)\neq\Delta(l_{i})$ for all $i$. Let $u_{i}=\phi(v_{i})$ and let $l'_{i}$ be the standard geodesic such that $\phi(v(l_{i}))=v(l'_{i})$. Then $\overline{u_{i}u_{i+1}}\subset l'_{i}$ and $\Delta(l')\neq\Delta(l'_{i})$ for all $i$, thus $\pi_{l'}(l'_{i})$ is a point by Corollary \ref{3.2} and $\pi_{l'}(u_{i})=\pi_{l'}(u_{j})$ for all $1\le i,j\le n$.
\end{proof}

\emph{Step 1:} We produce a convex subcomplex of $X(\Gamma,S)$ from $G(\Gamma')$.

The left action $G(\Gamma)\curvearrowright G(\Gamma)$ induces $G(\Gamma')\curvearrowright G(\Gamma)$ and $G(\Gamma')\curvearrowright X(\Gamma,S)$. By choosing a standard generating set $S'$ of $G(\Gamma')$, we have left action $G(\Gamma')\curvearrowright X(\Gamma',S')$. For $h\in G(\Gamma')$, we use $\phi_{h}$, $\bar{\phi}_{h}$, $s_{h}$ and $\bar{s}_{h}$ to denote the action of $h$ on $G(\Gamma)$, $G(\Gamma')$, $\mathcal{P}(\Gamma)$ and $\mathcal{P}(\Gamma')$ respectively. Pick a $G(\Gamma')$-equivariant quasi-isometry $q:X(\Gamma,S)\to X(\Gamma',S')$ such that $q|_{G(\Gamma')}=\textmd{Id}$. By Theorem \ref{5.3} and Lemma \ref{4.10}, $q$ induces surjective $G(\Gamma')$-equivariant maps $\phi:G(\Gamma)\to G(\Gamma')$ and $s:\mathcal{P}(\Gamma)\to\mathcal{P}(\Gamma')$. Note that $\phi$ depends on the choice of generating set $S$ and $S'$, and this flexibility comes from the automorphism groups of $G(\Gamma)$ and $G(\Gamma')$. 

The key of step 1 is to choose a \textquotedblleft nice\textquotedblright\ standard generating set $S'$ of $G(\Gamma')$ such that $\phi$ behaves like $\varphi$ in Theorem \ref{5.12}.

\begin{lem}
	\label{change base point}
By choosing a possibly different standard generating set $S'$ for $G(\Gamma')$, we can assume the map $\phi$ satisfies $\phi(\id)=\id$, where $\id$ denotes the identity element in the corresponding group.
\end{lem}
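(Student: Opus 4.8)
The plan is to read off the discrepancy $g_{0}:=\phi(\id)\in G(\Gamma')$ and absorb it into a conjugated standard generating set. First I would record two facts about $g_{0}$. By the $G(\Gamma')$-equivariance of $\phi$ we have $\phi(h)=h\cdot\phi(\id)=hg_{0}$ for every $h\in G(\Gamma')$; and since the straightening $\phi$ stays within bounded distance of $q$ (as in the proof of Theorem \ref{5.12}) while $q|_{G(\Gamma')}=\textmd{Id}$, the element $g_{0}$ lies a bounded $d_{w}$-distance from $\id$. The naive remedy, postcomposing $\phi$ with left translation by $g_{0}^{-1}$, is exactly what fails: left translation by a non-central element does not commute with the left $G(\Gamma')$-action and so destroys equivariance. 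This is the crux of the argument — I must move $g_{0}$ to $\id$ by an operation that commutes with left multiplication, namely \emph{right} translation $R_{g_{0}^{-1}}\colon y\mapsto yg_{0}^{-1}$.

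The key geometric observation is that $R_{g_{0}^{-1}}$ is not a cubical automorphism of $X(\Gamma',S')$, but becomes a cubical isomorphism once the target carries the conjugated generating set $S'':=g_{0}S'g_{0}^{-1}$. Indeed conjugation $c_{g_{0}}\colon y\mapsto g_{0}yg_{0}^{-1}$ is an automorphism of $G(\Gamma')$ carrying $S'$ to $S''$, so $S''$ is again a standard generating set, and a direct check on edges shows $R_{g_{0}^{-1}}$ sends the $s'$-edge $[y,ys']$ to the $s''$-edge $[yg_{0}^{-1},ys'g_{0}^{-1}]$ with $s''=g_{0}s'g_{0}^{-1}$; hence $R_{g_{0}^{-1}}$ is a label-matching cubical isomorphism $X(\Gamma',S')\to X(\Gamma',S'')$, and it is equivariant for the left $G(\Gamma')$-action because right and left translations commute. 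I would then set $q'':=R_{g_{0}^{-1}}\circ q$. Since a cubical isomorphism carries standard flats to standard flats exactly and commutes with taking intersections, the straightening of $q''$ furnished by Lemma \ref{4.10} is precisely $R_{g_{0}^{-1}}\circ\phi$, whose value at $\id$ is $R_{g_{0}^{-1}}(g_{0})=\id$, as desired; and $q''$ remains a $G(\Gamma')$-equivariant quasi-isometry $X(\Gamma,S)\to X(\Gamma',S'')$.

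The one remaining defect is that $q''|_{G(\Gamma')}$ is now right translation by $g_{0}^{-1}$ rather than $\textmd{Id}$. Because $g_{0}$ is bounded, $q''$ displaces each $h\in G(\Gamma')$ by a bounded amount, so I would repair this by redefining $q''$ to equal $\textmd{Id}$ on the vertices $G(\Gamma')$ and leaving it unchanged elsewhere. One checks that this is a bounded perturbation preserving both the quasi-isometry constants and the $G(\Gamma')$-equivariance — here using that each $h\in G(\Gamma')$ permutes $G(\Gamma')$ and its complement in the vertex set — and that a bounded perturbation induces the same coarse data: the same simplicial isomorphism $s\colon\mathcal{P}(\Gamma)\to\mathcal{P}(\Gamma')$ via Theorem \ref{5.3}, and hence, through Lemma \ref{4.10}, the same straightening. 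After replacing $S'$ by $S''$ and $q$ by this repaired map we thus retain every standing hypothesis while achieving $\phi(\id)=\id$. I expect the only genuinely delicate point to be the verification that the bounded repair leaves the induced $s$ and $\phi$ untouched; the rest is a direct computation with the conjugation $c_{g_{0}}$.
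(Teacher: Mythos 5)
Your proof is correct and rests on the same key move as the paper's: replacing $S'$ by the conjugate $g_{0}S'g_{0}^{-1}$ with $g_{0}=\phi(\id)$, so that the maximal $S'$-flats through $g_{0}$ (e.g.\ $\{g_{0}g^{k}\}_{k}$) become Hausdorff-close to maximal flats through the identity in the new structure (your observation that $R_{g_{0}^{-1}}$ is a label-matching cubical isomorphism $X(\Gamma',S')\to X(\Gamma',g_{0}S'g_{0}^{-1})$ is exactly this computation). The only difference is that the paper leaves $q$ completely untouched --- the condition $q|_{G(\Gamma')}=\mathrm{Id}$ concerns the underlying set map and does not see the generating set, and changing $S'$ already changes which flats Lemma \ref{4.10} intersects --- so your detour through $R_{g_{0}^{-1}}\circ q$ and the subsequent bounded repair on $G(\Gamma')$ is harmless but unnecessary.
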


\begin{proof}
Assume $\phi(\id)=a\neq \id$, we claim if we change the generating set from $S'$ to $aS'a^{-1}$, then the resulting $\phi$ will satisfy our requirement. By the construction of $\phi$, it suffices to show for any maximal $S'$-flat $F'_{1}$ such that $a\in F'_{1}$, there exists a maximal $aS'a^{-1}$-flat $F'_{2}$ such that $\id\in F'_{2}$ and $d_{H}(F'_{1},F'_{2})<\infty$. Let us assume $F'_{1}=\{ag^{k}\}_{k\in \Bbb Z}$ for some $g\in S'$. Then $F'_{2}=\{(aga^{-1})^{k}\}_{k\in \Bbb Z}$ would satisfy the required condition. We can prove the general case in a similar way.
\end{proof}

Pick a standard geodesic $l\subset X(\Gamma,S)$, we want to flip the order of points of $l$ in a $G(\Gamma')$-equivariant way such that (\ref{5.13}) is true. We choose an order preserving identification of $v(l)$ and $\Bbb Z$. Let $d=|\phi^{-1}(\phi(p))\cap v(l)|$ where $p$ is a vertex in $v(l)$. Let $Stab(v(l))$ be the stabilizer of $v(l)$ under the action $G(\Gamma')\curvearrowright G(\Gamma)$. By the second paragraph of the proof of Theorem \ref{5.12}, $d$ does not depend on the choice of $p$ in $v(l)$, and $Stab(v(l))$ acts on $v(l)$ in the same way as $d\Bbb Z$ acts on $\Bbb Z$ (recall that $\phi$ is $G(\Gamma')$-equivariant and the action of $G(\Gamma')$ on $G(\Gamma)$ is induced from the left action of $G(\Gamma)$ on itself). 

We will write $\chi(l)=d$. If $\bar{l}$ and $l$ are parallel, then $\chi(l)=\chi(\bar{l})$. Thus $\chi:\mathcal{P}(\Gamma)\to \Bbb Z$ is well-defined. Since $\chi(l)$ only depends on how $Stab(v(l))$ acts on $v(l)$, $\chi$ does not depend on the standard generating set $S'$. However, for any choice of $S'$, $\chi$ descends to $\chi: S'\to \Bbb Z$ by the $G(\Gamma')$-equivariance of $\phi$. 

Let $\phi(0)=a$. Then $Stab(v(l))$ is generated by $aha^{-1}$ for some $h\in S'$. By the same reasoning as Lemma \ref{change base point}, we can assume $a=\id$. Let $S'=\{h_{\lambda}\}_{\lambda\in\Lambda}$. For each $h_{\lambda}\in S'$, we associated an integer $n_{\lambda}$ as follows. If $h_{\lambda}h=hh_{\lambda}$, we set $n_{\lambda}=0$. Now we consider the case $h_{\lambda}h\neq hh_{\lambda}$. Let $l'_{\lambda}\subset X(\Gamma',S')$ be the standard geodesic that contains all powers of $h_{\lambda}$, and let $b_{\lambda}=\pi_{\Delta(l)}\circ s^{-1} (\Delta(l'_{\lambda}))$ ($\pi_{\Delta(l)}$ is the map in Lemma \ref{6.2}). Then $n_{\lambda}$ is defined to be the unique integer such that $b_{\lambda}+n_{\lambda}d\in[0,d-1]$ (recall that $d=\chi(l)$). Define $f: S'\to G(\Gamma')$ by sending $h_{\lambda}$ to $h^{n_{\lambda}}h_{\lambda}h^{-n_{\lambda}}$, then $f$ extends to an automorphism of $G(\Gamma')$ and $S''=\{f(h_{\lambda})\}_{\lambda\in \Lambda}$ is also a standard generating set. Indeed, if $\Delta(l'_{\lambda_{1}})$ and $\Delta(l'_{\lambda_{2}})$ stay in the same connected component of $\mathcal{P}(\Gamma')\setminus St(\Delta(l'))$, then $b_{\lambda_{1}}=b_{\lambda_{2}}$ by Lemma \ref{6.2}, hence $n_{\lambda_{1}}=n_{\lambda_{2}}$. It follows that $f$ can be realized as a composition of partial conjugations.

\begin{lem}
	\label{order-preserving}
We replace $S'$ by $S''$ in the definition of $\phi$ and denote the new map by $\phi_{1}$. Then $\phi_{1}|_{v(l_{1})}$ satisfies (\ref{5.13}) for any standard geodesic $l_{1}\subset X(\Gamma,S)$ with $\Delta(l_{1})\in\{s_{h}(\Delta(l))\}_{h\in G(\Gamma')}$.
\end{lem}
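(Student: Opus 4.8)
The plan is to verify condition (\ref{5.13}) directly, after first using $G(\Gamma')$-equivariance to reduce to the single geodesic $l$, and then isolating the one genuinely geometric point: that the partial conjugation defining $S''$ realigns every transverse direction so that the fibre of $\phi_1$ over the identity meets $v(l)$ in an honest interval. As a first move I would reduce to $l_1=l$. Since $\phi_1$ and its induced simplicial isomorphism are $G(\Gamma')$-equivariant, and the chosen orientations of $X(\Gamma,S)$ and $X(\Gamma',S'')$ are $G(\Gamma)$- (hence $G(\Gamma')$-) invariant, the left translation $\phi_h$ carries $v(l)$ to $v(l_1)$ by an order-preserving bijection while $\bar\phi_h$ is order-preserving on the image side. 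Hence if $\phi_1|_{v(l)}$ has the form (\ref{5.13}), so does $\phi_1|_{v(l_1)}$ for every $l_1$ with $\Delta(l_1)\in\{s_h(\Delta(l))\}_{h\in G(\Gamma')}$, and it suffices to treat $l$.

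Next I would record the structural data on $v(l)$. Because $h$ commutes with itself, $f(h)=h$, so the $h$-axis $l'$ through $\id$ is simultaneously an $S'$- and an $S''$-geodesic and $\phi_1(v(l))=v(l')$; I identify both $v(l)$ and $v(l')$ with $\Bbb Z$ order-preservingly with $\id\mapsto 0$, and write $p_a$ for the vertex of $l$ at position $a$. The generator $h$ of $Stab(v(l))$ translates $v(l)$ by $d=\chi(l)$ and $v(l')$ by $1$, so equivariance gives the periodicity $\phi_1(p_{a+d})=\phi_1(p_a)+1$; moreover every fibre of $\phi_1|_{v(l)}$ has exactly $\chi(l)=d$ points, $\chi$ being independent of the standard generating set of $G(\Gamma')$. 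Given these three facts, (\ref{5.13}) is equivalent to the single assertion $\phi_1^{-1}(\id)\cap v(l)=\{p_0,\dots,p_{d-1}\}$.

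The heart of the argument is locating where the transverse directions of $S''$ land on $l$. For each $h_\lambda$ with $h_\lambda h\neq hh_\lambda$, the new axis $\tilde l_\lambda$ of $f(h_\lambda)=h^{n_\lambda}h_\lambda h^{-n_\lambda}$ satisfies $\Delta(\tilde l_\lambda)=\bar s_{h^{n_\lambda}}(\Delta(l'_\lambda))$ in $\mathcal{P}(\Gamma')$, so by the intertwining $s^{-1}\circ\bar s_{g}=s_{g}\circ s^{-1}$ its preimage under the $S''$-analogue $s_1$ of $s$ is $s_{h^{n_\lambda}}\bigl(s^{-1}(\Delta(l'_\lambda))\bigr)$. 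Since $\phi_{h^{n_\lambda}}$ preserves $l$ and shifts it by $n_\lambda d$, the projection $\pi_{\Delta(l)}$ of Lemma \ref{6.2} transforms by $\pi_{\Delta(l)}\circ s_{h^{n_\lambda}}=(\,\cdot+n_\lambda d\,)\circ\pi_{\Delta(l)}$ on the relevant components, whence the $S''$-level of this direction is $\tilde b_\lambda=b_\lambda+n_\lambda d\in[0,d-1]$ by the choice of $n_\lambda$. Feeding $x=s_1^{-1}(\Delta(\tilde l_\lambda))$ into the $S''$-form $\phi_1\circ\pi_{\Delta(l)}=\pi_{\Delta(l')}\circ s_1$ of Lemma \ref{6.3}, and using that $\tilde l_\lambda$ and $l'$ meet only at $\id$ (their directions do not commute), I obtain $\phi_1(p_{\tilde b_\lambda})=\pi_{\Delta(l')}(\Delta(\tilde l_\lambda))=\id$. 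Thus every transverse direction projects into the window $[0,d-1]$, and all of these projection points lie in $\phi_1^{-1}(\id)\cap v(l)$.

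I would then close the argument by combining this alignment with convexity. Lemma \ref{6.2} guarantees $n_{\lambda_1}=n_{\lambda_2}$ whenever $\Delta(l'_{\lambda_1}),\Delta(l'_{\lambda_2})$ lie in the same component of $\mathcal{P}(\Gamma')\setminus St(\Delta(l'))$, so $f$ is a genuine composition of partial conjugations and $S''$ is a legitimate standard generating set. The fibre $\phi_1^{-1}(\id)$ is a convex subcomplex of $X(\Gamma,S)$, so $\phi_1^{-1}(\id)\cap v(l)$ is an interval; the realignment $\tilde b_\lambda\ge 0$ forces this interval to be non-negative with left endpoint $0$, and its cardinality is $d$, so it equals $\{p_0,\dots,p_{d-1}\}$. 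With the periodicity this yields $\phi_1(p_a)=\lfloor a/d\rfloor$, i.e. (\ref{5.13}). I expect the main obstacle to be precisely this last pinning-down: proving that landing every $\tilde b_\lambda$ in $[0,d-1]$ is not merely necessary but sufficient to force $\phi_1^{-1}(\id)\cap v(l)$ to be the exact interval $[0,d-1]$ rather than some translate $[s,s+d-1]$ with $-(d-1)\le s\le 0$. This requires establishing both the convexity (hence interval shape) and the non-negativity of the fibre and tying them to the alignment of the transverse levels, with the equivariant bookkeeping between $S'$ and $S''$ serving as the technical engine.
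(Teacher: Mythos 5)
Your proposal tracks the paper's argument closely for most of its length: the reduction to the single geodesic $l$ by $G(\Gamma')$-equivariance, the observation that $f(h)=h$ and that the fibres of $\phi_1|_{v(l)}$ have exactly $d=\chi(l)$ elements, and the computation via Lemma \ref{6.2} and Lemma \ref{6.3} that $\phi_1(p_{b_\lambda+n_\lambda d})=\id$ for each non-commuting generator $h_\lambda$ are all in line with what the paper does (the paper phrases the last computation through the explicit Hausdorff-distance estimate (\ref{6.4}) rather than through the intertwining of $s$ with the group actions, but these are the same calculation). The problem is the closing step, which you yourself flag as the "main obstacle" and then do not resolve. Two things go wrong there. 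First, the convexity of $\phi_1^{-1}(\id)$ is not available at this point: in the paper it is deduced only at the end of Step~1, \emph{after} (\ref{5.13}) has been verified on every standard geodesic, because only then does $\phi$ send combinatorial geodesics to combinatorial geodesics. Invoking it to prove (\ref{5.13}) on the first orbit of geodesics is circular. Second, even granting that $\phi_1^{-1}(\id)\cap v(l)$ is an interval of cardinality $d$ containing $0$ and the points $\tilde b_\lambda\in[0,d-1]$, this only pins it down to $[s,s+d-1]$ with $\max_\lambda \tilde b_\lambda-d+1\le s\le 0$; nothing you have established rules out $s<0$ unless some $\tilde b_\lambda$ equals $d-1$.

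The missing idea — the one the paper uses, in the sentence "Then there exists $\lambda\in\Lambda$ such that $b_\lambda+n_\lambda d=i$" — is that the values $b_\lambda+n_\lambda d$ \emph{exhaust} $[0,d-1]$. This follows because $\phi^{-1}(\id)\cap v(l)$ consists of exactly one point from each residue class mod $d$ (its translates by multiples of $d$ tile $\mathbb Z$), and every point $j$ in it arises as some $b_\lambda$: choose a standard geodesic $r\ni j$ with $d(\Delta(r),\Delta(l))\ge 2$ (possible since $\out(G(\Gamma))$ is finite and $G(\Gamma)\ne\mathbb Z$); then $\phi(v(r))$ is an $S'$-geodesic through $\id$, hence the axis $l'_\lambda$ of some $h_\lambda\in S'$ not commuting with $h$, and $b_\lambda=\pi_{\Delta(l)}(\Delta(r))=j$. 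Since $b_\lambda\mapsto b_\lambda+n_\lambda d$ is exactly reduction into $[0,d-1]$ mod $d$, you get $\phi_1(i)=\id$ for \emph{every} $i\in[0,d-1]$, and then the cardinality count alone forces $\phi_1^{-1}(\id)\cap v(l)=\{p_0,\dots,p_{d-1}\}$ — no convexity of the fibre is needed at all. With that ingredient added, your argument closes; without it, it does not.
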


Recall that for any $h\in G(\Gamma')$, we use $s_h$ to denote the action of $h$ on $\mathcal{P}(\Gamma)$.

\begin{proof}
It suffices to show $\phi_{1}|_{v(l)}$ satisfies (\ref{5.13}), and the rest follows from the $G(\Gamma')$-equivariance of $\phi_1$. To show this, we only need to prove $\phi_1(i)=\id$ for any $i\in [0,d-1]$. Let $\Lambda$, $b_{\lambda}$ and $n_{\lambda}$ be as above.

We pick $i\in [0,d-1]$. Then there exists $\lambda\in \Lambda$ such that $b_{\lambda}+n_{\lambda}d=i$. By Lemma \ref{6.3}, $\phi(b_{\lambda})=\id$, hence $\phi(i)=h^{n_{\lambda}}$. Let $l_{i}$ be a standard geodesic such that $b_{\lambda}\in l_{i}$ and $d(\Delta(l_{i}),\Delta(l))\ge 2$. Then there exists $h_{\lambda'}\in S'$ with $b_{\lambda'}=b_{\lambda}$ such that $\phi(v(l_{i}))=\{h_{\lambda'}^{k}\}_{k\in \Bbb Z}$. Then $(\phi_{h})^{n_{\lambda}}(v(l_{i}))$ is an $S$-geodesic passing through $i$, and $(\phi\circ(\phi_{h})^{n_{\lambda}})(v(l_{i}))=((\bar{\phi}_{h})^{n_{\lambda}}\circ\phi)(v(l_{i}))=\{h^{n_{\lambda}}h_{\lambda'}^{k}\}_{k\in \Bbb Z}$. Note that
\begin{equation}
\label{6.4}
d_{H}(\{h^{n_{\lambda}}h_{\lambda'}^{k}\}_{k\in \Bbb Z},\{(f(h_{\lambda'}))^{k}\}_{k\in \Bbb Z})<\infty
\end{equation}
Now we look at the new map $\phi_{1}$. Note that $\phi_{1}(0)=\id$ is still true. Moreover, (\ref{6.4}) and Lemma \ref{6.3} imply $\phi_{1}(i)=\id$. Thus the lemma follows.
\end{proof} 

The next lemma basically says the above change of basis process does not affect other geodesics in an essential way.
\begin{lem}
	\label{independent}
Let $r$ be a standard geodesic in $X(\Gamma,S)$ which satisfies the condition that $\Delta(r)\notin\{s_{h}(\Delta(l))\}_{h\in G(\Gamma')}$. Pick two different vertices $x_{1},x_{2}\in r$. If $\phi(x)=\phi(y)$, then $\phi_{1}(x)=\phi_{1}(y)$.
\end{lem}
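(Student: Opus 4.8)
The plan is to reduce the statement to the comparison of a single coordinate along the image geodesics, and then to show that the change of generating set underlying $\phi_1$ shifts this coordinate uniformly. First I would record that $\phi(v(r))=v(r')$ and $\phi_1(v(r))=v(r'')$ for standard geodesics $r'\subset X(\Gamma',S')$ and $r''\subset X(\Gamma',S'')$ (one checks this as in the proof of Theorem \ref{5.12}, which applies verbatim to $\phi$ and to $\phi_1$ here, using $G(\Gamma)\ncong\Bbb Z$ so that $r$ is an intersection of maximal standard flats). Writing $\delta'=\Delta(r')=s(\Delta(r))$, every vertex of $r'$ shares all of its coordinate values $I_v$ for $v\neq\delta'$, so for $x,y\in v(r)$ one has $\phi(x)=\phi(y)$ if and only if $I_{\delta'}(\phi(x))=I_{\delta'}(\phi(y))$, i.e. if and only if $\phi(x)$ and $\phi(y)$ occupy the same position on $r'$; the analogous statement holds for $\phi_1$ and $r''$. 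Using that $\chi(r)$ is independent of the standard generating set and that $\phi,\phi_1$ are $G(\Gamma')$-equivariant with $Stab(v(r))$ acting as $d\Bbb Z$ on $v(r)\cong\Bbb Z$ (with $d=\chi(r)$, translating $v(r')$ and $v(r'')$ by one unit), both $\phi|_{v(r)}$ and $\phi_1|_{v(r)}$ are surjections of the form $a\mapsto\lfloor a/d\rfloor+g(a)$ with $g$ a $d$-periodic offset. Since two such maps induce the same fibers once their offsets differ by an additive constant, it suffices to prove that the offset of $\phi_1|_{v(r)}$ equals that of $\phi|_{v(r)}$ up to a constant.

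To compute these offsets I would invoke Lemma \ref{6.3} and Lemma \ref{6.2}. For $x=\pi_{\Delta(r)}(w)$ we have $\phi(x)=\pi_{\Delta(r')}(s(w))$, and by Lemma \ref{6.2} this position depends only on the connected component of $\mathcal{P}(\Gamma')\setminus St(\delta')$ containing $s(w)$. Thus the offset of $\phi|_{v(r)}$ is governed by the map sending each component $C'$ of $\mathcal{P}(\Gamma')\setminus St(\delta')$ to its projection $\pi_{\Delta(r')}(C')$ onto $r'$, computed in $X(\Gamma',S')$; likewise the offset of $\phi_1|_{v(r)}$ is governed by the projections of the same components onto $r''$ in $X(\Gamma',S'')$, pulled back along the fixed simplicial isomorphism $s$ (which carries $St(\Delta(r))$ to the common subcomplex $St(\delta')$). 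Hence the lemma reduces to the claim that these two families of projections agree up to a single global translation of $\Bbb Z$.

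The heart of the argument, and the step I expect to be the main obstacle, is this last rigidity claim. Recall that $S''=f(S')$, where $f$ is a composition of partial conjugations by powers of the generator $h$ of direction $\Delta(l')$, with the exponent constant on each connected component of $\mathcal{P}(\Gamma')\setminus St(\Delta(l'))$. By the $G(\Gamma')$-equivariance of $s$, the hypothesis $\Delta(r)\notin\{s_{h}(\Delta(l))\}_{h\in G(\Gamma')}$ translates into $\delta'=\Delta(r')$ lying outside the $G(\Gamma')$-orbit of $\Delta(l')$; equivalently, $h$ is not conjugate into the cyclic subgroup carried by $r'$. I would then argue that passing from $X(\Gamma',S')$ to $X(\Gamma',S'')$ moves $r'$ rigidly: conjugation by $h^{n}$ restricts to a translation of $r'$ when $\delta'$ lies in a component of $\mathcal{P}(\Gamma')\setminus St(\Delta(l'))$, and fixes $r'$ setwise when $\delta'\in St(\Delta(l'))$, so in either case it carries each component of $\mathcal{P}(\Gamma')\setminus St(\delta')$ to a component with the same relative position along the geodesic. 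The non-conjugacy of $h$ into the $\delta'$-direction is exactly what excludes the order-reversing, level-permuting behaviour exhibited in Example \ref{flip order}, forcing the induced change on the projection-to-$r'$ data to be a uniform shift rather than a nontrivial permutation. Making this precise, via the centralizer/commutation description of adjacency in the extension graph used in Lemma \ref{4.2} together with the explicit effect of $f$ on the coordinate functions $I_{v}$, is where the real work lies; granting it, the offsets of $\phi$ and $\phi_1$ along $v(r)$ differ by a constant and the lemma follows.
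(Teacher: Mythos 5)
Your setup is sound as far as it goes: reducing the claim to the statement that the fibers of $\phi|_{v(r)}$ and $\phi_1|_{v(r)}$ coincide, and observing via Lemma \ref{6.3} and Lemma \ref{6.2} that these fibers are determined by how the connected components of $\mathcal{P}(\Gamma')\setminus St(\Delta(r'))$ project onto $r'$ (resp.\ $r''$), is essentially the right reduction. But the proof stops exactly where the content of the lemma lies. The ``rigidity claim'' of your last paragraph --- that passing from $S'$ to $S''$ changes the projection data only by a uniform shift --- is asserted, motivated by analogy with Example \ref{flip order}, and then explicitly deferred (``this is where the real work lies''). Nothing in the proposal uses the hypothesis $\Delta(r)\notin\{s_{h}(\Delta(l))\}_{h\in G(\Gamma')}$ in a checkable way, and the heuristic offered in its place is not quite coherent: the partial conjugation $f$ is not an inner automorphism, so ``conjugation by $h^{n}$ restricts to a translation of $r'$'' does not describe how the $S''$-geodesics sit inside $G(\Gamma')$; moreover the exponents $n_{\lambda}$ are constant on components of $\mathcal{P}(\Gamma')\setminus St(\Delta(l'))$, which is a different decomposition from the one by components of $\mathcal{P}(\Gamma')\setminus St(\Delta(r'))$ that your argument needs.

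The paper closes this gap with a short explicit computation rather than a uniform-shift statement. Given $x,y\in v(r)$ with $\phi(x)=\phi(y)=\alpha$, one chooses standard geodesics $r_{1}\ni x$ and $r_{2}\ni y$ with $d(\Delta(r_{i}),\Delta(r))\ge 2$, so that $x=\pi_{\Delta(r)}(\Delta(r_{1}))$ and $y=\pi_{\Delta(r)}(\Delta(r_{2}))$. Their $\phi$-images are $\{\alpha h_{\lambda_{i}}^{k}\}_{k\in\Bbb Z}$, and the corresponding $S''$-geodesics are $r''_{i}=\{\alpha h^{-n_{\lambda_{i}}}(f(h_{\lambda_{i}}))^{k}\}_{k\in\Bbb Z}$. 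Any point of $r''_{1}$ is carried to any point of $r''_{2}$ by a word of the form $(f(h_{\lambda_{1}}))^{k_{1}}\,h^{n_{\lambda_{1}}-n_{\lambda_{2}}}\,(f(h_{\lambda_{2}}))^{k_{2}}$; since $h_{\lambda_{1}},h_{\lambda_{2}}\neq h_{\lambda}$ and --- this is precisely where the hypothesis enters --- $h\neq h_{\lambda}$, none of these syllables is in the direction $\Delta(r'')$, so by the definition of $\pi_{\Delta(r'')}$ the two geodesics have the same projection to $r''$, and Lemma \ref{6.3} then gives $\phi_1(x)=\phi_1(y)$. The key point is not that the change of basis shifts everything uniformly, but that the offending syllable $h^{n_{\lambda_{1}}-n_{\lambda_{2}}}$ lies in a direction transverse to $r''$ and therefore does not move the projection at all. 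Your proposal would need to supply exactly this (or an equivalent) computation to be complete.
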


\begin{proof}
For $i=1,2$, let $r_{i}\subset X(\Gamma,S)$ be a standard geodesic containing $x_{i}$ such that $d(\Delta(r_{i}),\Delta(r))\ge 2$ for $i=1,2$. Let $r'$ (resp. $r''$) be an $S'$-geodesic (resp. $S''$-geodesic) such that $\phi(v(r))=v(r')$ (resp. $\phi_{1}(v(r))=v(r'')$). Let $\alpha=\phi(x)=\phi(y)$. Then there exist elements $h_{\lambda},h_{\lambda_{1}}$ and $h_{\lambda_{2}}$ in $S'$ such that $\phi(v(r_{i}))=\{\alpha h^{k}_{\lambda_{i}}\}_{k\in \Bbb Z}$ for $i=1,2$, and $r'=\{\alpha h^{k}_{\lambda}\}_{k\in \Bbb Z}$. Note that
\begin{equation}
\label{6.5}
h\neq h_{\lambda}, h_{\lambda_{1}}\neq h_{\lambda}, h_{\lambda_{2}}\neq h_{\lambda}
\end{equation}
Recall that $h$ is the generator of $Stab(v(l))$. The first inequality of (\ref{6.5}) follows from $\Delta(r)\notin\{s_{h}(\Delta(l))\}_{h\in G(\Gamma')}$.


It suffices to show there exist $S''$-geodesics $r''_{1}$ and $r''_{2}$ such that 
\begin{equation}
\label{6.6}
d_{H}(\phi(v(r_{i})),r''_{i})<\infty
\end{equation}
for $i=1,2$ and 
\begin{equation}
\label{6.7}
\pi_{\Delta(r'')}(\Delta(r''_{1}))=\pi_{\Delta(r'')}(\Delta(r''_{2})),
\end{equation}
then $\phi_{1}(x)=\phi_{1}(y)$ follows from Lemma \ref{6.3}. Define $r''_{i}=\{\alpha h^{-n_{\lambda_{i}}}(f(h_{\lambda_{i}}))^{k}\}_{k\in \Bbb Z}$, then (\ref{6.6}) is immediate. Note that for any $a\in r'_{1}$ and $b\in r'_{2}$, we have 
\begin{equation*}
b=a\cdot(f(h_{\lambda_{1}}))^{k_{1}}\cdot h^{n_{\lambda_{1}}-n_{\lambda_{2}}}\cdot (f(h_{\lambda_{2}}))^{k_{2}}
\end{equation*}
for some $k_{1},k_{2}\in \Bbb Z$, then (\ref{6.7}) follows from (\ref{6.5}) and the definition of $\pi_{\Delta(r'')}$. 
\end{proof}

Similarly, we can prove that if we change $\phi$ with respect to the conjugation $S'\to aS'a^{-1}$, then Lemma \ref{independent} is still true with $r$ being an arbitrary standard geodesic.

By Lemma \ref{order-preserving} and Lemma \ref{independent}, we can apply the above change-of-basis procedure for finitely many times to find appropriate standard generating set $S'$ of $G(\Gamma')$ such that the corresponding map $\phi$ satisfies (\ref{5.13}) when restricted to any standard geodesic in $X(\Gamma,S)$. By the proof of Theorem \ref{5.12}, we can extend $\phi$ to a cubical map $\phi:X(\Gamma,S)\to X(\Gamma',S')$ such that combinatorial geodesics in $C(\Gamma,S)$ are mapped to combinatorial geodesics in $C(\Gamma',S')$. Thus $\phi^{-1}(\id)$ is a combinatorially convex subcomplex. The subcomplex $\phi^{-1}(\id)$ is also compact since $\phi^{-1}(\id)$ contains finitely many vertices. Recall that combinatorial convexity in $\ell^{1}$-metric and convexity in $CAT(0)$ metric are the same for subcomplexes of $CAT(0)$ cube complexes (\cite{haglund2007isometries}), so we have constructed a compact convex subcomplex $\phi^{-1}(\id)\subset X(\Gamma,S)$ from a given finite index RAAG subgroup $G(\Gamma')\le G(\Gamma)$. 

\emph{Step 2:} We show $\phi^{-1}(\id)$ can be assumed to be an element in $CN(\Gamma,S)$.

For $K\subset G(\Gamma)$, denote the union of all standard geodesics in $X(\Gamma,S)$ that have non-trivial intersection with $K$ by $K^{\ast}$. $K$ is \textit{S-convex} if and only if $K$ is the vertex set of some convex subcomplex in $X(\Gamma,S)$. Now we return to $\phi$. By step 1, we can assume $\phi(\id)=\id$, and $\phi^{-1}(y)$ is $S$-convex for any $y\in G(\Gamma')$.

\emph{Step 2.1.} Let $\{l_{i}\}_{i=1}^{q}$ be the collection of standard geodesics passing through $\id$ and $\Lambda_{1}=\{\id\}$. Let $I:G(\Gamma)\to \ell^{1}(v(\mathcal{P}(\Gamma,S)))$ and $I_{\Delta(l)}:G(\Gamma)\to \mathbb Z^{\Delta(l)}$ be the map defined in Section \ref{subsec_construct finite index}. Since $v(l_{i})$ and $v(l_{j})$ are in different $G(\Gamma')$-orbits for $i\neq j$, by Lemma \ref{order-preserving} and Lemma \ref{independent}, we can apply the change-of-basis procedure in step 1 to find a standard generating set $S'$ for $G(\Gamma')$ such that for each $1\le i\le q$,
\begin{equation}
\label{6.8}
I^{-1}_{\Delta(l_{i})}([0,\chi(l_{i})-1])\cap v(l_{i})\subset\phi^{-1}(\id).
\end{equation}

\emph{Step 2.2.} Let $\Lambda_{2}=\Lambda^{\ast}_{1}\cap\phi^{-1}(\id)$. Pick a vertex $x\in \Lambda_{2}\setminus\Lambda_{1}$ (if such $x$ does not exist, then our process terminate). Let $l$ be a standard geodesic such that $x\in l$. If $l$ is parallel to some $l_{i}$ in step 2.1, then (\ref{6.8}) with $l_{i}$ replaced by $l$ is automatically true without any modification on $S'$, because both $I$ and $\phi$ respect the product structure of $P_{l_{i}}$. If $l$ is not parallel to any $l_{i}$, then $I_{\Delta(l)}(x)=0$. Moreover, $\Delta(l)$ is not in the $G(\Gamma')$-orbits of $\Delta(l_{i})$'s, so we can modify $S'$ as before such that both (\ref{6.8}) and $I^{-1}_{\Delta(l)}([0,\chi(l)-1])\cap v(l)\subset\phi^{-1}(\id)$ are true. We deal with other standard geodesics passing through $x$ and other points in $\Lambda_{2}\setminus\Lambda_{1}$ in a similar way. 

\emph{Step 2.3}. Let $\Lambda_{3}=\Lambda^{\ast}_{2}\cap\phi^{-1}(\id)$. For each vertex in $\Lambda_3\setminus\Lambda_2$, we repeat the procedure in step 2.2. Then we can define $\Lambda_4,\Lambda_5,\cdots$. Since $|\phi^{-1}(\id)|$ is finite and this number does not change after adjusting $S'$, our procedure must terminate after finitely many steps. Since $\phi^{-1}(\id)$ remains connected in each step, once the procedure terminates, we must have already dealt with each point in $\phi^{-1}(\id)$ and each standard geodesic passing through each point in $\phi^{-1}(\id)$. By construction, the resulting $\phi$ satisfies $\id\in\phi^{-1}(\id)$ and $I^{-1}_{\Delta(l)}([0,\chi(l)-1])\cap v(l)\subset\phi^{-1}(\id)$ for each standard geodesic $l$ which intersects $\phi^{-1}(\id)$. Thus $\phi^{-1}(\id)$ is non-negative. 

Note that the sets $\Lambda_{i}$'s actually do not depend on the map $\phi$ from step $i-1$. They only depend on the map $\chi: v(\mathcal{P}(\Gamma))\to\Bbb Z$. Thus non-negative subset $\phi^{-1}(\id)\subset G(\Gamma)$ produced above depends only on $S$ and the subgroup $G(\Gamma')\le G(\Gamma)$. Then we have a well-defined map
\begin{equation*}
\Xi_{S}:\{\textmd{Finite\ index\ RAAG subgroups\ of\ } G(\Gamma)\}\to CN(\Gamma,S)
\end{equation*}

\emph{Step 3:} We show $\Xi_{S}$ is an inverse to the map $\Theta_S$ defined in Section \ref{subsec_coherent ordering and labelling}.

First we prove $\Theta_{S}\circ\Xi_{S}=\textmd{Id}$. Let $K=\Xi_{S}(G(\Gamma'))$. Let $S'$ be the corresponding standard generating set for $G(\Gamma')$ and let $\phi: G(\Gamma)\to G(\Gamma')$ be the corresponding map. We find a maximal collection of standard geodesics $\{c_{i}\}_{i=1}^{s}$ such that $c_{i}\cap K\neq\emptyset$ for all $i$ and $\Delta(c_{i})\neq \Delta(c_{j})$ for any $i\neq j$. Let $n_{i}=\chi(c_{i})$ and let $g_{i}\in S$ be the label of edges in $c_{i}$. Suppose $\alpha_{i}=\pi_{c_{i}}(\id)$ where $\pi_{c_{i}}:X(\Gamma,S)\to c_{i}$ is the $CAT(0)$ projection. Then it suffices to prove the following lemma.

\begin{lem} $S'=\{\alpha_{i}g^{n_{i}}_{i}\alpha^{-1}_{i}\}_{i=1}^{s}$. 
\end{lem}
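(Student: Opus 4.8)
The plan is to prove the stronger assertion that the standard generating set $S'$ of $G(\Gamma')$ produced in Steps 1--2 coincides, element for element, with the set $\{\alpha_i g_i^{n_i}\alpha_i^{-1}\}_{i=1}^s$ extracted from $K=\Xi_S(G(\Gamma'))=\phi^{-1}(\id)$; this immediately yields $\Theta_S\circ\Xi_S=\textmd{Id}$. Throughout I keep the notation $\phi\colon G(\Gamma)\to G(\Gamma')$ and $s\colon\mathcal P(\Gamma)\to\mathcal P(\Gamma')$ fixed as at the end of Step 2. The first step is bookkeeping: I would reconcile the two meanings of $n_i$. Since $\alpha_i=\pi_{c_i}(\id)$ is the coordinate-$0$ vertex of $c_i$ and lies in the non-negative convex complex $K$, we get $\phi(\alpha_i)=\id$; combined with the normal form (\ref{5.13}) and the identification of $v(l'_\lambda)$ with $\mathbb Z$ sending $\id$ to $0$, this forces $\phi|_{v(c_i)}(a)=\lfloor a/\chi(c_i)\rfloor$. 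Hence $v(K\cap c_i)=\phi^{-1}(\id)\cap v(c_i)=\{0,1,\dots,\chi(c_i)-1\}$, so that $n_i=|v(K\cap c_i)|=\chi(c_i)$, reconciling the definition in Section~\ref{subsec_construct finite index} with the one used here.

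Next I would set up the bijection between $\{c_i\}_{i=1}^s$ and $S'$. Write $S'=\{h_\lambda\}_\lambda$ and let $l'_\lambda\subset X(\Gamma',S')$ be the axis of $h_\lambda$ through $\id$. If a standard geodesic $c$ meets $K$ at a vertex $x$, then $\phi(x)=\id$ and $\phi(v(c))$ lies in the geodesic of parallel class $s(\Delta(c))$ passing through $\id$; thus $s(\Delta(c))=\Delta(l'_\lambda)$ for some $\lambda$. Conversely, pulling $l'_\lambda$ back through $s^{-1}$ and $\phi$ produces a class meeting $K$. Because $s$ is a simplicial isomorphism and distinct standard generators have distinct axes through $\id$ (two distinct geodesics through a common vertex cannot be parallel), this gives a bijection $i\leftrightarrow\lambda$ with $\Delta(c_i)=s^{-1}(\Delta(l'_\lambda))$, and in particular $s=|S'|$.

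The crux is then to identify $h_\lambda$ with $v_i=\alpha_i g_i^{n_i}\alpha_i^{-1}$ for each matched pair. Applying the Step~1 description of $Stab(v(c_i))$ for the left action $G(\Gamma')\curvearrowright G(\Gamma)$, with basepoint $\alpha_i$ so that $a=\phi(\alpha_i)=\id$, the group $Stab(v(c_i))$ is generated by an element of $S'$ that acts on $v(c_i)\cong\mathbb Z$ as $\chi(c_i)\mathbb Z=n_i\mathbb Z$; since distinct generators fix distinct classes of $\mathcal P(\Gamma')$, this generator is precisely $h_\lambda$. After orienting the generators so that $h_\lambda$ translates $c_i$ in the positive $g_i$-direction (replacing $h_\lambda$ by $h_\lambda^{-1}$ in $S'$ if necessary), we have $h_\lambda\cdot\alpha_i=\alpha_i g_i^{n_i}=v_i\cdot\alpha_i$, where both are the coordinate-$n_i$ vertex of $c_i$. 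Consequently $v_i^{-1}h_\lambda\in G(\Gamma)$ fixes the vertex $\alpha_i$; as $G(\Gamma)$ acts freely on the vertex set of $X(\Gamma,S)$ (left multiplication is free), we conclude $v_i^{-1}h_\lambda=\id$, i.e.\ $h_\lambda=v_i$. Ranging over $\lambda$ gives $S'=\{\alpha_i g_i^{n_i}\alpha_i^{-1}\}_{i=1}^s$.

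The main obstacle I anticipate is the third step: extracting cleanly from the Step~1 construction both that the $S'$-generator of $Stab(v(c_i))$ is exactly the class-matched $h_\lambda$ and that its translation length along $c_i$ equals $n_i=\chi(c_i)$ with the correct orientation. The verification that $\alpha_i\in K$ (hence $\phi(\alpha_i)=\id$ and $a=\id$) is what makes the stabilizer generator literally an element of $S'$ rather than a conjugate of one, and this is the delicate point. Once that is in place, the concluding appeal to freeness of the $G(\Gamma)$-action is immediate.
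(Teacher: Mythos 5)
Your proposal is correct and follows essentially the same route as the paper: match each $h\in S'$ to a unique $c_i$ via the simplicial isomorphism $s$ and $G(\Gamma')$-equivariance, observe that $h$ generates $Stab(v(c_i))$ and translates $v(c_i)$ by $\chi(c_i)=n_i$ in the positive direction, and conclude by freeness of the left action. Your choice to compare the two translations at the vertex $\alpha_i\in c_i$ (rather than at $\id$, which need not lie in $P_{c_i}$) is a slightly more careful rendering of the paper's final step, but the argument is the same.
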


\begin{proof}
Pick $h\in S'$ and let $c_{h}\subset X(\Gamma',S')$ be the standard geodesic containing $\id$ and $h$. Then there exists a unique $i$ such that $\phi(v(c_{i}))=c_{h}$. To see this, let $c$ be a standard geodesic in $X(\Gamma,S)$ such that $s(\Delta(c))=\Delta(c_{h})$. Then $\phi(v(c))$ and $c_{h}$ are parallel and there exists $u\in G(\Gamma')$ which sends $\phi(v(c))$ to $v(c_{h})$. Thus $\phi\circ\phi_{u}(v(c))=v(c_{h})$ by (\ref{5.8}), where $\phi_u$ is defined in the beginning of step 1. Note that $\phi_{u}(v(c))$ has nontrivial intersection with $K$. We choose $c_{i}$ to be the geodesic parallel to $\phi_{u}(v(c))$. Then $\phi(v(c_i))=v(c_h)$.

For any standard geodesic $c'_{i}$ parallel to $c_{i}$, $\phi(c'_{i})$ is parallel to $c_{h}$, so $h\in Stab(v(\phi(c'_{i})))=Stab(v(c'_{i}))$. It follows that $\phi_{h}$ stabilizes the parallel set $P_{c_{i}}$ and acts by translation along the $c_{i}$-direction. Note that $(I_{\Delta(c_{i})}\circ\phi_{h})(x)=I_{\Delta(c_{i})}(x)+\chi(c_{i})$ for any $x\in v(P_{c_{i}})$, so $h=\phi_{h}(\id)=\alpha_{i}g^{n_{i}}_{i}\alpha^{-1}_{i}$ and the claim follows.
\end{proof}

It remains to show $\Xi_{S}\circ\Theta_{S}=\textmd{Id}$. This follows from the following result.

\begin{lem}
\label{6.13}
Let $\Gamma$ be an arbitrary finite simplicial graph. Pick a standard generating set $S$ for $G(\Gamma)$ and $K\in CN(\Gamma,S)$. Let $G(\Gamma')=\Theta_{S}(K)$ and let $S'$ be the corresponding generating set. Suppose $q:G(\Gamma)\to G(\Gamma')$ is a $G(\Gamma')$-equivariant quasi-isometry such that $q|_{G(\Gamma')}$ is the identity map. Then
\begin{enumerate}
\item $q$ induces a simplicial isomorphism $q_{\ast}:\mathcal{P}(\Gamma,S)\to\mathcal{P}(\Gamma',S')$.
\item $q_{\ast}$ induces a $G(\Gamma')$-equivariant retraction $r:G(\Gamma)\to G(\Gamma')$ such that $r$ sends every $S$-flat to an $S'$-flat.
\item $r$ extends to a surjective cubical map $r:X(\Gamma,S)\to X(\Gamma',S')$ such that $r^{-1}(\id)=K$. In particular, the vertex set of $K$ is the strict fundamental domain for the left action $G(\Gamma')\curvearrowright G(\Gamma)$.
\end{enumerate}
\end{lem}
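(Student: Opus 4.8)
The plan is to treat the three parts in order, building everything on the explicit description of $G(\Gamma')=\Theta_S(K)$ rather than on finiteness of $\out$, which is unavailable since $\Gamma$ is arbitrary. For (1), I would first record the natural correspondence $\Psi$ from parallel classes of $S'$-geodesics to parallel classes of $S$-geodesics. Each generator $v_i=\alpha_i g_i^{n_i}\alpha_i^{-1}$, and hence each $G(\Gamma')$-conjugate of it, is conjugate in $G(\Gamma)$ to a power of a standard generator; so every $S'$-geodesic $L$ is stabilised by an element that translates a standard $S$-geodesic $l$, with $v(L)$ coarsely dense in $v(l)$ under the inclusion $\iota\colon G(\Gamma')\hookrightarrow G(\Gamma)$. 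Set $\Psi(\Delta(L))=\Delta(l)$. Surjectivity of $\Psi$ comes from the covering $G(\Gamma')\cdot v(K)=G(\Gamma)$ (proved when showing $\Theta_S(K)$ has finite index): given an $S$-geodesic $l$, move a vertex of $l$ into $v(K)$ by some $h\in G(\Gamma')$, so that $h^{-1}l$ meets $K$ and is parallel to some $c_i$; then $hv_ih^{-1}$ provides an $S'$-geodesic mapping to $\Delta(l)$. Injectivity and preservation of adjacency in both directions follow because $\iota$ is a quasi-isometry: non-parallel standard geodesics lie at infinite Hausdorff distance, a property preserved by $\iota$ and its coarse inverse $q$, while adjacency is the coarse-containment condition of Observation~\ref{4.1} (equivalently, commutation of the stabilising elements), itself a quasi-isometry invariant. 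Since $\mathcal{P}(\Gamma)$ and $\mathcal{P}(\Gamma')$ are flag, $\Psi^{-1}$ extends to the simplicial isomorphism $q_\ast$ induced by $q$.

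For (2), I would build $r$ by flooring along standard geodesics. For each parallel class $\Delta(l)$ put $\chi(l)=|v(K\cap l)|$ when $l$ meets $K$, and extend by $G(\Gamma')$-invariance; this is well defined on parallel classes because $\chi(l)$ is the translation length of the cyclic stabiliser $Stab_{G(\Gamma')}(v(l))$. Using Lemma~\ref{6.1}(4), $\pi_l(\id)\in K\cap l$, and non-negativity together with convexity force $v(K\cap l)=\{0,1,\dots,\chi(l)-1\}$ once $v(l)$ is identified with $\Bbb Z$ so that $\pi_l(\id)=0$. Define $I_{\Delta(l)}(r(x))=\lfloor I_{\Delta(l)}(x)/\chi(l)\rfloor$ for each class, where $I$ is the coordinate embedding of Section~\ref{subsec_construct finite index}. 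The content is that this tuple is realised by a genuine element $r(x)\in G(\Gamma')$: I would verify that $r$ sends each edge to an edge or a vertex and respects the two elementary moves on words, the commutation move being handled by Lemma~\ref{2.2}, which splits $K\cap P_l$ along $P_l=l\times l^{\perp}$ so that the floorings in orthogonal directions are independent. The anchoring $\alpha_i=\pi_{c_i}(\id)$, $n_i=|v(K\cap c_i)|$ in the definition of $v_i$, combined with $q|_{G(\Gamma')}=\mathrm{Id}$, makes $r$ a $G(\Gamma')$-equivariant retraction carrying every $S$-flat onto the $S'$-flat prescribed by $q_\ast$.

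For (3), I would extend $r$ to a cubical map exactly as in the proof of Theorem~\ref{5.12}: once $r$ has the floor form on every standard geodesic, it maps combinatorial geodesics to combinatorial geodesics and is surjective. On each $S$-geodesic $l$ meeting $K$ one gets $r^{-1}(\id)\cap v(l)=\{0,\dots,\chi(l)-1\}=v(K\cap l)$; since $r^{-1}(\id)$ and $K$ are both convex, contain $\id$, and agree with each standard geodesic through any of their vertices, an induction along combinatorial geodesics gives $r^{-1}(\id)=K$. Finally, equivariance together with $r|_{G(\Gamma')}=\mathrm{Id}$ shows $r^{-1}(h)=h\,v(K)$ for every $h\in G(\Gamma')$, so $v(K)$ meets every $G(\Gamma')$-orbit in exactly one point, i.e. it is a strict fundamental domain.

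The main obstacle is the well-definedness and equivariance of $r$ in part (2). Unlike in Section~\ref{subsec_coherent ordering and labelling}, here $\Gamma$ is arbitrary, so neither $\out$-finiteness nor the hypotheses of Lemma~\ref{4.10} are available, and I cannot simply invoke the coherent-ordering/labeling formalism. Everything hinges on two points: that the anchoring of the generators $v_i$ to $K$ forces all offsets of the floor maps to vanish (so that $r$ really restricts to the identity on $G(\Gamma')$), and that convexity of $K$ guarantees $K\cap P_l$ respects the product splitting of the parallel set. These are precisely the inputs that let the per-geodesic floorings glue into a single equivariant cubical retraction, and verifying them carefully is the technical heart of the argument.
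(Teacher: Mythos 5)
Your part (1) is essentially sound, and your algebraic construction of $\Psi$ from the generators $v_i=\alpha_i g_i^{n_i}\alpha_i^{-1}$ is a legitimate alternative to the paper's route (which instead observes that the $q$-image of any $S$-flat meeting $K$ is Hausdorff close to an $S'$-flat through $\id$ and then spreads this around by equivariance). The genuine problem is in part (2), precisely at the point you flag as the technical heart. The formula $I_{\Delta(l)}(r(x))=\lfloor I_{\Delta(l)}(x)/\chi(l)\rfloor$ is not $G(\Gamma')$-equivariant and does not restrict to the identity on $G(\Gamma')$, because the offsets of the floor maps do \emph{not} all vanish. The coordinate $I_{\Delta(l)}$ is normalized by $\pi_l(\id)=0$, and while the anchoring of the $v_i$ does give $v(K\cap l)=\{0,\dots,\chi(l)-1\}$ for every class meeting $K$, for a translated class the tiles $h\cdot v(K\cap l)$ ($h\in G(\Gamma')$) occupy intervals $\{c+j\chi,\dots,c+(j+1)\chi-1\}$ whose offset $c$ need not be divisible by $\chi$. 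Concretely: let $\Gamma$ be two isolated vertices, $G(\Gamma)=\langle a,b\rangle$, and let $K$ be the union of the edges from $\id$ to $a$ and from $\id$ to $b$, so that $S'=\{a^2,b^2,aba^{-1},bab^{-1}\}$ and $|G(\Gamma):G(\Gamma')|=3$. The geodesic $ab\langle a\rangle$ equals $(aba^{-1})\cdot\langle a\rangle$ with $aba^{-1}\in G(\Gamma')$, so $\chi=2$; its normalized coordinate has $ab\leftrightarrow 0$, and the translates of $v(K)$ tile it as $\{-1,0\},\{1,2\},\dots$. Now $aba=(aba^{-1})(a^2)$ lies in $G(\Gamma')$, so any retraction must fix it, and its coordinate along the corresponding $S'$-geodesic $(aba^{-1})\langle a^2\rangle$ is $1$; but your formula gives $\lfloor 1/2\rfloor=0$, i.e.\ it would send $aba$ to $aba^{-1}$. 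So the per-geodesic floorings, with your normalization, do not glue to an equivariant retraction, and the claim that ``the anchoring forces all offsets to vanish'' is false.

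The paper sidesteps this bookkeeping entirely. After reducing to the case where $\Gamma$ is irreducible and not a point (so that the maximal $S$-flats $\{F_i\}$ through a vertex $x$ satisfy $\cap_i F_i=\{x\}$, hence $\cap_i\Delta(F'_i)=\emptyset$ and $\cap_i F'_i$ is at most one point), it defines $r(x)=\cap_i F'_i$, where $F'_i$ is the maximal $S'$-flat with $d_{H}(q(F_i),F'_i)<\infty$; for $x\in v(K)$ each $F'_i$ contains $\id$, so the intersection is exactly $\{\id\}$, and $G(\Gamma')$-equivariance handles every other $x$. The equality $r^{-1}(\id)=v(K)$ then comes from the counting $|G(\Gamma):G(\Gamma')|\le|v(K)|\le|r^{-1}(\id)|$ rather than from a geodesic-by-geodesic analysis. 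If you want to retain a coordinate-wise description, you would have to either define $r$ only on data based at $K$ and extend by equivariance afterwards, or insert the correct class-dependent offsets; as written, part (2) (and hence the identification $r^{-1}(\id)=K$ in part (3)) does not go through.
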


\begin{proof}
It suffices to prove the case when $\Gamma$ does not admit a nontrivial join decomposition and $\Gamma$ is not a point.

By the construction of $\Theta_{S}$, we know the $q$-image of any $S$-flat which intersects $K$ is Hausdorff close to an $S'$-flat which contains the identity. Moreover, if the $S$-flat is maximal, then the corresponding $S'$-flat is unique. Since $G(\Gamma')\cdot v(K)=G(\Gamma)$, so the equivariance of $q$ implies the $q$-image of every $S$-flat is Hausdorff close to an $S'$-flat. Since $q$ is a quasi-isometry, so images of parallel $S$-geodesics are Hausdorff closed to each other. This induces $q_{\ast}:\mathcal{P}(\Gamma,S)\to\mathcal{P}(\Gamma',S')$. $q_{\ast}$ is injective since $q$ is a quasi-isometry and $q_{\ast}$ is surjective by the $G(\Gamma')$-equivariance.

Pick $x\in G(\Gamma)$, let $\{F_{i}\}_{i\in I}$ be the collection of maximal $S$-flats containing $x$. For each $i$, let $F'_{i}$ be the unique maximal $S'$-flat such that $d_{H}(q(F_{i}),F'_{i})<\infty$. Note that $\cap_{i\in I}F_{i}=x$ by our assumption on $\Gamma$. So $\cap_{i\in I}F'_{i}$ is either empty or one point. Note that if $x\in K$, then $\cap_{i\in I}F'_{i}=\id$. The equivariance of $q_{\ast}$ implies for every $x$, $\cap_{i\in I}F'_{i}$ is a point, which is defined to be $r(x)$. It is clear that $v(K)\subset r^{-1}(\id)$, but $|G(\Gamma):G(\Gamma')|\le |v(K)|$, so $v(K)=r^{-1}(\id)$. It follows that $v(K)$ is the strict fundamental domain for the left action of $G(\Gamma')$, and $r$ is a $G(\Gamma')$-equivariant map which maps $v(K)$ to $\id$. 

Note that $r(\id)=\id$. Then the $G(\Gamma')$-equivariance of $r$ implies $r(g)=g$ for any $g\in G(\Gamma')\subset G(\Gamma)$. Thus $r$ is a retraction. Similarly, by using the $G(\Gamma')$-equivariance of $r$, we deduce that $r$ sends every $S$-flat that intersects $K$ to an $S'$-flat passing through the identity element of $G(\Gamma')$. Thus $r$ sends every $S$-flat to an $S'$-flat by the equivariance of $r$. It is easy to see $r$ extends to a cubical map $r:X(\Gamma,S)\to X(\Gamma',S')$ such that $r^{-1}(\id)=K$.
\end{proof}

\begin{remark}
\label{6.14}
We can generalize some of the results in Lemma \ref{6.13} to infinite convex subcomplexes of $X(\Gamma,S)$. A convex subcomplex $K\subset X(\Gamma,S)$ is \textit{admissible} if for any standard geodesic $l$, the $CAT(0)$ projection $\pi_{l}(K)$ is either a finite interval or the whole $l$ (a ray is not allowed). Let $\{l_{\lambda}\}_{\lambda\in\Lambda}$ be a maximal collection of standard geodesics such that (1) $l_{\lambda}\cap K\neq\emptyset$; (2) $l_{\lambda}$ and $l_{\lambda'}$ are not parallel for $\lambda\neq\lambda'$; (3) $\pi_{l_{\lambda}}(K)$ is a finite interval. For each $l_{\lambda}$, let $\alpha_{\lambda}\in G(\Gamma)$ be an element which translates along $l_{\lambda}$ with translation length $= 1+length (\pi_{l_{\lambda}}(K))$. Let $G_{K}$ be the subgroup generated by $S'=\{\alpha_{\lambda}\}_{\lambda\in\Lambda}$. If $K$ is admissible, we can prove $G_{K}\cdot v(K)=G(\Gamma)$ as before. Moreover, for any finite subset $S'_{1}\subset S'$, the subgroup $G_{1}$ generated by $S'_{1}$ is a right-angled Artin group, and $G_{1}\hookrightarrow G_{K}$ is an isometric embedding with respect to the word metric. We can define $S'$-flat as before and view each vertex of $G_{K}$ as a $0$-dimensional $S'$-flat.

Now we show $v(K)$ is a strict fundamental domain for the action $G_{K}\curvearrowright G(\Gamma)$. It suffices to show $\alpha(K)\cap K=\emptyset$ for each nontrivial $\alpha\in G_{K}$. We can assume there is a right-angled Artin group $G_{1}$ such that $\alpha\in G_{1}\subset G_{K}$. Let $\alpha=w_{1}w_{2}\cdots w_{n}$ be a canonical form of $\alpha$ (see \cite[Section 2.3]{charney2007introduction}). Then
\begin{enumerate}
\item Each $w_{i}$ belongs to an abelian standard subgroup of $G_{1}$.
\item For each $i$, let $w_{i}=r^{k_{i,1}}_{i,1}r^{k_{i,2}}_{i,2}\cdots r^{k_{i,n_{i}}}_{i,n_{i}}$ ($r_{i,j}\in S'$). Then for each $r_{i+1,j}$ ($1\le j\le n_{i+1}$), there exists $r_{i,j'}$ which does not commute with $r_{i+1,j}$.
\end{enumerate}
We associate each generator $r_{i,j}$ with a subset $X_{i,j}\subset X(\Gamma,S)$ as in the proof of Lemma \ref{raag subgroup}, and claim there exists $j$ with $1\le j\le n_{1}$ such that $\alpha(K)\subset X_{1,j}$, then $\alpha(K)\cap K=\emptyset$ follows. We prove by induction on $n$ and assume $w_{2}w_{3}\cdots w_{n}(K)\subset X_{2,j'}$. By (2), there is $r_{1,j}$ such that $r_{1,j}$ and $r_{2,j'}$ does not commute, so $r^{k_{1,j}}_{1,j}(X_{2,j'})\subset X_{1,j}$. Moreover, by (1), $r^{k_{1,h}}_{1,h}(X_{1,j})=X_{1,j}$ for $h\neq j$, so $\alpha(K)\subset w_{1}(X_{2,j'})\subset X_{1,j}$.

Now we can define a $G_{K}$-equivariant map $r:G(\Gamma)\to G_{K}$ by sending $v(K)$ to the identity of $G_{K}$. We prove as before that $r$ maps $S$-flats to (possibly lower dimensional or 0-dimensional) $S'$-flats, thus $r$ is 1-Lipschitz with respect to the word metric. Let $i:G_{K}\hookrightarrow G(\Gamma)$ be the inclusion. Then by the equivariance of $r$, $r\circ i$ is a left translation of $G_K$. In particular, if $K$ contains the identity, then $r$ is a retraction. It follows that if $S'$ is finite, then $i$ is a quasi-isometric embedding.

Note that a related construction in the case of right-angled Coxeter groups has been discussed in \cite{haglund2008finite}. By taking larger and larger convex compact subcomplexes of $X(\Gamma,S)$, we know $G(\Gamma)$ is residually finite. Moreover, pick $\beta\in Stab(K)\subset G(\Gamma)$, by definition of $S'$, $S'=\beta S' \beta^{-1}$, so $Stab(K)$ normalize $G_{K}$. Now we have obtained a direct proof of the fact that every word-quasi-convex subgroup of a finite generated right-angled Artin group is separable (Theorem F of \cite{haglund2008finite}) by using the above discussion together with the outline in Section 1.5 of \cite{haglund2008finite}.
\end{remark}

The following result follows readily from the above discussion.

\begin{thm}
\label{6.15}
Let $G(\Gamma)$ be a RAAG with $\out(G(\Gamma))$ finite. We pick a standard generating set $S$ for $G(\Gamma)$. Then there is a 1-1 correspondence between non-negative convex compact subcomplexes of $X(\Gamma,S)$ that contain the identity and finite index RAAG subgroups of $G(\Gamma)$. In particular, these subgroups are generated by conjugates of powers of elements in $S$.
\end{thm}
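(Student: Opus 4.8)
The plan is to read Theorem \ref{6.15} as the single assertion that the two maps $\Theta_{S}$ and $\Xi_{S}$ constructed in this section are mutually inverse bijections; consequently the bulk of the work has already been carried out and the proof should amount to assembling the pieces. First I would recall that $\Theta_{S}\colon CN(\Gamma,S)\to\{\text{finite index RAAG subgroups of }G(\Gamma)\}$ is well-defined: given $K\in CN(\Gamma,S)$, the (unlabeled) lemma establishing $G'\cdot v(K)=G(\Gamma)$ guarantees that the output subgroup $G'$ has finite index, while Lemma \ref{raag subgroup}, via the ping-pong criterion of Theorem \ref{6.9}, guarantees that $G'$ is itself a RAAG on the generators $v_{i}=\alpha_{i}g_{i}^{n_{i}}\alpha_{i}^{-1}$.

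Next I would recall that the reverse map $\Xi_{S}$ is well-defined by the three-step construction of this subsection. Starting from a finite index RAAG subgroup $G(\Gamma')\le G(\Gamma)$ together with a $G(\Gamma')$-equivariant quasi-isometry, Theorem \ref{5.3} and Lemma \ref{4.10} produce the map $\phi$; the change-of-basis procedure of Lemmas \ref{change base point}, \ref{order-preserving} and \ref{independent} then arranges that $\phi$ restricts to the normal form (\ref{5.13}) on every standard geodesic, so that $\phi^{-1}(\id)$ is a compact convex subcomplex, which Step 2 renders non-negative. The essential point to stress here is that Step 2 records that the sets $\Lambda_{i}$ depend only on the function $\chi\colon v(\mathcal{P}(\Gamma))\to\Bbb Z$ and not on the auxiliary choice of $\phi$; this is exactly what makes $\Xi_{S}(G(\Gamma'))$ a well-defined element of $CN(\Gamma,S)$.

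Finally I would invoke Step 3 directly. The identity $\Theta_{S}\circ\Xi_{S}=\textmd{Id}$ is precisely the content of the lemma identifying $S'$ with $\{\alpha_{i}g_{i}^{n_{i}}\alpha_{i}^{-1}\}_{i=1}^{s}$, and $\Xi_{S}\circ\Theta_{S}=\textmd{Id}$ follows from Lemma \ref{6.13}, whose conclusion (3) asserts that the associated retraction $r$ satisfies $r^{-1}(\id)=K$ and that $v(K)$ is a strict fundamental domain for $G(\Gamma')\curvearrowright G(\Gamma)$. Together these two equations show that $\Theta_{S}$ and $\Xi_{S}$ are inverse to each other, hence each is a bijection, which is the claimed $1$-$1$ correspondence; the \emph{in particular} clause is then immediate from the explicit form of the generators $v_{i}=\alpha_{i}g_{i}^{n_{i}}\alpha_{i}^{-1}$, which are conjugates of powers of elements of $S$. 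I do not expect a genuinely new obstacle at the level of Theorem \ref{6.15} itself, since it is a packaging statement; the only delicate point is the independence of $\Xi_{S}$ from the choices made in Steps 1 and 2, and that is precisely why Step 2 emphasizes that the $\Lambda_{i}$ are governed solely by $\chi$. The real mathematical weight sits upstream, in verifying that both composites $\Theta_{S}\circ\Xi_{S}$ and $\Xi_{S}\circ\Theta_{S}$ equal the identity.
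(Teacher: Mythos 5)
Your proposal is correct and is essentially the paper's own argument: the paper dispatches Theorem \ref{6.15} with the single line that it ``follows readily from the above discussion,'' and that discussion is exactly the assembly you describe — $\Theta_{S}$ well-defined by Lemma \ref{raag subgroup}, $\Xi_{S}$ well-defined by Steps 1–2 (with the independence from auxiliary choices resting on the $\Lambda_{i}$ depending only on $\chi$), and the two composites shown to be the identity in Step 3 via the lemma identifying $S'$ and Lemma \ref{6.13}. Nothing further is needed.
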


In particular, Theorem \ref{1.4} in the introduction follows from Theorem \ref{6.15}.
\begin{remark}
\label{6.16}

If we drop the finite automorphism group assumption in the above theorem, then there exist a RAAG $G(\Gamma_{1})$ and its finite index RAAG subgroup $G(\Gamma_{2})$ such that $G(\Gamma_{2})$ is not isomorphic to any special subgroup of $G(\Gamma_{1})$. To see this, let $G(\Gamma_{1})$ be a right-angled Artin group such that $\out(G(\Gamma_{1}))$ is transvection free. Then Lemma \ref{6.13} and Theorem \ref{3.28} imply each special subgroup of $G(\Gamma_{1})$ does not admit non-trivial transvection in its outer automorphism group. Let $\Gamma_{1}$ and $\Gamma_{2}$ be the graphs in Example \ref{3.29}. Then $G(\Gamma_{2})$ is a right-angled Artin subgroup of $G(\Gamma_{1})$ and there are non-trivial transvections in $\out(G(\Gamma_{2}))$. Thus $G(\Gamma_{2})$ is not isometric to any special subgroup of $G(\Gamma_{1})$.
\end{remark}

\begin{remark}
\label{6.17}
Pick $G(\Gamma)$ such that $\out(G(\Gamma))$ is finite, then Theorem \ref{6.15} can be used to show certain subgroup of $G(\Gamma)$ is not a RAAG. For example, let $\{v_{i}\}_{i=1}^{k}$ be a subset of some standard generating set for $G(\Gamma)$. We define homomorphism $h:G(\Gamma)\to\Bbb Z/2$ by sending each $v_{i}$ to the non-trivial element in $\mathbb Z/2$ and killing all other generators. Then $\ker(h)$ is a RAAG if and only if $k=1$. One can compare this example to Example \ref{3.29}.
\end{remark}

\begin{remark}
\label{6.18}
It is shown in \cite[Theorem 2]{kim2013embedability} that if $F(\Gamma')$ embeds into $\mathcal{P}(\Gamma)$ as a full subcomplex, then there exists a monomorphism $G(\Gamma')\hookrightarrow G(\Gamma)$. This result can be recovered by the our previous discussion as follows. Let $\Gamma$ be an arbitrary finite simplicial graph. Let $S$ be a standard generating set for $G(\Gamma)$. For any vertex $w\in\mathcal{P}(\Gamma)$, let $\alpha_{w}\in G(\Gamma)$ be a conjugate of some element in $S$ such that $\alpha_{w}(l)=l$ for every standard geodesic $l\subset X(\Gamma,S)$ with $\Delta(l)=w$. 

Suppose $M\subset\mathcal{P}(\Gamma,S)$ is a compact full subcomplex and $\Gamma'$ is the 1-skeleton of $M$. Denote the vertex set of $M$ by $\{w_{i}\}_{i=1}^{n}$ and let $l_{i}$ be a standard geodesic with $\Delta(l_{i})=w_{i}$. We identify each $l_i$ in an orientation-preserving way with $\mathbb R$ such that $0\in \mathbb R$ is identified with $\pi_{l_i}(\id)\subset l_i$, where$\pi_{l_i}$ is the $CAT(0)$ projection to $l_i$ and $\id$ is the identity element of $G(\Gamma)$.

For $1\le i\le n$, define $\Lambda_{i}=\{1\le j\le n\mid d(w_{i},w_{j})\ge 2\}$. For each $i$, we define a pair of integers $a_i$ and $k_i$ as follows. If $\Lambda_{i}\neq\emptyset$, then let $[a_{i},a_{i}+k_{i}]\subset \mathbb R$ be the minimal interval such that $\cup_{j\in\Lambda_{i}}\pi_{l_{i}}(l_{j})\subset [a_{i},a_{i}+k_{i}]$ (recall that $l_i$ is identified with $\mathbb R$). If $\Lambda_{i}=\emptyset$, then we pick an arbitrary $a_{i}$ and set $k_{i}=0$. Define $X_{i}=\pi^{-1}_{c_{i}}((-\infty,a_{i}-1/2])\cup\pi^{-1}_{c_{i}}([a_{i}+k_{i}+1/2,\infty))$. Then by construction, $X_{i}\cap X_{j}=\emptyset$ for $i,j$ satisfying $d(w_{i},w_{j})\ge 2$. Using the argument in Section \ref{subsec_construct finite index}, we can show the subgroup generated by $S'=\{\alpha^{k_{i}+1}_{w_{i}}\}_{i=1}^{n}$ is a RAAG with defining graph $\Gamma'$.

\end{remark}

At this point it is natural to ask the following question.
\begin{que}
\label{6.19}
Let $S$ be a standard generating set of $G(\Gamma)$ and let $S'$ be a finite collection of elements of form $\alpha r^k\alpha^{-1}$, where $r\in S$, $k\in\Bbb Z$ and $\alpha\in G(\Gamma)$. Suppose $G$ is the subgroup generated by $S'$. Is $G$ a right-angled Artin group?
\end{que}

\subsection{Generalized star extension}
\label{subsec_GSE}
Our goal in this subsection is to find an algorithm to determine whether $G(\Gamma)$ and $G(\Gamma')$ are quasi-isometric or not, given $\out(G(\Gamma))$ is finite.

For convex subcomplex $E\subset X(\Gamma)$, we denote the full subcomplex in $\mathcal{P}(\Gamma,S)$ spanned by $\{\Delta(l_{\lambda})\}_{\lambda\in\Lambda}$ by $\hat{E}$, where $\{l_{\lambda}\}_{\lambda\in\Lambda}$ is the collection of standard geodesics in $X(\Gamma)$ with $l_{\lambda}\cap E\neq\emptyset$.

Now we describe a process to construct a graph $\Gamma'$ from $\Gamma$ such that $G(\Gamma')$ is isomorphic to a special subgroup of $G(\Gamma)$. Let $\Gamma_{1}=\Gamma$ and let $K_{1}$ be one point. We will construct a pair $(\Gamma_{i},K_{i})$ inductively such that
\begin{enumerate}
\item $K_{i}$ is a compact $CAT(0)$ cube complex and there is a cubical embedding $f:K_{i}\to X(\Gamma)$ such that $f(K_{i})$ is convex in $X(\Gamma)$.
\item $\Gamma_{i}$ is a finite simplicial graph and there is a simplicial isomorphism $g:F(\Gamma_{i})\to\widehat{f(K_i)}$.
\end{enumerate}
Note that these assumptions are true for $i=1$. 

We associate each edge $e\subset K_{i}$ with a vertex in $\Gamma_{i}$, denoted by $v_{e}$, as follows. Let $l_{e}$ be the standard geodesic in $X(\Gamma)$ that contains $f(e)$. We define $v_{e}:=g^{-1}(\Delta(l_{e}))$. Each vertex $x\in K_{i}$ can be associated with a full subcomplex $\Phi(x)\subset F(\Gamma_{i})$ defined by $\Phi(x)=g^{-1}(\hat{x})$.

To define $(\Gamma_{i+1},K_{i+1})$, pick a vertex $v\in\Gamma_{i}$ and let $\{x_{j}\}_{j=1}^{m}$ be the collection of vertices in $K_{i}$ such that $v\in\Phi(x_{j})$. Then $\{f(x_{j})\}_{j=1}^{k}$ are exactly the vertices in $P_{l}\cap f(K_{i})$, where $l$ is a standard geodesic such that $\Delta(l)=g(v)$. Let $L$ be the convex hull of $\{x_{j}\}_{j=1}^{m}$ in $K_{i}$. Then $e\subset L$ for any edge $e\subset K_{i}$ with $v_{e}=v$. 

Since $f(L)=P_l\cap f(K_i)$, the natural product decomposition $P_l\cong l\times l^{\perp}$ induces a product decomposition of $L=h\times [0,a]$. Note that it is possible that $a=0$, and $a>0$ if and only if there exists an edge $e\subset K_{i}$ with $v_{e}=v$. When $a>0$, $h$ is isomorphic to the hyperplane dual to $e$, and for any edge $e'\in K_{i}$ with $v_{e'}=v$, the projection of $e'$ to the interval factor $[0,a]$ is an edge. 

Let $L_{i}=h\times \{a\}\subset L$ and let $M_{i}=\cup_{x\in L_{i}}\Phi(x)$ ($x$ is a vertex). We define $F(\Gamma_{i+1})$ to be the simplicial complex obtained by gluing $F(\Gamma_{i})$ and $M_{i}$ along $St(v,M_{i})$ (see Section \ref{subsec_notation} for the notation), and define $K_{i+1}$ to be the $CAT(0)$ cube complex obtained by gluing $K_{i}$ and $L_{i}\times[0,1]$ along $L_{i}$. One readily verifies that one can extend $f$ to a cubical embedding $f':K_{i+1}\to X(\Gamma)$ such that $f'(K_{i+1})$ is convex. This also induces an isomorphism $g':F(\Gamma_{i+1})\to\hat{K}_{i+1}$ which is an extension of $g$. 

By construction, each $G(\Gamma_{i})$ is isomorphic to a special subgroup of $G(\Gamma)$, moreover, the associated convex subcomplex of this special subgroup is $K_i$. Also note that the above induction process actually does not depend on knowing what $X(\Gamma)$ is. Thus it also provides a way to construct convex subcomplexes of $X(\Gamma)$ by hand.

The above process of obtaining $(\Gamma_{i+1},K_{i+1})$ from $(\Gamma_{i},K_{i})$ is called a \textit{generalized star extension} (GSE) at $v$. Note that the following are equivalent.
\begin{enumerate}
	\item $\Gamma_{i}\subsetneq\Gamma_{i+1}$.
	\item $P_l\subsetneq X(\Gamma)$, where $l$ is the standard geodesic in $X(\Gamma)$ such that $\Delta(l)=g(v)$.
	\item $St(\pi(g(v)))\subsetneq F(\Gamma)$, where $\pi:\mathcal{P}(\Gamma)\to F(\Gamma)$ is the natural label-preserving projection defined in (\ref{projection}).
\end{enumerate}
A GSE is \textit{nontrivial} if $\Gamma_{i}\subsetneq\Gamma_{i+1}$. If $\Gamma$ is not a clique, then at each stage, there exists a vertex $v\in\Gamma_{i}$ such that the GSE at $v$ is nontrivial.

\begin{lem}
Suppose $G(\Gamma')$ is isomorphic to a special subgroup of $G(\Gamma)$. Then we can construct $\Gamma'$ from $\Gamma$ by using finitely many GSE's.	
\end{lem}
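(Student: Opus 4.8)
The plan is to prove the equivalent reachability statement: every $K\in CN(\Gamma,S)$ can be produced from the initial pair $(\Gamma,\{\mathrm{pt}\})$ by finitely many GSE's, in such a way that the terminal pair $(\Gamma_{\mathrm{fin}},K_{\mathrm{fin}})$ has $K_{\mathrm{fin}}=K$ and $\Gamma_{\mathrm{fin}}\cong\widehat{K}$. This yields the lemma: if $G(\Gamma')$ is isomorphic to a special subgroup $\Theta_{S}(K)$, then by the construction of $\Theta_{S}$ the defining graph of $\Theta_{S}(K)$ is $\widehat{K}$, so $G(\Gamma')\cong G(\widehat{K})$ and hence $\Gamma'\cong\widehat{K}$ by \cite{droms1987isomorphisms}; since each GSE maintains the isomorphism $\Gamma_{i}\cong\widehat{K_{i}}$ by its very construction, building $K$ produces a graph isomorphic to $\Gamma'$.

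I would argue by induction on the number of vertices $|v(K)|$. When $|v(K)|=1$ we have $K=\{\mathrm{id}\}$ and $\widehat{K}\cong F(\Gamma)$, so no GSE is needed. For the inductive step the idea is to peel off a single outermost \emph{collar}. Among all closed halfspaces $K\cap\overline{H}$ of $K$ that do not contain $\mathrm{id}$ (these exist once $|v(K)|\ge 2$), choose one, $K^{+}$, that is minimal with respect to inclusion, let $h^{*}$ be its bounding hyperplane, and set $K^{-}=K\cap\overline{H^{-}}$ to be the complementary closed halfspace. Minimality forces every hyperplane meeting $K^{+}$, other than $h^{*}$, to cross $h^{*}$; this is the standard argument showing that a minimal halfspace is a collar, so $K^{+}\cong S\times[0,1]$ with $S\times\{0\}\subset K^{-}$.

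The key geometric point is to identify this collar with a GSE. All edges dual to $h^{*}$ are mutually parallel, so they determine a single parallel class $v=\Delta(l)\in\mathcal P(\Gamma)$, and every cube meeting $h^{*}$ lies in the parallel set $P_{l}$. By Lemma \ref{2.2} the intersection $P_{l}\cap K$ splits as a product box $h\times[0,a]$, inside which a $v$-hyperplane is a full cross-section; consequently $S\times\{0\}$ is exactly the full top $v$-cross-section of the product box $P_{l}\cap K^{-}$. This is precisely the face $L_{i}$ to which the GSE at $v$ glues $L_{i}\times[0,1]$, so $K=\mathrm{GSE}_{v}(K^{-})$. Moreover $K^{-}$ is convex (an intersection of $K$ with a closed halfspace), non-negative, contains $\mathrm{id}$ (because $\mathrm{id}\notin K^{+}$), and has strictly fewer vertices; and $v\in\widehat{K^{-}}$ since $S\times\{0\}\subset P_{l}\cap K^{-}$, so $v$ is a legitimate vertex of the current graph at which to perform the GSE once $K^{-}$ has been built. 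By the induction hypothesis $K^{-}$ is reachable from $(\Gamma,\{\mathrm{pt}\})$ by finitely many GSE's, and appending the GSE at $v$ produces $K$.

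I expect the main obstacle to be this middle step: verifying that a minimal identity-free halfspace is a collar and, more delicately, that this collar spans the \emph{entire} $v$-cross-section of $K^{-}$ rather than a proper subcomplex of it — otherwise the GSE, which always extends across the whole top cross-section, would overshoot $K$. Both facts are forced by confining all cubes meeting $h^{*}$ to the single parallel set $P_{l}$ and then invoking the product splitting $P_{l}\cap K=h\times[0,a]$ of Lemma \ref{2.2}; non-negativity together with the placement of $\mathrm{id}$ on the far side of $h^{*}$ guarantees that the collar is attached on top, so the $v$-extent increases by exactly one and no ``sideways'' vertices (those lying outside $P_{l}$) are lost.
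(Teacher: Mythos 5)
Your argument is correct, but it runs in the opposite direction from the paper's. The paper grows the convex subcomplex from the identity: it picks an edge $e_{i}\subset K$ sticking out of $K_{i}$ at a single vertex, takes $K_{i+1}$ to be the convex hull of $K_{i}\cup e_{i}$, observes that this hull is $K_{i}$ with the collar $(K_{i}\cap N_{i})\times[0,1]$ attached, and identifies that attachment with a GSE. You instead peel from the outside: a minimal identity-free halfspace $K^{+}$ is forced (by the minimality argument, since no hyperplane meeting $K^{+}$ can avoid crossing $h^{*}$ -- in particular no hyperplane parallel to $h^{*}$ can lie beyond it) to be the outer half of the carrier of $h^{*}$, hence a collar over the \emph{full} top cross-section of $P_{l}\cap K^{-}$ by the splittings of Lemma \ref{3.4} and Lemma \ref{2.2}, so $K=\mathrm{GSE}_{v}(K^{-})$ and downward induction on $|v(K)|$ applies. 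The two decompositions are genuinely different (your collars are always full cross-sections of the current complex relative to the ambient $K$, whereas the paper's greedy step may only cover the portion of the carrier already reached), but each step in both cases reduces to the same verification that a collar over a full cross-section of $P_{l}\cap K_{i}$ is exactly a GSE. What your route buys is a cleaner well-founded induction and an explicit proof of the "collar equals full cross-section" point that the paper dispatches with "one readily verifies"; what the paper's buys is brevity and a construction that directly mirrors the forward inductive definition of GSE. Two small imprecisions worth fixing: your $K^{+}\cong S\times[0,1]$ should really be the closed carrier half (the full subcomplex on the far-side vertices is just $S$), and the reason the collar sits at the extreme level of $[0,a]$ is minimality of $K^{+}$ (a further parallel hyperplane would give a smaller halfspace), not non-negativity, which only serves to keep $K^{-}$ in $CN(\Gamma,S)$ for the inductive hypothesis.
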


\begin{proof}
Let $\Theta_S$ and $CN(\Gamma,S)$ be the objects defined in Section \ref{subsec_construct finite index}. Suppose $G(\Gamma')$ is isomorphic to $\Theta_{S}(K)$ for $K\in CN(\Gamma,S)$. We define a sequence of convex subcomplexes in $K$ by induction. Let $K_{1}$ be the identity element in $G(\Gamma)$. Suppose $K_{i}$ is already defined. If $K_{i}=K$, then the induction terminates. If $K_{i}\subsetneq K$, pick an edge $e_{i}\subset K$ such that $e_{i}\cap K_{i}$ is a vertex and let $K_{i+1}$ be the convex hull of $K_{i}\cup e_{i}$. Let $\{K_{i}\}_{i=1}^{s}$ be the resulting collection of convex subcomplexes. An alternative way of describing $K_{i+1}$ is the following. If $h_{i}$ is the hyperplane in $K$ dual to $e_{i}$ and $N_{i}$ is the carrier of $h_{i}$ in $K$, then $h_{i}\cap K_{i}=\emptyset$ by the convexity of $K_{i}$. Thus $K_i\cap N_i$ is disjoint from $h_i$. Hence there is a copy of $(K_{i}\cap N_{i})\times[0,1]$ inside $N_{i}$, which is denoted by $M_i$. Then $K_{i+1}=K_{i}\cup M_{i}$. Now one readily verifies that one can obtain $(\hat{K}_{i+1},K_{i+1})$ from $(\hat{K}_{i},K_{i})$ by a GSE.
\end{proof}

The above construction gives rise to an algorithm to detect whether $G(\Gamma')$ is isomorphic to a special subgroup of $G(\Gamma)$. If there are $n$ vertices in $\Gamma'$, then $\Gamma'$ can be obtained from $\Gamma$ by at most $n$ nontrivial GSE's. So we can start with $\Gamma$, enumerate all possible $n$-step nontrivial GSE's from $\Gamma$, and compare each resulting graph with $\Gamma'$. By Theorem \ref{5.15} and Theorem \ref{6.15}, we have the following result.

\begin{thm}
\label{6.20}
If $\out(G(\Gamma))$ is finite, then $G(\Gamma')$ is quasi-isometric to $G(\Gamma)$ if and only if $\Gamma'$ can be obtained from $\Gamma$ by finitely many GSE's. In particular, there is an algorithm to determine whether $G(\Gamma')$ and $G(\Gamma)$ are quasi-isometric.
\end{thm}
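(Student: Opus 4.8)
The plan is to deduce Theorem~\ref{6.20} purely by assembling three results already in hand: the quasi-isometry characterization of Theorem~\ref{5.15}, the identification of finite index RAAG subgroups with special subgroups in Theorem~\ref{6.15}, and the equivalence between being a special subgroup and being reachable by GSE's established by the lemma and the GSE construction of this subsection. Since $G(\Gamma)\cong\Bbb Z$ is trivial (and excluded by the standing assumption of the section), I assume $G(\Gamma)\ncong\Bbb Z$ throughout.

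First I would establish the equivalence of quasi-isometry and GSE-reachability. Suppose $G(\Gamma')$ is quasi-isometric to $G(\Gamma)$. By Theorem~\ref{5.15}, $G(\Gamma')$ is isomorphic to a subgroup of finite index in $G(\Gamma)$; as $G(\Gamma')$ is itself a RAAG, this is a finite index RAAG subgroup, hence an $S$-special subgroup by Theorem~\ref{6.15}. The lemma immediately preceding Theorem~\ref{6.20} then shows $\Gamma'$ can be constructed from $\Gamma$ by finitely many GSE's. Conversely, if $\Gamma'$ is obtained from $\Gamma$ by finitely many GSE's, then by the GSE construction $G(\Gamma')$ is isomorphic to a special subgroup of $G(\Gamma)$, in particular to a finite index subgroup; applying the implication $(3)\Rightarrow(1)$ of Theorem~\ref{5.15} gives that $G(\Gamma')$ is quasi-isometric to $G(\Gamma)$. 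This settles the ``if and only if'' part.

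For the algorithmic statement I would use that a nontrivial GSE strictly enlarges the graph: at each nontrivial step $\Gamma_i\subsetneq\Gamma_{i+1}$, so the number of vertices increases by at least one, while $\Gamma=\Gamma_1$ embeds into the terminal graph. Hence if $\Gamma'$ has $n$ vertices, it is reachable from $\Gamma$ using at most $n$ nontrivial GSE's. Moreover a GSE is completely determined by the current pair $(\Gamma_i,K_i)$ together with the choice of a vertex $v\in\Gamma_i$, so at each stage there are only finitely many available GSE's, at most $|v(\Gamma_i)|$ of them. Consequently the collection of all graphs obtainable from $\Gamma$ by at most $n$ nontrivial GSE's is finite and can be explicitly enumerated. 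The algorithm enumerates this finite list and tests each resulting graph for isomorphism with $\Gamma'$; by the equivalence above, $G(\Gamma')$ is quasi-isometric to $G(\Gamma)$ if and only if some graph on the list is isomorphic to $\Gamma'$.

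The argument is essentially bookkeeping, since all the genuine content has been absorbed into Theorem~\ref{5.15}, Theorem~\ref{6.15}, and the GSE lemma. The only point needing care is the termination bound for the enumeration, namely verifying that nontrivial GSE's strictly increase the vertex count so that the search depth is bounded in terms of $|v(\Gamma')|$; everything else follows formally.
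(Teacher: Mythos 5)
Your proposal is correct and follows essentially the same route as the paper: Theorem \ref{5.15} reduces quasi-isometry to being a finite index subgroup, Theorem \ref{6.15} identifies finite index RAAG subgroups with special subgroups, the preceding lemma and the GSE construction give the equivalence with GSE-reachability, and the algorithm comes from the fact that nontrivial GSE's strictly enlarge the graph, so the search is bounded by $|v(\Gamma')|$ nontrivial steps. This matches the paper's (very brief) argument, including the enumeration bound.
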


Note that a GSE gives rise to a pair $(\Gamma_{i},K_{i})$. If one does not care about the associated convex subcomplex $K_{i}$, then there is a simpler description of GSE when $\out(G(\Gamma))$ is finite. Suppose we have already obtained $F(\Gamma_{i})$ together with a finite collection of full subcomplexes $\{G_{\lambda}\}_{\lambda\in\Lambda_{i}}$ such that
\begin{enumerate}
\item $\{G_{\lambda}\}_{\lambda\in\Lambda_{i}}$ is a covering of $F(\Gamma_{i})$.
\item Each $G_{\lambda}$ is isomorphic to $F(\Gamma)$.
\end{enumerate}
When $i=1$, we pick the trivial cover of $F(\Gamma)$ by itself. To construct $\Gamma_{i+1}$, pick vertex $v\in F(\Gamma_{i})$, let $\Lambda_{v}=\{\lambda\in\Lambda_{i}\mid v\in G_{\lambda}\}$ and let $\Gamma_{v}=\cup_{\lambda\in\Lambda_{v}}G_{\lambda}$. Suppose $\{C_{j}\}_{j=1}^{m}$ is the collection of connected components of $\Gamma_{v}\setminus St(v,\Gamma_{v})$ and suppose $C'_{j}=C_{j}\cup St(v,\Gamma_{v})$. Then $F(\Gamma_{i+1})$ is defined by gluing $C'_{1}$ and $F(\Gamma_{i})$ along $St(v,\Gamma_{v})$, and $\Gamma_{i+1}$ is the 1-skeleton of $F(\Gamma_{i+1})$.

\begin{lem}
Suppose $\out(G(\Gamma))$ is finite. Then the above simplified process is consistent with GSE.
\end{lem}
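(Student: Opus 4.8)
The plan is to prove the lemma by induction on $i$, carrying along an identification of the combinatorial data of the two constructions. Concretely, I would show that the family $\{G_\lambda\}_{\lambda\in\Lambda_i}$ produced by the simplified process can be identified with the family $\{\Phi(x):x\in v(K_i)\}$ produced by the GSE process, where $\Phi(x)=g^{-1}(\widehat{f(x)})$. The base case $i=1$ is immediate: $K_1$ is a point, $\widehat{f(K_1)}=\widehat{f(\mathrm{pt})}$ is the image of the canonical embedding $i_{f(\mathrm{pt})}:F(\Gamma)\hookrightarrow\mathcal{P}(\Gamma)$, so $\Phi(\mathrm{pt})=F(\Gamma_1)$ is the single member of the trivial cover. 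To verify the covering hypotheses in general I would check that each $\Phi(x)\cong F(\Gamma)$ (since $\widehat{f(x)}$ is exactly the image of $i_{f(x)}$), and that $\{\Phi(x)\}_{x\in v(K_i)}$ covers $F(\Gamma_i)$: any simplex of $\widehat{f(K_i)}$ equals $\Delta(F)$ for a standard flat $F$ meeting the convex subcomplex $f(K_i)$, hence meeting it in a nonempty subcomplex which contains some vertex $x$, so that simplex lies in $\widehat{f(x)}$.

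For the inductive step, fix the vertex $v\in\Gamma_i$ at which both processes act. Under the identification the index set $\Lambda_v=\{\lambda:v\in G_\lambda\}$ becomes $\{x\in v(K_i):v\in\Phi(x)\}$, which is precisely the vertex set of the slab with $f(L)=P_l\cap f(K_i)=h\times[0,a]$ appearing in the GSE (here $l$ is a standard geodesic with $\Delta(l)=g(v)$); thus $\Gamma_v=\bigcup_{x\in v(L)}\Phi(x)$. The two crucial identifications are then (1) $St(v,\Gamma_v)=St(v,M_i)$, and (2) $M_i\setminus St(v,M_i)$ is a single connected component of $\Gamma_v\setminus St(v,\Gamma_v)$. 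For (1) I would use that $lk(v)$ is carried by the standard geodesics perpendicular to $l$, which live in $P_l=l\times l^{\perp}$; such a geodesic at level $t$ is parallel to one at the top level $a$, and parallel geodesics determine the same vertex of $\mathcal{P}(\Gamma)$, so $lk(v,\Gamma_v)=lk(v,M_i)$ and the stars agree. For (2) I first show that $\pi_l$ refines the components: if two far directions $w,w'$ (those with $d(\cdot,v)\ge 2$) are adjacent in $\Gamma_v$, then by Observation \ref{4.1} their standard geodesics lie in a common standard flat $F$ with $d(\Delta(F),v)\ge 2$, whence $\pi_l(F)$ is a single vertex by the reasoning of Lemma \ref{6.2} (via Corollary \ref{3.2} and Lemma \ref{6.1}); so no edge of $\Gamma_v\setminus St(v,\Gamma_v)$ joins different $\pi_l$-levels. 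I then show each level is connected: inside a single copy $\Phi(x)\cong F(\Gamma)$ the far part $F(\Gamma)\setminus St(v)$ is connected because finiteness of $\out(G(\Gamma))$ forbids separating closed stars, and two copies $\Phi(x),\Phi(x')$ sharing a perpendicular edge of label $u\in lk(v)$ overlap in a far direction because absence of transvections (again from finiteness of $\out(G(\Gamma))$) gives $lk(u)\not\subseteq St(v)$; traversing the connected slice $\pi_l^{-1}(a)\cap K_i$ yields connectivity of the top level. Combining, $M_i=St(v,\Gamma_v)\cup(\text{far directions at level }a)=C_1'$ for the top-level component $C_1$, so the GSE gluing of $M_i$ along $St(v,M_i)$ coincides with the simplified gluing of $C_1'$ along $St(v,\Gamma_v)$, producing $F(\Gamma_{i+1})$ on the nose; the new copies $\Phi(x)$ with $x\in v(K_{i+1})\setminus v(K_i)$ (the level-$(a+1)$ translates) then furnish the updated cover and preserve the inductive hypothesis.

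The step I expect to be the main obstacle is the component analysis in item (2), for two reasons. First, the connectivity of the far part of a single level genuinely uses \emph{both} consequences of finiteness of $\out(G(\Gamma))$ — non-separating stars and absence of transvections — and has to be assembled copy-by-copy along the convex slice $\pi_l^{-1}(a)\cap K_i$. Second, there is the bookkeeping of \emph{which} component is glued: the GSE is defined to extend at the distinguished end $L_i=h\times\{a\}$ of the slab, so to make the two descriptions literally agree I must pin down that the simplified process's chosen component $C_1$ is the extremal (top-level) one, i.e.\ that the two product-ends of $h\times[0,a]$ are exactly the sides along which an extension occurs. Orienting the slab so that $C_1$ sits at the top — and observing that the $\pi_l$-refinement gives a well-defined notion of extremal level — is what forces the match; once this is fixed, the verification that $f$ extends to a convex cubical embedding of $K_{i+1}=K_i\cup(L_i\times[0,1])$ is routine and already contained in the GSE construction.
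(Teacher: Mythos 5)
Your proposal is correct and follows essentially the same route as the paper: both reduce the lemma to a bijection between the $\pi_l$-levels $L_j$ of the slab $L=h\times[0,a]$ and the connected components of $\Gamma_v\setminus St(v,\Gamma_v)$, proving connectivity of each level's far part from the absence of transvections (adjacent vertices of a level share a common far direction) together with the absence of separating closed stars, and separating distinct levels by the projection/hyperplane argument (the paper invokes Lemma \ref{4.8} where you argue directly via Observation \ref{4.1}, Corollary \ref{3.2} and Lemma \ref{6.2}, which amounts to the same mechanism). Your additional bookkeeping --- the base case, the covering property of $\{\Phi(x)\}$, and the identification $St(v,\Gamma_v)=St(v,M_i)$ --- only makes explicit what the paper leaves implicit, so there is no substantive difference in approach.
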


\begin{proof}
We assume inductively that there is a $CAT(0)$ cube complex $K_{i}$ such that the two induction assumptions for GSE are satisfied, moreover, $\{G_{\lambda}\}_{\lambda\in\Lambda_{i}}$ coincides with $\{\Phi(x)\}_{x\in K_{i}}$ ($x$ is a vertex). Let $L=h\times[0,a]$ be as before and let $L_{j}=h\times\{j\}\subset L$ for each integer $j\in [0,a]$. It suffices to show there is a 1-1 correspondence between $\{L_{j}\}_{j=0}^{a}$ and $\{C'_{j}\}_{j=1}^{m}$ such that for each $j$, there exists a unique $j'$ with $\widehat{f(L_{j})}=g(C'_{j'})$. Pick adjacent vertices $x_{1},x_{2}\in f(L_{j})$ and let $\bar{w}\in\Gamma$ be the label of edge $\overline{x_{1}x_{2}}$. Suppose $\bar{v}=\pi(g(v))$. Then $d(\bar{w},\bar{v})=1$. Since $\out(G(\Gamma))$ is finite, the orthogonal complement of $\bar{w}$ satisfies $\bar{w}^{\perp}\nsubseteq St(\bar{v})$. Then there is a vertex $\bar{u}\in\bar{w}^{\perp}$ such that $d(\bar{u},\bar{v})=2$. The lifts of $\bar{u}$ in $\hat{x}_{1}$ and $\hat{x}_{2}$ are the same point, so $(\hat{x}_{1}\cap\hat{x}_{2})\setminus St(g(v))$ contains a vertex. Since $F(\Gamma)$ does not have separating closed stars, $\hat{x}_{i}\setminus St(g(v))$ is connected for $i=1,2$. Thus $(\hat{x}_{1}\cap\hat{x}_{2})\setminus St(g(v))$ is connected. It follows that $\widehat{f(L_{j})}\setminus St(g(v))$ is connected. Moreover, Lemma \ref{4.8} implies that $\widehat{f(L_{j_{1}})}\setminus St(g(v))$ and $\widehat{f(L_{j_{2}})}\setminus St(g(v))$ are in different components of $\mathcal{P}(\Gamma)\setminus St(g(v))$ when $j_{1}\neq j_{2}$, so there exists a unique $j'$ such that $\widehat{f(L_{j})}=g(C'_{j'})$.
\end{proof}

\bibliographystyle{alpha}
\bibliography{1}

\end{document}